\begin{document}

\def\sgn{\mathrm{sign}}
\def\E{\mathds{E}}
\def\P{\mathds{P}}
\def\R{\mathds{R}}
\def\C{\mathds{C}}
\def\N{\mathds{N}}
\def\s{\mathfrak{s}}
\def\m{\mathfrak{m}}
\def\I{\mathcal{I}}
\def\M{\mathcal{M}}
\def\U{\mathcal{U}}
\def\T{\mathcal{T}}
\def\Tau{\mathcal{T}}
\def\r{\mathrm{r}}
\def\d{\mathrm{d}}
\def\NCDF{\mathcal{N}}
\def\G{\mathcal{G}}
\def\L{\mathcal{L}}
\def\F{\mathsf{F}}
\def\X{\mathsf{X}}
\def\ds{\displaystyle}
\def\indfunc{\mathds{1}}
\def\b{k}
\def\ind{\mathbb{I}}
\def\disteq{\overset{d}{=}}

\numberwithin{equation}{section}

 \pagestyle{fancy}
 \fancyhead{}
 \fancyhead[C]{G.~Campolieti and Y. Sui, Last Hitting Time Distributions for Solvable Diffusions}
 \fancyhead[R]{\thepage}
 \fancyfoot{}

\newtheorem{theorem}{Theorem} 
\newtheorem{corollary}{Corollary}
\newtheorem{pro}{Proposition}
\newtheorem{lemma}{Lemma}



\title{Last Hitting Time Distributions for Solvable Diffusions}

\author{Giuseppe~Campolieti and Yaode Sui\\
Mathematics Department, Wilfrid Laurier University\\
75 University Avenue West, Waterloo, Ontario, Canada\\
E-mail: \texttt{gcampoli@wlu.ca}}

\date{\today}

\maketitle

\begin{abstract}
By considering any one-dimensional time-homogeneous solvable diffusion process, 
this paper develops a complete analytical framework for computing the distribution of the last hitting time, to any level, and its joint distribution with the process value 
on any {\it finite time horizon}.  Our formalism allows for regular diffusions with any type of endpoint boundaries. The cases with imposed killing at any one or two interior points of the original state space also lead us to analytical formulae for joint distributions of the last hitting time, the process value and its attained maximum and/or minimum value on any finite time horizon. 
A key aspect of the formulation is the inherent link between last and first hitting times, which we exploit in deriving novel general formulae for the distribution of the last hitting time on any finite time horizon. 
The simpler known formulae for an infinite time horizon are then easily recovered as a special limit. 
We further link the joint distribution of the last hitting time and the process value to the transition density of the process, 
transition densities with additionally imposed killing, first hitting time probabilities and first hitting time densities associated to the last hitting level. 
In particular, we derive general formulae for each component of the joint distribution, i.e., the jointly continuous, the partly continuous (defective) and the jointly defective portions. By employing the spectral expansions of the transition densities and the first hitting time distributions, our derivations culminate in spectral expansions for both marginal and joint distributions of the last hitting time and the process value on any finite time horizon.

An additional main contribution of this paper lies in the application of our general formulae which give rise to newly closed-form analytical formulae for several solvable diffusions. In particular, we systematically derive analytical expressions for each portion of the marginal and joint distributions of the last hitting time and the process value on any finite time horizon, without and with imposed killing at one or two interior points, for Brownian motion, Brownian motion with drift (geometric Brownian motion), the squared Bessel, squared radial Ornstein-Uhlenbeck (CIR) and Ornstein-Uhlenbeck processes. Most of the formulae are given in terms of spectral series that are rapidly convergent and efficiently implemented. We demonstrate this by presenting some numerical calculations of marginal and joint distributions using accurately truncated series.
 \vskip 0.1in
\textbf{Keywords:} last hitting (passage) time distribution; joint distributions of last hitting time; 
first hitting (passage) time distributions; closed-form spectral expansions; 
Brownian motion, GBM, Bessel, CIR (Radial OU) and Ornstein-Uhlenbeck processes; solvable diffusions.\\\textbf{AMS Subject Classification:} 60E05, 60G05, 60G51, 60J25, 60J60, 60J70, 91B70.
\end{abstract}


\section*{Introduction}
First passage (or exit) times play an important role in the theory of stochastic processes.  
For a one-dimensional (scalar) diffusion process, the first passage time is the first hitting time to a given level. 
First passage time distributions have been extensively studied and applied in many areas of stochastic modeling. For solvable diffusions, standard formulae can be used to produce analytical expressions for Laplace transforms of the first hitting time distributions, e.g., see \cite{BS02} for tables of formulae associated to ``exponential stopping'' for some commonly known solvable diffusions. Solvability implies known analytical expressions (generally in the form of series) for the fundamental solutions associated to the infinitesimal generator of the diffusion. For simple processes, e.g., Brownian motion, 
Laplace inversion readily recovers known simple analytical expressions for the first hitting time distributions. 
For generally solvable diffusions the inversions can be represented analytically in the form of spectral expansions. 
In \cite{Linetsky2004b}, spectral expansions are derived for the distribution of the first hitting time to a given level for scalar solvable diffusions with any type of endpoint boundaries. The approach in \cite{Linetsky2004b} uses the Spectral Theorem for semigroups of self-adjoint contractions in the Hilbert space of real-valued functions that are square-integrable with respect to the diffusion speed measure. 
As part of a self-contained general framework for any solvable scalar diffusion, this paper presents 
a related complementary approach to deriving the same spectral expansions of the first hitting time distributions. 
Our formulation makes direct use of the transition probability density function (PDF) of the process with imposed killing at the first hitting level(s). Moreover, we also include spectral expansions for the distributions of first exit times from an interval, and for hitting an upper or lower level before the other, as these are integral portions of the marginal and joint distributions associated to last passage  (or exit) times. 

Last passage times are also fundamental. 
For a Markov process with continuous paths, such as a diffusion, the last passage time is equivalent to the last hitting time to a given level, within a specified time horizon. The last passage time is not a stopping time (with respect to the natural filtration of a process). It provides a kind of window into possible future scenarios of a process. 
For some early theoretical works on last passage (exit) times, 
see, e.g., \cite{Getoor},\cite{Pitman and Yor(1981)},\cite{Salminen84}. 
The last passage time, in combination with other functionals of a stochastic process, has spawned an increasing number of applications over the years. The mathematical finance literature has especially benefited. 
In \cite{Profeta2010} it is shown how standard European options, under a positive local martingale model for the asset price, are expressible in terms of the distribution of the last passage time to the strike level for an infinite time horizon. 
In \cite{Egami}, the last passage time of a conditioned diffusion on an infinite time horizon is used to model credit risk. 
More recently, research concerning the last passage time has also focused on Lévy processes. 
For instance, in \cite{Sato} the moments of last exit times are studied. In \cite{chiu2005passage}, the joint Laplace transform of the first and last passage time distributions are derived for spectrally negative Lévy processes. 
In \cite{landriault2023bridging} two random times, occupation-type and Parisian-type first–last passage time, are introduced to bridge first and last passage times within a risk model based on spectrally negative Lévy processes. In excursion theory, many concepts are intrinsically related to the last passage time, such as meander processes or excursions straddling one or two levels within some finite time.  Parisian options, first introduced in \cite{Chesney}, have payoffs that depend upon the accumulated excursion time of the stock price staying above (or below) a prescribed barrier level for a given amount of time that exceeds a specified option time window.  The introduction of a Parisian ruin time has also emerged in modeling financial default as the time when a process meanders below a prescribed default threshold for an extended duration. 

It is particularly important to distinguish between a last passage (hitting) time for a finite time horizon, $T\in (0,\infty)$, and that for the infinite time horizon, $T=\infty$, which is a limiting case of a finite time horizon. 
Analytical formulae for the distribution of the last passage time, and of its associated joint distributions, have so far been derived for a  limited class of processes and mostly for $T=\infty$. 
For finite $T\in (0,\infty)$, the known analytical formulae in the literature are predominantly given as Laplace transforms of the distributions. Moreover, for scalar diffusions most of the analytical formulae for the distributions have so far remained focused on the simplest processes, e.g., standard Brownian motion (with drift). 
Hence, generally for $T\in (0,\infty)$, there has not been a comprehensive analytical theory for computing the marginal and joint distributions associated to the last hitting time to any given level. This paper directly addresses this important gap by presenting a thorough self-contained analytical treatment for generally time-homogeneous solvable scalar diffusions on any finite time horizon. 
For example, the distribution of the last hitting time to a given level (for nontrivial transcient diffusions) has long been known for $T=\infty$, e.g., see \cite{Pitman and Yor(1981)}. Yet, to date, for a finite time horizon only the Laplace transform (w.r.t. $T$) of the distribution has been derived, e.g., see \cite{BS02}. In Theorem~\ref{last-passage-propn-time-t} we perform the Laplace inversion analytically and hence derive a general new formula for the last hitting time density for all time $t\in (0,T)$, which is reduced to a product involving the time--$t$ transition PDF, evaluated at the last hitting level, and a difference of initial value (left and right) derivatives, evaluated at the last hitting level, of the time--$(T-t)$ distribution of the first hitting time to the last hitting level. The discrete (defective) portion of the last hitting time distribution is simply the tail (past $T$) probability of first hitting the last hitting level. Moreover, the limit $T\to\infty$ trivially recovers the known formula for infinite time horizon. Theorems \ref{last-passage-propn-time-t-kill-ab} and \ref{last-passage-propn-time-t-kill-b} further provide analogous new formulae for diffusions with imposed killing at either one or two interior points of the original state space of the process. The discrete portions of the last hitting time distributions are also given by tail probabilities of appropriate first hitting times (for hitting one level before another) on any finite time horizon. The distributions of the last hitting time for infinite time horizon are again recovered as $T\to\infty$. 

Furthermore, Theorems \ref{joint_last-passage-propn-time-t}, \ref{joint_last-passage-propn-time-t_ab} and \ref{joint_last-passage-propn-time-t-b} provide new general formulae for the joint densities of the last hitting time, to a given level, and the process value for any finite time horizon $T$, and for all respective cases without and with imposed killing at interior points. 
The formulae provide new insights and are readily implementable. The joint densities are given as products of the respective time--$t$ transition PDF and a first hitting time density for hitting the last hitting level within a remaining time window $T-t$. Our formulae also display an equivalent (``dual") interpretation in a backward time setting with transition and joint densities relative to the diffusion speed density. 
The key ingredients in deriving the densities in Theorems~\ref{last-passage-propn-time-t}--\ref{joint_last-passage-propn-time-t-b} are conditioning on the intermediate time--$t$ value of the process, applying the Markov property, time homogeneity and the forward and backward Kolmogorov PDEs. 
In a recent paper\cite{Camp_Sui_excursion}, part of this methodology was used in deriving simplified formulae for joint densities  associated to excursions straddling two arbitrary levels within a finite time $T$.

We complete the determination of the joint distributions by also deriving formulae for the corresponding partly and doubly discrete (or continuous) portions. These portions are simply given by either the time--$T$ transition PDFs with imposed killing at the hitting level or by a probability of entering the cemetery  state (for nonconservative endpoints of the process) before reaching the last hitting level within the time horizon $T$. Based on spectral formulae for the transition PDFs and first hitting time distributions, the relations established in Theorems~\ref{last-passage-propn-time-t}--\ref{joint_last-passage-propn-time-t-b} lead us to the derivation of general spectral expansion formulae, given in Propositions \ref{last-passage-propn-time-t-spectral}--\ref{marginal_joint_last-passage-propn-kill-b}, for all respective marginal and joint densities associated to the last hitting time to any given level and finite time horizon. Spectral expansions for the defective portions of the marginal and joint distributions are also derived. The formulae are readily applicable to any analytically tractable, i.e., solvable, diffusion process. We demonstrate this by applying our formulae to some known families of solvable diffusions and thereby generate several new analytical  expressions for the marginal and joint distributions associated to the last hitting time under such processes. 
Moreover, we perform efficient and accurate numerical calculations of some distributions by appropriately truncating the analytical spectral series.

%
The related sections of this paper are organized as follows. 
In Section~\ref{sect1} we summarize the basic fundamentals for a general time-homogeneous scalar solvable diffusion. 
This section also sets the basic notation and assumptions in the paper. Section~\ref{sect_spectral_transPDF} contains the spectral theory for transition PDFs, explicitly also treating the essential cases with imposed killing at one or two interior points of the state space. Section~\ref{sect_spectral_FHT} links transition PDFs, with appropriately imposed killing, to the distribution of first hitting times. 
Propositions~\ref{prop_spec_first_hit_1}--\ref{prop_spec_first_hit_ab} provide the general spectral expansion formulae for first hitting time distributions. 
Section~\ref{last_hitting_time_sect} is central to the paper, containing the complete theoretical treatment of the marginal 
and joint distributions associated to last hitting times for any solvable scalar diffusion. In particular, this section includes Theorems~\ref{last-passage-propn-time-t}--\ref{joint_last-passage-propn-time-t-b} and Propositions \ref{last-passage-propn-time-t-spectral}--\ref{marginal_joint_last-passage-propn-kill-b} for computing the continuous (density) portions of the distributions, as well as separate derivations of general formulae for the partly or doubly discrete (defective) portions of the distributions. Section~\ref{last_hitting_time_sect} also serves to connect the main formalism for the first and last hitting times. 
Section~\ref{subsect_spectral_formulae} applies the formalism in Section~\ref{last_hitting_time_sect} to several known solvable diffusions. In particular, we provide in-depth systematic derivations of new analytically closed-form formulae (mainly as spectral expansions) for both marginal, and joint distributions associated to last hitting times, for Brownian motion, Brownian motion with drift (geometric Brownian motion), the squared Bessel, squared radial Ornstein-Uhlenbeck (CIR) and Ornstein-Uhlenbeck processes. Our formulae also include all the respective cases for the killed diffusions, i.e., with additonally imposed killing at interior points of the original diffusions. These cases then also automatically give new analytically closed-form formulae for the multiply joint distributions of the last hitting time to a level, the process value and its attained maximum and/or minimum value (equivalently first hitting times to one or two interior levels) on any finite time horizon $T$. 
Section~\ref{sectNumerical} contains numerical calculations.  All relevant proofs are given in the Appendix.

\section{Basic Fundamentals and Notation}\label{sect1}
We consider a one-dimensional time-homogeneous regular diffusion $\{X_t, t\geq 0\}$ on
the state space $\I\equiv (l,r)$ with endpoints $l$ and $r$, $-\infty\leq l<r\leq\infty$. If the left and/or right endpoint is in the state space then $\I\equiv [l,r)$, or $(l,r]$, or $[l,r]$, accordingly. This process has infinitesimal generator $\mathcal G$ defined by
\begin{equation}\label{eq:X_Gen}
\mathcal G f(x):= {1\over 2}\nu^2(x)f^{\prime\prime}(x) + \alpha(x)f^\prime(x) 
= \frac{1}{\m(x)}\left(\frac{f^\prime(x)}{\s(x)}\right)^\prime\,,\quad   x \in \I\,.
\end{equation}
Throughout we assume an identically zero instantaneous killing measure, although this is readily incorporated in the formalism. The domain of $\mathcal G$ consists of all bounded functions on $\I$ s.t. $\mathcal G f$ are bounded on $\I$ and satisfying appropriate boundary conditions as described below. We assume that the respective infinitesimal drift and diffusion coefficient functions $\alpha(x)$ and $\nu(x)>0$ are continuous on $\I$, i.e., with $\m(x)$ and $\s'(x)$ continuous on $\I$. The respective scale and speed density functions are defined as
\begin{equation}\label{speed_scale_dens}
\s(x):=\exp\left(-\int^x\frac{2\alpha(z)}{\nu^2(z)}\,dz\right)
   \mbox{,\,\,\,   } \m(x):=\frac{2}{\nu^2(x)\s(x)}.
   \end{equation}
As a stochastic differential equation, $dX_t = \alpha(X_t)dt + \nu (X_t)dW_t$, where $\{W_t\}_{t\geq 0}$ is  a standard scalar Brownian motion.

The homogeneous second order linear ordinary differential equation:
\begin{equation} \label{eq:phi}
\mathcal{G} u (x) = \lambda\,u(x),\; \lambda\in \C,\; x\in\I,
\end{equation}
admits two fundamental solutions, denoted by 
$\varphi^+_\lambda(x)$ and $\varphi^-_\lambda(x)$. 
Every solution of (\ref{eq:phi}), with appropriate boundary conditions, is a linear combination of these two fundamental solutions. For positive real values of $\lambda$, $\varphi^+_\lambda(x)$ and $\varphi^-_\lambda(x)$ are respectively {\it strictly increasing and decreasing} convex real-valued positive functions for real $x\in\I$. Their Wronskian is
\begin{equation} \label{wronskian} W[\varphi^-_\lambda,\varphi^+_\lambda](x)
   := \varphi^-_\lambda(x) \varphi^{+ \,\prime}_\lambda(x) - 
\varphi^+_\lambda(x) \varphi^{- \,\prime}_\lambda(x)
      = w_\lambda\s(x)\,,
 \end{equation}
$\varphi^{\pm \,\prime}_\lambda(x) \equiv 
\frac{d \varphi^{\pm}_\lambda(x)}{dx}$, with Wronskian factor $w_\lambda$ as a function of $\lambda$ and independent of $x$. With this definition, $w_\lambda > 0$ for any positive real $\lambda$. The fundamental functions $\varphi^\pm_\lambda$ are unique, up to a multiplication constant, 
with $\varphi^+_\lambda$ as increasing and $\varphi^-_\lambda$ as decreasing for all real $\lambda>0$, and by further imposing boundary conditions in the case of non-singular (regular) endpoints. The boundary conditions of $\varphi_\lambda^+$ at the left endpoint are as follows for real $\lambda>0$: If $l$ is entrance-not-exit, then
$\varphi_\lambda^+(l+) > 0, \frac{1}{\s(l+)}\frac{d\varphi_\lambda^+(l+)}{dx}=0$. 
If $l$ is exit-not-entrance, then $\varphi_\lambda^+(l+) = 0, \frac{1}{\s(l+)}\frac{d\varphi_\lambda^+(l+)}{dx}>0$. 
If $l$ is a natural boundary, then $\varphi_\lambda^+(l+) = 0, \frac{1}{\s(l+)}\frac{d\varphi_\lambda^+(l+)}{dx}=0$. 
If $l$ is regular, then different boundary conditions can be applied, e.g., $\varphi_\lambda^+(l+) = 0$ if $l\notin \I$
is a killing boundary or $\frac{1}{\s(l+)}\frac{d\varphi_\lambda^+(l+)}{dx}=0$ if $l\in\I$ is a reflecting boundary.
Analogous conditions hold for the right boundary $r$ involving $\varphi_\lambda^-(r-)$
and $\frac{1}{\s(r-)}\frac{d\varphi_\lambda^-(r-)}{dx}$. 
The classification of $l (r)$ as either natural, entrance-not-exit, exit-not-entrance or regular is given by the standard Feller conditions, e.g., see \cite{BS02,Karlin}.

The Green function (resolvent kernel)
\footnote{Throughout the text we define the transition PDF w.r.t. the Lebesgue measure as opposed to the speed measure. The Green function w.r.t. the speed measure, $G_\lambda(x,y) := G(\lambda;x,y)/\m(y)$, 
which is symmetric in $x$ and $y$, is also commonly used (e.g., see \cite{BS02}).} 
for the $X$-diffusion on $\I$ is given by
\begin{equation}
G(\lambda;x,y) = w_\lambda^{-1}\m(y)\varphi^+_\lambda(x \wedge y)\varphi^-_\lambda(x \vee y),\,\,\lambda \in\C, \,x,y\in\I,
\label{greenfunc}
\end{equation}
where $x \wedge y:=\min\{x,y\}$ and $x \vee y:=\max\{x,y\}$, and $w_\lambda$
is a function of $\lambda$ given by (\ref{wronskian}).

The Green function the $X$-diffusion on the lower subinterval $\I^{\,-}_b := (l,b)\subset \I$ (or $\I^{\,-}_b:=[l,b)$
if $l$ is regular instantaneously reflecting), with {\it imposed
killing at a prescribed upper level} $b\in \I$, is given by
\begin{equation}
G_b^-(\lambda;x,y) = \m(y) \frac{\varphi_\lambda^+(x \wedge y)\,\phi(x \vee y,b;\lambda)}{w_\lambda \varphi_\lambda^+(b)}, \,\,\lambda \in\C, \,x,y\in\I^{\,-}_b.
\label{greenfunc_up}
\end{equation}
Similarly, for the $X$-diffusion on the upper subinterval $\I^{\,+}_b := (b,r)$ (or $(b,r]$ if $r$ is
regular instantaneously reflecting), {\it imposed killing at a prescribed lower level} $b\in \I$, the Green function 
is given by 
\begin{equation}
G_b^+(\lambda;x,y) = \m(y) \frac{\phi(b,x \wedge y;\lambda)\,\varphi_\lambda^-(x \vee y)}{w_\lambda \varphi_\lambda^-(b)}, \,\,\lambda \in\C, \,x,y\in\I^{\,+}_b.
\label{greenfunc_down}
\end{equation}
Within (\ref{greenfunc_up})-(\ref{greenfunc_down}), and throughout, we define the {\it generalized cylinder function}
(associated to a given pair of fundamental solutions $\varphi_\lambda^\pm$):
\begin{equation}
\phi(x,y;\lambda) := \varphi_\lambda^-(x)\varphi_\lambda^+(y) - \varphi_\lambda^-(y)\varphi_\lambda^+(x),
\label{phi_function}
\end{equation}
Note that $\phi$ is antisymmetric, $\phi(x,y;\lambda) = -\phi(y,x;\lambda)$. For real values of $\lambda > 0$, 
$b\in \I$, $\phi(b,x;\lambda)$ ($\phi(x,b;\lambda)$) is a positive increasing (decreasing) function of $x$ on 
$(b,r)$ ($(l,b)$).

For the $X$-diffusion on $(a,b)$, $l < a < b < r$, {\it with imposed killing at both prescribed lower and upper levels}
$a,b\in \I$, the Green function is given by
\begin{equation}
G_{(a,b)}(\lambda;x,y) = \m(y) \frac{\phi(a,x \wedge y;\lambda)\,\phi(x \vee y,b;\lambda)}{ w_\lambda \phi(a,b;\lambda)}, \,\,\lambda \in\C, \,x,y\in(a,b).
\label{greenfunc_double}
\end{equation}

We denote the diffusion on
 $\I^{\,-}_b$ with Green function in (\ref{greenfunc_up}), by 
$X_{b}\equiv \{X_{b,\,t}, t\ge 0\}$, where
\begin{align}\label{X-Killed-b}
X_{b,\,t} := \begin{cases} X_t & \text{for } t < \T_b,
\\
\partial^\dagger & \text{for }  t \geq \T_b. 
\end{cases}
\end{align}
This process is started below level $b$, $X_{b,\,0} = X_0 < b$, and is sent to the cemetery state,   
$\partial^\dagger$, upon hitting level $b$ at the first hitting time $\T_b := \inf\{t \ge 0 : X_t = b\}$. 
The diffusion on $\I^{\,+}_b$ is also defined as in
(\ref{X-Killed-b}); however, the diffusion is started above level $b$, $X_{b,\,0} = X_0 > b$. 
Similarly, $X_{(a,b)}\equiv \{X_{(a,b),\,t}, t\ge 0\}$ denotes the diffusion on $(a,b)$, with imposed killing at $a$ and $b$ and Green function in (\ref{greenfunc_double}), where
\begin{align}\label{X-Killed-ab}
X_{(a,b),\,t} := \begin{cases} X_t & \text{for } t < \T_{(a,b)},
\\
\partial^\dagger & \text{for }  t \geq \T_{(a,b)}. \end{cases}
\end{align}
The process is started at $X_{(a,b),\,0} = X_0 \in (a,b)$ and $\T_{(a,b)}:= \inf\{t \ge 0 : X_t \notin (a,b)\} = \T_a \wedge \T_b$ is the first exit time from the interval $(a,b)$.

The respective Green functions and transition PDFs are Laplace transform pairs. In particular, we have the conditional
\footnote{Throughout, $\P_x(A)$ denotes the probability of event $A$ conditional on $X_0=x$.}
 probability $\P_x(X_t \in dy) =  p(t;x,y) dy$ with $p(t;x,y)$ as the time-$t$ transition PDF of the $X$-process started at $X_0 = x$, $x,y\in\I, t > 0$. With $G$ given in (\ref{greenfunc}), we have
\begin{align}
G(\lambda;x,y) &= {\mathcal L}_t\{p(t;x,y)\}(\lambda) = \int_0^\infty p(t;x,y) e^{-\lambda t}  dt\,,
\\
p(t;x,y) &=  {\mathcal L}^{-1}_\lambda\{G(\lambda;x,y)\}(t) = {1\over 2\pi i}\int_{c-i\infty}^{c+i\infty} G(\lambda;x,y) e^{\lambda t}d\lambda\,.
\end{align}
The latter is the Bromwich contour integral on the complex $\lambda$-plane with all singularities of $G$ 
(as function of $\lambda$) lying to the left of the contour line $\text{Re}(\lambda) = c$.
Similarly, for the transition PDFs of the process with imposed killing on respective intervals $\I_b^\pm$, denoted by $p_b^\pm(t;x,y)$, 
$x,y\in\I_b^\pm, t > 0$, we have
\begin{align}\label{joint_transPDF_kill_b}
\P_x( X_{b,\,t} \in dy) = \P_x(\T_b > t, X_t \in dy)
= \begin{cases} 
\P_x(m_t > b , X_t  \in dy)  = p_{b}^+(t;x,y)dy, \,\,\,x > b,
\\
\P_x(M_t < b, X_t  \in dy) = p_{b}^-(t;x,y)dy, \,\,\,x < b.
\end{cases}
\end{align}
The respective densities $p_{b}^\pm$ are zero for values of $x,y\notin \I_b^\pm$ and satisfy zero boundary condition at the killing level, i.e., $p_{b}^\pm(t;b,y) = p_{b}^\pm(t;x,b)\equiv 0$. Throughout we define
\begin{equation}
M_t := \sup\limits_{0\le s \le t} X_s\,\,,\,\,\,\,m_t := \inf\limits_{0\le s \le t} X_s\,.
\label{sampled-sup-inf-defn}
\end{equation}
The respective Laplace transform pairs are 
\begin{align}
G_b^\pm(\lambda;x,y) = {\mathcal L}_t\{p_b^\pm(t;x,y)\}(\lambda)\,,\,\,\,\,\,\,
p_b^\pm(t;x,y) =  {\mathcal L}^{-1}_\lambda\{G_b^\pm(\lambda;x,y)\}(t).
\end{align}
We denote the transition PDF of $X_{(a,b)}$ by $p_{(a,b)}(t;x,y)$ where
\begin{align}\label{joint_transPDF_kill_a_b}
\P_x( X_{(a,b),t} \in dy) = \P_x(\T_{(a,b)} > t, X_t \in dy)
= \P_x(m_t > a, M_t < b, X_t  \in dy)  = p_{(a,b)}(t;x,y) dy\,,
\end{align}
$x,y\in (a,b), t > 0$, and
\begin{align}
G_{(a,b)}(\lambda;x,y) = {\mathcal L}_t\{p_{(a,b)}(t;x,y)\}(\lambda)\,,\,\,\,\,\,\,
p_{(a,b)}(t;x,y) =  {\mathcal L}^{-1}_\lambda\{G_{(a,b)}(\lambda;x,y)\}(t).
\end{align}

\section{Spectral Series for Transition Densities and First Hitting Time Distributions}
\subsection{Transition Densities}
\label{sect_spectral_transPDF}
Laplace inversions of the respective Green function, by closing the Bromwich integral and applying Cauchy's Residue Theorem, produces analyically closed-form spectral expansions of the respective transition PDF. 
Residues from the simple poles give rise to the discrete part of the spectrum, while continuous parts of the spectral expansion arise as integrals over branch cut discontinuities of the Green function. 

The qualitative nature of the spectrum is also intimitely related to the classification of the boundaries of the Sturm-Liouville (SL) ODE associated to (\ref{eq:phi}). 
See, for example, \cite{Linetsky2004a} for a general discussion and summary of the possible spectral categories and their relation to the boundary classification of the endpoints for a one-dimensional time-homogeneous diffusion. Here we simply re-state some basic facts about the connection between the boundary classification and the spectrum. 
We recall the (non-negative) Sturm-Liouville (SL) operator ${\mathcal H} := -{\mathcal G}$, where 
${\mathcal G}$ is the non-positive generator for the $X$-diffusion. Letting $\epsilon=-\lambda$, equation (\ref{eq:phi}) is a Sturm-Liouville second order linear ODE ${\mathcal H} u(x) = \epsilon u(x)$, i.e., 
${\mathcal H} \varphi^\pm_{-\epsilon}(x) = \epsilon\, \varphi^\pm_{-\epsilon}(x)$, $x\in (e_1,e_2)$. 
As endpoints we consider either $e_1 = l, e_2 = r$, or $e_1 = l, e_2 = b$, or $e_1 = b, e_2 = r$, or $e_1 = a, e_2 = b$ where $-\infty \le l < a < b < r \le \infty$. 
For a given $\epsilon$ (or $\lambda$), the SL ODE is either oscillatory or non-oscillatory at an endpoint $e_i$, $i=1,2$. 
In particular, for any given {\it real} value of $\epsilon$ (or $\lambda$) the SL ODE (or (\ref{eq:phi})) 
is oscillatory at $e_i$ if and only if every solution of the SL ODE (or (\ref{eq:phi})) has infinitely many zeros clustering at $e_i$. Otherwise, it is non-oscillatory at $e_i$. This classification at a given boundary $e_i$ is mutually exclusive for each real value of $\epsilon$ (or $\lambda$) and, of course, can vary with $\epsilon$ (or $\lambda$). 
This leads to the fact that an endpoint $e_i$ falls into only one of two cases: 
(i) NONOSC, where the SL ODE (or (\ref{eq:phi})) is non-oscillatory for all real $\epsilon$ (or $\lambda$); 
(ii) O-NO with some cutoff $\Lambda\ge 0$, where the SL ODE (or (\ref{eq:phi})) is oscillatory at
$e_i$ for all real values of $\lambda < -\Lambda$ ($\epsilon > \Lambda$) and non-oscillatory at $e_i$ for all real values
$\lambda > -\Lambda$ ($\epsilon < \Lambda$). 

Assuming smoothness conditions on the derivatives of the speed and scale densities, there are three main spectral categories that can arise (e.g., see Theorems 2-4 in \cite{Linetsky2004a}):
\vskip0.05in
\noindent{\it Spectral Category} I. This is the case when the Green function is a meromorphic function, analytic in $\lambda$ with the exception of a countable number of isolated simple poles, i.e., both endpoints are NONOSC and the {\it eigenspectrum is simple, nonnegative and purely discrete}. 
We recall the fact that non-natural boundaries are NONOSC. Hence, any diffusion with both endpoints as either regular or exit-not-entrance or entrance-not-exit is necessarily in Spectral Category I. Moreover, diffusions having one non-natural boundary and one NONOSC natural boundary, 
or having both NONOSC natural boundaries, are also in Spectral Category I. 

\noindent{\it Spectral Category} II: If one boundary is NONOSC and the other is O-NO with cutoff $\Lambda \ge 0$, then the spectrum of ${\mathcal H}$ is simple and nonnegative, with essential spectrum in 
$[\Lambda,\infty)$. Moreover, 
${\mathcal H}$ has purely absolutely continuous spectrum in $(\Lambda,\infty)$. If the O-NO endpoint is non-oscillatory for $\epsilon = \Lambda \ge 0$, then there exists a finite set (it may be empty) of simple eigenvalues in $[0,\Lambda]$. If the O-NO endpoint is oscillatory for $\epsilon = \Lambda > 0$, then there exists an infinite sequence of simple eigenvalues in $[0,\Lambda)$ clustering at $\Lambda$.

\noindent {\it Spectral Category} III: If the left boundary $e_1$ is O-NO with a cutoff $\Lambda_1 \ge 0$ and the right boundary $e_2$ is O-NO with cutoff $\Lambda_2 \ge 0$, then the essential spectrum is
contained in $[\Lambda_<,\infty)$, $\Lambda_< :=\min\{\Lambda_1,\Lambda_2\}$.
Moreover, ${\mathcal H}$ has purely absolutely continuous spectrum in $(\Lambda_<,\infty)$.
The part of the spectrum below $\Lambda_>:=\max\{\Lambda_1,\Lambda_2\}$ is simple (with multiplicity one) and the part above $\Lambda_>$ has multiplicity two. If the SL equation is non-oscillatory for $\epsilon = \Lambda_< \ge 0$, then there is a finite set (it may be empty) of simple eigenvalues in $[0, \Lambda_<]$. If the SL equation is oscillatory for 
$\epsilon = \Lambda_< > 0$, then there is an infinite sequence of simple eigenvalues in $[0, \Lambda_<)$ clustering at 
$\Lambda_<$.

For regular diffusions with imposed killing at one endpoint, there can at most be 
one O-NO endpoint and therefore Spectral Category III does not apply, i.e., we are either in Spectral Category I or II. The spectral expansions in this paper make use of either Spectral Category I or II.
\footnote{The only case considered here which corresponds to Spectral Category III is the most trivial diffusion, i.e., Brownian motion on $\R$ with both endpoints $\pm\infty$ as O-NO natural with spectral cutoff 
$\Lambda_1 = \Lambda_2 = 0$. In this case the Laplace inverse of the Green function trivially gives the known Gaussian transition PDF.} 
We now summarize the relevant spectral expansion formulae for each case. The derivations are based on standard manipulations. 

%
%
Consider the diffusion $X_b\in \I^{\,-}_b$ with killing at upper level $b\in \I$ with left endpoint $l$ assumed as either NONOSC natural, regular, exit-not-entrance or entrance-not-exit. Both $l$ and $b$ are NONOSC, i.e., we are in Spectral Category I where $G_b^-$ in (\ref{greenfunc_up}) is meromorphic. The transition PDF is given by
\begin{eqnarray}
p_b^-(t;x,y) = \m(y)\sum_{n=1}^\infty e^{-\lambda_n t}\phi_n(x)\phi_n(y), \,\,\,x,y\in \I^{\,-}_b, t >0.
\label{u_spectral_1}
\end{eqnarray}
The eigenvalues $\{\lambda_n \equiv \lambda^-_{n,b},\,n=1,2,...\}$ ($\nearrow \infty$ as $n\nearrow\infty$) 
are all the nonnegative simple zeros solving 
\begin{equation}\label{eigen_trans_pdf_1}
\varphi^+_{\lambda}(b)\big\vert_{\lambda = -\lambda_n}  \equiv \varphi^+_{-\lambda_n}(b) = 0.
\end{equation}
The eigenfunctions $\phi_n(x)\equiv \phi^{(b)}_n(x)$ on $\I^{\,-}_b$, 
where ${\mathcal H}\phi_n(x) = \lambda_n \phi_n(x)$, have the equivalent expressions:
\begin{equation}
\phi_n(x) = \pm \sqrt{A_n\over C_n}\varphi^+_{-\lambda_n}\!(x)
= \pm {\phi(b,x;-\lambda_n) \over \sqrt{A_nC_n}},
\label{eigenfunc1}
\end{equation}
$$C_n := \big[w_\lambda{\partial\over \partial\lambda}\varphi_\lambda^+(b)\big]_{\lambda=-\lambda_n} 
\equiv - w_{_{-\lambda_n}}\big[{\partial\over \partial\lambda}\varphi_{-\lambda}^+(b)\big]_{\lambda=\lambda_n}, \,\,\,
A_n = -\varphi^-_{-\lambda_n}\!(b).$$
[Note: $\m(y)\phi_n(x)\phi_n(y) = \text{Res}\,G_b^-(\lambda=-\lambda_n; x,y)$.] From (\ref{eigenfunc1}), the product of eigenfunctions in (\ref{u_spectral_1}) takes the useful form:
\begin{eqnarray}
\phi_n(x)\phi_n(y) = {\varphi^-_{-\lambda_n}\!(b) \over w_{_{\!-\lambda_n}}
\big[{\partial\over \partial\lambda}\varphi_{-\lambda}^+(b)\big]_{\lambda=\lambda_n}} 
\varphi^+_{-\lambda_n}\!(x)\varphi^+_{-\lambda_n}\!(y).
\label{spectral_1_product_eigen}
\end{eqnarray}
Similarly, for $X_b\in \I^{\,+}_b$ with killing at lower level $b$ and right endpoint $r$ 
assumed to be either NONOSC natural, regular, exit-not-entrance or entrance-not-exit, 
we have the transition PDF
\begin{eqnarray}
p_b^+(t;x,y) = \m(y)\sum_{n=1}^\infty e^{-\lambda_n t}\phi_n(x)\phi_n(y), \,\,\,x,y\in \I^{\,+}_b, t >0.
\label{u_spectral_2}
\end{eqnarray}
The eigenvalues $\{\lambda_n \equiv \lambda^+_{n,b},\,n=1,2,...\}$ ($\nearrow \infty$ as $n\nearrow\infty$) are all the nonnegative simple zeros solving 
\begin{equation}\label{eigen_trans_pdf_2}
\varphi^-_{\lambda}(b)\big\vert_{\lambda = -\lambda_n}  \equiv \varphi^-_{-\lambda_n}(b) = 0.
\end{equation}
The eigenfunctions $\phi_n(x)\equiv \phi^{(b)}_n(x)$ on $\I^{\,+}_b$, where 
${\mathcal H}\phi_n(x) = \lambda_n \phi_n(x)$, have the equivalent expressions:
\begin{equation}
\phi_n(x) = \pm \sqrt{A_n\over C_n}\phi(b,x;-\lambda_n)
= \pm {\varphi^-_{-\lambda_n}\!(x) \over \sqrt{A_nC_n}},
\label{eigenfunc2}
\end{equation}
$$C_n := \big[w_\lambda{\partial\over \partial\lambda}\varphi_\lambda^-(b)\big]_{\lambda=-\lambda_n}
\equiv - w_{_{-\lambda_n}}\big[{\partial\over \partial\lambda}\varphi_{-\lambda}^-(b)\big]_{\lambda=\lambda_n},\,\,\,
A_n = -[\varphi^+_{-\lambda_n}\!(b)]^{-1}.$$
[Note: $\m(y)\phi_n(x)\phi_n(y) = \text{Res}\,G_b^+(\lambda=-\lambda_n; x,y)$.] 
Using (\ref{eigenfunc2}) gives the product of eigenfunctions in (\ref{u_spectral_2}):
\begin{eqnarray}
\phi_n(x)\phi_n(y) = {\varphi^+_{-\lambda_n}\!(b) \over w_{_{\!-\lambda_n}}
\big[{\partial\over \partial\lambda}\varphi_{-\lambda}^-(b)\big]_{\lambda=\lambda_n}} 
\varphi^-_{-\lambda_n}\!(x)\varphi^-_{-\lambda_n}\!(y).
\label{spectral_2_product_eigen}
\end{eqnarray}

%
%
For the diffusion $X_{(a,b)}$ we automatically have Spectral Category I since both endpoints $a$ and $b$ are
regular killing (i.e., NONOSC). The transition PDF has a discrete spectral expansion,  
\begin{eqnarray}
p_{(a,b)}(t;x,y) = \m(y)\sum_{n=1}^\infty e^{-\lambda_n t}\phi_n(x)\phi_n(y),  \,\,\,x,y\in (a,b), t >0.
\label{u_spectral_3}
\end{eqnarray}
The eigenvalues $\{\lambda_n \equiv \lambda_n^{(a,b)},\,n=1,2,...\}$ ($\nearrow \infty$ as $n\nearrow\infty$) are all the nonnegative simple zeros solving 
\begin{equation}\label{eigen_trans_pdf_3}
\phi(a,b;-\lambda_n) = 0.
\end{equation}
The eigenfunctions $\phi_n(x) \equiv \phi_n^{(a,b)}(x)$ on $(a,b)$, where 
${\mathcal H}\phi_n(x) = \lambda_n \phi_n(x)$, are given by
\begin{equation}
\phi_n(x) = \pm \sqrt{A_n\over C_n}\phi(a,x;-\lambda_n)
= \pm {\phi(x,b;-\lambda_n) \over \sqrt{A_nC_n}},
\label{eigenfunc3}
\end{equation}
$$
C_n = \big[w_\lambda{\partial\over \partial\lambda} \phi(a,b;\lambda)\big]_{\lambda=-\lambda_n} 
\equiv -w_{_{-\lambda_n}} \Delta(a,b;\lambda_n).
$$ 
[Note: $\m(y)\phi_n(x)\phi_n(y) = \text{Res}\,G_{(a,b)}(\lambda=-\lambda_n; x,y)$.] 
Throughout, we define 
\begin{equation}\label{Delta_derivative}
\Delta(a,b;\lambda_n) := {\partial\over \partial\lambda}\phi(a,b;-\lambda)\big\vert_{\lambda=\lambda_n} 
\equiv - {\partial\over \partial\lambda}\phi(a,b;\lambda)\big\vert_{\lambda=-\lambda_n}.
\end{equation}
Since $\phi(x,b;-\lambda_n) = A_n \phi(a,x;-\lambda_n)$, we have 
$A_n = -{\varphi^+_{-\lambda_n}\!(b)\over \varphi^+_{-\lambda_n}\!(a)}$.
From (\ref{eigenfunc3}), we have the product of eigenfunctions in (\ref{u_spectral_3}) given by
\begin{align}
\phi_n(x)\phi_n(y) = {\varphi^+_{-\lambda_n}\!(b) \over \varphi^+_{-\lambda_n}\!(a)} 
{\phi(a,x;-\lambda_n)\phi(a,y;-\lambda_n) \over w_{_{-\lambda_n}} \Delta(a,b;\lambda_n)}
= {\phi(a,x;-\lambda_n)\phi(b,y;-\lambda_n) \over w_{_{-\lambda_n}} \Delta(a,b;\lambda_n)}.
\label{spectral_3_product_eigen}
\end{align}
The second expression is clearly more efficiently computed. The equivalence of both expressions follows from (\ref{eigen_trans_pdf_3}), i.e., 
${\varphi^+_{-\lambda_n}\!(b) \over \varphi^+_{-\lambda_n}\!(a)} 
= {\varphi^-_{-\lambda_n}\!(b) \over \varphi^-_{-\lambda_n}\!(a)}$.

For $X_b\in \I^{\,-}_b$ ($\I^{\,+}_b$) with endpoint $l$ ($r$) as O-NO natural with cutoff $\Lambda_- \ge 0$ 
($\Lambda_+ \ge 0$), the respective transition PDF has the form (i.e., Spectral Category II):
\begin{eqnarray}
p^\pm_b(t;x,y) \!\!&=& \!\!\sum_{n\ge 1} e^{-\lambda_n t} \,\text{Res}\,G^\pm_b(\lambda=-\lambda_n;x,y)
+ {1 \over \pi}\int_{\Lambda_\pm}^\infty \!e^{-\epsilon t}\, \text{Im}\,G^\pm_b(\epsilon e^{-i\pi};x,y)\,d\epsilon
\nonumber \\
\!&\equiv& \!\m(y)\bigg[\sum_{n\ge 1} e^{-\lambda_n t}\phi^{(b)}_n(x)\phi^{(b)}_n(y) +
\int_{\Lambda_\pm}^\infty e^{-\epsilon t}\Psi_b(x,\epsilon)\Psi_b(y,\epsilon) \,d\Omega^{(b)\,\pm}_{ac}(\epsilon)\bigg].
\label{u_spectral_2_b}
\end{eqnarray}
The sum is over a discrete set of increasing eigenvalues $\{\lambda_n\}_{n\ge 1}$ (if any exist) respectively given by 
(\ref{eigen_trans_pdf_1}) or (\ref{eigen_trans_pdf_2}) with respective product eigenfunctions 
given by (\ref{spectral_1_product_eigen}) or (\ref{spectral_2_product_eigen}). 
The integral in the second expression is over a 
continuous spectrum with real function $\Psi_b(x,\epsilon)$ as a nonzero multiple of the cylinder function 
$\phi(b,x;-\epsilon)$ where ${\mathcal H} \Psi_b(x,\epsilon) = \epsilon \Psi_b(x,\epsilon)$ 
on either respective interval $\I_b^\pm$ with $\Psi_b(b,\epsilon) = 0$. 
The respective absolutely continuous spectral
function on $[\Lambda_\pm,\infty)$ (normalized relative to $\Psi_b(x,\epsilon)$) is 
denoted by $\Omega^{(b)\,\pm}_{ac}(\epsilon)$ and arises by integrating along the branch cut 
discontinuity of the respective Green function $G^\pm_b$ with respective branch point $\lambda=-\Lambda_\pm$, i.e., 
$\text{Im}\,G^\pm_b(\epsilon e^{-i\pi};x,y) = [G^\pm_b(\epsilon e^{-i\pi};x,y) - G^\pm_b(\epsilon e^{i\pi};x,y)]/2i$. 
[Note: throughout we simply take the principal branch cut, although other convenient branches can be chosen.] 
In some cases of Spectral Category II the spectrum may be purely continuous with no summation term. Generally, the spectral expansion in \eqref{u_spectral_2_b} can be cast as a single Stieltjes integral on $[0,\infty)$.

We remark that the transition PDFs in (\ref{u_spectral_2_b}), with assumed nonzero integral component, can be accurately approximated by employing (\ref{u_spectral_3}) in the respective limit of either $a\searrow l$ or $b\nearrow r$. In practice, to compute $p^+_a(t;x,y)$ we can use (\ref{u_spectral_3}), with an appropriately truncated number of terms in the series, and for progressively larger values of $b$ until an acceptable error tolerance is achieved. Similarly, $p^-_b(t;x,y)$ can be approximated using (\ref{u_spectral_3}) for progressively smaller values of $a$ until an acceptable error tolerance is achieved.

%
%
Lastly, consider the regular diffusion $X\in \I$. 
If both endpoints $l$ and $r$ are NONOSC natural, regular, exit-not-entrance or 
entrance-not-exit, then we have Spectral Category I with transition PDF: 
\begin{eqnarray}
p(t;x,y) = \m(y)\sum_{n=1}^\infty e^{-\lambda_n t}\phi_n(x)\phi_n(y) \,\,\,x,y\in \I, t >0.
\label{u_spectral_4}
\end{eqnarray}
The eigenvalues $\{\lambda_n,\,n=1,2,...\}$ 
($\nearrow \infty$ as $n\nearrow\infty$) are all nonnegative simple zeros solving $w_{{-\!\lambda_n}}=0$, with  
$w_{\lambda}$ as the Wronskian factor in (\ref{wronskian}). 
The eigenfunctions are equally given by
\begin{equation}
\phi_n(x) = \pm \sqrt{A_n\over C_n}\varphi^+_{-\!\lambda_n}\!(x)
= \pm { \varphi^-_{-\!\lambda_n}\!(x) \over \sqrt{A_nC_n}}
\label{eigenfunc_regular}
\end{equation}
where $C_n := {d\over d\lambda}w_\lambda\big\vert_{\lambda=-\lambda_n}$ 
and $\varphi^-_{-\!\lambda_n}\!(x) = A_n \varphi^+_{-\!\lambda_n}\!(x)$ for any $x\in\I$ with nonzero constant $A_n$. 
[Note: $\m(y)\phi_n(x)\phi_n(y) = \text{Res}\,G(\lambda=-\lambda_n; x,y)$.]
The product of eigenfunctions in (\ref{u_spectral_4}) is hence equally given by
\begin{align}
\phi_n(x)\phi_n(y) = {A_n\over C_n}\varphi^+_{-\!\lambda_n}\!(x)\varphi^+_{-\!\lambda_n}\!(y) 
=  {1\over A_n C_n}\varphi^-_{-\!\lambda_n}\!(x)\varphi^-_{-\!\lambda_n}\!(y)
= {1\over C_n} \varphi^\pm_{-\!\lambda_n}\!(x)\varphi^\mp_{-\!\lambda_n}\!(y).
\label{spectral_4_product_eigen}
\end{align}

If one endpoint ($l$ or $r$) is NONOSC natural, regular, exit-not-entrance or entrance-not-exit and the other endpoint is O-NO natural with a cutoff $\Lambda \ge 0$ (i.e., Spectral Category II), then    
\begin{eqnarray}
p(t;x,y) \!\!&=& \!\!\sum_{n\ge 1} e^{-\lambda_n t} \,\text{Res}\,G(\lambda=-\lambda_n;x,y)
+ {1 \over \pi}\int_\Lambda^\infty \!e^{-\epsilon t}\, \text{Im}\,G (\epsilon e^{-i\pi};x,y)\,d\epsilon
\nonumber \\
\!&\equiv& \!\m(y)\bigg[\sum_{n\ge 1} e^{-\lambda_n t}\phi_n(x)\phi_n(y) +
\int_\Lambda^\infty e^{-\epsilon t}\Psi(x,\epsilon)\Psi(y,\epsilon) \,d\Omega_{ac}(\epsilon)\bigg]
\label{u_spectral_5}
\end{eqnarray}
with product eigenfunctions in (\ref{spectral_4_product_eigen}). 
For a purely continuous spectrum there is no summation term in (\ref{u_spectral_5}). 
In the second expression, the integral along the branch cut is recast in terms of an integral involving a real solution to the SL equation, ${\mathcal H} \Psi(x,\epsilon) = \epsilon \Psi(x,\epsilon)$, $x\in\I$, 
(e.g., $\Psi(x,\epsilon)$ is a nonzero multiple of $\varphi^+_{-\epsilon}(x)$ if $l$ is NONOSC) and an absolutely continuous spectral function $\Omega_{ac}(\epsilon)$ normalized relative to $\Psi(x,\epsilon)$.

If both endpoints $l$ and $r$ are O-NO natural, then we have Spectral Category III where the spectral expansion may generally involve a combination of different integral terms (from the continuous portions of the spectrum) and a summation term from the discrete spectrum. We simply refer to \cite{Linetsky2004a} 
for the general spectral representation of the transition PDF. We remark also that the transition PDFs for Spectral Category III can be accurately approximated by employing (\ref{u_spectral_3}) in the respective limits $a\searrow l$ and $b\nearrow r$. In practice, we can use (\ref{u_spectral_3}), with a truncated number of terms in the series, for progressively smaller value of $a$ and larger values of $b$ until an acceptable error tolerance is achieved.

\subsection{First Hitting Time Distributions}\label{sect_spectral_FHT}
We recall the definition of the respective {\it first passage time up and down at level $b$},
\begin{equation}\label{FHT-up-b}
{\Tau}^+_b := \inf \,\{ t \ge 0 \,\big|\, X_t \ge b \}
\end{equation}
and
\begin{equation}\label{FHT-down-b}
{\Tau}^-_b := \inf \,\{ t \ge 0 \,\big|\, X_t \le b \}.
\end{equation}
For any regular diffusion, as defined in Section \ref{sect1}, the first passage times are equivalent to the respective first hitting times up or down. Clearly, we have ${\Tau}^+_b = \Tau_b$ for $X_0 \le b$ and ${\Tau}^+_b \equiv 0$ for $X_0 \ge b$. Similarly, ${\Tau}^-_b = \Tau_b$ for $X_0 \ge b$ and ${\Tau}^-_b \equiv 0$ for $X_0 \le b$. In the respective trivial cases, where ${\Tau}^\pm_b \equiv 0$, observe that ${\Tau}^\pm_b \ne {\Tau}_b$.\footnote{Throughout this paper we obviously consider the nontrivial cases. However, it proves convenient to display our formulae in terms of ${\Tau}^+_b$ or ${\Tau}^-_b$, even if it suffices to use $\Tau_b$.} From standard formulae, e.g., see \cite{Karlin},
the probability of the diffusion (starting at $X_0=x$) ever hitting $b\in (l,r)$ in finite time is given by 
\begin{align}\label{FHT-b-up-infinity}
\Phi^{+}_b(x) := \P_x({\Tau}^+_b < \infty) =
\left\{\begin{array}{ll}
1&, l \in E_l^c,\\
\displaystyle{\mathcal{S}(l,x] \over \mathcal{S}(l,b]}&, l \in E_l\,,
   \end{array}\right.
\end{align}
for $x\in (l,b]$, and 
\begin{align}\label{FHT-b-down-infinity}
\Phi^{-}_b(x) := \P_x({\Tau}^-_b < \infty) =
\left\{\begin{array}{ll}
1&,  r \in E_r^c,\\[10pt]
\displaystyle{\mathcal{S}[x,r) \over \mathcal{S}[b,r)}&, r \in E_r\,,
   \end{array}\right.
\end{align}
for $x\in [b,r)$, with scale function $\mathcal{S}[x,y]:= \int_{x}^{y} \s(z)dz$, 
$\mathcal{S}(l,y]:= \lim\limits_{x\to l+}\mathcal{S}[x,y]$, $\mathcal{S}[x,r):= \lim\limits_{y\to r-}\mathcal{S}[x,y]$. 
Throughout, we define the set $E_e := \{\text{$e$ is attracting natural, or exit-not-entrance or regular killing} \}$ 
and $E_e^c = \{\text{$e$ is non-attracting natural, or entrance-not-exit or regular reflecting} \}$ for either boundary point $e\in\{l,r\}$. 
Since $\P_x({\Tau}^\pm_b = \infty) = 1 - \P_x({\Tau}^\pm_b < \infty)$, we have (respectively for $x\le b$ and $x \ge b$)
\begin{align}\label{FHT-b-up-equal_infinity}
\P_x({\Tau}^+_b = \infty) = {\mathcal{S}[x,b] \over \mathcal{S}(l,b]}\cdot \ind_{E_l}\,\,\,\,\text{and} \,\,\,\,
\P_x({\Tau}^-_b = \infty) = {\mathcal{S}[b,x] \over \mathcal{S}[b,r)}\cdot \ind_{E_r}
\end{align}
where $\ind_{E_e} \equiv 1$ if $e \in E_e$ and $\ind_{E_e} \equiv 0$ if $e \in E^c_e$.

The cumulative distribution function (CDF) of the respective first hitting time (up or down) is hence expressible in terms of the  respective tail probability,
\begin{equation} \label{X_full_cdf}
\P_{x}(\Tau^{\pm}_{b} \le t) = \Phi^{\pm}_b(x) - \P_{x}(t < \Tau^{\pm}_{b} < \infty),
\end{equation}
for all $t \in [0,\infty)$. Hence,
\begin{equation} \label{FHT_cdf_complement}
\P_{x}(\Tau^{\pm}_{b} > t) = 1 - \P_{x}(\Tau^{\pm}_{b} \le t) = 
\P_x({\Tau}^\pm_b = \infty) + \P_{x}(t < \Tau^{\pm}_{b} < \infty).
\end{equation}
Note that \eqref{FHT_cdf_complement} gives the tail probability including infinite time. 
The following Lemma links the tail probabilities in \eqref{X_full_cdf}, and the first hitting time PDFs, to the corresponding transition PDFs in \eqref{joint_transPDF_kill_b}.  
\begin{lemma} \label{Lemma_hitting_time}
The tail probabilities in (\ref{X_full_cdf}) are given by 
\begin{equation} \label{X_hit_lemma_1}
\P_{x}(t < \Tau^{\pm}_{b} < \infty) = \int_{\I_b^{\mp}} \Phi^{\pm}_b(y) \,p_b^\mp(t;x,y)\,d y
\end{equation}
where $p_b^\mp(t;x,y)$ are the transition PDFs on the respective intervals $\I_b^{\mp}$ with imposed killing at upper (lower) level $b\in (l,r)$. The respective first hitting time PDFs $f^\pm(t;x,b) := {\partial\over \partial t}\P_{x}({\Tau}^\pm_b \le t)$, 
$t\in (0,\infty)$, are equivalently given by the following left (right) limit derivatives at $b$:
\begin{equation}\label{fhit_up_down_pdf}
f^\pm(t;x,b) = \mp \displaystyle\frac{1}{\s(b)} \frac{\partial}{\partial y}
\left( \frac{p_b^\mp(t;x,y)}{\m(y)}\right)\bigg|_{y=b\mp} 
= \mp \frac{1}{\m(x)}\displaystyle\frac{1}{\s(b)} \frac{\partial}{\partial y}
p_b^\mp(t;y,x)\bigg|_{y=b\mp}.
\end{equation}
\end{lemma}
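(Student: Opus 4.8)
The plan is to prove the two displayed identities in turn, treating the ``$+$'' (first passage up) case in detail; the ``$-$'' case follows by the mirror-image argument. Throughout I write $\partial_y$ for the spatial derivative and set $\hat p(t;x,y):=p_b^\mp(t;x,y)/\m(y)$ for the kernel relative to the speed measure. This kernel is symmetric, $\hat p(t;x,y)=\hat p(t;y,x)$ (evident from its spectral form, e.g.\ (\ref{u_spectral_1})), and since $\mathcal{G}$ is self-adjoint with respect to $\m(y)\,dy$ it satisfies the forward equation in the self-adjoint form $\partial_t\hat p=\mathcal{G}_y\hat p$, equivalently $\partial_t p_b^\mp=\m\,\mathcal{G}_y\hat p$.

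First I would establish (\ref{X_hit_lemma_1}). Fix $x<b$, so $\Tau^+_b=\Tau_b$. On $\{\Tau^+_b>t\}$ the process has not yet reached $b$, i.e.\ $M_t<b$, and by (\ref{joint_transPDF_kill_b}) its time-$t$ law on this event is $\P_x(\Tau^+_b>t,\,X_t\in dy)=p_b^-(t;x,y)\,dy$ on $\I_b^-$. Conditioning on $X_t=y$ and invoking the Markov property together with time homogeneity gives conditional probability $\P_y(\Tau^+_b<\infty)=\Phi^+_b(y)$ of reaching $b$ afterwards, so integrating over $y$ yields
\[
\P_x(t<\Tau^+_b<\infty)=\int_{\I_b^-}\Phi^+_b(y)\,p_b^-(t;x,y)\,dy,
\]
which is (\ref{X_hit_lemma_1}); the ``$-$'' case is identical with $\I_b^+$ and $p_b^+$.

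For the density I would start from (\ref{X_full_cdf}), giving $f^+(t;x,b)=\partial_t\P_x(\Tau^+_b\le t)=-\partial_t\P_x(t<\Tau^+_b<\infty)$. Differentiating the integral just found under the integral sign and using $\partial_t p_b^-=\m\,\mathcal{G}_y\hat p$ gives $f^+(t;x,b)=-\int_l^b\Phi^+_b\,\m\,\mathcal{G}_y\hat p\,dy$. The key observation is that $\Phi^+_b$ is $\mathcal{G}$-harmonic on $\I_b^-$: in the case $l\in E_l$ it is the normalized scale function $\mathcal{S}(l,y]/\mathcal{S}(l,b]$ and in the case $l\in E_l^c$ it is the constant $1$, and $\mathcal{G}\mathcal{S}=0=\mathcal{G}1$. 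Hence I would invoke the Lagrange (Green's) identity for the Sturm--Liouville operator,
\[
\int_l^b\m\big(u\,\mathcal{G}v-v\,\mathcal{G}u\big)\,dy=\Big[\tfrac{1}{\s}\big(u\,\partial_y v-v\,\partial_y u\big)\Big]_{y=l+}^{y=b-},
\]
with $u=\Phi^+_b$, $v=\hat p(t;x,\cdot)$ and $\mathcal{G}u\equiv0$, reducing the integral to boundary terms. At the killing level the zero boundary condition $p_b^-(t;x,b)=0$ and $\Phi^+_b(b)=1$ leave the single term $\tfrac{1}{\s(b)}\partial_y\hat p(t;x,b-)$ for the integral, whence $f^+(t;x,b)=-\tfrac{1}{\s(b)}\partial_y\hat p(t;x,b-)$, the first expression in (\ref{fhit_up_down_pdf}). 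Finally the symmetry $p_b^-(t;x,y)/\m(y)=p_b^-(t;y,x)/\m(x)$ converts $\partial_y\big(p_b^-(t;x,y)/\m(y)\big)$ into $\m(x)^{-1}\partial_y p_b^-(t;y,x)$, giving the second expression.

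The main obstacle is showing the boundary term at the non-killing endpoint $l$ vanishes, since this is exactly where the Feller classification enters. I would argue that $\Phi^+_b$ and $\hat p(t;x,\cdot)$ obey the \emph{same} self-adjoint boundary condition at $l$, so the bilinear concomitant $\tfrac{1}{\s}\big(\Phi^+_b\,\partial_y\hat p-\hat p\,\partial_y\Phi^+_b\big)$ vanishes as $y\to l+$: when $l\in E_l$ (absorbing-type: attracting natural, exit-not-entrance or regular killing) both $\Phi^+_b(l+)=0$ and $\hat p(t;x,l+)=0$, whereas when $l\in E_l^c$ (reflecting-type: non-attracting natural, entrance-not-exit or regular reflecting) both $\tfrac{1}{\s}\partial_y\Phi^+_b(l+)=0$ and $\tfrac{1}{\s}\partial_y\hat p(t;x,l+)=0$; in either case one factor in each product is annihilated. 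Making these limits rigorous---the finiteness of the surviving factors and the legitimacy of differentiating under the integral when $l$ is a natural boundary at $\pm\infty$---is the delicate part, and it is precisely here that I would appeal to the probabilistic boundary behaviour of each Feller type together with dominated convergence.
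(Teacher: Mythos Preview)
Your proposal is correct and follows essentially the same route as the paper: the tail-probability formula via Markov conditioning on $X_t$, then differentiation in $t$ combined with the forward Kolmogorov PDE and integration by parts (your Lagrange identity with harmonic $\Phi^+_b$ is exactly the paper's single integration by parts using $\tfrac{1}{\s}\partial_y\Phi^+_b=\chi(l,b]$ constant), followed by the same case-by-case boundary analysis at $l$ according to the Feller type, and finally the speed-measure symmetry $p_b^\mp(t;x,y)/\m(y)=p_b^\mp(t;y,x)/\m(x)$ for the second equality.
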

\begin{proof} See \cite{CM21} which is reproduced in \ref{sect_Lemma1_proof}. 
\end{proof}
\noindent Remark: In case $l$ is a conservative boundary, i.e., if $l \in E^c_l$ or if $l$ is attracting natural, then it readily follows that \eqref{X_full_cdf} gives 
$\P_{x}(\Tau^{+}_{b} \le t) = 1 - \int_l^b p_{b}^-(t;x,y)dy = 1 - \P_{x}(M_t < b)$, i.e., $\P_{x}(\Tau^{+}_{b} > t)  \equiv \P_{x}(M_t < b) = \int_l^b p_{b}^-(t;x,y)dy$. Similarly, if $r$ is conservative then \eqref{X_full_cdf} recovers  
$\P_{x}(\Tau^{-}_{b} \le t) = 1 - \int_b^r p_{b}^+(t;x,y)dy \equiv 1 - \P_{x}(m_t > b)$, 
i.e., $\P_{x}(\Tau^{-}_{b} > t) = \P_{x}(m_t > b) = \int_b^r p_{b}^+(t;x,y)dy$.

%
%
By making use of the spectral representations of $p_b^\mp$, Lemma~\ref{Lemma_hitting_time} 
gives rise to spectral representations of the densities, tail probabilities and hence CDFs of the respective first hitting times. 
Note that only Spectral Category I or II are possible. 
In particular, Proposition \ref{prop_spec_first_hit_1} corresponds to Spectral Category I, while Proposition \ref{prop_spec_first_hit_2} corresponds to Spectral Category II.
In what follows it proves convenient to define
\begin{align}\label{FHT_eigenfunctions_1}
\psi_n^+(x;b) := {\varphi^+_\lambda (x) \over {\partial \over \partial \lambda}\,
\varphi^+_{\lambda}(b)}\bigg\vert_{\lambda=-\lambda_{n,b}^-}
\equiv - {\varphi^+_{-\lambda_{n,b}^-} (x) \over {\partial \over \partial \lambda}\,
\varphi^+_{-\lambda}(b)\vert_{\lambda=\lambda_{n,b}^-}}\,,
\\
\psi_n^-(x;b) := {\varphi^-_\lambda (x) \over {\partial \over \partial \lambda}\,
\varphi^-_{\lambda}(b)}\bigg\vert_{\lambda=-\lambda_{n,b}^+}
\equiv - {\varphi^-_{-\lambda_{n,b}^+} \over {\partial \over \partial \lambda}\,
\varphi^-_{-\lambda}(b)\vert_{\lambda=\lambda_{n,b}^+}}\,,
\label{FHT_eigenfunctions_2}
\end{align}
where $\lambda_{n,b}^-$ and $\lambda_{n,b}^+$ denote the eigenvalues solving (\ref{eigen_trans_pdf_1}) and (\ref{eigen_trans_pdf_2}), respectively.

\begin{pro} \label{prop_spec_first_hit_1}
If the left boundary $l$ is NONOSC (i.e., exit-not-entrance, entrance-not-exit, regular or NONOSC natural), then the first hitting time up at $b\in(l,r)$, where $X_0=x\in \I_b^-$, has probability density given by the spectral expansion, for all $t\in (0,\infty)$:
\begin{equation}
f^{+}(t;x,b) = \sum_{n=1}^\infty e^{-\lambda_{n,b}^- t}\psi_n^+(x;b)
\label{FHT_prop1_1}
\end{equation}
and
\begin{equation}
\P_{x}(t < \Tau^{+}_{b} < \infty) = 
\sum_{n=1}^\infty {e^{- \lambda_{n,b}^- t} \over  \lambda_{n,b}^-}\psi_n^+(x;b)\,.
\label{FHT_prop1_2}
\end{equation}
If the right boundary $r$ is NONOSC, then, the first hitting time down at $b\in(l,r)$, where $X_0=x\in \I_b^+$, has probability density given by the spectral expansion, for all $t\in (0,\infty)$:
\begin{equation}
f^{-}(t;x,b) = \sum_{n=1}^\infty e^{-\lambda_{n,b}^+ t}\psi_n^-(x;b)
\label{FHT_prop2_1}
\end{equation}
and
\begin{equation}
\P_{x}(t < \Tau^{-}_{b} < \infty) = 
\sum_{n=1}^\infty {e^{- \lambda_{n,b}^+ t} \over  \lambda_{n,b}^+}\psi_n^-(x;b)\,.
\label{FHT_prop2_2}
\end{equation}
\end{pro}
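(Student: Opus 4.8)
The plan is to prove the two assertions for the first passage time up at $b$; the statements for the down case then follow by a verbatim argument with the roles of $\varphi^+$ and $\varphi^-$, of $\I_b^-$ and $\I_b^+$, and of the eigenvalue conditions \eqref{eigen_trans_pdf_1} and \eqref{eigen_trans_pdf_2} interchanged. Since $l$ is NONOSC and $b$ is a regular killing boundary, the diffusion $X_b$ on $\I_b^-$ is in Spectral Category~I, so I may insert the purely discrete expansion \eqref{u_spectral_1} (writing $\lambda_n\equiv\lambda_{n,b}^-$) with product eigenfunctions \eqref{spectral_1_product_eigen} into the representation of $f^+$ furnished by Lemma~\ref{Lemma_hitting_time}.

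Concretely, I would start from the first equality in \eqref{fhit_up_down_pdf}, namely $f^+(t;x,b) = -\frac{1}{\s(b)}\frac{\partial}{\partial y}\big(p_b^-(t;x,y)/\m(y)\big)\big|_{y=b-}$, and substitute $p_b^-(t;x,y)/\m(y) = \sum_{n\ge 1} e^{-\lambda_n t}\phi_n(x)\phi_n(y)$ from \eqref{u_spectral_1}. Differentiating termwise in $y$ and evaluating at $y=b$ brings down the spatial derivative $\varphi^{+\,\prime}_{-\lambda_n}(b)$ from the $y$-factor $\varphi^+_{-\lambda_n}(y)$ in \eqref{spectral_1_product_eigen}. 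The essential simplification then comes from the Wronskian identity \eqref{wronskian} evaluated at $x=b$ and $\lambda=-\lambda_n$: since the eigenvalue condition \eqref{eigen_trans_pdf_1} forces $\varphi^+_{-\lambda_n}(b)=0$, the Wronskian collapses to $\varphi^-_{-\lambda_n}(b)\,\varphi^{+\,\prime}_{-\lambda_n}(b) = w_{-\lambda_n}\,\s(b)$. Substituting this relation cancels the prefactor $\varphi^-_{-\lambda_n}(b)/w_{-\lambda_n}$ against $\varphi^{+\,\prime}_{-\lambda_n}(b)$ and against the $\s(b)$ in the denominator, leaving exactly $\sum_{n\ge 1} e^{-\lambda_n t}\psi_n^+(x;b)$ with $\psi_n^+$ as defined in \eqref{FHT_eigenfunctions_1}, which is \eqref{FHT_prop1_1}.

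For the tail probability \eqref{FHT_prop1_2} I would integrate the density in time. Because $f^+(\cdot;x,b)=\partial_t\P_x(\Tau^+_b\le t)$, relation \eqref{X_full_cdf} gives $\P_x(t<\Tau^+_b<\infty) = \int_t^\infty f^+(s;x,b)\,ds$, so integrating \eqref{FHT_prop1_1} termwise over $(t,\infty)$ produces the factor $\int_t^\infty e^{-\lambda_n s}\,ds = e^{-\lambda_n t}/\lambda_n$ and hence \eqref{FHT_prop1_2}. This step uses that every $\lambda_n=\lambda_{n,b}^->0$, which holds because imposed killing at $b$ renders the killed semigroup $p_b^-$ transient (its total mass decays to zero as $t\to\infty$), so the bottom of the spectrum is strictly positive. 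Alternatively, the same series can be obtained directly from the integral representation \eqref{X_hit_lemma_1} by inserting \eqref{u_spectral_1} and integrating $\Phi^+_b(y)$ against the eigenfunctions, but the time integration is more transparent.

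The main obstacle is analytic rather than algebraic: the manipulations above interchange the infinite sum with a boundary derivative in $y$ (in step two) and with the time integral (in step three). I expect these to be controlled by the exponential damping $e^{-\lambda_n t}$ for fixed $t>0$ together with the growth $\lambda_n\nearrow\infty$ and standard Sturm--Liouville bounds on the eigenfunctions and their boundary derivatives, which render the differentiated and integrated series locally uniformly convergent on $(0,\infty)$; the one-sided evaluation at $y=b-$ is legitimate since $p_b^-$ and its $y$-derivative extend continuously up to the killing level. The down case is then identical, using \eqref{u_spectral_2}, \eqref{spectral_2_product_eigen}, \eqref{eigen_trans_pdf_2} and the definition \eqref{FHT_eigenfunctions_2}, yielding \eqref{FHT_prop2_1} and \eqref{FHT_prop2_2}.
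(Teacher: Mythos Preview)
Your proposal is correct and follows essentially the same approach as the paper: substitute the discrete spectral expansion \eqref{u_spectral_1}--\eqref{spectral_1_product_eigen} into \eqref{fhit_up_down_pdf}, differentiate termwise, use the eigenvalue condition $\varphi^+_{-\lambda_n}(b)=0$ to reduce the Wronskian \eqref{wronskian} so that the multiplicative factor collapses to unity, and then obtain \eqref{FHT_prop1_2} by termwise time integration over $(t,\infty)$. The paper also mentions your alternative route via \eqref{X_hit_lemma_1} (and a third via Laplace inversion of $\varphi_\lambda^+(x)/\varphi_\lambda^+(b)$), but your primary argument coincides with its main proof.
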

\begin{proof} Alternative proofs, taken from \cite{CM21}, are reproduced in \ref{sect_FHT_Prop1_proof}.   
\end{proof}
\begin{pro} \label{prop_spec_first_hit_2}
If the left boundary $l$ is O-NO natural with spectral cutoff $\Lambda_- \ge 0$, then the first hitting time up at $b\in(l,r)$, where $X_0=x\in \I_b^-$, has probability density given by the equivalent spectral expansions, for all $t\in (0,\infty)$:
\begin{align}
f^{+}(t;x,b) &= \sum_{n\ge 1} e^{-\lambda_{n,b}^- t}\psi_n^+(x;b) 
+ {1 \over \pi}\int_{\Lambda_-}^\infty \!e^{-\epsilon t}\, \textup{Im}
\bigg\{{\varphi^+_\lambda (x) \over \varphi^+_\lambda (b)}\bigg\vert_{\lambda = \epsilon e^{-i\pi}}\bigg\}\,d\epsilon
\nonumber \\
\!&= \sum_{n\ge 1} e^{-\lambda_{n,b}^- t}\psi_n^+(x;b) -  
\int_{\Lambda_-}^\infty e^{-\epsilon t}{\Psi_b^\prime(b,\epsilon)\over \s(b)}\Psi_b(x,\epsilon) \,d\Omega^{(b)\,-}_{ac}(\epsilon)
\label{FHT_prop_ONO1_1}
\end{align}
and
\begin{align}
\P_{x}(t < \Tau^{+}_{b} < \infty) &= 
\sum_{n\ge 1} {e^{- \lambda_{n,b}^- t} \over  \lambda_{n,b}^-}\psi_n^+(x;b) 
+ {1 \over \pi} \int_{\Lambda_-}^\infty \!{e^{-\epsilon t}\over \epsilon}\, \textup{Im}
\bigg\{{\varphi^+_\lambda (x) \over \varphi^+_\lambda (b)}\bigg\vert_{\lambda = \epsilon e^{-i\pi}}\bigg\}\,d\epsilon
\nonumber \\
&=\sum_{n\ge 1} {e^{- \lambda_{n,b}^- t} \over  \lambda_{n,b}^-}\psi_n^+(x;b) 
- \int_{\Lambda_-}^\infty {e^{-\epsilon t}\over \epsilon}{\Psi_b^\prime(b,\epsilon)\over \s(b)}\Psi_b(x,\epsilon) \,d\Omega^{(b)\,-}_{ac}(\epsilon).
\label{FHT_prop_ONO1_2}
\end{align}
If the right boundary $r$ is O-NO natural with spectral cutoff $\Lambda_+ \ge 0$, then the first hitting time down at $b\in(l,r)$, where $X_0=x\in \I_b^+$, has probability density given by the equivalent spectral expansions, for all $t\in (0,\infty)$:
\begin{align}
f^{-}(t;x,b) &= \sum_{n\ge 1} e^{-\lambda_{n,b}^+ t}\psi_n^-(x;b) 
+ {1 \over \pi}\int_{\Lambda_+}^\infty \!e^{-\epsilon t}\, \textup{Im}
\bigg\{{\varphi^-_\lambda (x) \over \varphi^-_\lambda (b)}\bigg\vert_{\lambda = \epsilon e^{-i\pi}}\bigg\}\,d\epsilon
\nonumber \\
\!&= \sum_{n\ge 1} e^{-\lambda_{n,b}^+ t}\psi_n^-(x;b) +   
\int_{\Lambda_+}^\infty e^{-\epsilon t}{\Psi_b^\prime(b,\epsilon)\over \s(b)}\Psi_b(x,\epsilon) \,d\Omega^{(b)\,+}_{ac}(\epsilon)
\label{FHT_prop_ONO2_1}
\end{align}
and
\begin{align}
\P_{x}(t < \Tau^{-}_{b} < \infty) &= 
\sum_{n\ge 1} {e^{- \lambda_{n,b}^+ t} \over  \lambda_{n,b}^+}\psi_n^-(x;b) 
+ {1 \over \pi} \int_{\Lambda_+}^\infty \!{e^{-\epsilon t}\over \epsilon}\, \textup{Im}
\bigg\{{\varphi^-_\lambda (x) \over \varphi^-_\lambda (b)}\bigg\vert_{\lambda = \epsilon e^{-i\pi}}\bigg\}\,d\epsilon 
\nonumber \\
&= \sum_{n\ge 1} {e^{- \lambda_{n,b}^+ t} \over  \lambda_{n,b}^+}\psi_n^-(x;b) 
+ \int_{\Lambda_+}^\infty {e^{-\epsilon t}\over \epsilon}{\Psi_b^\prime(b,\epsilon)\over \s(b)}\Psi_b(x,\epsilon) \,d\Omega^{(b)\,+}_{ac}(\epsilon)\,.
\label{FHT_prop_ONO2_2}
\end{align}
\end{pro}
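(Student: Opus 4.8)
The plan is to follow the same route as Proposition~\ref{prop_spec_first_hit_1}, now feeding the Spectral Category~II expansion \eqref{u_spectral_2_b} of $p_b^\mp$ into Lemma~\ref{Lemma_hitting_time}. I would treat the first hitting time \emph{up} in detail; the \emph{down} case is entirely symmetric, the sign change in \eqref{fhit_up_down_pdf} (left vs.\ right limit at $b$) accounting for the opposite sign of the continuous integral in \eqref{FHT_prop_ONO2_1}--\eqref{FHT_prop_ONO2_2}.

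First I would substitute \eqref{u_spectral_2_b} into the density formula \eqref{fhit_up_down_pdf}, namely $f^+(t;x,b) = -\s(b)^{-1}\,\partial_y\big(p_b^-(t;x,y)/\m(y)\big)\big|_{y=b-}$, and differentiate term-by-term. The discrete sum is handled exactly as in Proposition~\ref{prop_spec_first_hit_1}: using the product of eigenfunctions \eqref{spectral_1_product_eigen}, the eigenvalue condition $\varphi^+_{-\lambda_n}(b)=0$ from \eqref{eigen_trans_pdf_1}, and the Wronskian relation \eqref{wronskian} to evaluate $\varphi^{+\,\prime}_{-\lambda_n}(b) = w_{-\lambda_n}\s(b)/\varphi^-_{-\lambda_n}(b)$, each term collapses to $e^{-\lambda_{n,b}^- t}\psi_n^+(x;b)$ with $\psi_n^+$ as in \eqref{FHT_eigenfunctions_1}. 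For the continuous part, differentiating $\Psi_b(y,\epsilon)$ under the integral and using the boundary condition $\Psi_b(b,\epsilon)=0$ yields directly $-\int_{\Lambda_-}^\infty e^{-\epsilon t}\,\s(b)^{-1}\Psi_b'(b,\epsilon)\Psi_b(x,\epsilon)\,d\Omega^{(b)\,-}_{ac}(\epsilon)$, which is the second expression in \eqref{FHT_prop_ONO1_1}.

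To obtain the equivalent first (branch-cut) expression I would observe that the Laplace transform of $f^+$ is $\E_x[e^{-\lambda \Tau^+_b}] = \varphi^+_\lambda(x)/\varphi^+_\lambda(b)$, which follows by Laplace-transforming \eqref{fhit_up_down_pdf} against the Green function \eqref{greenfunc_up}: the factor $\phi(x\vee y,b;\lambda)$ and its $y$-derivative at $b$, combined with the Wronskian, reduce the ratio to $\varphi^+_\lambda(x)/\varphi^+_\lambda(b)$. Inverting by closing the Bromwich contour in Spectral Category~II, the simple poles at $\lambda=-\lambda_{n,b}^-$ contribute residues $e^{-\lambda_{n,b}^- t}\psi_n^+(x;b)$, while integration around the cut emanating from $\lambda=-\Lambda_-$ produces $\pi^{-1}\int_{\Lambda_-}^\infty e^{-\epsilon t}\,\mathrm{Im}\{\varphi^+_\lambda(x)/\varphi^+_\lambda(b)|_{\lambda=\epsilon e^{-i\pi}}\}\,d\epsilon$. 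The two continuous representations coincide because $\Omega^{(b)\,-}_{ac}$ is, by its construction in \eqref{u_spectral_2_b}, precisely the spectral measure extracted from $\mathrm{Im}\,G_b^-$ along the same cut. The tail-probability identities \eqref{FHT_prop_ONO1_2} then follow by integrating the density over $(t,\infty)$, which multiplies each discrete term by $1/\lambda_{n,b}^-$ and each continuous integrand by $1/\epsilon$; equivalently one substitutes \eqref{u_spectral_2_b} directly into \eqref{X_hit_lemma_1}.

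The main obstacle I anticipate is analytic rather than algebraic: justifying the interchange of $\partial_y$ (and of the limit $y\to b-$) with the continuous-spectrum integral, together with the termwise closing of the Bromwich contour. This requires controlling $\Psi_b(y,\epsilon)$ and its $y$-derivative uniformly near the killing level $b$, and verifying both that the integral over the closing arcs vanishes and that the branch-cut contribution converges. These estimates rest on the O-NO asymptotics of $\varphi^+_\lambda$ at the natural boundary $l$ and on the decay properties of $G_b^-$ furnished by the Spectral Category~II theory of \cite{Linetsky2004a}.
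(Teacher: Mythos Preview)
Your proposal is correct and follows essentially the same route as the paper: feed the Category~II expansion \eqref{u_spectral_2_b} into \eqref{fhit_up_down_pdf}, handle the discrete sum exactly as in Proposition~\ref{prop_spec_first_hit_1}, differentiate $\Psi_b(y,\epsilon)$ under the integral for the second representation, and obtain the tail probabilities by termwise time-integration. The only minor difference is in how you reach the first (branch-cut) representation: you invoke a separate Bromwich inversion of $\varphi^+_\lambda(x)/\varphi^+_\lambda(b)$, whereas the paper obtains it more directly by applying the same derivative to the \emph{first} line of \eqref{u_spectral_2_b} (the $\textup{Im}\,G_b^-$ form) and using the Green-function identity \eqref{Laplace_FHT_Green_derivative} with $\lambda=\epsilon e^{-i\pi}$, so that both representations drop out of the two already-equivalent forms of $p_b^-$ without a second contour argument.
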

\begin{proof} Alternative proofs, taken from \cite{CM21}, are reproduced in \ref{sect_FHT_Prop2_proof}.   
\end{proof}
\noindent We remark that the expressions in the first line of \eqref{FHT_prop_ONO1_1}--\eqref{FHT_prop_ONO2_2} lead more directly to explicit formulae based on knowledge of the fundamental functions $\varphi^\pm_\lambda$. Also, the summations are zero for a purely continuous spectrum.

%
%
We now set $X_0 = x \in (a,b)$ on any subinterval $(a,b)\subset (l,r)$ and consider the first hitting time up at $b$ for the diffusion $X_{a}$ killed at $a$, i.e., the first time for the $X$-diffusion on $\I$ to hit $b$ before $a$:
\begin{equation}\label{X-fptime-up-b-killa}
{\Tau}^+_b\!(a) := \inf \,\{ t \ge 0 \,:\, X_t = b\,\,,\, m_t > a \} = \inf \,\{ t \ge 0 \,:\, X_{a,\,t} = b\} ,
\end{equation}
Similarly, the first hitting time down at $a$ for the diffusion $X_{b}$ killed at $b$, i.e., the first time for the $X$-diffusion on $\I$ to hit $a$ before $b$:
\begin{equation}\label{X-fptime-down-a-killb}
{\Tau}^-_a\!(b) := \inf \,\{ t \ge 0 \,:\, X_t = a\,\,,\,M_t < b\} = \inf \,\{ t \ge 0 \,:\, X_{b,\,t} = a\}.
\end{equation}
We note that $\P_{x}({\Tau}^+_b\!(a) \le t) = \P_{x}({\Tau}_b \le t, {\Tau}_b < {\Tau}_a)$ and 
$\P_{x}({\Tau}^-_a\!(b) \le t) = \P_{x}({\Tau}_a \le t, {\Tau}_a < {\Tau}_b)$. 
For $t \in (0,\infty)$, we define the respective PDFs of ${\Tau}^+_b\!(a)$ and ${\Tau}^-_a\!(b)$:
\begin{equation}\label{defn:FHT-PDF-b-killa_a-killb}
f^+(t;x,b\vert a) := {\partial\over \partial t}\P_{x}({\Tau}^+_b\!(a) \le t)\,\,\,\text{and}\,\,\,
f^-(t;x,a\vert b) := {\partial\over \partial t}\P_{x}({\Tau}^-_a\!(b) \le t)\,.
\end{equation}
From standard theory:
\begin{align}\label{FHT-b-killa-infinity}
\Phi^{+}_b(x\vert a) &:= \P_x\left({\Tau}^+_b\!(a) < \infty \right) = \P_x\left({\Tau}_b < \Tau_a \right) = {\mathcal{S}[a,x] \over \mathcal{S}[a,b]}\,,
\\
\Phi^{-}_a(x\vert b) &:=\P_x\left({\Tau}^-_a\!(b) < \infty \right) = \P_x\left({\Tau}_a < \Tau_b \right) = {\mathcal{S}[x,b] \over \mathcal{S}[a,b]}\,.
\label{FHT-a-killb-infinity}
\end{align}
As required, these sum to $\P_x\left({\Tau}_{(a,b)} < \infty \right)=1$, with 
$\P_x({\Tau}^+_b\!(a) = \infty) = \P_x\left({\Tau}_a < \Tau_b \right) 
= {\mathcal{S}[x,b] \over \mathcal{S}[a,b]}$ and 
$\P_x({\Tau}^-_a\!(b) = \infty) = \P_x\left({\Tau}_b < \Tau_a \right) 
= {\mathcal{S}[a,x] \over \mathcal{S}[a,b]}$.
Analogous to \eqref{X_full_cdf}--\eqref{FHT_cdf_complement}, we have 
\begin{align} \label{X_full_cdf_tau_ab_1}
\P_x({\Tau}^+_b\!(a) \le t) = {\mathcal{S}[a,x] \over \mathcal{S}[a,b]} - \P_{x}(t < {\Tau}^+_b\!(a) < \infty),
\\
\P_x({\Tau}^-_a\!(b) \le t) = {\mathcal{S}[x,b] \over \mathcal{S}[a,b]} - \P_{x}(t < {\Tau}^-_a\!(b) < \infty),
\label{X_full_cdf_tau_ab_2}
\end{align}
and 
\begin{align} \label{FHT_cdf_complement_tau_ab_1}
\P_x({\Tau}^+_b\!(a) > t) = {\mathcal{S}[x,b] \over \mathcal{S}[a,b]} + \P_{x}(t < {\Tau}^+_b\!(a) < \infty),
\\
\P_x({\Tau}^-_a\!(b) > t) = {\mathcal{S}[a,x] \over \mathcal{S}[a,b]} + \P_{x}(t < {\Tau}^-_a\!(b) < \infty).
\label{FHT_cdf_complement_tau_ab_2}
\end{align}

Analogous to Lemma \ref{Lemma_hitting_time}, the following links the above tail probabilities and first hitting time densities with the transition PDF in \eqref{joint_transPDF_kill_a_b}.
\begin{lemma} \label{Lemma_hitting_time_ab}
The above tail probabilities are given by
\begin{eqnarray} \label{hit_ab_lemma_1}
\P_{x}(t < {\Tau}^+_b\!(a) < \infty) = \int_{a}^{b}  \Phi^{+}_b(y\vert a) \,p_{(a,b)}(t;x,y) \, dy\,,
\\
\P_{x}(t < {\Tau}^-_a\!(b) < \infty) = \int_{a}^{b}  \Phi^{-}_a(y\vert b)  \,p_{(a,b)}(t;x,y) \, dy\,,
\label{hit_ab_lemma_2}
\end{eqnarray}
for all $t>0$, where $p_{(a,b)}$ is the transition PDF of the diffusion on $(a,b)$ with imposed killing  
at both $a,b\in (l,r)$, $ a < b$. 
The first hitting time densities in \eqref{defn:FHT-PDF-b-killa_a-killb} 
are equivalently given by the left (right) limits:
\begin{align}\label{hit_ab_lemma_3}
f^+(t;x,b\vert a) &= - \displaystyle\frac{1}{\s(b)} \frac{\partial}{\partial   y}
\left(
  \frac{p_{(a,b)}(t;x,y)}{\m(y)}\right)\bigg|_{y=b-} 
= - \frac{1}{\m(x)}\frac{1}{\s(b)} \frac{\partial}{\partial   y}p_{(a,b)}(t;y,x)\bigg|_{y=b-}  \,,
\\
f^{-}(t;x,a\vert b) &= \displaystyle\frac{1}{\s(a)} \frac{\partial}{\partial   y}
\left(
  \frac{p_{(a,b)}(t;x,y)}{\m(y)}\right)\bigg|_{y=a+}
= \frac{1}{\m(x)}\frac{1}{\s(a)} \frac{\partial}{\partial   y}p_{(a,b)}(t;y,x)\bigg|_{y=a+}\,.
\label{hit_ab_lemma_4}
\end{align}
\end{lemma}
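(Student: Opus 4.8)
The plan is to establish the two tail-probability identities \eqref{hit_ab_lemma_1}--\eqref{hit_ab_lemma_2} first and then deduce the density formulae \eqref{hit_ab_lemma_3}--\eqref{hit_ab_lemma_4} by differentiating in $t$, paralleling the proof of Lemma~\ref{Lemma_hitting_time}. For the tail probabilities I would condition on the time-$t$ state of the killed diffusion $X_{(a,b)}$. On the event $\{{\Tau}^+_b\!(a) > t\}$ the path has hit neither $b$ nor $a$ up to time $t$, so by \eqref{joint_transPDF_kill_a_b} the sub-probability of being at $y\in(a,b)$ at time $t$ while surviving is $p_{(a,b)}(t;x,y)\,dy$. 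Applying the Markov property and time-homogeneity at time $t$, the residual event $\{{\Tau}^+_b\!(a)<\infty\}$ is the event that a fresh copy of the diffusion started at $y$ hits $b$ before $a$, which by \eqref{FHT-b-killa-infinity} has probability $\Phi^{+}_b(y\vert a)=\P_y({\Tau}_b<{\Tau}_a)$. Integrating over $y$ gives \eqref{hit_ab_lemma_1}, and \eqref{hit_ab_lemma_2} follows identically with $\Phi^{-}_a(y\vert b)$ from \eqref{FHT-a-killb-infinity}.

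For the densities I would differentiate the decomposition \eqref{X_full_cdf_tau_ab_1}: since its leading term is independent of $t$, definition \eqref{defn:FHT-PDF-b-killa_a-killb} gives $f^+(t;x,b\vert a)=-\partial_t\P_x(t<{\Tau}^+_b\!(a)<\infty)$, and pushing $\partial_t$ through \eqref{hit_ab_lemma_1} yields $f^+(t;x,b\vert a)=-\int_a^b \Phi^{+}_b(y\vert a)\,\partial_t p_{(a,b)}(t;x,y)\,dy$. Writing $\hat p(t;x,y):=p_{(a,b)}(t;x,y)/\m(y)$ for the density relative to the speed measure, this density is symmetric and satisfies the Kolmogorov equations in the self-adjoint form of \eqref{eq:X_Gen}, namely $\partial_t\hat p(t;x,y)=\mathcal{G}_y\hat p(t;x,y)$, so that $\partial_t p_{(a,b)}=\m(y)\,\mathcal{G}_y\hat p=\partial_y\!\big(\s(y)^{-1}\partial_y\hat p\big)$. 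The key point is that $\Phi^{+}_b(\cdot\vert a)=\mathcal{S}[a,\cdot]/\mathcal{S}[a,b]$ is $\mathcal{G}$-harmonic, $\mathcal{G}\Phi^{+}_b(\cdot\vert a)=0$, with boundary values $\Phi^{+}_b(a\vert a)=0$ and $\Phi^{+}_b(b\vert a)=1$. I would then apply the Lagrange (Green) identity for the Sturm--Liouville operator, with $\Phi$ standing for $\Phi^{+}_b(\cdot\vert a)$,
\[
\int_a^b \big[\Phi\,\mathcal{G}_y\hat p-\hat p\,\mathcal{G}_y\Phi\big]\,\m(y)\,dy
=\left[\frac{1}{\s(y)}\big(\Phi\,\partial_y\hat p-\hat p\,\partial_y\Phi\big)\right]_{y=a+}^{y=b-},
\]
whose bulk term vanishes by harmonicity. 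Since $p_{(a,b)}$, hence $\hat p$, vanishes at both killing levels, every piece containing $\hat p$ at an endpoint drops out, and the $\Phi\,\partial_y\hat p$ term survives only at $y=b-$ (it vanishes at $a+$ because $\Phi^{+}_b(a\vert a)=0$). This leaves $f^+(t;x,b\vert a)=-\s(b)^{-1}\partial_y\hat p(t;x,y)\big|_{y=b-}$, which is \eqref{hit_ab_lemma_3}.

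The companion \eqref{hit_ab_lemma_4} is obtained the same way using the harmonic function $\Phi^{-}_a(\cdot\vert b)=\mathcal{S}[\cdot,b]/\mathcal{S}[a,b]$, with $\Phi^{-}_a(a\vert b)=1$ and $\Phi^{-}_a(b\vert b)=0$, so that now only the $y=a+$ endpoint contributes and the orientation of the boundary bracket produces the $+\s(a)^{-1}$ sign. Finally, the second ("dual") expressions in \eqref{hit_ab_lemma_3}--\eqref{hit_ab_lemma_4} follow from the reversibility/detailed-balance symmetry $\m(x)p_{(a,b)}(t;x,y)=\m(y)p_{(a,b)}(t;y,x)$, equivalently $\hat p(t;x,y)=\hat p(t;y,x)$, which holds because $\mathcal{G}$ is self-adjoint with respect to the speed measure $\m(y)\,dy$; substituting this symmetry converts $\s(b)^{-1}\partial_y\big(p_{(a,b)}(t;x,y)/\m(y)\big)$ into $\m(x)^{-1}\s(b)^{-1}\partial_y p_{(a,b)}(t;y,x)$, and likewise at $a$.

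I expect the main obstacle to be the analytic bookkeeping at the boundaries rather than any conceptual difficulty. Two points need care: justifying differentiation under the integral sign, which follows from uniform control of $\partial_t p_{(a,b)}$ on compact $t$-intervals afforded by the discrete, exponentially damped spectral expansion \eqref{u_spectral_3}; and verifying that $\hat p$ vanishes while its one-sided $y$-derivative stays finite and nonzero at the killing levels (consistent with $\phi_n(b)=0$ but $\phi_n'(b)\neq 0$ in \eqref{u_spectral_3}), so that the Lagrange boundary bracket is well defined and only the claimed endpoint survives. The harmonicity of $\Phi^{\pm}$ and the self-adjoint form \eqref{eq:X_Gen} make the interior cancellation automatic, so it is precisely these one-sided limits at $a$ and $b$ where the care is required.
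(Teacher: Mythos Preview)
Your proposal is correct and follows essentially the same route as the paper: condition on the time-$t$ state of $X_{(a,b)}$ via the Markov property to get \eqref{hit_ab_lemma_1}--\eqref{hit_ab_lemma_2}, then differentiate in $t$, apply the forward Kolmogorov PDE, and integrate by parts to obtain the boundary expressions \eqref{hit_ab_lemma_3}--\eqref{hit_ab_lemma_4}, with the dual forms coming from the speed-measure symmetry $p_{(a,b)}(t;x,y)/\m(y)=p_{(a,b)}(t;y,x)/\m(x)$. The only cosmetic difference is that the paper performs one integration by parts and uses the explicit fact that $\Phi^{+}_b(\cdot\vert a)'/\s(\cdot)=1/\mathcal{S}[a,b]$ is constant, whereas you package the same computation as the Lagrange identity plus $\mathcal{G}\Phi^{+}_b(\cdot\vert a)=0$; these are equivalent, and the resulting boundary evaluations and vanishing terms are identical.
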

\begin{proof} See \cite{CM21} which is reproduced in \ref{sect_Lemma2_proof}. 
\end{proof}

By employing \eqref{u_spectral_3}, Lemma~\ref{Lemma_hitting_time_ab}  
gives rise to spectral representations of the densities, tail probabilities and CDFs of the respective first hitting times. 
Note that Spectral Category I applies. In what follows it proves convenient 
(in analogy with \eqref{FHT_eigenfunctions_1}--\eqref{FHT_eigenfunctions_2}) to define
\begin{eqnarray}\label{FHT_eigenfunctions_ab}
\psi_n^+(x;a,b) := {\phi(x,a; -\lambda_n) \over \Delta(a,b; \lambda_n)},\,\,\,\,
\psi_n^-(x;a,b) := {\phi(b,x; -\lambda_n) \over \Delta(a,b; \lambda_n)},
\end{eqnarray}
where $\lambda_n \equiv \lambda_n^{(a,b)}$ are eigenvalues solving \eqref{eigen_trans_pdf_3} and 
$\Delta(a,b; \lambda_n)$ is defined in \eqref{Delta_derivative}.
\begin{pro} \label{prop_spec_first_hit_ab}
The first hitting times defined by \eqref{X-fptime-up-b-killa}--\eqref{X-fptime-down-a-killb}, where $X_0=x \in (a,b)$, have the respective probability density functions and tail probabilities, for all $t\in (0,\infty)$:
\begin{align}
f^+(t;x,b\vert a) &= \sum_{n=1}^\infty e^{-\lambda_n t} \psi_n^+(x;a,b),
\label{FHT_prop3_up_1}
\\
f^{-}(t;x,a\vert b)  &= \sum_{n=1}^\infty e^{-\lambda_n t} \psi_n^-(x;a,b),
\label{FHT_prop3_down_1}
\\
\P_{x}(t < {\Tau}^+_b\!(a) < \infty) &= 
\sum_{n=1}^\infty {e^{- \lambda_n t} \over  \lambda_n}\psi_n^+(x;a,b),
\label{FHT_prop3_up_2}
\\
\P_{x}(t < {\Tau}^-_a\!(b) < \infty) &= 
\sum_{n=1}^\infty {e^{- \lambda_n t} \over  \lambda_n} \psi_n^-(x;a,b).
\label{FHT_prop3_down_2}
\end{align}
where $\lambda_n \equiv \lambda_n^{(a,b)}$ solve \eqref{eigen_trans_pdf_3}.
\end{pro}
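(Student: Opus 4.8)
The plan is to read off the two density formulae (\ref{FHT_prop3_up_1})--(\ref{FHT_prop3_down_1}) directly from the boundary-derivative representations (\ref{hit_ab_lemma_3})--(\ref{hit_ab_lemma_4}) of Lemma~\ref{Lemma_hitting_time_ab}, and then obtain the tail-probability formulae (\ref{FHT_prop3_up_2})--(\ref{FHT_prop3_down_2}) by integrating in time. First I would substitute the discrete spectral expansion (\ref{u_spectral_3}), so that $p_{(a,b)}(t;x,y)/\m(y)=\sum_{n\ge 1}e^{-\lambda_n t}\phi_n(x)\phi_n(y)$, and (granting the termwise differentiation discussed below) reduce each density to a series whose $n$th term is $\mp\s(b)^{-1}e^{-\lambda_n t}\,\partial_y[\phi_n(x)\phi_n(y)]$ evaluated at $y=b-$ for $f^+$, and $+\s(a)^{-1}e^{-\lambda_n t}\,\partial_y[\phi_n(x)\phi_n(y)]$ at $y=a+$ for $f^-$.

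For the $n$th term of $f^+$ I would use the product form (\ref{spectral_3_product_eigen}), $\phi_n(x)\phi_n(y)=\phi(a,x;-\lambda_n)\,\phi(b,y;-\lambda_n)/[w_{-\lambda_n}\Delta(a,b;\lambda_n)]$, in which only the factor $\phi(b,y;-\lambda_n)$ depends on $y$. The key computation is the boundary derivative of the cylinder function: from the definition (\ref{phi_function}), $\partial_y\phi(b,y;\lambda)\big|_{y=b}=\varphi^-_\lambda(b)\varphi^{+\prime}_\lambda(b)-\varphi^{-\prime}_\lambda(b)\varphi^+_\lambda(b)=W[\varphi^-_\lambda,\varphi^+_\lambda](b)=w_\lambda\s(b)$ by (\ref{wronskian}). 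Evaluated at $\lambda=-\lambda_n$ this yields $w_{-\lambda_n}\s(b)$, which cancels the $w_{-\lambda_n}$ and $\s(b)$ in the prefactor and leaves $-\phi(a,x;-\lambda_n)/\Delta(a,b;\lambda_n)$; the antisymmetry $\phi(a,x;-\lambda_n)=-\phi(x,a;-\lambda_n)$ converts this into $\psi_n^+(x;a,b)$ of (\ref{FHT_eigenfunctions_ab}), giving (\ref{FHT_prop3_up_1}). For $f^-$ I would use the equivalent symmetric form $\phi_n(x)\phi_n(y)=\phi(b,x;-\lambda_n)\,\phi(a,y;-\lambda_n)/[w_{-\lambda_n}\Delta(a,b;\lambda_n)]$ (valid since $\phi_n(x)\phi_n(y)$ is symmetric in its arguments), so that the $y$-dependent factor is $\phi(a,y;-\lambda_n)$, whose derivative at $y=a$ equals $w_{-\lambda_n}\s(a)$ by the same Wronskian identity; this leaves $\phi(b,x;-\lambda_n)/\Delta(a,b;\lambda_n)=\psi_n^-(x;a,b)$, establishing (\ref{FHT_prop3_down_1}).

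The tail probabilities then follow by time integration. Since the event $\{t<\Tau^+_b(a)<\infty\}$ carries density $f^+(\cdot;x,b\vert a)$, one has $\P_x(t<\Tau^+_b(a)<\infty)=\int_t^\infty f^+(s;x,b\vert a)\,ds$, in agreement with (\ref{X_full_cdf_tau_ab_1}); integrating the series (\ref{FHT_prop3_up_1}) term by term produces the factor $\int_t^\infty e^{-\lambda_n s}\,ds=e^{-\lambda_n t}/\lambda_n$, which is (\ref{FHT_prop3_up_2}), and the identical argument applied to (\ref{FHT_prop3_down_1}) gives (\ref{FHT_prop3_down_2}). Equivalently, one could verify these directly from the integral representations (\ref{hit_ab_lemma_1})--(\ref{hit_ab_lemma_2}) by inserting (\ref{u_spectral_3}) and evaluating $\int_a^b\Phi^\pm(y)\,\m(y)\phi_n(y)\,dy$, but the time-integration route is cleaner and reuses the density series already obtained.

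The main obstacle is analytic rather than algebraic: justifying the termwise differentiation of the spectral series at the killing boundaries. Because both $a$ and $b$ are regular killing (NONOSC) endpoints of the \emph{compact} interval $(a,b)$, this is a regular Sturm--Liouville problem, so the eigenvalues obey Weyl growth $\lambda_n\sim c\,n^2$ and the eigenfunctions together with their first derivatives are uniformly bounded on $[a,b]$. Hence for each fixed $t>0$ the differentiated series $\sum_n e^{-\lambda_n t}\,\partial_y[\phi_n(x)\phi_n(y)]$ is dominated by $\sum_n e^{-\lambda_n t}\cdot O(1)$, which converges absolutely and uniformly in $y$ up to the boundary; this licenses interchanging $\partial_y$ with the summation and taking the boundary limit termwise. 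The subsequent time integration is more benign still, the extra factor $1/\lambda_n$ only improving convergence.
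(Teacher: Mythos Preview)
Your proof is correct and follows essentially the same approach as the paper: substitute the spectral expansion \eqref{u_spectral_3} into the boundary-derivative formulae \eqref{hit_ab_lemma_3}--\eqref{hit_ab_lemma_4}, differentiate termwise, and reduce via the Wronskian identity $\partial_y\phi(b,y;-\lambda_n)|_{y=b}=w_{-\lambda_n}\s(b)$ to obtain the $\psi_n^\pm$; then integrate in time for the tail probabilities. The paper's proof is slightly terser (it only treats $f^+$ and says $f^-$ follows similarly, and it omits the analytic justification for termwise differentiation that you supply), but the argument is the same; the paper also mentions, as you do, the alternative routes via \eqref{hit_ab_lemma_1}--\eqref{hit_ab_lemma_2} or via Laplace inversion of $\phi(a,x;\lambda)/\phi(a,b;\lambda)$.
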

\begin{proof} Alternative proofs, taken from \cite{CM21}, are reproduced in \ref{sect_Prop_FHT_ab_proof}.   
\end{proof}
As noted at the end of Section \ref{sect_spectral_transPDF} for transition PDFs with O-NO natural boundaries, the series in Proposition \ref{prop_spec_first_hit_ab} 
can also be used to accurately approximate the first hitting time densities and tail probabilities in Proposition \ref{prop_spec_first_hit_2}. For instance, the probability in \eqref{FHT_prop_ONO1_2} can be accurately approximated by using \eqref{FHT_prop3_up_2} for values of $a$ progressively closer to the left boundary $l$ until an acceptable error tolerance with an appropriately truncated number of terms in the series. 
Similarly, the probability in \eqref{FHT_prop_ONO2_2} (with lower level $b$ now labeled as $a$) can be approximated by using \eqref{FHT_prop3_down_2}, for values of $b$ progressively closer to the right boundary $r$ and an appropriately truncated number of terms in the series.

%
%
%
%
\section{Last Hitting Time}\label{last_hitting_time_sect}
\subsection{Distribution of Last Hitting Time}\label{subsect2}
Consider any time-homogeneous regular diffusion $\{X_t, t\geq 0\}\in \I$ as defined in Section \ref{sect1}. 
The last hitting time to any fixed level $\b\in \I$ within any {\it finite} time interval $[0,T]$, $T>0$, 
is defined by 
\begin{align}\label{last_hiting_time_defn}
g^X_\b(T) \equiv g_\b(T) := \sup\{0\le u \le T: X_u = \b \}
\end{align}
and $g_\b(T) \equiv 0$ if $\{0\le u \le T: X_u = \b \} = \emptyset$. 
Note that $g_\b(T)\in [0,T]$ is generally a mixed random variable having a nonzero probability mass function (discrete portion) at $0$ and a continuous CDF with a probability density function on $(0,T)$. The CDF of $g_\b(T)$ is given by
\begin{align}\label{last_time-t-0}
\P_x(g_\b(T) \le t) \equiv \P_x(0 \le g_\b(T) \le t) = \P_x(g_\b(T) =0) + \P_x(0 < g_\b(T) \le t),
\end{align}
$0 \le t \le T$, for the diffusion started at $X_0 = x \in \I$, 
and where $\P_x(g_\b(T) \le T) \equiv \P_x(g_\b(T) < T) = 1$.

By conditioning on the time-$t$ value of the process, $X_t$, and using the total law of probabilities:
\begin{align}\label{prop_last_time-t-1}
\P_x(g_\b(T) \le t) &= \int_l^r \P_x(g_\b(T) \le t \vert X_t = y)\,  p(t;x,y) dy 
+ \P_x(g_\b(T) \le t \vert X_t =\partial^\dagger) \cdot \P_x(X_t =\partial^\dagger)
\nonumber \\
&= \int_l^r \P_y(\Tau_\b > T - t)\,  p(t;x,y) dy + \P_x(X_t =\partial^\dagger)\,.
\end{align}
Here we used the Markov property and time homogeneity, i.e., given $X_t = y$, the probability that the last 
time the process will hit $\b$ is not greater than $t$ (within the interval $[0,T]$) is equivalent to the probability that the process, restarted at $y$, has a first hitting time $\T_\b$ to level $\b$ that is at least equal to $T - t$. 
We emphasize that we are allowing any type of boundary for $l$ or $r$, i.e., conservative or nonconservative, where $\P_x(X_t =\partial^\dagger)$ is the probability that the process is in the cemetery state $\partial^\dagger$ at time $t$. Moreover, $\P_x(g_\b(T) \le t \vert X_t =\partial^\dagger) = 1$, i.e., the process cannot hit level $\b$ past time $t$ given that it is already in the cemetery state at time $t$. 
By taking $t = T$ in (\ref{prop_last_time-t-1}), and using $\P_y(\Tau_\b > 0) = 1$ for all $y\ne \b$, we 
recover the fact that the CDF at $T$ is unity: 
\begin{equation*}\label{probability_conservation}
\P_x(g_\b(T) \le T) = \int_l^r  p(T;x,y) dy + \P_x(X_T =\partial^\dagger) 
= \P_x(X_T \in \I)  + \P_x(X_T=\partial^\dagger)  = 1\,.
\end{equation*}
In the limit $t \searrow 0$, the transition PDF approaches the Dirac delta $p(0+;x,y) = \delta(x-y)$ and 
$\P_x(X_0 =\partial^\dagger) = 0$, since $X_0 = x\in \I$. Hence, (\ref{prop_last_time-t-1}) recovers 
$\P_x(g_\b(T) = 0) = \P_x(\T_\b > T)$, i.e., the equivalence of events $\{g_\b(T) = 0\} = \{\T_\b > T\}$ that also follows from the definition of $g_\b(T)$. Hence, the discrete portion of the distribution of $g_\b(T)$ is given by 
\eqref{FHT-b-up-equal_infinity} and \eqref{FHT_cdf_complement} for $t=T$:
\begin{align}\label{prop_last_time-t-1-limit}
\P_x(g_\b(T) = 0) = \begin{cases} 
\P_{x}(\Tau^{+}_{\b} > T) = \P_x(\Tau^{+}_{\b} = \infty) + \P_{x}(T < \Tau^{+}_{\b} < \infty)&, x < \b,
\\
\P_{x}(\Tau^{-}_{\b} > T) = \P_x(\Tau^{-}_{\b} = \infty) + \P_{x}(T < \Tau^{-}_{\b} < \infty)&, x > \b.
\end{cases}
\end{align}
The tail probabilities $\P_{x}(T < \Tau^{\pm}_{\b} < \infty)$ have spectral expansions given respectively by \eqref{FHT_prop1_2} and \eqref{FHT_prop2_2} for NONOSC endpoints ($l$ or $r$), or \eqref{FHT_prop_ONO1_2} and \eqref{FHT_prop_ONO2_2} for O-NO natural endpoints ($l$ or $r$). Note the trivial case when $x=\b$: $\P_\b(g_\b(T) = 0) = \P_{\b}(\Tau_{\b} > T)=0$. 

Since $\P_x(X_t =\partial^\dagger) = 1 - \int_l^r p(t;x,y) dy$ 
and $\P_y(\T_\b > T - t) = 1 - \P_y(\T_\b \le T - t)$, (\ref{prop_last_time-t-1}) is written more compactly in terms of the CDF of the first hitting time:
\begin{align}\label{prop_last_time-t-1-prime}
\P_x(g_\b(T) \le t) 
= 1 - \int_l^r \P_y(\T_\b \le T - t)\,  p(t;x,y) dy\,.
\end{align}
The following theorem firstly provides a formula for the Laplace transform (w.r.t. time horizon $T$) of the probability density of 
$g_\b(T)$, i.e., \eqref{last-passage-density-time-t-Laplace}--\eqref{last-passage-density-phi}, which is a known result 
(e.g., see \cite{BS02}). Secondly, the Laplace inversion in \eqref{last-passage-density-phi} is performed to give 
\eqref{last-passage-density-phi-explicit}. Hence, we uncover the general expression in \eqref{last-passage-pdf-explicit} for the time-$t$ PDF of $g_\b(T)$ in terms of the time-$t$ transition PDF and (left/right) derivatives of the CDFs of the first hitting times, at time $T-t$, w.r.t. the initial value evaluated at the (upper/lower) hitting level $\b$. 
 \begin{theorem}\label{last-passage-propn-time-t}
Let $f_{g_\b(T)}(t;x) := {\partial \over \partial t}\P_x(g_\b(T) \le t) $, $t \in (0,T)$, denote the probability density function of the last hitting time $g_\b(T)$, 
for any $x,\b\in \I$. 
Then, 
\begin{equation}\label{last-passage-density-time-t-Laplace}
{\mathcal L}_T\{f_{g_\b(T)}(t;x) \}(\lambda) = {e^{-\lambda t} \over \lambda G(\lambda;\b,\b)} p(t;x,\b).
\end{equation}
In particular, 
\begin{equation}\label{last-passage-density-time-t-1}
f_{g_\b(T)}(t;x) =\xi(T-t;\b)\, p(t;x,\b)
\end{equation}
where
\begin{align}\label{last-passage-density-phi}
\xi(u;\b) &:= {\mathcal L}_\lambda^{-1}\left\{{1 \over \lambda G(\lambda;\b,\b)}\right\}(u)
\\
&= {1 \over \m(\b)\s(\b)} \left[ {\partial \over \partial y} \P_y(\Tau^+_\b \le u)\big\vert_{y=\b-} 
- {\partial \over \partial y} \P_y(\Tau^-_\b \le u)\big\vert_{y=\b+} 
\right].
\label{last-passage-density-phi-explicit}
\end{align}
Hence, 
\begin{equation}\label{last-passage-pdf-explicit}
f_{g_\b(T)}(t;x) = {p(t;x,\b) \over \m(\b)}{1 \over \s(\b)} 
\left[ {\partial \over \partial y} \P_y(\Tau^+_\b \le T-t)\big\vert_{y=\b-} 
-\, {\partial \over \partial y} \P_y(\Tau^-_\b \le T-t)\big\vert_{y=\b+} 
\right].
\end{equation}
\end{theorem}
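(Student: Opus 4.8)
The plan is to establish the Laplace transform identity \eqref{last-passage-density-time-t-Laplace} first, and then to perform the inversion explicitly; since the transform itself is essentially the known result of \cite{BS02}, the genuinely new content is the inversion leading to \eqref{last-passage-density-phi-explicit}--\eqref{last-passage-pdf-explicit}. To obtain the transform I would start from the compact identity \eqref{prop_last_time-t-1-prime} and differentiate in $t$ at fixed $T$. Writing $H(u;y):=\P_y(\Tau_\b\le u)$, so that its $u$-derivative is the first hitting time density $f(u;y,\b)$ and $\partial_t H(T-t;y)=-f(T-t;y,\b)$, the product rule gives
\begin{equation*}
f_{g_\b(T)}(t;x)=\int_l^r f(T-t;y,\b)\,p(t;x,y)\,dy-\int_l^r H(T-t;y)\,\frac{\partial}{\partial t}p(t;x,y)\,dy .
\end{equation*}
Applying ${\mathcal L}_T$ and substituting $u=T-t$ (so $e^{-\lambda T}=e^{-\lambda t}e^{-\lambda u}$, with $u$ ranging over $(0,\infty)$), Fubini turns the inner $u$-transforms into $G(\lambda;y,\b)/G(\lambda;\b,\b)$ and $G(\lambda;y,\b)/(\lambda G(\lambda;\b,\b))$, using the standard first passage transform $\E_y[e^{-\lambda\Tau_\b}]=G(\lambda;y,\b)/G(\lambda;\b,\b)$.

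It then remains to collapse the two $y$-integrals, i.e.\ to show $\int_l^r G(\lambda;y,\b)p(t;x,y)\,dy-\tfrac1\lambda\int_l^r G(\lambda;y,\b)\,\partial_t p(t;x,y)\,dy=\tfrac1\lambda p(t;x,\b)$. Writing $G(\lambda;y,\b)=\int_0^\infty p(s;y,\b)e^{-\lambda s}\,ds$ and using Chapman--Kolmogorov, one finds $\int_l^r G(\lambda;y,\b)p(t;x,y)\,dy=A(t)$ with $A(t):=e^{\lambda t}\int_t^\infty e^{-\lambda\sigma}p(\sigma;x,\b)\,d\sigma$. Differentiating under the integral sign yields the elementary identity $A'(t)=\lambda A(t)-p(t;x,\b)$, and $A'(t)$ is precisely the second $y$-integral. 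Substituting, the $A(t)$ contributions cancel and leave $\tfrac1\lambda p(t;x,\b)$; multiplying by $e^{-\lambda t}/G(\lambda;\b,\b)$ produces \eqref{last-passage-density-time-t-Laplace}.

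For the inversion, note that $p(t;x,\b)$ is independent of both the transform variable $\lambda$ and the inversion variable $T$, while $e^{-\lambda t}$ is a pure time shift; the shift rule then gives \eqref{last-passage-density-time-t-1} with $\xi(u;\b):={\mathcal L}^{-1}_\lambda\{1/(\lambda G(\lambda;\b,\b))\}(u)$, which is \eqref{last-passage-density-phi}. To identify $\xi$ explicitly I would verify \eqref{last-passage-density-phi-explicit} by transforming its right-hand side and invoking uniqueness: using $\E_y[e^{-\lambda\Tau^+_\b}]=\varphi^+_\lambda(y)/\varphi^+_\lambda(\b)$ for $y<\b$ and $\E_y[e^{-\lambda\Tau^-_\b}]=\varphi^-_\lambda(y)/\varphi^-_\lambda(\b)$ for $y>\b$, the $u$-transforms of the two CDF-derivatives at $\b$ are $\tfrac1\lambda\varphi^{+\,\prime}_\lambda(\b)/\varphi^+_\lambda(\b)$ and $\tfrac1\lambda\varphi^{-\,\prime}_\lambda(\b)/\varphi^-_\lambda(\b)$. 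Over a common denominator their difference produces the Wronskian $\varphi^-_\lambda(\b)\varphi^{+\,\prime}_\lambda(\b)-\varphi^+_\lambda(\b)\varphi^{-\,\prime}_\lambda(\b)=w_\lambda\s(\b)$ of \eqref{wronskian} in the numerator; multiplying by $1/(\m(\b)\s(\b))$ and using $G(\lambda;\b,\b)=w_\lambda^{-1}\m(\b)\varphi^+_\lambda(\b)\varphi^-_\lambda(\b)$ collapses the expression to $1/(\lambda G(\lambda;\b,\b))$, confirming \eqref{last-passage-density-phi-explicit}. Substituting $\xi(T-t;\b)$ into \eqref{last-passage-density-time-t-1} then gives \eqref{last-passage-pdf-explicit}.

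The analytic heart of the argument is the second step: justifying the Fubini interchange and the differentiation under the integral sign (which rest on integrability and decay of $p$ and $\partial_t p$, available for solvable diffusions), and recognising that Chapman--Kolmogorov together with the shift identity $A'=\lambda A-p$ makes the two integral terms telescope. In the inversion the only real subtlety is noticing that the difference of the logarithmic $y$-derivatives of $\varphi^\pm_\lambda$ at $\b$ reassembles exactly the Wronskian factor $w_\lambda\s(\b)$.
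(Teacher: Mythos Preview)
Your proof is correct, and the inversion step (identifying $\xi(u;\b)$ by transforming the right-hand side of \eqref{last-passage-density-phi-explicit} and invoking the Wronskian \eqref{wronskian}) is essentially identical to the paper's. However, your route to the Laplace transform \eqref{last-passage-density-time-t-Laplace} is genuinely different. The paper first takes ${\mathcal L}_T$ of \eqref{prop_last_time-t-1-prime}, splits the $y$-integral at $\b$, inserts the explicit ratios $\varphi^\pm_\lambda(y)/\varphi^\pm_\lambda(\b)$, then differentiates in $t$; the resulting integrals are reduced via the forward Kolmogorov PDE for $p(t;x,y)$ and two integrations by parts, with a case-by-case analysis of the boundary limits at $l$ and $r$ (regular killing/reflecting, entrance, exit, natural), leaving only the Wronskian term at $\b$. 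You instead keep the $y$-integral whole, use the Green-function form $\E_y[e^{-\lambda\Tau_\b}]=G(\lambda;y,\b)/G(\lambda;\b,\b)$, and collapse everything via Chapman--Kolmogorov and the telescoping identity $A'=\lambda A-p$. Your route is more probabilistic and avoids both the PDE machinery and the boundary-classification casework; the paper's route, by contrast, is the one that recycles verbatim in the proofs of Theorems~\ref{last-passage-propn-time-t-kill-ab}--\ref{last-passage-propn-time-t-kill-b} for the killed processes (where the same forward-PDE-plus-IBP template applies with $p_{(a,b)}$ or $p^\pm_b$ in place of $p$). The paper also records, in a Remark, a second direct proof of \eqref{last-passage-pdf-explicit} that bypasses Laplace transforms entirely by combining the forward PDE for $p$ with the backward PDE for $\P_y(\Tau^\pm_\b\le T-t)$.
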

\begin{proof} The original statement of this theorem and its proof is found in \cite{CM21}. 
See \ref{sect_a1}. 
\end{proof}
\noindent [Remark: $\Tau^\pm_\b$, defined in (\ref{FHT-up-b})-(\ref{FHT-down-b}), can be replaced by $\Tau_\b$ in 
\eqref{last-passage-pdf-explicit}, i.e., $\P_y(\Tau^+_\b \le T-t) = \P_y(\Tau_\b \le T-t)$, for $y<\b$, and 
$\P_y(\Tau^-_\b \le T-t) = \P_y(\Tau_\b \le T-t)$, for $y>\b$.]

\vskip0.1in
%
The density of $g_\b := \sup\{u \ge 0: X_u = \b \} = \lim_{T\to \infty}g_\b(T)$, i.e., the last hitting time for an infinite time horizon, is easily recovered from \eqref{last-passage-pdf-explicit} in the limit $T\to \infty$ where 
$\P_y(\Tau^\pm_\b \le T-t) \to\P_y(\Tau^\pm_\b < \infty)$. 
From \eqref{FHT-b-up-infinity}--\eqref{FHT-b-down-infinity}, 
${1 \over \s(\b)} \big[ {\partial \over \partial y} \P_y(\Tau^+_\b < \infty)\big\vert_{y=\b-} 
- {\partial \over \partial y} \P_y(\Tau^-_\b < \infty)\big\vert_{y=\b+}\big] = {\ind_{E_l} \over \mathcal{S}(l,\b]} 
+ {\ind_{E_r} \over \mathcal{S}[\b,r)}$. Hence, this gives the known expression for the density $f_{g_\b}(t;x) := {\partial \over \partial t}\P_x(g_\b \le t)$:
\begin{equation}\label{last-passage-density-infinite-T}
f_{g_\b}(t;x) = \lim_{T\to \infty} f_{g_\b(T)}(t;x) = \left[ {\ind_{E_l} \over \mathcal{S}(l,\b]} + {\ind_{E_r} \over \mathcal{S}[\b,r)}
\right]{p(t;x,\b) \over \m(\b)},
\end{equation}
$t\in (0,\infty)$. The discrete portion follows from \eqref{prop_last_time-t-1-limit} and \eqref{FHT-b-up-equal_infinity}, i.e.,
\begin{equation}\label{last-passage-discrete-infinite-T}
\P_x(g_\b = 0) = \P_x(\Tau_{\b} = \infty) = {(\mathcal{S}[x,\b])^+ \over \mathcal{S}(l,\b]}\cdot \ind_{E_l} 
+ {(\mathcal{S}[\b,x])^+ \over \mathcal{S}[\b,r)}\cdot \ind_{E_r}.
\end{equation}
Note that, for an infinite time horizon, the nontrivial (nonzero) cases involve only transient diffusions where at least one of the boundaries, $l$ or $r$, is attracting and non-reflecting. For a recurrent diffusion (where $\ind_{E_l} = \ind_{E_r} = 0$) 
we simply have $\P_x(g_\b = \infty)=1$.

The CDF in (\ref{last_time-t-0}) is expressible as
\begin{equation}\label{last-passage-CDF-time_0_to_t}
\P_x(g_\b(T) \le t) = \P_x(\Tau_\b > T) + \int_0^t f_{g_\b(T)}(u;x) du.
\end{equation}
This provides a useful alternative to (\ref{prop_last_time-t-1-prime}) for computing the  CDF of $g_\b(T)$. 
Both (\ref{prop_last_time-t-1-prime}) and (\ref{last-passage-CDF-time_0_to_t}) make use of the first hitting time distributions in different ways. To implement (\ref{prop_last_time-t-1-prime}) we need to integrate (over the state space) the time-$(T-t)$ CDF of the first hitting time (as function of the initial point $y$) against the time-$t$ transition PDF (with $x$ as initial and $y$ as terminal point).  

Theorem~\ref{last-passage-propn-time-t} and Proposition~\ref{prop_spec_first_hit_1}--\ref{prop_spec_first_hit_2} 
lead to a spectral representation of the last hitting time density.
\begin{pro}\label{last-passage-propn-time-t-spectral}
The probability density function of $g_\b(T)$ has the representation:
\begin{equation}\label{last-passage-pdf-spectral}
f_{g_\b(T)}(t;x) = {p(t;x,\b) \over \m(\b)} 
\left[ \mathcal{S}(l,r;\b) 
+  {1 \over \s(\b)}\bigg(\eta^-(T-t;\b) - \eta^+(T-t;\b)\bigg)\right],
\end{equation}
where $\mathcal{S}(l,r;\b) := \frac{\ind_{E_l}}{\mathcal{S}(l,\b]} + \frac{\ind_{E_r}}{\mathcal{S}[\b,r)}$ and 
\begin{align}\label{last_passage_pdf_spectral_1}
 \eta^-(T-t;\b) 
= \begin{cases}
\displaystyle \sum_{n=1}^{\infty}{e^{-\lambda_{n,\b}^+ (T-t)} \over \lambda_{n,\b}^+} 
\hat\psi^-_{n}(\b), &  \text{if $r$ is NONOSC},\\
\displaystyle \sum_{n\ge 1} {e^{-\lambda_{n,\b}^+ (T-t)} \over \lambda_{n,\b}^+}\hat\psi^-_{n}(\b)
+ {1 \over \pi}\int_{\Lambda_+}^\infty \!e^{-\epsilon (T-t)}\, \textup{Im}
\bigg\{{\varphi^{-\prime}_\lambda (\b) \over \varphi^-_\lambda (\b)}\bigg\vert_{\lambda = \epsilon e^{-i\pi}}\bigg\}\,{d\epsilon \over \epsilon} , &  \text{if $r$ is O-NO},
\end{cases}
\end{align}
\begin{align}\label{last_passage_pdf_spectral_2}
\eta^+(T-t;\b) 
= \begin{cases}
\displaystyle \sum_{n=1}^{\infty}{e^{-\lambda_{n,\b}^- (T-t)} \over \lambda_{n,\b}^-} 
\hat\psi^+_{n}(\b), &  \text{if $l$ is NONOSC},\\
\displaystyle \sum_{n\ge 1} {e^{-\lambda_{n,\b}^- (T-t)} \over \lambda_{n,\b}^-} 
\hat\psi^+_{n}(\b)
+ {1 \over \pi}\int_{\Lambda_-}^\infty \!e^{-\epsilon (T-t)}\, \textup{Im}
\bigg\{{\varphi^{+\prime}_\lambda (\b) \over \varphi^+_\lambda (\b)}\bigg\vert_{\lambda = \epsilon e^{-i\pi}}\bigg\}\, {d\epsilon  \over \epsilon}, &  \text{if $l$ is O-NO},
\end{cases}
\end{align}
where 
$\displaystyle \hat\psi^\pm_{n}(\b) 
:= {\partial \over \partial x}\psi_n^\pm(x;\b)\vert_{x=\b}
= {\varphi^{\pm\prime}_\lambda (\b) \over {\partial \over \partial \lambda}\,
\varphi^\pm_{\lambda}(\b)}\bigg\vert_{\lambda=-\lambda_{n,\b}^\mp}
\equiv - {\varphi^{\pm\,\,\prime}_{-\lambda_{n,\b}^\mp} (\b) 
\over {\partial \over \partial \lambda}\,
\varphi^\pm_{-\lambda}(\b)\vert_{\lambda = \lambda_{n,\b}^\mp}}$
and 
$\Lambda_-$($\Lambda_+$) as the spectral cutoff for $l$($r$). 
\end{pro}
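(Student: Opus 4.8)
The plan is to begin from the closed-form density \eqref{last-passage-pdf-explicit} of Theorem~\ref{last-passage-propn-time-t} and to insert into it the decomposition \eqref{X_full_cdf} of the first hitting time CDFs into their ``ever hits'' component and their finite-horizon tail, namely $\P_y(\Tau^\pm_\b \le T-t) = \Phi^\pm_\b(y) - \P_y(T-t < \Tau^\pm_\b < \infty)$. Differentiating in $y$ and taking the one-sided limits $y\to\b\mp$ then splits the bracketed quantity in \eqref{last-passage-pdf-explicit} into two pieces: a static piece coming from $\Phi^\pm_\b$ and a time-dependent piece coming from the tail probabilities. Matching these two pieces against the two terms in \eqref{last-passage-pdf-spectral} will establish the claim.

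First I would handle the static piece. From \eqref{FHT-b-up-infinity}--\eqref{FHT-b-down-infinity}, the ``ever hits'' probabilities are $\Phi^+_\b(y) = \mathcal{S}(l,y]/\mathcal{S}(l,\b]$ on $E_l$ (and $\equiv 1$ on $E_l^c$), and $\Phi^-_\b(y) = \mathcal{S}[y,r)/\mathcal{S}[\b,r)$ on $E_r$ (and $\equiv 1$ on $E_r^c$). Since $\frac{d}{dy}\mathcal{S}(l,y] = \s(y)$ and $\frac{d}{dy}\mathcal{S}[y,r) = -\s(y)$, the one-sided $y$-derivatives at $\b$ are $\frac{\s(\b)}{\mathcal{S}(l,\b]}\ind_{E_l}$ and $-\frac{\s(\b)}{\mathcal{S}[\b,r)}\ind_{E_r}$, the indicators recording that the constant value $1$ has zero derivative in the conservative cases. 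Forming the difference as in \eqref{last-passage-pdf-explicit} and dividing by $\s(\b)$ produces exactly $\frac{\ind_{E_l}}{\mathcal{S}(l,\b]} + \frac{\ind_{E_r}}{\mathcal{S}[\b,r)} = \mathcal{S}(l,r;\b)$, the first term in \eqref{last-passage-pdf-spectral}.

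Next I would handle the time-dependent piece by substituting the spectral expansions of the tail probabilities $\P_y(T-t < \Tau^\pm_\b < \infty)$ supplied by Proposition~\ref{prop_spec_first_hit_1} (NONOSC boundary) and Proposition~\ref{prop_spec_first_hit_2} (O-NO boundary), differentiating term by term in $y$, and evaluating at $y=\b$. Because $\hat\psi^\pm_n(\b) = \frac{\partial}{\partial x}\psi_n^\pm(x;\b)\vert_{x=\b}$ by definition, the discrete sums immediately yield the series appearing in \eqref{last_passage_pdf_spectral_1}--\eqref{last_passage_pdf_spectral_2}; in the O-NO case, differentiating the branch-cut integrand $\varphi^\pm_\lambda(y)/\varphi^\pm_\lambda(\b)$ in $y$ and setting $y=\b$ replaces $\varphi^\pm_\lambda(y)$ by $\varphi^{\pm\prime}_\lambda(\b)$ inside the imaginary part, reproducing the integral terms. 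This identifies $\frac{\partial}{\partial y}\P_y(T-t < \Tau^+_\b < \infty)\vert_{y=\b-} = \eta^+(T-t;\b)$ and $\frac{\partial}{\partial y}\P_y(T-t < \Tau^-_\b < \infty)\vert_{y=\b+} = \eta^-(T-t;\b)$. Collecting both pieces, dividing by $\s(\b)$, and multiplying by $p(t;x,\b)/\m(\b)$ then reconstructs \eqref{last-passage-pdf-spectral}.

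The main obstacle I anticipate is the analytic justification of the interchanges of operations: differentiating the spectral series (and, in Spectral Category~II, the branch-cut integral) term by term with respect to the initial point $y$, and commuting that $y$-derivative with the one-sided limit $y\to\b\mp$. The eigenfunction terms $\psi_n^\pm(x;\b)$ and their $x$-derivatives are finite at $x=\b$ because the denominators in \eqref{FHT_eigenfunctions_1}--\eqref{FHT_eigenfunctions_2} involve $\frac{\partial}{\partial\lambda}\varphi^\pm_\lambda(\b)$ evaluated at a simple zero, which is nonzero; hence each $\hat\psi_n^\pm(\b)$ is well defined. The remaining work is to verify uniform convergence of the differentiated series on a neighbourhood of $\b$ (for $t<T$ the factors $e^{-\lambda_{n,\b}^\pm(T-t)}$ guarantee rapid decay in $n$) and, in the O-NO case, that differentiation under the integral sign along the branch cut is permissible. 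These are the standard dominated-convergence and uniform-convergence arguments already underlying Propositions~\ref{prop_spec_first_hit_1}--\ref{prop_spec_first_hit_2}.
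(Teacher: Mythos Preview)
Your proposal is correct and follows essentially the same route as the paper: start from \eqref{last-passage-pdf-explicit}, substitute the decomposition \eqref{X_full_cdf}, differentiate the $\Phi^\pm_\b$ piece via the scale function to get $\mathcal{S}(l,r;\b)$, and differentiate the tail-probability spectral series from Propositions~\ref{prop_spec_first_hit_1}--\ref{prop_spec_first_hit_2} termwise to obtain $\eta^\pm(T-t;\b)$. The paper's proof is equally brief and does not elaborate further on the interchange-of-limits justifications you flag.
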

\begin{proof} See \ref{sect_proof_last_hitting_spectral}.  
\end{proof}
\noindent 
We remark that for several cases only Spectral Category I (with both $l$ and $r$ as NONOSC) applies and hence only the first expressions in \eqref{last_passage_pdf_spectral_1}-\eqref{last_passage_pdf_spectral_2} apply. 
As noted for \eqref{u_spectral_2_b}, the summations for the O-NO cases may be empty with purely continuous spectral expansions. 
We also observe that the term multiplying $p(t;x,\b)$ in \eqref{last-passage-pdf-spectral} is the spectral representation of 
$\xi(T-t;\b)$ in Theorem \ref{last-passage-propn-time-t}.

%
%
We note that if a process is defined as 
$F_t := {\sf F} (X_t)$, with ${\sf F}:\I \to {\mathcal D}$ as a 
monotonic (increasing or decreasing) mapping\footnote{Note that throughout the text we shall use the sans serif symbols ${\sf F}$ and ${\sf X}$ for the mapping functions.} with unique continuous inverse 
${\sf X} \equiv {\sf F}^{-1}$, i.e., $X_t = {\sf X} (F_t)$, 
then 
\begin{align}\label{last_passage_CDF_X_to_F}
\P_{F_0}(g^F_K(T) \le t) = \P_{x}(g_\b(T) \le t) \,,
\end{align}
where $g^F_K(T) :=\sup\{0 \le u \le T : F_u = K\}$, $K\in {\mathcal D}$, $x= X_0 = {\sf X}(F_0)$, $\b = {\sf X}(K)$. 
Hence, the density $f_{g^F_K(T)}(t;F_0) = f_{g_k(T)}(t;x)$ and $\P_{F_0}(g^F_K(T) = 0) = \P_{x}(g_\b(T) = 0)$. 
Note also the first hitting time $\T^F_K := \inf\{t\ge 0: F_t = K\} = \T^X_\b \equiv \T_\b$, i.e., 
$\P_{F_0}(\T^F_K > T) = \P_{x}(\T_\b > T)$.

%
%
\subsection{Joint Distribution of Last Hitting Time and Process Value}\label{subsect3}

We now derive formulae for the joint distribution of $(g_\b(T),X_T)$, i.e., the last hitting time to level $\b$ and terminal value of the  diffusion on $\I$ on any finite time interval $[0,T]$, $T>0$. 
Letting $X_0\equiv x,z,\b\in \I$, and applying a similar conditioning as in (\ref{prop_last_time-t-1}):
\begin{align}\label{prop_joint_last_time-t-1}
\P_x(g_\b(T) \le t, X_T \in dz) 
&= \int_l^r \P_x(g_\b(T) \le t , X_T \in dz \vert X_t = y)\,  p(t;x,y) dy 
\nonumber \\
&= \int_l^r \P_y(\T_\b > T - t, X_{T-t}  \in dz)\,  p(t;x,y) dy\,.
\end{align}
Again, we used the Markov and time-homogeneity properties. As well,   
$\P(X_T \in \I \vert X_t = \partial^\dagger) = 0$, for $t\in[0,T)$, i.e., $\P_x(g_\b(T) \le t , X_T \in dz \vert X_t = \partial^\dagger) = 0$.

Consistent with the equivalence $\{g_\b(T) = 0\} = \{\T_\b > T\}$, taking $t \searrow 0$ in (\ref{prop_joint_last_time-t-1}) recovers the portion of the joint distribution that is discrete in $g_\b(T)$ and continuous in $X_T$:
\begin{align}\label{prop_joint_last_time-t-2}
\P_x(g_\b(T) = 0, X_T \in dz) &= \P_x(\T_\b > T, X_{T}  \in dz)
\nonumber \\
&= \begin{cases} 
\P_x(m_T > \b , X_{T}  \in dz)  = p_{\b}^+(T;x,z)dz &, x,z > \b,
\\
\P_x(M_T < \b, X_{T}  \in dz) = p_{\b}^-(T;x,z)dz &, x,z < \b.
\end{cases}
\end{align}
Note: for $x=\b$ or $z=\b$, $\P_\b(g_\b(T) = 0, X_{T}  \in dz) \equiv 0$. 
Here we used \eqref{joint_transPDF_kill_b}, where $p_{\b}^\pm$ are the transition densities (that are nonzero on the respective intervals $\I_\b^\pm$) with imposed killing at the last hitting level $\b$. Hence, the distribution in \eqref{prop_joint_last_time-t-2} admits a spectral expansion given by either 
\eqref{u_spectral_1} or \eqref{u_spectral_2} in the respective cases where $l$ is NONOSC or $r$ is NONOSC, or by 
\eqref{u_spectral_2_b} if $l$ or $r$ are O-NO natural. 

As in Section \ref{subsect2} , we are generally considering nonconservative processes 
on the state space $\I$ where  
\begin{align}\label{prop_joint_last_time-t-3}
\P_x(g_\b(T) \le t)  = \P_x(g_\b(T) \le t, X_T \in \I) + \P_x(g_\b(T) \le t, X_T =\partial^\dagger).
\end{align}
If the process is conservative on $\I$, i.e., $\P_x(X_T = \partial^\dagger)
=\P_x(g_\b(T) \le t, X_T =\partial^\dagger) \equiv 0$, then 
\begin{align}\label{prop_joint_last_time-t-conserve}
\P_x(g_\b(T) \le t)  = \P_x(g_\b(T) \le t, X_T \in \I) 
= \int_l^r \P_x(g_\b(T) \le t, X_T \in dz)
\end{align}
with the (marginal) density of $g_\b(T)$ given by taking ${\partial \over \partial t}$ on both sides of (\ref{prop_joint_last_time-t-conserve}):
\begin{align}\label{last_density_time-t-conserve_from_joint}
f_{g_\b(T)}(t;x) = \int_l^r  f_{g_\b(T), X_T}(t,z;x) dz.
\end{align}
In any case (conservative or nonconservative) we shall denote by $f_{g_\b(T), X_T}(t,z;x)$  the {\it joint density} of 
$(g_\b(T), X_T)$ where ${\partial \over \partial t}\P_x(g_\b(T) \le t, X_T \in dz) = f_{g_\b(T), X_T}(t,z;x) dz$, i.e., $\P_x(g_\b(T) \in dt, X_T \in dz) = f_{g_\b(T), X_T}(t,z;x) dtdz$. If the process is nonconservative on $\I$, then in the place of (\ref{last_density_time-t-conserve_from_joint}) we have
\begin{align}\label{last_density_time-t-nonconserve_from_joint}
f_{g_\b(T)}(t;x) = \int_l^r  f_{g_\b(T), X_T}(t,z;x) dz 
+ {\partial \over \partial t}\P_x(g_\b(T) \le t, X_T = \partial^\dagger)\,.
\end{align}
Hence, this relation allows us to also compute the last term in \eqref{last_density_time-t-nonconserve_from_joint},
 i.e., the density of $g_\b(T)$ at $t\in (0,T)$ jointly with the process being in the cemetery state at time $T$, 
by subtracting the marginal (given by \eqref{last-passage-pdf-explicit}) and the integral on $\I$ of the joint density 
(given by \eqref{joint_last_passage_pdf_explicit} below). If the process is conservative on $\I$, then we trivially have ${\partial \over \partial t}\P_x(g_\b(T) \le t, X_T = \partial^\dagger) \equiv 0$.

Moreover, if the process is conservative on $\I$, then $\P_x(g_\b(T) = 0, X_T = \partial^\dagger) = 0$ and hence the discrete portion is recovered by integrating (\ref{prop_joint_last_time-t-2}), i.e., 
$\P_x(g_\b(T) = 0) = \P_x(g_\b(T) = 0, X_T \in \I) = \int_l^r \P_x(\T_\b > T, X_T \in dz)$. 
More generally, we have
$$\P_x(g_\b(T) = 0)  = \P_x(\T_\b > T) = \P_x(g_\b(T) = 0, X_T \in \I) + \P_x(g_\b(T) = 0, X_T = \partial^\dagger).$$
Hence, the nonzero jointly discrete (doubly defective) distribution is given equivalently by
\begin{eqnarray}
\P_x(g_\b(T) = 0, X_T = \partial^\dagger) &=& \P_x(\T_\b > T) - \P_x(\T_\b > T, X_T \in \I)
\label{joint_last_doubly_defective_1}
\\
&=& \begin{cases} 
\P_x(M_T < \b,  X_T = \partial^\dagger) = \P_x(\T_l^-(\b) \le T) &, x < \b,
\\
\P_x(m_T > \b ,  X_T = \partial^\dagger) = \P_x(\T_r^+(\b) \le T) &, x > \b.
\end{cases} 
\label{joint_last_doubly_defective_2}
\end{eqnarray} 
Note: we now assume the boundaries $l$ and $r$ are nonconservative (regular killing or exit-not-entrance) as otherwise the expression for $x \le \b$, or $x \ge \b$, is trivially zero if $l$, or $r$, is conservative, respectively. Both \eqref{joint_last_doubly_defective_1} or \eqref{joint_last_doubly_defective_2} can be used to derive the discrete spectral expansion for the jointly defective distribution: 
\begin{eqnarray}
\P_x(g_\b(T) = 0, X_T = \partial^\dagger) 
= \begin{cases} 
\displaystyle {\mathcal{S}[x,\b] \over \mathcal{S}(l,\b]} 
+ \sum_{n=1}^\infty {e^{- \lambda_n T} \over  \lambda_n}
\psi_n^+(x;\b) {\varphi^-_{-\lambda_n}(\b) \varphi^{+\,\prime}_{-\lambda_n}(l+) 
\over w_{-\lambda_n}\s(l+)}&, x < \b,
\\
\displaystyle {\mathcal{S}[\b,x] \over \mathcal{S}[\b,r)}  
- \sum_{n=1}^\infty {e^{- \lambda_n T} \over  \lambda_n}
\psi_n^-(x;\b) {\varphi^+_{-\lambda_n}(\b) \varphi^{-\,\prime}_{-\lambda_n}(r-) 
\over w_{-\lambda_n}\s(r-)}&, x > \b,
\end{cases}
\label{joint_last_doubly_defective_formula}
\end{eqnarray}
where $\lambda_n \equiv \lambda_{n,\b}^-$ for $x < \b$ and $\lambda_n \equiv \lambda_{n,\b}^+$ for $x > \b$, with 
$\psi_n^\pm$ in \eqref{FHT_eigenfunctions_1}--\eqref{FHT_eigenfunctions_2}. See  
\ref{Proof_doubly_defective} for a detailed derivation of \eqref{joint_last_doubly_defective_formula}. 
Note: for $x=\b$, $\psi_n^\pm(\b;\b)=0$ and $\mathcal{S}[\b,\b]=0$, i.e., 
$\P_\b(g_\b(T) = 0, X_T = \partial^\dagger)=0$, as trivially required.

The integral in (\ref{prop_joint_last_time-t-1}) can be split into two integrals, $y\in (l,\b)$ and $y\in (\b,r)$: 
\begin{align}\label{prop_joint_last_time-t-1_alt}
\P_x(g_\b(T) \le t, X_T \in dz) / dz =
\begin{cases}
\int_l^\b \,  p_{\b}^-(T-t;y,z)\,p(t;x,y)  dy 
&, z \in (l,\b),
\\
\int_\b^r \,  p_{\b}^+(T-t;y,z) \,p(t;x,y) dy 
&, z \in (\b,r),
\end{cases}
\end{align}
where $\P_y(\T_\b > T-t, X_{T-t}  \in dz) = \P_y(m_{T-t} > \b, X_{T-t}  \in dz) = 
p_{\b}^+(T-t;y,z) dz$, for $y,z \in (\b,r)$, and $\P_y(\T_\b > T-t, X_{T-t}  \in dz) = \P_y(M_{T-t} < \b, X_{T-t}  \in dz) = p_{\b}^-(T-t;y,z) dz$, for $y,z \in (l,\b)$.

Theorem \ref{joint_last-passage-propn-time-t} now gives the joint PDF of $(g_\b(T), X_T)$ at $(t,z)$, w.r.t. the speed measure at $z$, as a  product of the time-$t$ transition PDF, w.r.t. the speed measure at $\b$, of the process starting at $x$ and ending at the last hitting level $\b$, and the first hitting time density to hit $\b$ within a time $(T-t)$, starting at the future value $X_T=z$. 
\footnote{The formula for $f_{g_\b(T), X_T}(t,z;x)/\m(z)$ in \eqref{joint_last_passage_pdf_explicit} can also be interpretated in backward time, i.e., multiplying probabilities for paths ``starting'' at $z$ and first hitting $\b$ in time $T-t$ with density 
$f^+(T-t,z;\b)$, or $f^-(T-t,z;\b)$, and subsequently ``ending" at $x$, from $\b$, in time $t$ with density ${p(t;x,\b) \over \m(\b)} \equiv q(t;x,\b) = q(t;\b,x) = {p(t;\b,x) \over \m(x)}$.
}
As in Theorem~\ref{last-passage-propn-time-t}, this holds for any type of left and right boundaries.
\begin{theorem}\label{joint_last-passage-propn-time-t}
The joint PDF of $(g_\b(T), X_T)$ for $t \in (0,T)$, $X_0 = x,\b \in \I$, has the representation:
\begin{equation}\label{joint_last_passage_pdf_explicit}
f_{g_\b(T), X_T}(t,z;x) 
= {p(t;x,\b) \over \m(\b)} \m(z)
\begin{cases}
f^+(T-t,z;\b)
&, z \in (l,\b),
\\
f^-(T-t,z;\b)
&, z \in (\b,r),
\end{cases}
\end{equation}
with first hitting time densities $f^\pm$ given by \eqref{fhit_up_down_pdf}, i.e.,  
$\m(z) f^\pm(T-t,z;\b) = \mp \displaystyle\displaystyle\frac{1}{\s(\b)} \frac{\partial}{\partial y}p_\b^\mp(T-t;y,z)\bigg|_{y=\b\mp}$.
\end{theorem}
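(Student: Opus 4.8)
The plan is to differentiate the joint CDF representation \eqref{prop_joint_last_time-t-1_alt} with respect to $t$ and then collapse the spatial integral to a single boundary contribution at the hitting level $\b$ by means of the Kolmogorov equations. I treat the case $z\in(l,\b)$; the case $z\in(\b,r)$ is symmetric, with $p_\b^-$ replaced by $p_\b^+$, integration over $(\b,r)$, and the surviving boundary term taken at $\b+$. Recalling that ${\partial\over\partial t}\P_x(g_\b(T)\le t, X_T\in dz)=f_{g_\b(T),X_T}(t,z;x)\,dz$, I start from
$$f_{g_\b(T),X_T}(t,z;x)={\partial\over\partial t}\int_l^\b p_\b^-(T-t;y,z)\,p(t;x,y)\,dy$$
and differentiate under the integral. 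Since $t$ appears in both factors, the product rule yields two terms. For the first I invoke the backward Kolmogorov equation for $p_\b^-$ in its initial argument, ${\partial\over\partial s}p_\b^-(s;y,z)=\mathcal{G}_y\,p_\b^-(s;y,z)$, so that ${\partial\over\partial t}p_\b^-(T-t;y,z)=-\mathcal{G}_y\,p_\b^-(T-t;y,z)$. For the second I use the forward (Fokker--Planck) equation ${\partial\over\partial t}p(t;x,y)=\mathcal{G}^\ast_y\,p(t;x,y)$, where $\mathcal{G}^\ast_y$ denotes the formal Lebesgue-adjoint of $\mathcal{G}_y$.

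The next step recasts both terms through the single backward operator so that the Sturm--Liouville structure can be exploited. Writing $q(t;x,y):=p(t;x,y)/\m(y)$ for the symmetric density relative to the speed measure, which solves the backward equation in each argument, one has $\mathcal{G}^\ast_y\,p(t;x,y)=\m(y)\,\mathcal{G}_y\,q(t;x,y)$. Setting $u(y):=p_\b^-(T-t;y,z)$ and $v(y):=q(t;x,y)$, the integrand reduces to $\m(y)\big[\,u\,\mathcal{G}_y v-v\,\mathcal{G}_y u\,\big]$. Using the divergence form $\m(y)\,\mathcal{G}_y f=\big(f'(y)/\s(y)\big)'$ of \eqref{eq:X_Gen}, Green's identity (two integrations by parts) collapses the integral to the Wronskian-type boundary term $\big[(uv'-vu')/\s\big]_{l}^{\,\b-}$.

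At the upper endpoint the killing condition $p_\b^-(T-t;\b,z)=0$ gives $u(\b-)=0$, so only $-v(\b)\,u'(\b-)/\s(\b)$ survives; with $v(\b)=p(t;x,\b)/\m(\b)$ and the first-hitting-time identity \eqref{fhit_up_down_pdf} of Lemma~\ref{Lemma_hitting_time}, which reads $-\s(\b)^{-1}{\partial\over\partial y}p_\b^-(T-t;y,z)\big|_{y=\b-}=\m(z)\,f^+(T-t,z;\b)$, this contribution is exactly ${p(t;x,\b)\over\m(\b)}\,\m(z)\,f^+(T-t,z;\b)$, the claimed form.

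The \emph{main obstacle} is to show that the Wronskian boundary term vanishes at the left endpoint $l$. The conceptual reason is that both $u$ and $v$ lie in the domain of the self-adjoint realization of $\mathcal{H}=-\mathcal{G}$ carrying the boundary behavior of the diffusion at $l$ (killing at $\b$ does not affect the behavior at $l$), whence the Lagrange bilinear form vanishes there. Concretely I would verify $\lim_{y\to l+}(uv'-vu')/\s(y)=0$ according to the Feller type of $l$: for $l$ exit-not-entrance or regular killing both functions vanish at $l+$; for $l$ entrance-not-exit or regular reflecting the scaled fluxes $u'/\s$ and $v'/\s$ vanish; and for $l$ natural the limit-point property forces the form to zero. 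Establishing this limit uniformly, together with justifying the differentiation under the integral sign via the spectral bounds of Section~\ref{sect_spectral_transPDF}, is the only genuinely delicate part; the remaining Green's-identity reduction is routine.
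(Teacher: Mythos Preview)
Your proposal is correct and follows essentially the same route as the paper's proof: differentiate \eqref{prop_joint_last_time-t-1_alt} in $t$, apply the backward Kolmogorov PDE to $p_\b^-(T-t;y,z)$ and the forward PDE to $p(t;x,y)$ (equivalently, the backward PDE to $q(t;x,y)=p(t;x,y)/\m(y)$), and reduce via integration by parts/Green's identity to the boundary term at $y=\b-$, with the contribution at $y=l+$ eliminated by the same Feller-type case analysis the paper carries out. The only cosmetic difference is that the paper treats the natural boundary together with the exit/killing cases (using $q(t;x,l+)=p_\b^-(T-t;l+,z)=0$) rather than invoking the limit-point property separately.
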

\begin{proof} The original statement of this theorem and its proof is found in \cite{CM21}. 
See \ref{sect_a2}. 
\end{proof}
\noindent Remark: The joint PDF is zero at $z=\b$. In the case of a conservative process on $\I$, the joint PDF in \eqref{joint_last_passage_pdf_explicit} 
satisfies \eqref{last_density_time-t-conserve_from_joint}. 
Indeed, by splitting the integral of $f_{g_\b(T), X_T}(t,z;x)$ over $z\in (l,\b)$ and $z\in (\b,r)$, and then reversing the order of differentiation in $y$ and integration in $z$, 
while using $\int_\b^r p_{\b}^+(T-t;y,z)dz = 1 - \P_{y}(\Tau^{-}_{\b} \le T-t)$ and 
$\int_l^\b p_{\b}^-(T-t;y,z)dz = 1 - \P_{y}(\Tau^{+}_{\b} \le T-t)$, recovers the marginal PDF 
$f_{g_\b(T)}(t;x)$ in \eqref{last-passage-pdf-explicit}.

Based on Theorem ~\ref{joint_last-passage-propn-time-t} we directly have the following spectral representations.

%
%
\begin{pro}\label{joint_last-passage-propn-time-t-new-formula}
The joint PDF of $(g_\b(T), X_T)$, $t\in (0,T)$, $x, \b\in \I$, has the following representation. 
For $z \in (l,\b)$:
\begin{align}\label{joint_last_passage_pdf_spectral_1}
&\!\!\!\!\!\!\!f_{g_\b(T), X_T}(t,z;x) = {p(t;x,\b) \over \m(\b)} \m(z)
\nonumber \\
&\,\,\,\,\,\,\,\times\begin{cases}
\displaystyle \sum_{n=1}^{\infty}e^{-\lambda_{n,k}^- (T-t)} \psi^+_{n}(z;k), &  \text{if $l$ is NONOSC},
\\
\displaystyle \sum_{n\ge 1}e^{-\lambda_{n,k}^- (T-t)} \psi^+_{n}(z;k) 
+ {1 \over \pi}\int_{\Lambda_-}^\infty \!e^{-\epsilon (T-t)}\, \textup{Im}
\bigg\{{\varphi^+_\lambda (z) \over \varphi^+_\lambda (\b)}\bigg\vert_{\lambda = \epsilon e^{-i\pi}}\bigg\}\,d\epsilon
, &  \text{if $l$ is O-NO}.
\end{cases}
\end{align}
For $z \in (\b,r)$:
\begin{align}\label{joint_last_passage_pdf_spectral_2}
&\!\!\!\!\!\!\!f_{g_\b(T), X_T}(t,z;x) = {p(t;x,\b) \over \m(\b)} \m(z)
\nonumber \\
&\,\,\,\,\,\,\,\times\begin{cases}
\displaystyle \sum_{n=1}^{\infty}e^{-\lambda_{n,k}^+ (T-t)}  \psi^-_{n}(z;k) , &  \text{if $r$ is NONOSC}, \\
\displaystyle \sum_{n\ge 1}e^{-\lambda_{n,k}^+ (T-t)}  \psi^-_{n}(z;k) 
+ {1 \over \pi}\int_{\Lambda_+}^\infty \!e^{-\epsilon (T-t)}\, \textup{Im}
\bigg\{{\varphi^-_\lambda (z) \over \varphi^-_\lambda (\b)}\bigg\vert_{\lambda = \epsilon e^{-i\pi}}\bigg\}\,d\epsilon, &  \text{if $r$ is O-NO}.
\end{cases}
\end{align}
\end{pro}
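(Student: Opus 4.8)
The plan is to read off \eqref{joint_last_passage_pdf_spectral_1}--\eqref{joint_last_passage_pdf_spectral_2} as an immediate consequence of Theorem~\ref{joint_last-passage-propn-time-t} together with the spectral expansions of the first hitting time densities established in Propositions~\ref{prop_spec_first_hit_1}--\ref{prop_spec_first_hit_2}. Theorem~\ref{joint_last-passage-propn-time-t} already factors the joint density as ${p(t;x,\b)\over\m(\b)}\,\m(z)\,f^+(T-t,z;\b)$ on $z\in(l,\b)$ and as ${p(t;x,\b)\over\m(\b)}\,\m(z)\,f^-(T-t,z;\b)$ on $z\in(\b,r)$. Since the prefactor ${p(t;x,\b)\over\m(\b)}\,\m(z)$ carries no spectral content, the whole task reduces to inserting the known series (and, where present, branch-cut integral) representations of $f^\pm$.

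First I would handle $z\in(l,\b)$, where the governing factor $f^+(T-t,z;\b)$ is the density of the first hitting time up at $\b$ from $z<\b$, controlled by the left endpoint $l$. Substituting $x\mapsto z$, $b\mapsto\b$, $t\mapsto T-t$ into Proposition~\ref{prop_spec_first_hit_1} gives, when $l$ is NONOSC, $f^+(T-t,z;\b)=\sum_{n\ge 1}e^{-\lambda_{n,\b}^-(T-t)}\psi_n^+(z;\b)$ with $\psi_n^+$ as in \eqref{FHT_eigenfunctions_1}; when $l$ is O-NO natural with cutoff $\Lambda_-$, Proposition~\ref{prop_spec_first_hit_2} adjoins the branch-cut term ${1\over\pi}\int_{\Lambda_-}^\infty e^{-\epsilon(T-t)}\,\textup{Im}\{\varphi^+_\lambda(z)/\varphi^+_\lambda(\b)\}\vert_{\lambda=\epsilon e^{-i\pi}}\,d\epsilon$. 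Multiplying either form by ${p(t;x,\b)\over\m(\b)}\,\m(z)$ reproduces \eqref{joint_last_passage_pdf_spectral_1} exactly.

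The case $z\in(\b,r)$ is symmetric: the governing factor is now $f^-(T-t,z;\b)$, the first hitting time down at $\b$ from $z>\b$, whose spectrum is dictated by the right endpoint $r$. Substituting the NONOSC form from Proposition~\ref{prop_spec_first_hit_1} or the O-NO form from Proposition~\ref{prop_spec_first_hit_2} (with cutoff $\Lambda_+$ and $\varphi^-_\lambda$ in place of $\varphi^+_\lambda$) and multiplying by the same prefactor yields \eqref{joint_last_passage_pdf_spectral_2}. Because every analytic step has already been carried out upstream, I do not anticipate a genuine obstacle here; the one point requiring care is purely organizational, namely correctly pairing the hitting direction ($+$ for $z<\b$, $-$ for $z>\b$) with the endpoint ($l$ versus $r$) that controls the relevant spectrum, so that the branch-cut integrals appear in, and only in, the respective O-NO subcases.
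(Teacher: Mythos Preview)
Your proposal is correct and matches the paper's primary proof essentially verbatim: the paper states that the series follow directly by inserting the spectral expansions of $f^\pm$ from Propositions~\ref{prop_spec_first_hit_1}--\ref{prop_spec_first_hit_2} into the factorization of Theorem~\ref{joint_last-passage-propn-time-t}. The paper also records, in an appendix, an alternative derivation that bypasses Propositions~\ref{prop_spec_first_hit_1}--\ref{prop_spec_first_hit_2} by instead differentiating the spectral series for $p_\b^\mp$ termwise at $y=\b\mp$ and invoking the Wronskian relation together with \eqref{Green_derivative_2} for the O-NO integral piece, but your route is the one presented as the main argument.
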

\begin{proof} The series follow directly from $f^\pm$ in Propositions~\ref{prop_spec_first_hit_1}.--\ref{prop_spec_first_hit_2}. An alternative proof is given in \ref{spectral_prop_joint_last_proof}. 
\end{proof}
\noindent We remark that, as in Proposition~\ref{last-passage-propn-time-t-spectral}, many applications require only Spectral Category I ($l$ and $r$ as NONOSC) where only the first expressions in  
\ref{joint_last_passage_pdf_spectral_1}-\ref{joint_last_passage_pdf_spectral_2} apply. 
As noted above, the summations for the O-NO cases may be empty with purely continuous spectral expansions.

Following the discussion at the end of Section \ref{subsect2}, 
we also have a simple relation between the joint PDFs of $(g^F_K(T), F_T)$ and $(g^X_\b(T), X_T)$:
\begin{align}\label{joint_last_PDF_X_to_F}
f_{g^F_K(T), F_T}(t,z; F_0) = \vert {\sf X}^\prime(z) \vert f_{g^X_\b(T), X_T}(t,{\sf X}(z);X_0)\,,
\end{align}
$z, K, F_0 \in \mathcal{D}$, $\b = {\sf X}(K)$, $X_0 =  {\sf X}(F_0)$. For the nonzero partly discrete portion we have 
\begin{align}\label{pmf_F_to_X_joint_last_time_1}
&\P_{F_0}(g^F_K(T) = 0, F_T \in dz) = \P_{X_0}(g^X_\b(T) = 0, X_T \in d \,{\sf X}(z))
\nonumber \\
&= \begin{cases} 
\P_{X_0}(m_T > \b , X_{T}  \in d \,{\sf X}(z))  = p_{\b}^+(T;X_0,{\sf X}(z)) {\sf X}^\prime(z) dz &, z,F_0 > K,
\\
\P_{X_0}(M_T < \b, X_{T}  \in d \,{\sf X}(z)) = p_{\b}^-(T;X_0,{\sf X}(z)) {\sf X}^\prime(z) dz &, z,F_0 < K,
\end{cases}
\end{align}
if ${\sf X}^\prime > 0$, and 
\begin{align}\label{pmf_F_to_X_joint_last_time_2}
&\P_{F_0}(g^F_K(T) = 0, F_T \in dz)
\nonumber \\
&= \begin{cases} 
\P_{X_0}(M_T < \b , X_{T}  \in d \,{\sf X}(z))  = p_{\b}^-(T;X_0,{\sf X}(z)) \vert {\sf X}^\prime(z) \vert dz &, z,F_0 > K,
\\
\P_{X_0}(m_T > \b, X_{T}  \in d \,{\sf X}(z)) = p_{\b}^+(T;X_0,{\sf X}(z)) \vert {\sf X}^\prime(z) \vert dz &, z,F_0 < K,
\end{cases}
\end{align}
if ${\sf X}^\prime < 0$. Analogous relations hold for the doubly defective distribution.

%
%
%
%
\subsection{Joint Distributions of Last Hitting Time, Process Value and Extrema}\label{subsect4}

The formulae in previous sections are now extended by considering the joint distribution of 
$(g_\b(T),X_T)$ subject to imposed killing at interior points of the state space $\I$. We begin by deriving the distribution of 
the pair $(g_\b(T),X_T)$ jointly with the sampled minimum and maximum of the process at any time $T>0$. 
By conditioning on $X_t$, where $l < a < b < r$, we have
\begin{align}
&\P_x(g_\b(T) \le t, m_T> a, M_T < b, X_T \in dz) 
\nonumber \\
&=\int_a^b \P_y(g_\b(T) \le t, m_T> a, M_T < b, X_T \in dz \vert X_t = y)\, p_{(a,b)}(t;x,y) dy
\nonumber \\
&=\int_a^b \P_y(\T_\b > T-t, m_{T-t}> a, M_{T-t} < b, X_{T-t} \in dz)\,  p_{(a,b)}(t;x,y) dy
\label{prop_joint_last_time_kill_ab-t-1}
\\
&=\begin{cases}
\int_a^\b \P_y(m_{T-t}> a, M_{T-t} < \b, X_{T-t} \in dz)\,  p_{(a,b)}(t;x,y) dy\,,\,\, z\in (a,\b),
\\
\int_\b^b \P_y(m_{T-t}> \b, M_{T-t} < b, X_{T-t} \in dz)\,  p_{(a,b)}(t;x,y) dy\,,\,\,z \in (\b,b),
\end{cases}
\label{prop_joint_last_time_kill_ab-t-2}
\\
&=\begin{cases}
\left(\int_a^\b p_{(a,\b)}(T-t;y,z)  p_{(a,b)}(t;x,y) dy\right) dz\,,\,\, z\in (a,\b),
\\
\left(\int_\b^b p_{(\b,b)}(T-t;y,z)  p_{(a,b)}(t;x,y) dy\right) dz\,,\,\, z \in (\b,b),
\end{cases}
\label{prop_joint_last_time_kill_ab-t-3}
\end{align}
for all $t\in (0,T)$, $X_0= x,z,\b\in (a,b)$. Note that $\P_x(g_\b(T) \le t, m_T> a, M_T < b, X_T \in dz)=0$ for $z=\b$. 
Equation (\ref{prop_joint_last_time_kill_ab-t-1}) follows from the Markov property and  time-homogeneity of the process. Equation (\ref{prop_joint_last_time_kill_ab-t-2}) holds since  
$\P_y(\T_\b > T-t, m_{T-t} > a, M_{T-t} < b) = \P_y(m_{T-t} > a, M_{T-t} < \b)$, for $y \in (a,\b)$, and similarly 
$\P_y(\T_\b > T-t, m_{T-t} > a, M_{T-t} < b) = \P_y(m_{T-t} > \b, M_{T-t} < b)$, for $y \in (\b,b)$. Equation (\ref{prop_joint_last_time_kill_ab-t-3}) follows directly from the definition of the time-$(T-t)$ transition PDFs, $p_{(a,\b)}$ and $p_{(\b,b)}$, for the process with imposed killing at both endpoints of the intervals $(a,\b)$ and $(\b,b)$, respectively. 

For $t=T$, $\P_x(g_\b(T) \le T, m_T> a, M_T < b, X_T \in dz) = 
\P_x(m_T> a, M_T < b, X_T \in dz) = p_{(a,b)}(T;x,z) dz$, which is trivially recovered as $t\nearrow T$ in \eqref{prop_joint_last_time_kill_ab-t-3}. In the limit $t \searrow 0$, the nonzero partly discrete portion of the distribution is also recovered:
\begin{align}\label{prop_joint_last_time_kill_ab-t-3-zero}
\P_x(g_\b(T) = 0, m_T> a, M_T < b, X_T \in dz) = \P_x(\T_\b > T, m_T> a, M_T < b, X_T \in dz) 
\nonumber \\
= \begin{cases} 
\P_x(m_T > a , M_T < \b, X_T \in dz) =  p_{(a,\b)}(T;x,z) dz &, x,z \in (a,\b),
\\
\P_x(m_T > \b, M_T < b, X_{T}  \in dz) =  p_{(\b,b)}(T;x,z) dz &, x,z \in (\b,b).
\end{cases}
\end{align}
We note the trivial case with zero joint probability when $x=\b$ or $z=\b$.

The joint probabilities in (\ref{prop_joint_last_time_kill_ab-t-1})-(\ref{prop_joint_last_time_kill_ab-t-3-zero}) can be related to joint probabilities of the last hitting time and the process value for the killed process $X_{(a,b)}$ defined in 
(\ref{X-Killed-ab}). In particular, defining the last hitting time to level $\b\in (a,b)$ of this process within time $T$ as 
$$g^{(a,b)}_{\b}(T) := \sup\{0\le u \le T: X_{(a,b),u} = \b \}\,,$$
where $X_{(a,b),0} = X_0 = x \in (a,b)$, gives the equivalence
\begin{align}\label{last-passage-CDF-time_0_to_t_kill_ab}
\P_x(g^{(a,b)}_{\b}(T) \le t, X_{(a,b),T} \in dz) = 
\P_x(g_\b(T) \le t, m_T> a, M_T < b, X_T \in dz) \,.
\end{align}
In analogy with \eqref{prop_joint_last_time-t-3}, we can express the marginal CDF of $g^{(a,b)}_{\b}(T)$ as
\begin{align}\label{prop_joint_last_time-t-3_ab}
\P_x(g^{(a,b)}_{\b}(T) \le t)  &= \P_x(g^{(a,b)}_{\b}(T) \le t, X_{(a,b),T} \in (a,b))
+ \P_x(g^{(a,b)}_{\b}(T) \le t, X_{(a,b),T} = \partial^\dagger),
\end{align}
(note: $\{X_{(a,b),T} = \partial^\dagger\} \equiv \{\T_{(a,b)}\le T\}$) where 
\begin{align}\label{prop_joint_last_time-in_ab}
\P_x(g^{(a,b)}_{\b}(T) \le t, X_{(a,b),T} \in (a,b)) 
&= \P_x(g_\b(T) \le t, m_T> a, M_T < b)
\nonumber \\
&= \int_a^b\,\P_x(g^{(a,b)}_{\b}(T) \le t, X_{(a,b),T} \in dz)\,.
\end{align}
Hence, we note that, from \eqref{prop_joint_last_time-t-3_ab}--\eqref{prop_joint_last_time-in_ab},
$$\P_x(g^{(a,b)}_{\b}(T) \le t) 
> \P_x(g_\b(T) \le t, m_T> a, M_T < b)$$ 
since $\P_x(g^{(a,b)}_{\b}(T) \le t, X_{(a,b),T} = \partial^\dagger) > 0$ gives the nonzero probability of all paths that are killed within time $T$ and having last hitting times within any prior time $t < T$. 
For $t=T$ we must have $\P_x(g^{(a,b)}_{\b}(T) \le T) = 1$, i.e., the last hitting time within time $T$ for the process $X_{(a,b)}$ must be in $[0,T]$. 

For $t=0$ we have 
$$\P_x(g^{(a,b)}_{\b}(T) = 0, X_{(a,b),T} \in dz) = \P_x(g_\b(T) = 0, m_T> a, M_T < b, X_T \in dz)$$
as given by (\ref{prop_joint_last_time_kill_ab-t-3-zero}). Integrating over $z\in (a,b)$ gives 
\begin{align}
\P_x(g^{(a,b)}_{\b}(T) = 0, X_{(a,b),T} \in (a,b)) 
&= \P_x(g_\b(T) = 0, m_T> a, M_T < b)
\nonumber \\
&= \begin{cases} 
\int_a^\b  p_{(a,\b)}(T;x,z) dz &, x \in (a,\b),
\\
\int_\b^b  p_{(\b,b)}(T;x,z) dz &, x \in (\b,b).
\label{prop_joint_last_ab_pmf}
\end{cases}
\end{align}
In analogy with \eqref{prop_last_time-t-1-limit}, we have the nonzero discrete portion of the distribution:
\begin{align}\label{prop_last_time-CDF_discrete_killed_ab}
\P_x(g^{(a,b)}_{\b}(T) = 0) 
&= 
\begin{cases}
\P_x(\T^+_\b (a) > T) = 
\displaystyle{\mathcal{S}[x,\b] \over \mathcal{S}[a,\b]} 
+ \sum_{n=1}^\infty {e^{- \lambda_n^{(a,\b)} T} \over \lambda_n^{(a,\b)}}\psi_n^+(x;a,\b)&, x \in (a,\b),
\\
\P_x(\T^-_\b (b) > T) = 
\displaystyle{\mathcal{S}[\b,x] \over \mathcal{S}[\b,b]}
+ \sum_{n=1}^\infty {e^{- \lambda_n^{(\b,b)} T} \over \lambda_n^{(\b,b)}} \psi_n^-(x;\b,b)&, x \in (\b,b).
\end{cases}
\end{align} 
These discrete spectral expansions follow by appropriately adopting \eqref{FHT_cdf_complement_tau_ab_1}--\eqref{FHT_cdf_complement_tau_ab_2} and \eqref{FHT_prop3_up_2}--\eqref{FHT_prop3_down_2}. 

The nonzero jointly defective portion of the distribution has the discrete spectral expansion:
\begin{align}
&\P_x(g^{(a,b)}_{\b}(T) = 0, X_{(a,b),T} = \partial^\dagger) 
= \P_x(g^{(a,b)}_{\b}(T) = 0) - \P_x(g^{(a,b)}_{\b}(T) = 0, X_{(a,b),T} \in (a,b))
\nonumber 
\\
\label{joint_last_ab_doubly_defective_new}
&= \begin{cases}
 \P_x(\T^-_a (\b) \le T) 
= \displaystyle{\mathcal{S}[x,\b] \over \mathcal{S}[a,\b]} - 
\sum_{n=1}^\infty {e^{- \lambda_n^{(a,\b)} T} \over \lambda_n^{(a,\b)}}\psi_n^-(x;a,\b)&, x \in (a,\b),
\\
 \P_x(\T^+_b (\b) \le T) 
= \displaystyle{\mathcal{S}[\b,x] \over \mathcal{S}[\b,b]}
- \sum_{n=1}^\infty {e^{- \lambda_n^{(\b,b)} T} \over \lambda_n^{(\b,b)}} \psi_n^+(x;\b,b)&, x \in (\b,b).
\end{cases}
\end{align}
The above probability expressions are analogues of \eqref{joint_last_doubly_defective_1} and \eqref{joint_last_doubly_defective_2}. Their equivalence, given by the series in \eqref{joint_last_ab_doubly_defective_new}, is shown in \ref{Proof_doubly_defective_ab}.

Let $f_{g^{(a,b)}_{\b}(T), X_{(a,b),T}}(t,z;x)$ denote the {\it joint density} of 
$(g^{(a,b)}_{\b}(T), X_{(a,b),T})$ for the process started at $X_0 =x$, i.e., by \eqref{last-passage-CDF-time_0_to_t_kill_ab},
\begin{align}
{\partial \over \partial t}\P_x(g^{(a,b)}_{\b}(T) \le t, X_{(a,b),T} \in dz) 
&= {\partial \over \partial t}\P_x(g_\b(T) \le t, m_T> a, M_T < b, X_T \in dz) 
\nonumber \\
&= f_{g^{(a,b)}_{\b}(T), X_{(a,b),T}}(t,z;x) dz,
\label{joint_PDF_last_ab_defn}
\end{align}
for all $x,\b,z \in (a,b)$, $t\in (0,T)$. Equivalently,
\[
f_{\!g^{(a,b)}_{\b}(T), X_{(a,b),T}}\!(t,z;x) dtdz \equiv 
\P_x(g^{(a,b)}_{\b}(T) \!\in\! dt, X_{(a,b),T} \!\in\! dz) = \P_x(g_{\b}(T) \!\in\! dt, m_T> a, M_T < b, X_T \!\in\! dz).
\]
Hence, $f_{g^{(a,b)}_{\b}(T), X_{(a,b),T}}$ is also the joint PDF of the pair $(g_{\b}(T) ,X_T)$ subject to the side condition $\{m_T> a, M_T < b\}$.

Differentiating w.r.t. $t$ on both sides of (\ref{prop_joint_last_time-t-3_ab}) gives the analogue of 
(\ref{last_density_time-t-nonconserve_from_joint}):
\begin{align}\label{prop_marginal_last_time-t_ab}
f_{g^{(a,b)}_{\b}(T)}(t;x) = \int_a^b f_{g^{(a,b)}_{\b}(T), X_{(a,b),T}}(t,z;x) dz
+ {\partial \over \partial t}\P_x(g^{(a,b)}_{\b}(T) \le t, X_{(a,b),T} = \partial^\dagger),
\end{align}
where $f_{g^{(a,b)}_{\b}(T)}(t;x) := {\partial \over \partial t}\P_x(g^{(a,b)}_{\b}(T) \le t)$ is the (marginal) PDF of  $g^{(a,b)}_{\b}(T)$. Here we used (\ref{prop_joint_last_time-in_ab}), where the integral term in (\ref{prop_marginal_last_time-t_ab}) is equal to the PDF of the last hitting time, $g_{\b}(T)$, subject to the side condition $\{m_T > a, M_T < b\}$: 
$${\partial \over \partial t}\P_x(g^{(a,b)}_{\b}(T) \le t, X_{(a,b),T} \in (a,b)) \equiv 
{\partial \over \partial t}\P_x(g_{\b}(T) \le t, m_T> a, M_T < b).$$ 
As seen in (\ref{prop_marginal_last_time-t_ab}), the integral of the joint PDF (over the state space $(a,b)$) does not recover the marginal density of $g^{(a,b)}_{\b}(T)$, as must be the case since the killed process $X_{(a,b)}$ is clearly nonconservative on $(a,b)$.

We remark that the second term on the r.h.s. of (\ref{prop_marginal_last_time-t_ab}) corresponds to the PDF of 
$g^{(a,b)}_{\b}(T)$ subject to the side condition $\{X_{(a,b),T} =\partial^\dagger\}$, which is given by subtracting the integral of the joint PDF in (\ref{prop_marginal_last_time-t_ab}) from the marginal PDF $f_{g^{(a,b)}_{\b}(T)}(t;x)$. 

The (marginal) distribution of $g^{(a,b)}_{\b}(T)$ follows by a simple extension of the analysis in 
Section~\ref{subsect2}, where the killed diffusion $X_{(a,b)}$ is considered in the place of $X$. In particular, by conditioning and employing the total law of probabilities, we have the analogue of 
(\ref{prop_last_time-t-1}):
\begin{align}\label{prop_last_time-CDF_1_killed_ab}
\P_x(g^{(a,b)}_{\b}(T) \le t) &= \int_a^b \P_x(g^{(a,b)}_{\b}(T) \le t \vert X_{(a,b),t} = y)\, p_{(a,b)}(t;x,y) dy 
\nonumber \\
&\,\,\,+ \P_x(g^{(a,b)}_{\b}(T) \le t \vert X_{(a,b),t} =\partial^\dagger) \cdot \P_x(X_{(a,b),t} =\partial^\dagger)\,.
\end{align}
Note: $\P_x(g^{(a,b)}_{\b}(T) \le t \vert X_{(a,b),t} =\partial^\dagger) = 1$, 
$\P_x(X_{(a,b),t} =\partial^\dagger) = 1 - \P_x(X_{(a,b),t} \in (a,b)) = 
1 - \int_a^b p_{(a,b)}(t;x,y) dy$. Moreover, by the Markov property, the conditional probability within the integral is equivalently set to
$$\P_x(g^{(a,b)}_{\b}(T) \le t \vert X_{(a,b),t} = y) = \P_y(\T^-_\b (b) > T - t) \cdot \mathbb {I}_{\{\b < y < b\}} + 
\P_y(\T^+_\b (a) > T - t) \cdot \mathbb {I}_{\{a < y < \b\}},$$  
with $\T^+_\b (a)$ as first hitting time up at $\b$ before $a$ and $\T^-_\b (b)$ 
as first hitting time down at $\b$ before $b$. 
Combining these expressions into (\ref{prop_last_time-CDF_1_killed_ab}) gives the analogue of (\ref{prop_last_time-t-1-prime}):
\begin{align}\label{prop_last_time-CDF_2_killed_ab}
\P_x(g^{(a,b)}_{\b}(T) \le t) &= 1 - \int_a^\b \P_y(\T^+_\b (a) \le T - t)\,  p_{(a,b)}(t;x,y) dy 
- \int_\b^b \P_y(\T^-_\b (b) \le T - t)\,  p_{(a,b)}(t;x,y) dy \,.
\end{align}
For $t=T$, we clearly have $\P_x(g^{(a,b)}_{\b}(T) \le T) = 1$ (as required) and $t \searrow 0$ in (\ref{prop_last_time-CDF_2_killed_ab}) also recovers \eqref{prop_last_time-CDF_discrete_killed_ab}.
Given \eqref{prop_last_time-CDF_discrete_killed_ab} and the (marginal) density, we can also use the analogue of \eqref{last-passage-CDF-time_0_to_t} to compute the CDF of $g^{(a,b)}_{\b}(T)$, i.e., for all $t\in [0,T]$,
\begin{equation}\label{last-passage-CDF-kill_a_b_time_0_to_t}
\P_x(g^{(a,b)}_{\b}(T) \le t) = \P_x(g^{(a,b)}_{\b}(T) = 0) + \int_0^t f_{g^{(a,b)}_{\b}(T)}(u;x) du.
\end{equation}

%
%
We now present general formulae for computing the above marginal and joint PDFs. In particular, they are respective extensions (or analogues) of Theorem \ref{last-passage-propn-time-t} and 
\ref{joint_last-passage-propn-time-t} for the process with imposed killing at two interior points $a$ and $b$. 
In Theorems \ref{last-passage-propn-time-t-kill-ab}--\ref{joint_last-passage-propn-time-t_ab}, we assume $X_{(a,b),0} \equiv X_0 = x \in (a,b)$, with last hitting level $\b\in (a,b)$, $t\in (0,T)$, $T\in (0,\infty)$.
%
%
%
\begin{theorem}\label{last-passage-propn-time-t-kill-ab}
The probability density function of $g^{(a,b)}_{\b}(T)$ is given by
\begin{equation}\label{last-passage-density-time-t-Laplace-kill-ab}
f_{g^{(a,b)}_{\b}(T)}(t;x) =\xi_{(a,b)}(T-t;\b) \, p_{(a,b)}(t;x,\b)\,,
\end{equation}
where $\xi_{(a,b)}(u;\b)$ is given by the Laplace inverse
\begin{align}\label{last-passage-density-phi-killed}
\xi_{(a,b)}(u;\b) &= {\mathcal L}_\lambda^{-1}\left\{{1 \over \lambda G_{(a,b)}(\lambda;\b,\b)}\right\}(u)
\\
&= {1 \over \m(\b)\s(\b)} \left[ {\partial \over \partial y} \P_y(\T_\b^+(a) \le u)\bigg\vert_{y=\b-} 
- {\partial \over \partial y} \P_y(\T^-_\b(b) \le u)\bigg\vert_{y=\b+} \right].
\label{last-passage-density-phi-killed_2}
\end{align}
Hence, explicitly we have
\begin{align}\label{last-passage-pdf-explicit-kill-ab}
f_{g^{(a,b)}_{\b}(T)}(t;x) = {p_{(a,b)}(t;x,\b) \over \m(\b)}{1 \over \s(\b)} \!
\left[ {\partial \over \partial y} \P_y(\T_\b^+(a) \le T-t)\bigg\vert_{y=\b-} 
- {\partial \over \partial y} \P_y(\T^-_\b(b) \le T-t)\bigg\vert_{y=\b+} \right].
\end{align}
\end{theorem}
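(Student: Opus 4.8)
The plan is to establish the explicit density formula \eqref{last-passage-pdf-explicit-kill-ab} directly; the factorization \eqref{last-passage-density-time-t-Laplace-kill-ab} together with the explicit form \eqref{last-passage-density-phi-killed_2} of $\xi_{(a,b)}$ then follows at once, and the Laplace-inverse characterization \eqref{last-passage-density-phi-killed} is verified separately. The whole argument mirrors the proof of Theorem~\ref{last-passage-propn-time-t} (see \ref{sect_a1}) with the killed diffusion $X_{(a,b)}$ in place of $X$. First I would start from the already-established CDF \eqref{prop_last_time-CDF_2_killed_ab} and differentiate in $t$. Writing $g(y):=p_{(a,b)}(t;x,y)/\m(y)$ for the transition density relative to the speed measure, and setting $h(y):=\P_y(\Tau^+_\b(a)\le T-t)$ on $(a,\b)$ and $h(y):=\P_y(\Tau^-_\b(b)\le T-t)$ on $(\b,b)$, the operator $\partial_t$ produces two contributions on each subinterval: one in which $\partial_t$ falls on $h$, yielding by the chain rule $+\int_a^b f^\pm\,p_{(a,b)}\,dy$ with the killed first-passage densities $f^+(T-t;y,\b\vert a)$, $f^-(T-t;y,\b\vert b)$ of Lemma~\ref{Lemma_hitting_time_ab}, and one in which $\partial_t$ falls on $p_{(a,b)}$.

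For the second contribution I would use $\partial_t p_{(a,b)}=\m\,\mathcal{G}_y(p_{(a,b)}/\m)=(g'/\s)'$, i.e.\ the forward equation cast in the self-adjoint form induced by the speed and scale densities, and then integrate by parts twice over $(a,\b)$ and $(\b,b)$ separately. The crucial observation is that $h$, for fixed $T-t$, solves the backward equation $\mathcal{G}_y h=f^\pm$ in the interior of each subinterval, so the resulting volume integral $\int_a^b g\,(h'/\s)'\,dy=\int_a^b p_{(a,b)}\,f^\pm\,dy$ exactly cancels the first contribution. The boundary terms at $a$ and $b$ vanish because $g$ obeys the Dirichlet condition $p_{(a,b)}(t;x,a)=p_{(a,b)}(t;x,b)=0$ and because the killed first-passage CDFs satisfy $h(a)=h(b)=0$. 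What survives are the boundary terms at $\b$: since $g,g'$ are continuous at the interior point $\b$, $h(\b)=1$, and only $h'$ jumps across $\b$, the combined contribution collapses to $g(\b)\,[h'(\b-)-h'(\b+)]/\s(\b)$, which is precisely \eqref{last-passage-pdf-explicit-kill-ab} once $g(\b)=p_{(a,b)}(t;x,\b)/\m(\b)$ is substituted.

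It then remains to verify \eqref{last-passage-density-phi-killed}, namely that the explicit $\xi_{(a,b)}$ Laplace-transforms (in $u$) to $1/(\lambda G_{(a,b)}(\lambda;\b,\b))$. Here I would use the standard Laplace transforms of the killed first-passage laws, $\E_y[e^{-\lambda\Tau^+_\b(a)}]=\phi(a,y;\lambda)/\phi(a,\b;\lambda)$ for $y\in(a,\b)$ and $\E_y[e^{-\lambda\Tau^-_\b(b)}]=\phi(y,b;\lambda)/\phi(\b,b;\lambda)$ for $y\in(\b,b)$, together with the $1/\lambda$ factor converting density to CDF. Differentiating in $y$ and evaluating at $\b$ reduces the claim to the algebraic identity $\partial_2\phi(a,\b;\lambda)\,\phi(\b,b;\lambda)-\partial_1\phi(\b,b;\lambda)\,\phi(a,\b;\lambda)=w_\lambda\,\s(\b)\,\phi(a,b;\lambda)$, where $\partial_1,\partial_2$ denote derivatives in the first and second argument. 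This follows by expanding $\phi$ from \eqref{phi_function} and repeatedly invoking the Wronskian relation \eqref{wronskian}, $\varphi^-_\lambda(\b)\varphi^{+\prime}_\lambda(\b)-\varphi^+_\lambda(\b)\varphi^{-\prime}_\lambda(\b)=w_\lambda\s(\b)$ (the cross terms cancel, leaving exactly $w_\lambda\s(\b)\,[\varphi^-_\lambda(a)\varphi^+_\lambda(b)-\varphi^+_\lambda(a)\varphi^-_\lambda(b)]=w_\lambda\s(\b)\,\phi(a,b;\lambda)$); comparing with $G_{(a,b)}(\lambda;\b,\b)=\m(\b)\phi(a,\b;\lambda)\phi(\b,b;\lambda)/[w_\lambda\phi(a,b;\lambda)]$ from \eqref{greenfunc_double} closes the identification.

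I expect the main obstacle to be the integration-by-parts bookkeeping: justifying rigorously that the boundary terms at $a$ and $b$ vanish (which hinges on the Dirichlet behaviour of $p_{(a,b)}$ and on $h(a)=h(b)=0$) and carefully tracking the one-sided limits and the sign of the jump of $h'$ at $\b$, so that the two interior integrals cancel exactly and leave the stated difference of one-sided initial-value derivatives. The concluding Wronskian identity is routine once the terms are grouped, but keeping the numerous $\varphi^\pm_\lambda(a),\varphi^\pm_\lambda(b),\varphi^\pm_\lambda(\b)$ factors organized is the other place where a sign slip could occur.
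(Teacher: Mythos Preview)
Your approach is correct and closely parallels the paper's, differing mainly in the order of operations. The paper first applies $\mathcal{L}_T$ to the CDF formula \eqref{prop_last_time-CDF_2_killed_ab}, replaces the first-passage CDFs by their Laplace transforms (ratios of cylinder functions $\phi(a,y;\lambda)/\phi(a,\b;\lambda)$ and $\phi(y,b;\lambda)/\phi(\b,b;\lambda)$), then differentiates in $t$ and integrates by parts twice using the forward Kolmogorov PDE for $p_{(a,b)}$ together with the eigenvalue relation $\mathcal{G}_y\phi=\lambda\phi$; the surviving boundary terms at $\b$ are then identified with the Laplace-transformed version of \eqref{last-passage-pdf-explicit-kill-ab}, and the Green-function identity follows from the same Wronskian manipulation you outline. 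You instead differentiate directly in the time domain, invoking the backward Kolmogorov PDE for $h$ (so that $\mathcal{G}_y h=f^\pm$) in place of the eigenvalue equation for $\phi$, which makes the cancellation of the volume integrals more transparent and bypasses the Laplace detour. The paper explicitly records this direct route as an alternative in the Remark following the proof of Theorem~\ref{last-passage-propn-time-t} (see \ref{sect_a1}); you have effectively carried that alternative over to the doubly-killed setting. Both arguments rest on the same integration-by-parts skeleton, the same boundary bookkeeping at $a,\b,b$, and the same Wronskian/cylinder-function identity for the Green-function characterization, so the substance is identical.
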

\begin{proof} The original statement of this theorem and its proof is found in \cite{CM21}. 
See \ref{sect_a3}. 
\end{proof}
For infinite time horizon, the density of $g^{(a,b)}_{\b} :=\sup\{u \ge 0: X_{(a,b),\,u} = \b\}$ is simply recovered by \eqref{last-passage-pdf-explicit-kill-ab} in the limit $T\to\infty$, i.e., $f_{g^{(a,b)}_{\b}}(t;x) = \lim\limits_{T\to\infty}f_{g^{(a,b)}_{\b}(T)}(t;x)$:
\begin{equation}\label{last-passage-density-inf-ab}
f_{g^{(a,b)}_{\b}}(t;x)  = \left[{1 \over {\mathcal S[a,\b]}} + {1 \over {\mathcal S[\b,b]}} \right]{p_{(a,b)}(t;x,\b) \over \m(\b)},\,\,\,t \in (0,\infty).
\end{equation}
Here we used \eqref{FHT-b-killa-infinity}--\eqref{FHT-a-killb-infinity} which also gives
\begin{equation}\label{last-passage-discrete-inf-ab}
\P_x(g^{(a,b)}_{\b} = 0) = {({\mathcal S[x,\b]})^+ \over {\mathcal S[a,\b]}} + {({\mathcal S[\b,x]})^+ \over {\mathcal S[\b,b]}}.
\end{equation}
The expressions in \eqref{last-passage-density-inf-ab}--\eqref{last-passage-discrete-inf-ab} are analogues of \eqref{last-passage-density-infinite-T} and \eqref{last-passage-discrete-infinite-T} 
where $\ind_{E_a} = \ind_{E_b} = 1$. The diffusion killed at both endpoints is obviously transient with 
$\P_x(g^{(a,b)}_{\b} = \infty) = 0$.

%
%
\begin{theorem}\label{joint_last-passage-propn-time-t_ab}
The joint PDF of $(g^{(a,b)}_{\b}(T), X_{(a,b),T})$ has the representation:
\begin{equation}\label{joint_last_passage_pdf_explicit_ab}
f_{g^{(a,b)}_{\b}(T), X_{(a,b),T}}(t,z;x) 
= {p_{(a,b)}(t;x,\b) \over \m(\b)}\m(z)
\begin{cases}
f^+(T-t; z,\b\vert a)
&, z \in (a,\b),
\\
f^-(T-t; z,\b\vert b)
&, z \in (\b,b),
\end{cases}
\end{equation}
with $f^\pm$ as first hitting time densities to hit $\b$ before $a$ or $b$, respectively, i.e., 
$\displaystyle \m(z)f^+(T-t; z,\b\vert a) = -{1 \over \s(\b)}{\partial \over \partial y} p_{(a,\b)}(T-t;y,z)\big\vert_{y=\b-}$ and 
$\displaystyle\m(z)f^-(T-t; z,\b\vert b) = {1 \over \s(\b)}{\partial \over \partial y} p_{(\b,b)}(T-t;y,z)\big\vert_{y=\b+}.$
\end{theorem}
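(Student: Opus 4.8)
The plan is to establish Theorem~\ref{joint_last-passage-propn-time-t_ab} as the killed-diffusion analogue of Theorem~\ref{joint_last-passage-propn-time-t}, starting from the integral representation of the joint CDF already obtained in \eqref{prop_joint_last_time_kill_ab-t-3}. Fixing $z\in(a,\b)$, we have $\P_x(g^{(a,b)}_{\b}(T)\le t,\,X_{(a,b),T}\in dz)/dz = \int_a^\b p_{(a,\b)}(T-t;y,z)\,p_{(a,b)}(t;x,y)\,dy$, and by \eqref{joint_PDF_last_ab_defn} the joint density is obtained by differentiating this in $t$. First I would differentiate under the integral sign (justified by the smoothness and rapid decay of the killed transition densities for $t,T-t$ bounded away from $0$), writing $u:=T-t$ so that $\partial_t p_{(a,\b)}(u;y,z) = -\partial_u p_{(a,\b)}(u;y,z)$.

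The key step is to convert both time derivatives into spatial (generator) actions in the common integration variable $y$. Using the backward Kolmogorov equation for $p_{(a,\b)}$ in its initial argument, $\partial_u p_{(a,\b)}(u;y,z) = \mathcal{G}_y\, p_{(a,\b)}(u;y,z)$, and the forward Kolmogorov equation for $p_{(a,b)}$ in its terminal argument, $\partial_t p_{(a,b)}(t;x,y) = \mathcal{G}^*_y\, p_{(a,b)}(t;x,y)$, the $t$-derivative of the integral becomes $\int_a^\b [\,f\,\mathcal{G}^*_y g - g\,\mathcal{G}_y f\,]\,dy$ with $f(y):=p_{(a,\b)}(u;y,z)$, $g(y):=p_{(a,b)}(t;x,y)$. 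Equivalently, and more transparently, I would pass to the reversible (symmetric) kernels $q(t;\cdot,\cdot):=p(t;\cdot,\cdot)/\m(\cdot)$, for which $\m(x)p(t;x,y)=\m(y)p(t;y,x)$ and $\mathcal{G}$ is self-adjoint with respect to the speed measure $\m\,dy$; then both factors solve the \emph{same} backward equation $\partial_t q = \mathcal{G}_y q$ in $y$, and the $t$-derivative reduces to $\m(z)\,[\,\langle F,\mathcal{G}G\rangle_\m - \langle \mathcal{G}F,G\rangle_\m\,]$ with $F:=q_{(a,\b)}(u;z,\cdot)$ and $G:=q_{(a,b)}(t;x,\cdot)$.

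At this point I would invoke the Lagrange (Green's) identity for the Sturm--Liouville operator $\mathcal{G}=\frac{1}{\m}\big(\frac{d}{dy}\frac{1}{\s}\frac{d}{dy}\big)$, namely $\int_a^\b[(\mathcal{G}F)G-F(\mathcal{G}G)]\m\,dy = \big[\frac{1}{\s(y)}(F'G-FG')\big]_{y=a}^{y=\b}$, so that the whole expression collapses to boundary terms. The decisive observation is the asymmetry of the killing conditions: at the common endpoint $a$ both $F$ and $G$ vanish, so that contribution is zero, whereas at the intermediate level $\b$ only the inner kernel vanishes, $F(\b)=q_{(a,\b)}(u;z,\b)=0$, while $G(\b)=q_{(a,b)}(t;x,\b)=p_{(a,b)}(t;x,\b)/\m(\b)\neq 0$. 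Hence exactly one term survives, $-\frac{1}{\s(\b)}F'(\b)\,G(\b)$, and using reversibility $F'(\b)=\frac{1}{\m(z)}\partial_y p_{(a,\b)}(u;y,z)\big|_{y=\b-}$ gives $\frac{p_{(a,b)}(t;x,\b)}{\m(\b)}\big(-\frac{1}{\s(\b)}\partial_y p_{(a,\b)}(u;y,z)\big|_{y=\b-}\big)$, which is precisely $\frac{p_{(a,b)}(t;x,\b)}{\m(\b)}\,\m(z)f^+(T-t;z,\b\vert a)$ by the first-hitting identity \eqref{hit_ab_lemma_3} of Lemma~\ref{Lemma_hitting_time_ab}. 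The case $z\in(\b,b)$ is identical with $(a,\b)$ replaced by $(\b,b)$: now $F$ and $G$ both vanish at $b$, the surviving term sits at $\b$ approached from the right, and \eqref{hit_ab_lemma_4} identifies it with $\m(z)f^-(T-t;z,\b\vert b)$. The hard part will be the careful bookkeeping of the one-sided limits at $\b$ and justifying the interchange of differentiation and integration together with the boundary passage; the reversibility of the diffusion and the self-adjointness of $\mathcal{G}$ are what make the two Kolmogorov equations combine into the single clean boundary term at $\b$.
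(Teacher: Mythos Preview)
Your proposal is correct and follows essentially the same route as the paper's proof: differentiate the integral representation \eqref{prop_joint_last_time_kill_ab-t-3} in $t$, convert the time derivatives to generator actions via the forward Kolmogorov PDE for $p_{(a,b)}(t;x,y)$ and the backward Kolmogorov PDE for $p_{(a,\b)}(T-t;y,z)$, integrate by parts so that the bulk integrals cancel, and evaluate the surviving boundary term at $y=\b$ using the killing conditions at $a$ and $\b$. Your packaging of the integration by parts as the Lagrange/Green identity for the self-adjoint operator $\mathcal{G}$ on $L^2(\m)$, together with the passage to the symmetric kernels $q$, is a clean way to phrase exactly what the paper does by two explicit integrations by parts on $p$ and $q_{(a,b)}:=p_{(a,b)}/\m$; the content and the boundary analysis are identical.
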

\begin{proof} The original statement of this theorem and its proof is found in \cite{CM21}. 
See \ref{sect_a4}. 
\end{proof}
Note that the joint PDF is trivially zero at $z=\b$. Based on Theorems~\ref{last-passage-propn-time-t-kill-ab}--\ref{joint_last-passage-propn-time-t_ab} and  Proposition~\ref{prop_spec_first_hit_ab}, purely discrete spectral series representations for the marginal and joint densities arise as follows.
%
%
\begin{pro}\label{joint_last-passage-propn-time-t_ab-new-version}
The joint PDF of $(g^{(a,b)}_{\b}(T), X_{(a,b),T})$ has the representation:
\begin{equation}\label{joint_last_passage_pdf_explicit_ab_new}
f_{g^{(a,b)}_{\b}(T), X_{(a,b),T}}(t,z;x) 
= {p_{(a,b)}(t;x,\b) \over \m(\b)}\m(z)
\left\{
\begin{array}{lr}
            \sum_{n=1}^{\infty}e^{-\lambda_n^{(a,\b)} (T-t)} \psi^+_{n}(z;a,\b), &  z\in (a,\b), 
\\
            \sum_{n=1}^{\infty}e^{-\lambda_n^{(\b,b)} (T-t)}  \psi^-_{n}(z;\b,b) , &  z\in (\b,b), 
\end{array}
\right.
\end{equation}
with $\psi^+_{n}(z;a,\b)$ and $\psi^-_{n}(z;\b,b)$ respectively given by \eqref{FHT_eigenfunctions_ab} and 
$p_{(a,b)}(t;x,\b)$ given by \eqref{u_spectral_3} where $y=\b$.
The (marginal) PDF of $g^{(a,b)}_{\b}(T)$ has the representation:
\begin{equation}\label{last_passage_pdf_discrete_spec}
f_{g^{(a,b)}_{\b}(T)}(t;x) = {p_{(a,b)}(t;x,\b) \over \m(\b)} 
\left[ \mathcal{S}(a,b;\b) 
+ 
\sum_{n=1}^{\infty}
\bigg(
e^{-\lambda_n^{(a,\b)} (T-t)} 
{\hat\psi^+_{n}(a,\b) \over \lambda_n^{(a,\b)} }
+  
e^{-\lambda_n^{(\b,b)} (T-t)}  
{\hat\psi^-_{n}(\b,b) \over \lambda_n^{(\b,b)} }
\bigg)\right],
\end{equation}
where $\mathcal{S}(a,b;\b) := \frac{1}{\mathcal{S}[a,\b]} + \frac{1}{\mathcal{S}[\b,b]} = 
{\mathcal{S}[a,b] \over \mathcal{S}[a,\b]\mathcal{S}[\b,b]}$
and
\begin{align}\label{psi_hat_derivatives}
\hat\psi^+_{n}(a,\b) &:= -{1 \over \s(\b)}{\partial \over \partial x}\psi_n^+(x;a,\b)\vert_{x=\b} 
= { w_{-\lambda_n}\over \Delta(a,\b; \lambda_n)}{\varphi^+_{-\lambda_n}\!(a) \over \varphi^+_{-\lambda_n}\!(\b)}
\equiv { w_{-\lambda_n}\over \Delta(a,\b; \lambda_n)}
{\varphi^-_{-\lambda_n}\!(a) \over \varphi^-_{-\lambda_n}\!(\b)},\,\, \lambda_n = \lambda_n^{(a,\b)}, 
\nonumber \\
\hat\psi^-_{n}(\b,b) &:= {1 \over \s(\b)}{\partial \over \partial x}\psi_n^-(x;\b,b)\vert_{x=\b} 
= { w_{-\lambda_n}\over \Delta(\b,b; \lambda_n)}{\varphi^+_{-\lambda_n}\!(b) \over \varphi^+_{-\lambda_n}\!(\b)}
\equiv { w_{-\lambda_n}\over \Delta(\b,b; \lambda_n)}
{\varphi^-_{-\lambda_n}\!(b) \over \varphi^-_{-\lambda_n}\!(\b)}, \,\, \lambda_n = \lambda_n^{(\b,b)}.
\end{align}
\end{pro}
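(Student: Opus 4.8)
The plan is to derive both spectral formulae by inserting the first-hitting-time expansions of Proposition~\ref{prop_spec_first_hit_ab} into the closed-form representations already established in Theorems~\ref{last-passage-propn-time-t-kill-ab}--\ref{joint_last-passage-propn-time-t_ab}. The joint density \eqref{joint_last_passage_pdf_explicit_ab_new} is essentially immediate. Theorem~\ref{joint_last-passage-propn-time-t_ab} expresses $f_{g^{(a,b)}_{\b}(T), X_{(a,b),T}}(t,z;x)$ as the product $\frac{p_{(a,b)}(t;x,\b)}{\m(\b)}\m(z)$ times $f^+(T-t;z,\b\vert a)$ for $z\in(a,\b)$ and $f^-(T-t;z,\b\vert b)$ for $z\in(\b,b)$. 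Substituting the discrete series \eqref{FHT_prop3_up_1}--\eqref{FHT_prop3_down_1}, applied on the subinterval $(a,\b)$ (so $b\mapsto\b$, eigenvalues $\lambda_n^{(a,\b)}$, eigenfunction $\psi_n^+(z;a,\b)$) and on $(\b,b)$ (so the lower hit level $a\mapsto\b$, eigenvalues $\lambda_n^{(\b,b)}$, eigenfunction $\psi_n^-(z;\b,b)$), reproduces exactly the two cases of \eqref{joint_last_passage_pdf_explicit_ab_new}.

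For the marginal density \eqref{last_passage_pdf_discrete_spec} I would start from the explicit form \eqref{last-passage-pdf-explicit-kill-ab} and decompose the two CDFs via \eqref{X_full_cdf_tau_ab_1}--\eqref{X_full_cdf_tau_ab_2}, choosing the levels so that on $(a,\b)$ one has $\P_y(\T^+_\b(a)\le u) = \frac{\mathcal{S}[a,y]}{\mathcal{S}[a,\b]} - \P_y(u<\T^+_\b(a)<\infty)$ and on $(\b,b)$ one has $\P_y(\T^-_\b(b)\le u) = \frac{\mathcal{S}[y,b]}{\mathcal{S}[\b,b]} - \P_y(u<\T^-_\b(b)<\infty)$. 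Taking the one-sided $y$-derivatives at $y=\b$ and using $\partial_y \mathcal{S}[a,y]=\s(y)$, $\partial_y \mathcal{S}[y,b]=-\s(y)$ produces the scale-function contributions $\frac{\s(\b)}{\mathcal{S}[a,\b]}$ and $\frac{\s(\b)}{\mathcal{S}[\b,b]}$ (the overall minus sign in \eqref{last-passage-pdf-explicit-kill-ab} turning the latter positive). The residual derivatives act on the tail probabilities \eqref{FHT_prop3_up_2}--\eqref{FHT_prop3_down_2}; differentiating term by term and dividing by $\s(\b)$ yields the coefficients $-\frac{1}{\s(\b)}\partial_x\psi_n^+(x;a,\b)\vert_{x=\b}=\hat\psi^+_n(a,\b)$ and $\frac{1}{\s(\b)}\partial_x\psi_n^-(x;\b,b)\vert_{x=\b}=\hat\psi^-_n(\b,b)$, exactly as in \eqref{psi_hat_derivatives}. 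Collecting the two scale terms into $\mathcal{S}(a,b;\b)$ gives \eqref{last_passage_pdf_discrete_spec} with $u=T-t$.

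It then remains to confirm the explicit closed forms in \eqref{psi_hat_derivatives}. Writing $\psi_n^+(x;a,\b)=\phi(x,a;-\lambda_n)/\Delta(a,\b;\lambda_n)$ and differentiating $\phi$ through its definition \eqref{phi_function}, I would invoke the eigenvalue relation $\phi(a,\b;-\lambda_n)=0$, equivalently $\varphi^+_{-\lambda_n}(\b)/\varphi^+_{-\lambda_n}(a)=\varphi^-_{-\lambda_n}(\b)/\varphi^-_{-\lambda_n}(a)$, to replace $\varphi^-_{-\lambda_n}(a)$ and factor out $\varphi^+_{-\lambda_n}(a)/\varphi^+_{-\lambda_n}(\b)$. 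The bracketed combination that survives is precisely $\varphi^{-\prime}_{-\lambda_n}(\b)\varphi^+_{-\lambda_n}(\b)-\varphi^-_{-\lambda_n}(\b)\varphi^{+\prime}_{-\lambda_n}(\b)=-W[\varphi^-_{-\lambda_n},\varphi^+_{-\lambda_n}](\b)=-w_{-\lambda_n}\s(\b)$ by the Wronskian identity \eqref{wronskian}; the factor $\s(\b)$ cancels, leaving $\hat\psi^+_n(a,\b)=\frac{w_{-\lambda_n}}{\Delta(a,\b;\lambda_n)}\frac{\varphi^+_{-\lambda_n}(a)}{\varphi^+_{-\lambda_n}(\b)}$. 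The computation for $\hat\psi^-_n(\b,b)$, using $\psi_n^-(x;\b,b)=\phi(b,x;-\lambda_n)/\Delta(\b,b;\lambda_n)$ and the antisymmetry of $\phi$, is identical; the two equivalent $\varphi^+$ versus $\varphi^-$ forms in \eqref{psi_hat_derivatives} again follow from the same eigenvalue relation.

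The main technical obstacle is the term-by-term differentiation of the tail-probability series at the boundary point $y=\b$, i.e.\ the interchange of $\partial_y$ with the infinite sums on $(a,\b)$ and $(\b,b)$. Because both subintervals are bounded with regular (NONOSC) killing endpoints we are in Spectral Category~I, so $\lambda_n^{(a,\b)},\lambda_n^{(\b,b)}\nearrow\infty$ and the eigenfunctions together with their $x$-derivatives are uniformly bounded on compact sets; the factor $e^{-\lambda_n(T-t)}/\lambda_n$ then furnishes, for each fixed $t\in(0,T)$, a geometric-type majorant that gives locally uniform convergence of the differentiated series and legitimizes the interchange. The same Sturm--Liouville estimates underlie the validity of the substitutions in the joint-density step, and the trivial vanishing at $z=\b$ noted after the theorem statement is consistent with $\psi_n^\pm(\b;a,\b)=\psi_n^\pm(\b;\b,b)=0$.
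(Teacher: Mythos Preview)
Your proposal is correct and follows essentially the same route as the paper: substituting the first-hitting-time spectral series of Proposition~\ref{prop_spec_first_hit_ab} into Theorems~\ref{last-passage-propn-time-t-kill-ab}--\ref{joint_last-passage-propn-time-t_ab}, then verifying \eqref{psi_hat_derivatives} via the eigenvalue relation $\phi(a,\b;-\lambda_n)=0$ and the Wronskian identity. The paper additionally offers an alternative derivation of \eqref{last_passage_pdf_discrete_spec} by directly computing the Laplace inverse in \eqref{last-passage-density-phi-killed} as a residue sum, but your primary argument matches theirs and your explicit justification of the termwise differentiation is a welcome addition.
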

\begin{proof} See \ref{proof_joint_last_spec_ab}. 
\end{proof}
\noindent Observe that the term multiplying $p_{(a,b)}(t;x,\b)$ in \eqref{last_passage_pdf_discrete_spec} is a series  representation of $\xi_{(a,b)}(T-t;\b)$ in Theorem \ref{last-passage-propn-time-t-kill-ab}, which, as shown in \ref{proof_joint_last_spec_ab}, is the Laplace inverse of the function in \ref{last-passage-density-phi-killed}. 

Substituting the discrete spectral expansion for $p_{(a,b)}(t;x,\b)$ within \eqref{joint_last_passage_pdf_explicit_ab_new} and \eqref{last_passage_pdf_discrete_spec} produces an expression involving a double series for the joint and marginal densities. We can, for example, use it within the integral term in \eqref{last-passage-CDF-kill_a_b_time_0_to_t} to obtain a formula in terms of single and double series for the CDF of $g^{(a,b)}_{\b}(T)$ when combined with the series in \eqref{prop_last_time-CDF_discrete_killed_ab} for the discrete portion. In particular, we have the continuous portion of the CDF:
\begin{align}\label{series_CDF_g_K_ab_T}
\int_0^t f_{g^{(a,b)}_{\b}(T)}(u;x) du 
&= \sum_{m=1}^\infty \bigg[\mathcal{S}(a,b;\b){[1 - e^{-\lambda_m t}] \over \lambda_m}
+ \sum_{n=1}^\infty \bigg({\hat\psi^+_{n}(a,\b)  \over \lambda_n^{(a,\b)}} \alpha_+(t,T;m,n) 
\nonumber \\
&\phantom{= \sum_{m=1}^\infty \bigg[} + {\hat\psi^-_{n}(\b,b)  \over \lambda_n^{(\b,b)}} \alpha_-(t,T;m,n) 
\bigg)
\bigg]\phi_m(x)\phi_m(\b)
\end{align}
with $\lambda_m\equiv\lambda_m^{(a,b)}$, $\phi_m = \phi^{(a,b)}_m$ 
given by \eqref{eigen_trans_pdf_3}--\eqref{eigenfunc3}, and coefficients 
\begin{align*}
\alpha_+(t,T;m,n) := e^{-\lambda_n^{(a,\b)} T}
\begin{cases}
{e^{(\lambda_n^{(a,\b)} - \lambda_m)t} - 1 \over \lambda_n^{(a,\b)} - \lambda_m}
& , \lambda_n^{(a,\b)} \ne \lambda_m,
\\
t & , \lambda_n^{(a,\b)} = \lambda_m, 
\end{cases}
\end{align*}
and $\alpha_-(t,T;m,n)$ defined in the same manner where $\lambda_n^{(a,\b)}$ is replaced by $\lambda_n^{(\b,b)}$.

%
%

The above formulae are readily extended to the diffusion $X_b$ defined by \eqref{X-Killed-b} with imposed killing at a single interior point $b\in (l,r)$. This corresponds to a single side condition on either the minimum or maximum of the process. The last hitting time to $\b$ of this process within time $T>0$ is defined as 
$$g^b_{\b}(T) := \sup\{0\le u \le T: X_{b,\,u} = \b \}\,,$$
where $X_{b,\,0} = X_0 =x$. 
The two cases that arise are: (i) $x,\b \in \I_b^-$ and (ii) $x,\b \in \I_b^+$. 

Let us first consider case (i). The analogues 
of (\ref{last-passage-CDF-time_0_to_t_kill_ab})--(\ref{prop_joint_last_time-in_ab}) are
\begin{align}\label{last-passage-CDF-time_0_to_t_kill_b-below}
\P_x(g^{b}_{\b}(T) \le t, X_{b,T} \in dz) &= 
\P_x(g_\b(T) \le t, M_T < b, X_T \in dz) \,,
\\
\P_x(g^{b}_{\b}(T) \le t)  &= \P_x(g^{b}_{\b}(T) \le t, X_{b,T} \in \I_b^-)
+ \P_x(g^{b}_{\b}(T) \le t, X_{b,T} = \partial^\dagger),
\label{prop_joint_last_time-t-3_b-below}
\\
\P_x(g^{b}_{\b}(T) \le t, X_{b,T} \in \I_b^-) 
&= \int_l^b\, \P_x(g_\b(T) \le t, M_T < b, X_T \in dz)\,.
\label{prop_joint_last_time-in_b-below}
\end{align}
From (\ref{prop_joint_last_time-t-3_b-below}) we see that $\P_x(g^{b}_{\b}(T) \le t) > \P_x(g^{b}_{\b}(T) \le t, X_{b,T} \in \I_b^-)$ where $\P_x(g^{b}_{\b}(T) \le t, X_{b,T} = \partial^\dagger) > 0$ is the probability for paths having last hitting within time $t$ and killed by time $T > t$.

The nonzero partly discrete (defective) portion of the distribution is given by 
\begin{align}
\P_x(g^{b}_{\b}(T) = 0, X_{b,T} \in dz) = \P_x(g_\b(T) = 0, M_T < b, X_T \in dz)
= \begin{cases} 
p^-_{\b}(T;x,z) dz &, x,z \in (l,\b),
\\
p_{(\b,b)}(T;x,z) dz &, x,z \in (\b,b).
\label{prop_joint_last_b-below_pmf}
\end{cases}
\end{align}
The analogue of (\ref{prop_last_time-CDF_discrete_killed_ab}) is
\footnote{Note that $\T^+_\b$ is simply the first hitting time up to level $\b$, which also corresponds to $\T^+_\b(l)$ 
in case $l$ is exit-non-entrance or regular killing, i.e., the first hitting time up to $\b$ before the process exits or is killed at $l$. Similarly, the first hitting time down to level $\b$, $\T^-_\b$, also corresponds to $\T^-_\b(r)$ in case $r$ is exit-non-entrance or regular killing, i.e., the first hitting time down to $\b$ before the process exits or is killed at $r$.}
\begin{align}\label{last_time-CDF_discrete_killed_below_b}
\P_x(g^b_{\b}(T) = 0) = 
\begin{cases}
\P_x(\T^+_\b > T) &, x \in (l,\b),
\\
\P_x(\T^-_\b (b) > T) &, x \in (\b,b).
\end{cases}
\end{align}
For $x \in (l,\b)$, this admits the same spectral expansion as in \ref{prop_last_time-t-1-limit}. For $x \in (\b,b)$, the spectral expansion is given as in \eqref{prop_last_time-CDF_discrete_killed_ab}. 
The nonzero jointly defective distribution is given by
\begin{align}\label{doubly_defective_kill_up_b}
\P_x(g^{b}_{\b}(T) = 0, X_{b,T} = \partial^\dagger) &= 
\P_x(g^{b}_{\b}(T) =0)  - \P_x(g^{b}_{\b}(T) =0, X_{b,T} \in \I_b^-)
\nonumber \\
&= \begin{cases}
 \P_x(\T^-_l (\b) \le T) &, x \in (l,\b),
\\
 \P_x(\T^+_b (\b) \le T) &, x \in (\b,b).
\end{cases}
\end{align}
Hence, the spectral expansion for this defective distribution is given by \eqref{joint_last_doubly_defective_formula} 
for $x \in (l,\b)$ if endpoint $l$ is nonconservative (and is zero if $l$ is conservative) and by  \eqref{joint_last_ab_doubly_defective_new} for $x \in (\b,b)$.

Let $f_{g^{b}_{\b}(T), X_{b,T}}(t,z;x)$ denote the {\it joint density} of 
$(g^{b}_{\b}(T), X_{b,T})$, i.e., 
$$\P_x(g^{b}_{\b}(T) \in dt, X_{b,T} \in dz) = f_{g^{b}_{\b}(T), X_{b,T}}(t,z;x) dtdz$$
where
\begin{align}
{\partial \over \partial t}\P_x(g^{b}_{\b}(T) \le t, X_{b,T} \in dz) 
= {\partial \over \partial t}\P_x(g_\b(T) \le t, M_T < b, X_T \in dz) 
= f_{g^{b}_{\b}(T), X_{b,T}}(t,z;x) dz,
\label{joint_PDF_last_b-below_defn}
\end{align}
for all $x,\b,z \in (l,b)$, $t\in (0,T)$. That is, $f_{g^{b}_{\b}(T), X_{b,T}}$ is also the joint PDF of the pair 
$(g_{\b}(T) ,X_T)$ subject to the side condition on the maximum, $\{M_T < b\}$. The analogue of 
(\ref{prop_marginal_last_time-t_ab}) reads:
\begin{align}\label{prop_marginal_last_time-t_b-below}
f_{g^{b}_{\b}(T)}(t;x) = \int_l^b f_{g^{b}_{\b}(T), X_{b,T}}(t,z;x) dz
+ {\partial \over \partial t}\P_x(g^{b}_{\b}(T) \le t, X_{b,T} = \partial^\dagger),
\end{align}
where $f_{g^{b}_{\b}(T)}(t;x) := {\partial \over \partial t}\P_x(g^{b}_{\b}(T) \le t)$ is the (marginal) 
PDF of $g^{b}_{\b}(T)$. From (\ref{prop_marginal_last_time-t_b-below}), we note that the integral of the joint PDF does not recover the PDF of $g^{b}_{\b}(T)$ since the $X_{b}$-process is killed at $b$, i.e., it is clearly nonconservative on 
$(l,b)$.

By similar steps leading to \eqref{prop_last_time-CDF_2_killed_ab}, we have the CDF of $g^b_{\b}(T)$:
\begin{align}\label{prop_last_time-CDF_2_killed_below_b}
\P_x(g^b_{\b}(T) \le t) = 1 - \int_l^\b \P_y(\T^+_\b \le T - t)\,  p_b^-(t;x,y) dy 
- \int_\b^b \P_y(\T^-_\b (b) \le T - t)\,  p_b^-(t;x,y) dy \,.
\end{align}

For case (ii), the above analysis follows in similar fashion. 
In particular, we have the analogues of 
(\ref{last-passage-CDF-time_0_to_t_kill_b-below})-(\ref{prop_joint_last_time-in_b-below}) with side condition 
$\{M_T < b\}$ replaced by $\{m_T > b\}$, $\I_b^-$ replaced by $\I_b^+$ and integral over $(l,b)$ replaced by $(b,r)$. In the place of 
(\ref{prop_joint_last_b-below_pmf}) we have:
\begin{align}
\P_x(g^{b}_{\b}(T) = 0, X_{b,T} \in dz) &= \P_x(g_\b(T) = 0, m_T > b, X_T \in dz)
= \begin{cases} 
p_{(b,\b)}(T;x,z) dz &, x,z \in (b,\b),
\\
p^+_{\b}(T;x,z) dz &, x,z \in (\b,r).
\label{prop_joint_last_b-above_pmf}
\end{cases}
\end{align}
The joint density $f_{g^{b}_{\b}(T), X_{b,T}}(t,z;x)$ is now nonzero for $x,\b,z \in (b,r)$, $x\ne \b$, $t\in (0,T)$, where $\{m_T > b\}$ replaces $\{M_T < b\}$ in the analogue of (\ref{joint_PDF_last_b-below_defn}) and the 
integral in (\ref{prop_marginal_last_time-t_b-below}) is now over $(b,r)$. 
In the place of \eqref{last_time-CDF_discrete_killed_below_b}-\eqref{doubly_defective_kill_up_b} we now have:
\begin{align}\label{prop_last_time-CDF_discrete_killed_above_b}
\P_x(g^b_{\b}(T) = 0) = 
\begin{cases}
\P_x(\T^+_\b(b) > T) &, x \in (b,\b),
\\
\P_x(\T^-_\b > T) &, x \in (\b,r),
\end{cases}
\\
\label{doubly_defective_kill_down_b}
\P_x(g^{b}_{\b}(T) = 0, X_{b,T} = \partial^\dagger) 
= \begin{cases}
 \P_x(\T^-_b (\b) \le T) &, x \in (b,\b),
\\
 \P_x(\T^+_r (\b) \le T) &, x \in (\b,r).
\end{cases}
\end{align}
The probability in \eqref{prop_last_time-CDF_discrete_killed_above_b} has spectral expansion given by 
the first equation line in \ref{prop_last_time-t-1-limit} for $x \in (\b,r)$ and by 
the first equation line in \eqref{prop_last_time-CDF_discrete_killed_ab}, with parameter $a$ replaced by $b$, 
for $x \in (b,\b)$. 
The probability in \eqref{doubly_defective_kill_down_b} has spectral expansion given by the first equation line in \eqref{joint_last_ab_doubly_defective_new}, with parameter $a$ replaced by $b$, for $x \in (b,\b)$, and by \eqref{joint_last_doubly_defective_formula} for $x \in (\b,r)$ if endpoint $r$ is nonconservative (and is zero if $r$ is conservative).

The expression for the CDF in \eqref{prop_last_time-CDF_2_killed_below_b} is now replaced by
\begin{align}\label{prop_last_time-CDF_2_killed_above_b}
\P_x(g^b_{\b}(T) \le t) = 1 - \int_b^\b \P_y(\T^+_\b(b) \le T - t)\,  p_b^+(t;x,y) dy 
- \int_\b^r \P_y(\T_\b^- \le T - t)\,  p_b^+(t;x,y) dy \,
\end{align}

Theorems \ref{last-passage-propn-time-t-kill-b}--\ref{joint_last-passage-propn-time-t-b} below are analogues of Theorems \ref{last-passage-propn-time-t-kill-ab}--\ref{joint_last-passage-propn-time-t_ab} where $X_{b,0} \equiv X_0 = x$, $t\in (0,T)$, $T\in (0,\infty)$.

\begin{theorem}\label{last-passage-propn-time-t-kill-b}
The probability density function of $g^b_{\b}(T)$ is given as follows.

(i) For $x,\b \in \I_b^-$, 
\begin{equation}\label{last-passage-density-time-t-Laplace-kill-b-below}
f_{g^b_{\b}(T)}(t;x) =\xi_{b}(T-t;\b) \, p^-_{b}(t;x,\b)\,,
\end{equation}
$t\in (0,T)$, where $\xi_{b}(u;\b)$ is given by the Laplace inverse
\begin{align}\label{last-passage-density-phi-b-below}
\xi_{b}(u;\b) = {\mathcal L}_\lambda^{-1}\!\left\{\!{1 \over \lambda G^-_{b}(\lambda;\b,\b)}\!\right\}\!(u)
= {1 \over \m(\b)\s(\b)} \left[ {\partial \over \partial y} \P_y(\T_\b^+ \le u)\bigg\vert_{y=\b-} 
- {\partial \over \partial y} \P_y(\T^-_\b(b) \le u)\bigg\vert_{y=\b+} \right].
\end{align}
Hence, explicitly we have
\begin{align}\label{last-passage-pdf-explicit-kill-b-below}
f_{g^b_{\b}(T)}(t;x) = {p^-_{b}(t;x,\b) \over \m(\b)}{1 \over \s(\b)} 
 \left[ {\partial \over \partial y} \P_y(\T_\b^+ \le T-t)\bigg\vert_{y=\b-} 
 - {\partial \over \partial y} \P_y(\T^-_\b(b) \le T-t)\bigg\vert_{y=\b+} \right].
\end{align}

(ii) For $x,\b \in \I_b^+$, 
\begin{equation}\label{last-passage-density-time-t-Laplace-kill-b-above}
f_{g^b_{\b}(T)}(t;x) =\xi_{b}(T-t;\b) \, p^+_{b}(t;x,\b)\,,
\end{equation}
$t\in (0,T)$, where $\xi_{b}(u;\b)$ is given by the Laplace inverse
\begin{align}\label{last-passage-density-phi-b-above}
\xi_{b}(u;\b) &= {\mathcal L}_\lambda^{-1}\!\left\{\!{1 \over \lambda G^+_{b}(\lambda;\b,\b)}\!\right\}\!(u)
= {1 \over \m(\b)\s(\b)} \left[ {\partial \over \partial y} \P_y(\T_\b^+(b) \le u)\bigg\vert_{y=\b-} 
- {\partial \over \partial y} \P_y(\T^-_\b \le u)\bigg\vert_{y=\b+} \right].
\end{align}
Hence, explicitly we have
\begin{align}\label{last-passage-pdf-explicit-kill-b-above}
f_{g^b_{\b}(T)}(t;x) = {p^+_{b}(t;x,\b) \over \m(\b)}{1 \over \s(\b)} 
 \left[ {\partial \over \partial y} \P_y(\T_\b^+(b) \le T-t)\bigg\vert_{y=\b-} 
 - {\partial \over \partial y} \P_y(\T^-_\b \le T-t)\bigg\vert_{y=\b+} \right].
\end{align}
\end{theorem}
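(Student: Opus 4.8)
The plan is to prove case (i) ($x,\b\in\I_b^-$) by Laplace-transforming the CDF \eqref{prop_last_time-CDF_2_killed_below_b} in the horizon $T$; case (ii) ($x,\b\in\I_b^+$) is then the mirror image under $\varphi^+_\lambda\leftrightarrow\varphi^-_\lambda$, $\phi(\cdot,b;\lambda)\leftrightarrow\phi(b,\cdot;\lambda)$, $G_b^-\leftrightarrow G_b^+$, starting instead from \eqref{prop_last_time-CDF_2_killed_above_b}. The target is the transform identity
\[
{\mathcal L}_T\{f_{g^b_\b(T)}(t;x)\}(\lambda)=\frac{e^{-\lambda t}}{\lambda\,G_b^-(\lambda;\b,\b)}\,p_b^-(t;x,\b),
\]
for then inverting in $T$, using the shift rule together with the definition \eqref{last-passage-density-phi-b-below} of $\xi_b$ as ${\mathcal L}^{-1}_\lambda\{1/(\lambda G_b^-(\lambda;\b,\b))\}$, yields the factorisation \eqref{last-passage-density-time-t-Laplace-kill-b-below} and hence \eqref{last-passage-pdf-explicit-kill-b-below}.

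First I would transform \eqref{prop_last_time-CDF_2_killed_below_b} in $T$. With the hitting-time transforms ${\mathcal L}_u\{f^+(u;y,\b)\}(\lambda)=\varphi^+_\lambda(y)/\varphi^+_\lambda(\b)$ for $y<\b$ and ${\mathcal L}_u\{f^-(u;y,\b\vert b)\}(\lambda)=\phi(y,b;\lambda)/\phi(\b,b;\lambda)$ for $\b<y<b$ (which follow from Propositions~\ref{prop_spec_first_hit_1} and \ref{prop_spec_first_hit_ab}), this gives ${\mathcal L}_T\{\P_x(g^b_\b(T)\le t)\}(\lambda)=\lambda^{-1}-\lambda^{-1}e^{-\lambda t}I(t)$, where
\[
I(t):=\frac{1}{\varphi^+_\lambda(\b)}\int_l^\b\varphi^+_\lambda(y)\,p_b^-(t;x,y)\,dy+\frac{1}{\phi(\b,b;\lambda)}\int_\b^b\phi(y,b;\lambda)\,p_b^-(t;x,y)\,dy.
\]
Differentiating in $t$ gives ${\mathcal L}_T\{f_{g^b_\b(T)}(t;x)\}(\lambda)=\lambda^{-1}e^{-\lambda t}(\lambda I-I')$, so the whole theorem reduces to the identity $\lambda I-I'=p_b^-(t;x,\b)/G_b^-(\lambda;\b,\b)$. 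To compute $I'$ I would set $q_b^-:=p_b^-/\m$, use the forward equation $\partial_t p_b^-=\partial_y(\s^{-1}\partial_y q_b^-)$, and apply the Lagrange identity $v\,(\s^{-1}u')'-u\,(\s^{-1}v')'=\partial_y[\s^{-1}(vu'-uv')]$ on each subinterval with $v=\varphi^+_\lambda$ and $v=\phi(\cdot,b;\lambda)$ respectively, both of which solve ${\mathcal G}v=\lambda v$ by \eqref{eq:phi}. Each $\int v\,\partial_t p_b^-\,dy$ then becomes $\lambda\int v\,p_b^-\,dy$ plus a boundary term, so the $\lambda I$ cancels and only boundary contributions at $l$, $b$ and $\b$ survive.

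The endpoint terms vanish: at $l$ because $q_b^-(t;x,\cdot)$ satisfies the same boundary condition as $\varphi^+_\lambda$, so their generalised Wronskian is zero there for every admissible type of $l$; and at $b$ because $\phi(b,b;\lambda)=0$ and $q_b^-(t;x,b)=0$ by the killing at $b$. The crux is the interior level $\b$: since $\b$ is interior to $(l,b)$, $q_b^-(t;x,\cdot)$ is $C^1$ across $\b$, so the two terms proportional to $\partial_y q_b^-(t;x,\b\mp)$ coincide and cancel between the subintervals, leaving
\[
\lambda I-I'=\frac{1}{\s(\b)}\Big[\frac{\varphi^{+\prime}_\lambda(\b)}{\varphi^+_\lambda(\b)}-\frac{\partial_y\phi(y,b;\lambda)\big|_{y=\b}}{\phi(\b,b;\lambda)}\Big]\frac{p_b^-(t;x,\b)}{\m(\b)}.
\]
Expanding $\phi(\b,b;\lambda)=\varphi^-_\lambda(\b)\varphi^+_\lambda(b)-\varphi^-_\lambda(b)\varphi^+_\lambda(\b)$, the bracket collapses via the Wronskian \eqref{wronskian} to $w_\lambda\,\varphi^+_\lambda(b)\,\s(\b)/[\varphi^+_\lambda(\b)\phi(\b,b;\lambda)]$, and comparison with \eqref{greenfunc_up} at $x=y=\b$ identifies $\lambda I-I'$ with $p_b^-(t;x,\b)/G_b^-(\lambda;\b,\b)$, establishing the transform identity.

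The explicit form \eqref{last-passage-density-phi-b-below} of $\xi_b$ follows from the same ingredients: transforming it in $u$, the two $\partial_y$-terms become $\lambda^{-1}\varphi^{+\prime}_\lambda(\b)/\varphi^+_\lambda(\b)$ and $\lambda^{-1}\partial_y\phi(y,b;\lambda)\big|_{y=\b}/\phi(\b,b;\lambda)$, and the identical Wronskian collapse reproduces $1/(\lambda G_b^-(\lambda;\b,\b))$; the left/right derivatives at $\b$ and the appearance of $\T^+_\b$, $\T^-_\b(b)$ are exactly the (constrained) analogues in Lemmas~\ref{Lemma_hitting_time}--\ref{Lemma_hitting_time_ab}. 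Case (ii) is identical with $\varphi^-_\lambda$, $\phi(b,\cdot;\lambda)$, $G_b^+$ in place of $\varphi^+_\lambda$, $\phi(\cdot,b;\lambda)$, $G_b^-$, using \eqref{greenfunc_down} and \eqref{last-passage-density-phi-b-above}. I expect the main obstacle to be analytic rather than algebraic: justifying the interchange of $\partial_t$, $\int dy$ and ${\mathcal L}_T$, verifying the vanishing of the $l$-boundary Wronskian uniformly over all boundary classifications, and confirming the $C^1$-regularity of $q_b^-$ across $\b$ on which the decisive cancellation of the $\partial_y q_b^-(t;x,\b\mp)$ terms depends; once these are secured, the Wronskian identity \eqref{wronskian} makes the collapse immediate.
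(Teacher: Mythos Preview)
Your proposal is correct and follows essentially the same approach as the paper: the paper's proof simply says the steps mirror those of Theorem~\ref{last-passage-propn-time-t-kill-ab}, starting from \eqref{prop_last_time-CDF_2_killed_below_b} for case (i) and \eqref{prop_last_time-CDF_2_killed_above_b} for case (ii), and your Laplace-transform-in-$T$, forward-Kolmogorov, integration-by-parts, Wronskian-collapse argument is exactly that template specialised to the single-killing setting. Your boundary analysis at $l$, $b$ and the interior level $\b$, the cancellation of the $\partial_y q_b^-(t;x,\b\mp)$ terms, and the final identification with $1/G_b^-(\lambda;\b,\b)$ via \eqref{greenfunc_up} are all in line with the paper's detailed proofs of Theorems~\ref{last-passage-propn-time-t} and~\ref{last-passage-propn-time-t-kill-ab}.
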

\begin{proof} The original statement of this theorem and its proof is found in \cite{CM21}. 
The steps are similar to those in the proof of  
Theorem~\ref{last-passage-propn-time-t-kill-ab}. For $x,\b \in \I_b^-$, (\ref{prop_last_time-CDF_2_killed_below_b}) is used, whereas for $x,\b \in \I_b^+$, (\ref{prop_last_time-CDF_2_killed_above_b}) is used.
\end{proof}

We remark that the density of $g^b_{\b} :=\sup\{u \ge 0: X_{b,u} = \b\}$ is also simply recovered by taking the limit 
$T\to\infty$ of \eqref{last-passage-density-time-t-Laplace-kill-b-below} and \eqref{last-passage-pdf-explicit-kill-b-above}, i.e.,
$f_{g^b_{\b}}(t;x) = \lim\limits_{T\to\infty}f_{g^b_{\b}(T)}(t;x)$ gives, for $t\in (0,\infty)$:
\begin{equation}\label{last-passage-density-inf-b}
 f_{g^b_{\b}}(t;x) =
\begin{cases}
\displaystyle \big[{\ind_{E_l} \over {\mathcal S(l,\b]}} + {1 \over {\mathcal S[\b,b]}}\big]
{p_b^-(t;x,\b) \over \m(\b)}
&, x,\b \in \I_b^-,
\\
\displaystyle \big[{1 \over {\mathcal S[b,\b]}} + {\ind_{E_r} \over {\mathcal S[\b,r)}} \big]
{p_b^+(t;x,\b) \over \m(\b)}
&, x,\b \in \I_b^+.
\end{cases}
\end{equation}
The respective discrete portion is given by 
\begin{equation}\label{last-passage-discrete-inf-b}
\P_x(g^b_{\b} = 0) =
\begin{cases}
\displaystyle {({\mathcal S[x,\b]})^+ \over {\mathcal S(l,\b]}}\ind_{E_l} + {({\mathcal S[\b,x]})^+ \over {\mathcal S[\b,b]}}
&, x,\b \in \I_b^-,
\\
\displaystyle {({\mathcal S[x,\b]})^+ \over {\mathcal S[b,\b]}} + {({\mathcal S[\b,x]})^+ \over {\mathcal S[\b,r)}}\ind_{E_r}
&, x,\b \in \I_b^+.
\end{cases}
\end{equation}
In both cases the diffusion is transient with $\P_x(g^b_{\b} = \infty) = 0$.

%
%
\begin{theorem}\label{joint_last-passage-propn-time-t-b}
The joint PDF of $(g^{b}_{\b}(T), X_{b,T})$ has the following representation.

\!\!\!\!\!\!(i) For $x, \b \in \I_b^-$:

\begin{equation}\label{joint_last_passage_pdf_explicit_b_1}
f_{g^{b}_{\b}(T), X_{b,T}}(t,z;x) 
= {p^-_{b}(t;x,\b) \over \m(\b)}\m(z) 
\begin{cases}
f^+(T-t; z,\b)   &, z \in (l,\b),
\\
f^-(T-t; z,\b|b) &, z \in (\b,b).
\end{cases}
\end{equation}
(ii) For $x,\b \in \I_b^+$:

\begin{equation}\label{joint_last_passage_pdf_explicit_b_2}
f_{g^{b}_{\b}(T), X_{b,T}}(t,z;x) 
= {p^+_{b}(t;x,\b) \over \m(\b)}\m(z) 
\begin{cases}
f^+(T-t; z,\b | b)  &, z \in (b,\b),
\\
f^-(T-t; z,\b)  &, z \in (\b,r).
\end{cases}
\end{equation}
The first hitting time densities $f^\pm(T-t; z,\b)$ are as in \eqref{joint_last_passage_pdf_explicit}, 
where $\m(z)f^-(T-t; z,\b|b)$ is as in \eqref{joint_last_passage_pdf_explicit_ab} and 
$\m(z)f^+(T-t; z,\b | b) = -{1 \over \s(\b)}{\partial \over \partial y} p_{(b,\b)}(T-t;y,z)\big\vert_{y=\b-}$.
\end{theorem}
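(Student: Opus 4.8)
The plan is to reproduce, for the singly killed diffusion $X_b$, the conditioning-and-cancellation scheme that underlies Theorems~\ref{joint_last-passage-propn-time-t} and \ref{joint_last-passage-propn-time-t_ab}. I would prove case (i), $x,\b\in\I_b^-$, in full and obtain case (ii) by the mirror-image argument on $\I_b^+$ (with outer endpoint $r$ and the two sides of $\b$ interchanged). Starting from the joint law $\P_x(g^b_\b(T)\le t, X_{b,T}\in dz)=\P_x(g_\b(T)\le t, M_T<b, X_T\in dz)$ recorded in \eqref{last-passage-CDF-time_0_to_t_kill_b-below}, I condition on the time-$t$ value $X_{b,t}=y$, apply the Markov property and time homogeneity, and use that, given $X_{b,t}=y$, the event $\{g^b_\b(T)\le t\}$ is the event $\{\T_\b>T-t\}$ for the restarted killed process. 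Since a path that avoids $\b$ on $[t,T]$ cannot carry the terminal point across $\b$, the resulting $y$-integral splits at $\b$, giving
\begin{align*}
\P_x(g^b_\b(T)\le t,X_{b,T}\in dz)/dz=
\begin{cases}
\int_l^\b p_\b^-(T-t;y,z)\,p_b^-(t;x,y)\,dy, & z\in(l,\b),\\
\int_\b^b p_{(\b,b)}(T-t;y,z)\,p_b^-(t;x,y)\,dy, & z\in(\b,b).
\end{cases}
\end{align*}

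The core step is to differentiate these in $t$ and show that all interior mass cancels. Writing the speed-measure densities $q_b^-(t;x,y):=p_b^-(t;x,y)/\m(y)$ and $q_\b^-(s;y,z):=p_\b^-(s;y,z)/\m(z)$ and exploiting the symmetry $q_b^-(t;x,y)=q_b^-(t;y,x)$, both $v(y):=q_b^-(t;x,y)$ and $u(y):=q_\b^-(T-t;y,z)$ solve the \emph{backward} equation in $y$, so that $\partial_t v=\mathcal G_y v$ and $\partial_t u=-\mathcal G_y u$. Differentiating under the integral and substituting these identities turns the integrand into $\m(y)\,[\,u\,\mathcal G_y v-v\,\mathcal G_y u\,]$, and the Lagrange (Green) identity for the operator in \eqref{eq:X_Gen}, namely $\m(y)[u\mathcal Gv-v\mathcal Gu]=\frac{d}{dy}\big[(uv'-vu')/\s(y)\big]$, collapses the whole integral to the boundary evaluation $\m(z)\big[(uv'-vu')/\s(y)\big]_{y=l}^{y=\b}$ (and the analogous $[\cdot]_{y=\b}^{y=b}$ for the second integral).

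It remains to read off the boundary terms. For $z\in(l,\b)$ the contribution at the outer endpoint $y=l$ vanishes, because the killed-diffusion densities obey the boundary conditions at $l$ (regular, exit, entrance, or natural) that annihilate the bilinear form $(uv'-vu')/\s$ there, exactly as in the proof of Lemma~\ref{Lemma_hitting_time}; at $y=\b$ one has $u(\b)=q_\b^-(T-t;\b,z)=0$ since $p_\b^-$ vanishes at its killing level, so only $-v(\b)u'(\b-)/\s(\b)$ survives, and \eqref{fhit_up_down_pdf} identifies $-\s(\b)^{-1}\partial_y p_\b^-(T-t;y,z)\big|_{y=\b-}=\m(z)\,f^+(T-t;z,\b)$, yielding the first line of \eqref{joint_last_passage_pdf_explicit_b_1}. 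For $z\in(\b,b)$ the same computation applies to $p_{(\b,b)}$: now \emph{both} $u(b)=0$ and $v(b)=0$ at the shared killing level $b$, so $y=b$ contributes nothing, while the surviving $y=\b+$ term is converted through \eqref{hit_ab_lemma_4} of Lemma~\ref{Lemma_hitting_time_ab} into $\m(z)\,f^-(T-t;z,\b\vert b)\,p_b^-(t;x,\b)/\m(\b)$, giving the second line. Case (ii) is verbatim with $p_b^+$, the interval $\I_b^+$, outer endpoint $r$, and $f^-(T-t;z,\b)$ and $f^+(T-t;z,\b\vert b)$ in the mirrored roles.

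The step I expect to be most delicate is the rigorous vanishing of the outer boundary term together with the justification of differentiating under the integral. When the outer endpoint is an O-NO natural boundary one must show $(uv'-vu')/\s\to0$ in the limit to that endpoint, which rests on the flux/decay behaviour of the fundamental solutions $\varphi^\pm_\lambda$ encoded in $p_b^\mp$ and is precisely the estimate already required for Lemma~\ref{Lemma_hitting_time}; everything else is sign- and limit-bookkeeping, ensuring the left-limit hitting density $f^+$ appears below $\b$ and the interval-restricted down-hitting density $f^-(\,\cdot\,;\cdot,\b\vert b)$ appears above $\b$.
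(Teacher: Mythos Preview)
Your proposal is correct and follows essentially the same route as the paper, which instructs one to repeat the proof of Theorem~\ref{joint_last-passage-propn-time-t_ab} with only a single side condition on $M_T$ (or $m_T$). Your use of the speed-measure symmetry $q_b^-(t;x,y)=q_b^-(t;y,x)$ to place both factors under the backward operator, followed by the Lagrange identity, is just a repackaging of the paper's forward/backward PDE plus double integration by parts; the boundary accounting at $l$, $\b$, and $b$ that you describe coincides with the paper's.
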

\begin{proof} An original proof is given in \cite{CM21}. 
The steps are as in the proof of Theorem~\ref{joint_last-passage-propn-time-t_ab} 
where we have analoguous relations to (\ref{prop_joint_last_time_kill_ab-t-3}), but now with only one side condition on either the minimum or the maximum of the process treated separately. 
\end{proof}

Theorems \ref{last-passage-propn-time-t-kill-b} and \ref{joint_last-passage-propn-time-t-b} 
lead directly to Proposition \ref{marginal_joint_last-passage-propn-kill-b}.

%
%
\begin{pro}\label{marginal_joint_last-passage-propn-kill-b}
The PDF of $g^{b}_{\b}(T)$ and joint PDF of $(g^{b}_{\b}(T), X_{b,T})$ have the following representations.

\vskip0.1in
(i) For $x, \b \in \I_b^-$:
\begin{align}\label{marginal_spectral_kill-b_1}
&f_{g^{b}_\b(T)}(t;x) = {p^-_{b}(t;x,\b) \over \m(\b)} 
\left[ \frac{\ind_{E_l}}{\mathcal{S}(l,\b]} + \frac{1}{\mathcal{S}[\b,b]} 
-  {1 \over \s(\b)}\eta^+(T-t;\b) 
+  \sum_{n=1}^{\infty} e^{-\lambda_n^{(\b,b)} (T-t)} {\hat\psi^-_{n}(\b,b) \over \lambda_n^{(\b,b)} }  \right],
\\ 
\label{joint_last_passage_pdf_spectral_b_1}
&f_{g^{b}_{\b}(T), X_{b,T}}(t,z;x) 
= {p^-_{b}(t;x,\b) \over \m(\b)} \m(z)
\begin{cases}
f^+(T-t,z;\b) &, z \in (l,\b),
\\
\displaystyle \sum_{n=1}^{\infty} e^{-\lambda_n^{(\b,b)} (T-t)} \psi^-_{n}(z;\b,b)  &, z \in (\b,b).
\end{cases}
\end{align}
(ii) For $x,\b \in \I_b^+$:
\begin{align}\label{marginal_spectral_kill-b_2}
&f_{g^{b}_{\b}(T)}(t;x) = {p^+_{b}(t;x,\b) \over \m(\b)} 
\left[ \frac{\ind_{E_r}}{\mathcal{S}[\b,r)} + \frac{1}{\mathcal{S}[b,\b]} 
+ {1 \over \s(\b)}\eta^-(T-t;\b) 
+  \sum_{n=1}^{\infty} e^{-\lambda_n^{(b,\b)} (T-t)} {\hat\psi^+_{n}(b,\b) \over \lambda_n^{(b,\b)} } \right],
\\ 
\label{joint_last_passage_pdf_spectral_b_2}
&f_{g^{b}_{\b}(T), X_{b,T}}(t,z;x) 
= {p^+_{b}(t;x,\b) \over \m(\b)} \m(z)
\begin{cases}
\displaystyle \sum_{n=1}^{\infty} e^{-\lambda_n^{(b,\b)} (T-t)} \psi^+_{n}(z;b,\b)  &, z \in (b,\b),
\\
f^-(T-t,z;\b)  &, z \in (\b,r).
\end{cases}
\end{align}
In \eqref{marginal_spectral_kill-b_1} and \eqref{marginal_spectral_kill-b_2}, $\eta^\pm(T-t;\b)$ are given by \eqref{last_passage_pdf_spectral_1}--\eqref{last_passage_pdf_spectral_2}, while $\hat\psi^-_{n}(\b,b)$ and 
$\hat\psi^+_{n}(b,\b)$ are given by \eqref{psi_hat_derivatives}. 
In \eqref{joint_last_passage_pdf_spectral_b_1} and \eqref{joint_last_passage_pdf_spectral_b_2}, 
$f^\pm(T-t,z;\b)$ have the spectral representations given by \eqref{joint_last_passage_pdf_spectral_1} and \eqref{joint_last_passage_pdf_spectral_2}, while $\psi^-_{n}(z;\b,b)$ and $\psi^+_{n}(z;b,\b)$ are respectively given by \eqref{FHT_eigenfunctions_ab}.
\end{pro}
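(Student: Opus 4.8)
The plan is to derive both representations directly from the explicit formulae of Theorems~\ref{last-passage-propn-time-t-kill-b} and \ref{joint_last-passage-propn-time-t-b}, by substituting the spectral expansions of the first hitting time distributions supplied by Propositions~\ref{prop_spec_first_hit_1}--\ref{prop_spec_first_hit_ab}. I carry out case~(i), $x,\b\in\I_b^-$, in full; case~(ii) is identical after reflecting the roles of the two endpoints and of the up/down hitting times.

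For the marginal density \eqref{marginal_spectral_kill-b_1} I begin from the explicit expression \eqref{last-passage-pdf-explicit-kill-b-below}, so that it remains only to expand the bracketed difference of one-sided initial-value derivatives. For the up-contribution I split the CDF via \eqref{X_full_cdf}, writing $\P_y(\T^+_\b\le T-t)=\Phi^+_\b(y)-\P_y(T-t<\T^+_\b<\infty)$. Differentiating the scale ratio \eqref{FHT-b-up-infinity} gives $\tfrac{1}{\s(\b)}\tfrac{\partial}{\partial y}\Phi^+_\b(y)\big\vert_{y=\b-}=\ind_{E_l}/\mathcal{S}(l,\b]$, while differentiating the tail probability with the series of Proposition~\ref{prop_spec_first_hit_1} (or Proposition~\ref{prop_spec_first_hit_2} when $l$ is O-NO) reproduces exactly $\tfrac{1}{\s(\b)}\eta^+(T-t;\b)$, upon recognising that $\hat\psi^+_n(\b)=\tfrac{\partial}{\partial x}\psi^+_n(x;\b)\big\vert_{x=\b}$ and comparing with the definition \eqref{last_passage_pdf_spectral_2} of $\eta^+$. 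For the down-contribution I use the analogous decomposition \eqref{X_full_cdf_tau_ab_2} with $a\to\b$, namely $\P_y(\T^-_\b(b)\le T-t)=\mathcal{S}[y,b]/\mathcal{S}[\b,b]-\P_y(T-t<\T^-_\b(b)<\infty)$; since $\tfrac{d}{dy}\mathcal{S}[y,b]=-\s(y)$, the leading term yields $1/\mathcal{S}[\b,b]$, and inserting the tail series \eqref{FHT_prop3_down_2} of Proposition~\ref{prop_spec_first_hit_ab} together with the definition \eqref{psi_hat_derivatives} of $\hat\psi^-_n(\b,b)$ produces the series $\sum_n e^{-\lambda_n^{(\b,b)}(T-t)}\hat\psi^-_n(\b,b)/\lambda_n^{(\b,b)}$. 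Assembling the four pieces with the common prefactor $p^-_b(t;x,\b)/\m(\b)$ gives \eqref{marginal_spectral_kill-b_1}.

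For the joint density \eqref{joint_last_passage_pdf_spectral_b_1} I simply replace the two first hitting time densities in Theorem~\ref{joint_last-passage-propn-time-t-b}(i) by their spectral series. For $z\in(l,\b)$, $f^+(T-t;z,\b)$ is the expansion of Proposition~\ref{prop_spec_first_hit_1}/\ref{prop_spec_first_hit_2}, i.e.\ the form already recorded in \eqref{joint_last_passage_pdf_spectral_1}; for $z\in(\b,b)$, $f^-(T-t;z,\b\vert b)$ is the purely discrete series \eqref{FHT_prop3_down_1} of Proposition~\ref{prop_spec_first_hit_ab} with $a\to\b$, which is the $\sum_n e^{-\lambda_n^{(\b,b)}(T-t)}\psi^-_n(z;\b,b)$ term, with $\psi^-_n(z;\b,b)$ as in \eqref{FHT_eigenfunctions_ab}. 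Multiplying by $\m(z)\,p^-_b(t;x,\b)/\m(\b)$ yields the stated representation.

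The computation is routine given the earlier results; the effort lies entirely in bookkeeping. The one point that must be checked carefully is that the one-sided derivative of $\psi^-_n(y;\b,b)$ at $y=\b+$ coincides with $\hat\psi^-_n(\b,b)$ of \eqref{psi_hat_derivatives} (and likewise $\hat\psi^+_n(\b)$ for the up-term at $y=\b-$), so that the two defective scale-ratio contributions $\ind_{E_l}/\mathcal{S}(l,\b]$ and $1/\mathcal{S}[\b,b]$ and the two series enter with the correct signs. Term-by-term differentiation in the initial value is legitimate by the local uniform convergence of the spectral series for $t<T$ (the factor $e^{-\lambda_n(T-t)}$ providing the necessary decay); in the O-NO case the same reasoning applies to the branch-cut integral, which is differentiated under the integral sign and supplies the continuous part of $\eta^+$. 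Case~(ii) follows verbatim under the reflection $l\leftrightarrow r$, $\T^+_\b\leftrightarrow\T^-_\b$, $(\b,b)\leftrightarrow(b,\b)$, converting \eqref{last-passage-pdf-explicit-kill-b-above} and \eqref{joint_last_passage_pdf_explicit_b_2} into \eqref{marginal_spectral_kill-b_2} and \eqref{joint_last_passage_pdf_spectral_b_2}.
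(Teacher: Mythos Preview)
Your proof is correct and follows essentially the same approach as the paper: start from Theorems~\ref{last-passage-propn-time-t-kill-b} and~\ref{joint_last-passage-propn-time-t-b}, and expand the bracketed derivative terms using the spectral series for the first hitting time distributions. The only difference is that you re-derive the identifications of the derivative terms with $\eta^\pm$ and $\hat\psi^\pm_n$ from scratch, whereas the paper simply cites Propositions~\ref{last-passage-propn-time-t-spectral} and~\ref{joint_last-passage-propn-time-t_ab-new-version} (where these identifications were already established) and reads off the series directly.
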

\begin{proof} The terms within the square brackets in \eqref{marginal_spectral_kill-b_1} and \eqref{marginal_spectral_kill-b_2} arise by applying \eqref{last-passage-pdf-explicit-kill-b-below} and \eqref{last-passage-pdf-explicit-kill-b-above} in Theorem \ref{last-passage-propn-time-t-kill-b} where we simply identify the respective terms with those in  
Theorems \ref{last-passage-propn-time-t} and \ref{last-passage-propn-time-t-kill-ab}, 
whose series are already given in Proposition \ref{last-passage-propn-time-t-spectral} and in  \eqref{last_passage_pdf_discrete_spec} of Proposition \ref{joint_last-passage-propn-time-t_ab-new-version}, 
where level $a$ is replaced by $b$. Similarly, the series within \eqref{joint_last_passage_pdf_spectral_b_1} and \eqref{joint_last_passage_pdf_spectral_b_2} are obtained by identifying each term in 
Theorem \ref{joint_last-passage-propn-time-t-b} with the respective ones in 
Theorems \ref{joint_last-passage-propn-time-t} and \ref{joint_last-passage-propn-time-t_ab}, whose series are already given in Proposition \ref{joint_last-passage-propn-time-t-new-formula} and in \eqref{joint_last_passage_pdf_explicit_ab_new} of Proposition \ref{joint_last-passage-propn-time-t_ab-new-version}, 
where level $a$ is replaced by $b$.
\end{proof}
As described at the end of Sections \ref{sect_spectral_transPDF} and \ref{sect_spectral_FHT}, we note that all marginal and joint distributions involving Spectral Category II, i.e., with one O-NO boundary, can be accurately approximated by using discrete spectral series where an extra killing is imposed at one or two interior points. For instance, by accurately truncating the series and taking appropriate limiting values of the respective lower or upper killing levels $a$ or $b$, the series in Proposition \ref{joint_last-passage-propn-time-t_ab-new-version} can be used to accurately approximate the spectral expansions in Propositions \ref{last-passage-propn-time-t-spectral}, \ref{joint_last-passage-propn-time-t-new-formula} and \ref{marginal_joint_last-passage-propn-kill-b} for the case of O-NO boundaries.

We close this section by noting that, for processes with imposed killing at interior points, we also have obvious, yet useful, relations that extend those presented at the end of Sections \ref{subsect2} and \ref{subsect3}. In particular, 
consider an $F$-process with imposed killing at the endpoints of $(A,B)$ defined by 
$F_{(A,B),t} := {\sf F}(X_{(a,b),t})$, with smooth monotonic map ${\sf F}$ and its inverse ${\sf X}$, as above, and where 
there is a one-to-one mapping of the intervals $(a,b)\in \I$ and $(A,B)\in \mathcal{D}$:  
$a = \min\{{\sf X}(A),{\sf X}(B)\}$, $b = \max\{{\sf X}(A),{\sf X}(B)\}$, e.g., 
$a = {\sf X}(A), b = {\sf X}(B)$ if ${\sf X}^\prime > 0$. 
For any $F_{(A,B),0} = F_0\in (A,B)$ we have $X_0\equiv x = {\sf X}(F_0) \in (a,b)$. 
The last hitting time of the $F$-process to level $K\in (A,B)$ is defined as 
$g^{(A,B), F}_K(T) :=\sup\{0 \le u \le T : F_{(A,B),u} = K\}$. We set $\b = {\sf X}(K)$. 
Analogous to \eqref{last_passage_CDF_X_to_F} we have 
\begin{align}\label{last_passage_CDF_X_to_F_kill_A_B}
\P_{F_0}(g^{(A,B), F}_K(T) \le t) = \P_{x}(g^{(a,b)}_{\b}(T) \le t) \,,
\end{align}
with PDF $f_{g^{(A,B), F}_K(T)}(t;F_0) = f_{g^{(a,b)}_{\b}(T)}(t;x)$. 
For the joint PDF of $(g^{(A,B), F}_K(T),F_{(A,B),T})$,
\begin{align}\label{joint_last_PDF_X_to_F_kill_A_B}
f_{g^{(A,B), F}_K(T),F_{(A,B),T}}(t,z; F_0) 
= \vert {\sf X}^\prime(z) \vert f_{g^{(a,b)}_{\b}(T), X_{(a,b),T}}(t,{\sf X}(z);x)\,,
\end{align}
$z, K, F_0 \in(A,B)$. Similarly, for the partly discrete distribution we have 
\begin{align}\label{joint_last_discrete_X_to_F_kill_A_B}
\P_{F_0}(g^{(A,B), F}_K(T) = 0, F_{(A,B),T} \in dz) = \P_x(g^{(a,b)}_{\b}(T) = 0, X_{(a,b),T} \in d \,{\sf X}(z))
\end{align}
and $\P_{F_0}(g^{(A,B), F}_K(T) = 0, F_{(A,B),T} = \partial^\dagger) = \P_x(g^{(a,b)}_{\b}(T) = 0, 
X_{(a,b),T} = \partial^\dagger)$ for the jointly defective distribution. Lastly, we note that obvious analogous relations hold for an $F$-process with imposed killing at one interior point, say $B \in \mathcal{D}$, defined by 
$F_{B,t} := {\sf F}(X_{b,t})$, where $b={\sf X}(B)$.

%
%

\section{Explicit formulae for Some Known Solvable Diffusions}\label{subsect_spectral_formulae}
All expressions derived in each subsection are valid for $T\in (0,\infty)$. For the distributions of the last hitting times, 
the limit $T\to\infty$ produces the respective analytical expressions for infinite time horizon. 
For each case, the reader can easily verify that the resulting expressions also follow by substituting the given respective transition PDFs and scale functions within either \eqref{last-passage-density-infinite-T}--\eqref{last-passage-discrete-infinite-T}, 
\eqref{last-passage-density-inf-ab}--\eqref{last-passage-discrete-inf-ab} or \eqref{last-passage-density-inf-b}--\eqref{last-passage-discrete-inf-b}. In the interest of space, we shall not write down these expressions for infinite time horizon.
\subsection{Brownian motion}\label{subsect_standard_BM}
The simplest diffusion is Brownian motion $X_t := X_0 + W_t \in \R$, where $X_0=0$ for standard Brownian motion.  
The scale and speed densities are $\s(x) = 1$ and $\m(x) = 2$ and 
$\varphi^\pm_\lambda(x) = e^{\pm\sqrt{2\lambda}x}$ is a pair of fundamental solutions, i.e., the Green function $G(\lambda;x,y) = {1\over \sqrt{2\lambda}}e^{-\sqrt{2\lambda}\vert x - y \vert}$, $x,y\in\R$, where $G(\lambda;\b,\b) = {1\over \sqrt{2\lambda}}$. 
By a standard Laplace transform identity, (\ref{last-passage-density-phi}) gives 
$\xi(u;\b) = \sqrt{2}{\mathcal L}_\lambda^{-1}\left\{{1 \over \sqrt{\lambda}}\right\}(u) 
= \sqrt{2\over \pi u}$. [Note: there is no dependence on $\b$ in this simple case.] 
The well-known transition density also follows by a Laplace transform identity,  
$p(t;x,y) = {\mathcal L}_\lambda^{-1}\{G(\lambda;x,y) \}(t) = {1\over \sqrt{2\pi t}}e^{- {(x-y)^2\over 2t}}$, $x,y\in\R$, $t>0$. Using these expressions within (\ref{last-passage-density-time-t-1}), for $X_0 \equiv x$, recovers the known formula:
\begin{equation}\label{last_hitting_PDF_BM}
f_{g_\b(T)}(t;x) = p(t;x,\b) \, \xi(T-t;\b)
= {1\over \pi\sqrt{t(T-t)}} e^{- (x - \b)^2/2t}\,,
\end{equation}
$t\in (0,T)$, $x,\b \in \R$. Alternatively, \eqref{last_hitting_PDF_BM} also follows by applying \eqref{last-passage-density-phi-explicit} where the respective first hitting time CDFs for Brownian motion are (for $y < \b$ and $y > \b$):
\[
\P_y(\Tau^+_\b \le u) = 2 \,\NCDF\left({ y - \b \over \sqrt{u}} \right)\,,\,\,\,\,
\P_y(\Tau^-_\b \le u) = 2 \,\NCDF\left({ \b - y \over \sqrt{u}} \right)\,,
\]
where $\NCDF(x) := {1\over \sqrt{2\pi}}\int_{-\infty}^x e^{-{z^2\over 2}}d z$ denotes the standard normal CDF. 
Hence, ${\partial \over \partial y} \P_y(\Tau^+_\b \le u)\vert_{y=\b-} = 
- {\partial \over \partial y} \P_y(\Tau^-_\b \le u)\vert_{y=\b+}= \sqrt{2 \over \pi u}$ and therefore 
(\ref{last-passage-pdf-explicit}) recovers \eqref{last_hitting_PDF_BM}.

We further remark that Proposition \ref{last-passage-propn-time-t-spectral} provides us with a direct alternative. In particular, both $l=-\infty$ and $r=\infty$ are nonattracting natural and O-NO with common purely continuous spectrum with $\Lambda_\pm = 0$. Hence, the density in \eqref{last_hitting_PDF_BM} is given by 
\eqref{last-passage-pdf-spectral}, where $\ind_{E_l} = \ind_{E_r}=0$ and 
$\eta^\pm(T-t;\b)$ are given by \eqref{last_passage_pdf_spectral_1}--\eqref{last_passage_pdf_spectral_2} with no  summation term. 
In this case we have $\textup{Im}{\varphi^{\pm\prime}_\lambda (\b) \over \varphi^\pm_\lambda (\b)}
\vert_{\lambda = \epsilon e^{-i\pi}} 
= \mp\sqrt{2\epsilon}$, i.e., $\eta^-(T-t;\b)-\eta^+(T-t;\b) = {2\sqrt{2}\over \pi}\int_0^\infty e^{-\epsilon (T-t)} 
{d \epsilon \over \sqrt{\epsilon}} = {4\over \pi \sqrt{T-t}}\int_0^\infty e^{-y^2/2}  dy 
= {2\sqrt{2}\over \sqrt{\pi(T-t)}}$. 

The discrete part of the distribution, given by \eqref{prop_last_time-t-1-limit}, is simply written as 
\begin{equation}\label{std_BM_defective_last}
\P_x(g_\b(T) = 0) = 2 \,\NCDF\left({ \vert x - \b \vert \over \sqrt{T}} \right) - 1,
\end{equation}
$x,\b\in\R$. Hence, using \eqref{last_hitting_PDF_BM} and \eqref{std_BM_defective_last} within 
\eqref{last-passage-CDF-time_0_to_t} gives the CDF expressed as
\begin{align}\label{last_hitting_CDF_BM}
\P_x(g_\b(T) \le t) = 2 \,\NCDF\left({\vert x - \b \vert \over \sqrt{T}} \right) - 1 
+ \int_0^t {1\over \pi\sqrt{u(T-u)}} e^{- (x-\b)^2/2u} \,du\,.
\end{align}
Note: setting $x=\b$ recovers the well-known arcsine law for the zeros of Brownian motion 
on $[0,T]$.
 
The joint PDF, $f_{g_\b(T), X_T}(t,z;x)$, for $t \in (0,T)$, $X_0 \equiv x,z,\b \in \R$, is a simple application of \eqref{joint_last_passage_pdf_explicit} since we can directly use the known transition PDF for Brownian motion killed at level $\b\in\R$, i.e.,
\begin{equation}\label{killed_BM_trans_PDF}
p_\b^\pm(T - t;y,z) = {1\over \sqrt{2\pi (T-t)}}\left( e^{-{(y-z)^2\over 2(T-t)}} - e^{-{(y + z - 2\b)^2\over 2(T-t)}} \right)\ind_{\{y,z \,\in \,\I_\b^\pm\}}.
\end{equation}
Differentiating and casting as one expression for all $z\in\R$ gives
\begin{equation*}
\m(z)f^\pm(T-t,z;\b) = 
\mp {\partial \over \partial y} p_{\b}^\mp(T-t;y,z)\bigg\vert_{y = \mp\b} =  
\sqrt{2 \over \pi (T-t)^3} \,\vert z - \b \vert e^{-(z - \b)^2/2(T-t)}\,.
\end{equation*}
By combining this with $p(t;x,\b)$ above, (\ref{joint_last_passage_pdf_explicit}) 
then recovers the known formula for the joint density:
\begin{equation}\label{last_hitting_joint_PDF_BM}
f_{g_\b(T), X_T}(t,z;x) = {\vert z - \b \vert \over 2\pi\sqrt{t (T-t)^3}} e^{-{(x-\b)^2\over2t} 
- {(z - \b)^2\over 2(T-t)}},
\end{equation}
$t\in (0,T)$, $x,\b \in \R$. 
Brownian motion is conservative on $\R$, i.e., \eqref{last_density_time-t-conserve_from_joint} is readily verified.
From (\ref{prop_joint_last_time-t-2}) and \eqref{killed_BM_trans_PDF} we also have the nonzero partly discrete distribution:
\begin{align}\label{last_hitting_defective_joint_BM}
\P_x(g_\b(T) = 0, X_T \in dz) = 
\begin{cases} 
{1\over \sqrt{2\pi T}}\left( e^{-{(x-z)^2\over 2T}} - e^{-{(x + z - 2\b)^2\over 2T}} \right)
dz &, x,z > \b,
\\
{1\over \sqrt{2\pi T}}\left( e^{-{(x-z)^2\over 2T}} - e^{-{(x + z - 2\b)^2\over 2T}} \right) 
dz &, x,z < \b.
\end{cases}
\end{align}
It is also easy to show that integrating \eqref{last_hitting_defective_joint_BM} over $z\in\R$ 
recovers \eqref{std_BM_defective_last}, i.e., $\P_x(g_\b(T) = 0, X_T \in \R) = \P_x(g_\b(T) = 0)$ and 
$\P_x(g_\b(T) = 0, X_T \in \partial^\dagger) = 0$ since Brownian motion is conservative on $\R$.

As an alternative, Proposition \ref{joint_last-passage-propn-time-t-new-formula} can be used to derive 
\eqref{last_hitting_joint_PDF_BM} where $\pm\infty$ are O-NO with purely continuous spectrum, i.e., $\Lambda_\pm = 0$ where \eqref{joint_last_passage_pdf_spectral_1} and \eqref{joint_last_passage_pdf_spectral_2} give the known first hitting time density:
\begin{align*}
f^\pm(T-t,z;\b) 
= {1 \over \pi} \textup{Im}\big\{\!
\int_0^\infty e^{-{1\over 2} (T-t)y^2 + i \vert z - \b \vert y} y d y \big\} 
= {\vert z - \b \vert  \over\sqrt{2 \pi (T-t)^3}} \,e^{-(z - \b)^2/2(T-t)}.
\end{align*}
[Here we used the identity $\int_0^\infty e^{-{A\over 2} y^2 + i B y} y d y 
= {e^{-B^2/2A}\over A}[1 + iB\sqrt{\pi \over 2A}]$, $A>0,B\in\R$, with $A=T-t, B = \vert z - \b \vert$.] 
This recovers \eqref{last_hitting_joint_PDF_BM} for all $x,z,\b\in\R$, $t\in (0,T)$.

%
%
\subsection{Drifted Brownian motion}\label{subsect_drifted_BM}
Consider $X_t := X_0 + \mu t + W_t \in \R$ with 
constant $\mu \in \R$. The scale and speed densities are $\s(x) = e^{-2\mu x}$ and $\m(x) = 2e^{2\mu x}$. A pair of fundamental solutions is $\varphi^+_\lambda(x) = e^{(\sqrt{2\lambda + \mu^2} - \mu)x},\,  \varphi^-_\lambda(x) = e^{-(\sqrt{2\lambda + \mu^2} + \mu)x}$ where  
$G(\lambda;x,y) = e^{\mu (y-x)} {1\over \sqrt{2\lambda + \mu^2}}e^{-\sqrt{2\lambda + \mu^2}\vert x - y \vert}$, $x,y\in\R$. 
As in Section \ref{subsect_standard_BM}, we shall first derive the distribution of $g_\b(T)$ 
followed by the joint distribution of $(g_\b(T),X_T)$ for any $X_0 \equiv x,\b \in \R$.
 
To implement (\ref{last-passage-density-time-t-1}) we use 
$G(\lambda;\b,\b) = {1 \over \sqrt{2\lambda + \mu^2}}$. Since  
$
{\mathcal L}_\lambda^{-1}\left\{{\sqrt{\lambda + a} \over \lambda} \right\}(u) =
{e^{-a u} \over \sqrt{\pi u}} + \sqrt{a\pi}\,\text{Erf}(\sqrt{a u}),\, a > 0,
$
with error function $\text{Erf}(x) =  {2 \over \sqrt{\pi}}\left[\NCDF(\sqrt{2}x) - {1\over 2} \right]$, 
(\ref{last-passage-density-phi}) gives
\begin{align*}
\xi(u;\b) := \sqrt{2}{\mathcal L}_\lambda^{-1}\left\{{\sqrt{\lambda + \mu^2/2} \over \lambda}  \right\}(u)
= \sqrt{2 \over \pi u}e^{-{\mu^2 \over 2} u} + \mu \left[2\NCDF(\mu \sqrt{u})  - 1\right]\,,\,u>0,
\end{align*}
where $\vert \mu \vert \left[2\NCDF(\vert \mu \vert  \sqrt{u})  - 1 \right]
= \mu \left[2\NCDF(\mu \sqrt{u})  - 1\right]$.
Using the known density $p(t;x,\b) = {1\over \sqrt{2\pi t}}e^{- {(\b - x - \mu t)^2\over 2t}}$ and $\xi(T-t;\b)$ within
(\ref{last-passage-density-time-t-1}) gives the density of $g_\b(T)$, for $t\in(0,T)$, $x,\b\in\R$:
\begin{equation}\label{last_hitting_PDF_drifted_BM}
f_{g_\b(T)}(t;x) =
\left( \sqrt{2 \over \pi (T-t)}e^{-{\mu^2 \over 2} (T-t)} +  \mu \left[2\NCDF(\mu \sqrt{T-t})  - 1 \right] \right)
{1\over \sqrt{2\pi t}}e^{- {(\b - x - \mu t)^2\over 2t}}.
\end{equation}
The known first hitting time distribution for drifted Brownian motion gives the discrete portion of the distribution:
\begin{equation}\label{last_hitting_discrete_drifted_BM}
\P_x(g_\b(T) =0) = \P_x(\T_\b > T) = 
\begin{cases}
\NCDF\left({\b - x - \mu T) \over \sqrt{T}} \right) - e^{2\mu (\b - x)} \NCDF\left({x - \b - \mu T \over \sqrt{T}} \right)
&, x < \b,
\\
\NCDF\left({x - \b + \mu T) \over \sqrt{T}} \right) - e^{2\mu (\b - x)} \NCDF\left({\b - x + \mu T \over \sqrt{T}} \right)
&, x > \b.
\end{cases}
\end{equation}
Note: $\P_{x}(g_\b(T) = 0)=0$ when $x=\b$.

As an alternative derivation of \eqref{last_hitting_PDF_drifted_BM}, we can readily apply (\ref{last-passage-density-phi-explicit}) while using the above formula, where 
$\P_y(\T^+_\b \le u) = 1 - \P_y(\T_\b > u)$, for $y \le \b$, and $\P_y(\T^-_\b \le u) = 1 - \P_y(\T_\b > u)$, for $y \ge \b$. Upon differentiating, we again have 
\[
\xi(u;\b) =
{1\over 2}\left[{\partial \over \partial y} \P_y(\T^+_\b \le u)\big\vert_{y=\b-}
- {\partial \over \partial y} \P_y(\T^-_\b \le u)\big\vert_{y=\b+} \right]
= \sqrt{2 \over \pi u}e^{-{\mu^2 \over 2} u} + \mu \left[2\NCDF(\mu \sqrt{u})  - 1\right]\,.
\]
Hence, by (\ref{last-passage-pdf-explicit}), \eqref{last_hitting_PDF_drifted_BM} is recovered. 
Proposition \ref{last-passage-propn-time-t-spectral} can also be used as another alternative derivation.

The joint PDF, $f_{g_\b(T), X_T}(t,z;x)$, for $t \in (0,T)$, $X_0 \equiv x,z,\b \in \R$, again follows as a  simple application of \eqref{joint_last_passage_pdf_explicit} where we directly use the known transition PDF for drifted Brownian motion killed at $\b$, 
\begin{equation}\label{driftedBM_single_kill_transPDF}
p_\b^{\pm}(T - t;y,z) = {1\over \sqrt{2\pi (T-t)}}\left(e^{-{(z-y - \mu (T-t))^2\over 2(T-t)}}
- e^{2\mu (\b - y)} e^{-{(z+y - 2\b - \mu (T-t))^2\over 2(T-t)}} \right)\ind_{\{y,z \,\in \,\I_\b^\pm\}}. 
\end{equation}
Since $\m(\b)\s(\b) = 2$, we have
\begin{equation}\label{driftedBM_single_kill_transPDF_der}
{\m(z) \over \m(\b)}f^\pm(T-t; z , \b) = 
\mp {1\over 2}{\partial \over \partial y} p_{\b}^\mp(T-t;y,z)\bigg\vert_{y = \mp\b} =    
\sqrt{1 \over 2 \pi (T-t)^3} \,\vert z - \b \vert e^{-{(z - \b)^2\over 2(T-t)} + \mu (z - \b) - {\mu^2 \over 2}(T-t)}\,.
\end{equation}
Combining this with $p(t;x,\b)$, \eqref{joint_last_passage_pdf_explicit} gives
\begin{equation}\label{last_hitting_joint_PDF_drfited_BM}
f_{g_\b(T), X_T}(t,z;x) = {\vert z - \b \vert \over 2\pi\sqrt{t (T-t)^3}} 
e^{ -{(\b - x - \mu t)^2\over 2t} - {(z - \b- \mu (T-t))^2 \over 2(T-t)}}.
\end{equation}
Drifted Brownian motion is conservative on $\R$ and \eqref{last_density_time-t-conserve_from_joint} is readily verified. 
Employing \eqref{driftedBM_single_kill_transPDF} within (\ref{prop_joint_last_time-t-2}) gives the nonzero partly discrete distribution:
\begin{align}\label{last_hitting_defective_joint_drifted_BM}
\P_x(g_\b(T) = 0, X_T \in dz) = 
\begin{cases} 
{1\over \sqrt{2\pi T}}\left( e^{-{(z-x - \mu T)^2\over 2T}} 
- e^{2\mu (\b - x)} e^{-{(z+x - 2\b - \mu T)^2\over 2T}} \right)
dz &, x,z > \b,
\\
{1\over \sqrt{2\pi T}}\left( e^{-{(z-x - \mu T)^2\over 2T}} 
- e^{2\mu (\b - x)} e^{-{(z+x - 2\b - \mu T)^2\over 2T}} \right) 
dz &, x,z < \b.
\end{cases}
\end{align}
Integrating over $z$ recovers \eqref{last_hitting_discrete_drifted_BM} since the process is conservative on $\R$ 
with $\P_x(g_\b(T) = 0, X_T \in \partial^\dagger) = 0$. 
Proposition \ref{joint_last-passage-propn-time-t-new-formula} also offers an alternative to deriving the formula in \eqref{last_hitting_joint_PDF_drfited_BM}. The steps are as in Section \ref{subsect_standard_BM}.

%
%
\subsubsection{Geometric Brownian motion}\label{subsect_GBM}
Consider geometric Brownian motion (GBM) with generator 
${\mathcal G}f(x) := {1\over 2}\sigma^2 x^2 f''(x) + \mu xf'(x)$, $x\in (0,\infty)$, $\mu\in\R$, $\sigma > 0$. 
We now exploit the one-to-one mapping from GBM to Brownian motion. We denote GBM by $\{F_t\}_{t \ge 0}$, where 
$F_t = F_0e^{(\mu - \sigma^2/2)t + \sigma W_t}$, 
$F_0 > 0$. Letting $X_t = \nu t + W_t$, $\nu := (\mu - \sigma^2/2)/\sigma$, 
gives $F_t = F_0e^{\sigma X_t}$, i.e., $F_t = {\sf F}(X_t)$, with increasing map ${\sf F}(x) := F_0e^{\sigma x}$, $x\in\R$, and unique inverse ${\sf X}(y) := {1\over \sigma}\ln {y \over F_0}$, $y\in (0,\infty)$, i.e., 
$X_t = {\sf X}(F_t) = {1\over \sigma}\ln {F_t \over F_0}$, $X_0 = 0$. From (\ref{last_passage_CDF_X_to_F}), 
$\P_{F_0}(g^F_K(T) \le t) = \P_0(g^{X}_\b(T) \le t)$, $\b = {\sf X}(K) = {1 \over\sigma}\ln{K \over F_0}$, $K\in (0,\infty)$, which is the CDF of the last hitting time for Brownian motion with drift $\nu$ and started at $X_0 \equiv x = 0$. 
Hence, $f_{g^F_K(T)}(t;F_0) = f_{g^{X}_\b(T)}(t;0)$, where the latter is given by \eqref{last_hitting_PDF_drifted_BM}:
\begin{equation}\label{last_hitting_PDF_GBM}
f_{g^F_K(T)}(t;F_0)  = 
\left( \sqrt{2 \over \pi (T-t)}e^{-{\nu^2 \over 2} (T-t)} +  \nu \left[2\NCDF(\nu \sqrt{T-t})  - 1 \right] \right) 
{1\over \sqrt{2\pi t}}e^{- {({1 \over\sigma}\ln{K \over F_0} - \nu t)^2\over 2t}}.
\end{equation}
Using $\P_{F_0}(g^F_K(T) = 0) = \P_{0}(g^X_\b(T) = 0)$ and \eqref{last_hitting_discrete_drifted_BM} gives
\begin{equation}
\P_{F_0}(g^F_K(T) = 0)  
=
\begin{cases} 
\NCDF\left({\ln {K\over F_0} - (\mu - {\sigma^2 \over 2}) T) \over \sigma\sqrt{T}} \right) 
- ({K\over F_0})^{{2\mu \over \sigma^2} - 1} \NCDF\left({\ln {F_0\over K} - (\mu - {\sigma^2 \over 2}) T \over \sigma\sqrt{T}} \right)
&, F_0 \le K,
\\
\NCDF\left({\ln {F_0\over K} + (\mu - {\sigma^2 \over 2}) T\over \sigma\sqrt{T}} \right)
 - ({K\over F_0})^{{2\mu \over \sigma^2} - 1} \NCDF\left({\ln {K\over F_0} + (\mu - {\sigma^2 \over 2}) T\over \sigma\sqrt{T}} \right)
&, F_0 \ge K.
\end{cases}
\end{equation}
Note: $\P_{F_0}(g^F_K(T) = 0)=0$ when $F_0=K$.

The joint PDF is obtained by simply applying \eqref{joint_last_PDF_X_to_F}, 
where ${\sf X}(z) := {1 \over\sigma}\ln{z \over F_0}$, ${\sf X}'(z) := {1 \over\sigma z}$, i.e., 
$f_{g^F_K(T), F_T}(t,z; F_0) = {1 \over\sigma z} f_{g^X_\b(T), X_T}(t,{\sf X}(z);0)$. 
Hence, using \eqref{last_hitting_joint_PDF_drfited_BM},
\begin{align}\label{last_hitting_joint_PDF_drfited_GBM}
f_{g^F_K(T), F_T}(t,z; F_0) = {\vert \ln{z \over K} \vert \over 2\pi \sigma^2 z\sqrt{t (T-t)^3}}
e^{ - (\ln{K \over F_0} - (\mu - {\sigma^2\over 2}) t)^2/2\sigma^2 t -
(\ln{z \over K} - (\mu - {\sigma^2\over 2}) (T-t))^2/2\sigma^2 (T-t)},
\end{align}
$z, K, F_0 \in (0,\infty)$, $t\in (0,T)$. Since ${\sf X}'(z) > 0$, \eqref{pmf_F_to_X_joint_last_time_1} gives
\begin{align}
\P_{F_0}(g^F_K(T) = 0, F_T \in dz) = {1 \over\sigma z}
\begin{cases}\label{joint_last_hitting_discrete_GBM}
p_\b^{+}(T;0,{\sf X}(z)) dz &, z, F_0 > K,
\\
p_{\b}^{-}(T;0,{\sf X}(z)) dz &, z, F_0 < K,
\end{cases}
\end{align}
and zero otherwise, where (for $z,F_0 > K$)
$$p_k^{+}(T;0,{\sf X}(z)) = {1\over \sqrt{2\pi T}}
\left[e^{-(\ln{z \over F_0} - (\mu - {\sigma^2\over 2}) T)^2 / 2\sigma^2T}
-  \bigg({K\over F_0}\bigg)^{{2\mu \over \sigma^2} - 1} e^{-(\ln{z F_0 \over K^2} - (\mu - {\sigma^2\over 2}) T)^2 / 2\sigma^2T} \right]$$
and $p_\b^{-}(T;0,{\sf X}(z))$ given by the same expression (for $z,F_0 < K$). Note: $\P_{F_0}(g^F_K(T) = 0, F_T \in \partial^\dagger) = 0$ since GBM is conservative on $(0,\infty)$.

%
%

\subsection{Drifted Brownian motion killed at either of two endpoints}\label{section_drfited_BM_killed_a_b}
Consider drifted Brownian motion on $(a,b)$ with imposed killing at the endpoints $-\infty < a < b < \infty$. 
The respective marginal and joint distributions of $g^{(a,b)}_{\b}(T)$ and 
$(g^{(a,b)}_{\b}(T),X_{(a,b),T})$ are readily derived by applying 
\eqref{prop_joint_last_time_kill_ab-t-3-zero}, \eqref{prop_last_time-CDF_discrete_killed_ab}--\eqref{joint_last_ab_doubly_defective_new} and \eqref{joint_last_passage_pdf_explicit_ab_new}--\eqref{last_passage_pdf_discrete_spec} of Proposition 
\ref{joint_last-passage-propn-time-t_ab-new-version}. 
Hence, we compute $\psi_n^\pm$ defined in \eqref{FHT_eigenfunctions_ab}. 
Using $\varphi^\pm_\lambda$ in Section \ref{subsect_drifted_BM}, the cylinder function in 
\eqref{phi_function} is given by
\begin{equation}\label{drifted_BM_cylinder}
\phi(x,y;\lambda) = 2e^{-\mu(x+y)}\sinh(\sqrt{2\lambda + \mu^2} (y-x)).
\end{equation}
All positive eigenvalues solving $\phi(a,b;-\lambda_n^{(a,b)}) = 0$ are 
$\lambda_n^{(a,b)} = {n^2\pi^2 \over 2(b-a)^2} + {\mu^2\over 2}$, $n \ge 1$. Now, using 
\begin{equation*}
{\partial \over \partial \lambda}\phi(a,b;\lambda) 
= 2e^{-\mu(a+b)}{(b-a)\over \sqrt{2\lambda + \mu^2}}\cosh(\sqrt{2\lambda + \mu^2} (b-a)),
\end{equation*}
gives the derivative in \eqref{Delta_derivative} which simplifies to 
$\Delta(a,b; \lambda_n^{(a,b)}) 
= 2i (-1)^{n} e^{-\mu(a+b)}{(b-a)^2\over n\pi}$. 
[Note: $\sinh(ix)=i\sin x$, $\cosh(ix)=\cos x$, $\cos(n\pi)=(-1)^n$ 
and the eigenvalue equation $\sqrt{2\lambda_n^{(a,b)} - \mu^2} = {n\pi \over b-a}$.] 
From \eqref{drifted_BM_cylinder} we have $\phi(a,x;-\lambda_n^{(a,b)}) = 2ie^{-\mu(a+x)}\sin\big({n\pi (x-a)\over b-a} \big)$ and hence
\begin{align}\label{drifted_BM_Psi_plus_n}
\psi_n^+(x;a,b) = -{\phi(a,x; -\lambda_n^{(a,b)}) \over \Delta(a,b; \lambda_n^{(a,b)})}
= {n\pi \over (b-a)^2}e^{\mu(b-x)}\sin\big({n\pi (b-x)\over b-a}\big).
\end{align}
This expression follows using $(-1)^{n+1}\sin \theta = \sin (-\theta + n\pi) $. 
Similarly,
\begin{equation}\label{drifted_BM_minus_n}
\psi_n^-(x;a,b) = {n\pi \over (b-a)^2}e^{\mu(a-x)} (-1)^{n+1} \sin\big({n\pi (b-x)\over b-a}\big)
= {n\pi \over (b-a)^2}e^{\mu(a-x)}\sin\big({n\pi (x-a)\over b-a}\big).
\end{equation}
The corresponding expressions for $\psi_n^\pm(x;a,\b), \psi_n^\pm(x;\b,b)$, $a < \b < b$, are obvious with eigenvalues  
$\lambda_n^{(a,\b)} = {n^2\pi^2 \over 2(\b-a)^2} + {\mu^2\over 2}$ and $\lambda_n^{(\b,b)} = {n^2\pi^2 \over 2(b-\b)^2} + {\mu^2\over 2}$.

The scale function is given by 
${\mathcal S}[x,y] = \int_x^y e^{-2\mu z} d z = {1\over 2\mu}(e^{-2\mu x} - e^{-2\mu y})$, if $\mu\ne 0$ and 
${\mathcal S}[x,y] = y-x$ if $\mu = 0$. 
Hence, by \eqref{prop_last_time-CDF_discrete_killed_ab} we have the explicit spectral series for the defective marginal distribution:
\begin{align}\label{drifted_BM_prop_last_time_discrete_killed_ab}
\P_x(g^{(a,b)}_{\b}(T) = 0) 
= \begin{cases}
R(x;a,\b)
+ e^{\mu(\b-x) - {\mu^2\over 2}T} 
\displaystyle 
\sum_{n=1}^\infty {2\pi ne^{- {n^2\pi^2 T\over 2(\b-a)^2}} \over n^2\pi^2 + \mu^2 (\b-a)^2}
\sin\big({n\pi (\b-x)\over \b - a}\big)&, x \in (a,\b),
\\
R(x;b,\b)
+ e^{\mu(\b-x) - {\mu^2\over 2}T} 
\displaystyle 
\sum_{n=1}^\infty {2\pi ne^{- {n^2\pi^2 T\over 2(b-\b)^2}} \over n^2\pi^2 + \mu^2 (b-\b)^2}
\sin\big({n\pi (x-\b)\over b-\b}\big)&, x \in (\b,b).
\end{cases}
\end{align} 
Here, and in Section \ref{subsect_drifted_BM_kill_one}, we define $R(x;a,\b) := (e^{2\mu(\b-x)} - 1)/(e^{2\mu(\b-a)} - 1)$, if $\mu\ne 0$, and 
$R(x;a,\b) := (\b-x)/(\b-a)$, if $\mu = 0$, and similarly for $R(x;b,\b)$ with $b$ replacing $a$. Employing 
$\psi_n^-(x;a,\b)$ and $\psi_n^+(x;\b,b)$ within \eqref{joint_last_ab_doubly_defective_new} gives 
the explicit spectral series for the jointly defective distribution:
\begin{align}\label{drifted_BM_joint_last_ab_doubly_defective}
&\P_x(g^{(a,b)}_{\b}(T) = 0, X_{(a,b),T} = \partial^\dagger) 
\nonumber \\
&
= \begin{cases}
R(x;a,\b)
- e^{\mu(a-x) - {\mu^2\over 2}T} 
\displaystyle 
\sum_{n=1}^\infty {2\pi ne^{- {n^2\pi^2 T\over 2(\b-a)^2}} \over n^2\pi^2 + \mu^2 (\b-a)^2}
\sin\big({n\pi (x-a)\over \b - a}\big)&, x \in (a,\b),
\\
R(x;b,\b)
- e^{\mu(b-x) - {\mu^2\over 2}T} 
\displaystyle 
\sum_{n=1}^\infty {2\pi ne^{- {n^2\pi^2 T\over 2(b-\b)^2}} \over n^2\pi^2 + \mu^2 (b-\b)^2}
\sin\big({n\pi (b - x)\over b - \b}\big)&, x \in (\b,b).
\end{cases}
\end{align}

By using the above functions and eigenvalues within \eqref{u_spectral_3}, we readily obtain the well-known spectral series for the transition density,
\begin{eqnarray}\label{drfited_BM_trans_killed_a_b}
p_{(a,b)}(t;x,y) = {2\over b-a}e^{\mu(y-x) - {\mu^2\over 2}t}\sum_{n=1}^\infty e^{-{n^2\pi^2\over 2(b-a)^2}t}
\sin\big({n\pi (x-a)\over b-a}\big)\sin\big({n\pi (y-a)\over b-a}\big)\,,
\end{eqnarray}
$x,y\in (a,b)$, $t > 0$. Hence, the explicit spectral series for the partly discrete distribution in \eqref{prop_joint_last_time_kill_ab-t-3-zero} follows immediately where 
$p_{(a,\b)}(T;x,z)$ and $p_{(\b,b)}(T;x,z)$ are expressed using \eqref{drfited_BM_trans_killed_a_b}, i.e.,
\begin{align}\label{partial_joint_last_time_kill_ab_driftedBM}
&\P_x(g^{(a,b)}_{\b}(T) = 0, X_{(a,b),T} \in dz) 
\nonumber \\
&= \begin{cases} 
\displaystyle{2\over \b-a}e^{\mu(z-x) - {\mu^2\over 2}T}\sum_{n=1}^\infty e^{-{n^2\pi^2\over 2(\b-a)^2}T}
\sin\big({n\pi (x-a)\over \b-a}\big)\sin\big({n\pi (z-a)\over \b-a}\big) dz 
&, x,z \in (a,\b),
\\ 
\displaystyle {2\over b-\b}e^{\mu(z-x) - {\mu^2\over 2}T}\sum_{n=1}^\infty e^{-{n^2\pi^2\over 2(b-\b)^2}T}
\sin\big({n\pi (x-\b)\over b-\b}\big)\sin\big({n\pi (z-\b)\over b-\b}\big) dz
&, x,z \in (\b,b).
\end{cases}
\end{align}

Given $\psi_n^+(x;a,\b)$ and $\psi_n^-(x;\b,b)$, \eqref{joint_last_passage_pdf_explicit_ab_new} now directly gives us the joint density:
\begin{align}\label{drifted_BM_kill_a_b_joint_last_passage}
f_{g^{(a,b)}_{\b}(T), X_{(a,b),T}}(t,z;x) 
= p_{(a,b)}(t;x,\b) e^{\mu(z-\b) - {\mu^2\over 2}(T-t)}
\!\left\{
\begin{array}{lr}
\!\displaystyle \sum_{n=1}^\infty { n\pi e^{- {n^2\pi^2 (T-t)\over 2(\b-a)^2}} \over (\b-a)^2}
\sin\big({n\pi (\b - z)\over \b - a}\big), &\!\!\!\!  z\in (a,\b), 
\\
\!\displaystyle \sum_{n=1}^\infty { n\pi  e^{- {n^2\pi^2 (T-t)\over 2(b - \b)^2}} \over (b - \b)^2}
\sin\big({n\pi (z - \b)\over b - \b}\big), &\!\!\!\!  z\in (\b,b).
\end{array}
\right.
\end{align}
Using \eqref{drfited_BM_trans_killed_a_b} for $p_{(a,b)}(t;x,\b)$ produces a double (sine) series representation of the joint PDF.

We now employ \eqref{last_passage_pdf_discrete_spec}. From the above scale function and $\m(\b) = 2e^{2\mu \b}$, we have (for $\mu\ne 0$):
\begin{align*}
\widehat{\mathcal{S}}(a,b;\b) := {1 \over \m(\b)} \mathcal{S}(a,b;\b) = 
\mu {e^{-2\mu a} - e^{-2\mu b} \over (e^{-2\mu a} - e^{-2\mu \b} )(1 - e^{-2\mu (b-\b)})} 
= {\mu \over 2}{\sinh(\mu(b-a)) \over \sinh(\mu(\b-a))\sinh(\mu(b-\b))}.
\end{align*}
For $\mu=0$ we simply have 
$\widehat{\mathcal{S}}(a,b;\b) = {1 \over 2}{b-a \over (\b-a)(b-\b)}$. 
Computing the derivatives in \eqref{psi_hat_derivatives} gives
\begin{equation*}
{1 \over \m(\b)}\hat\psi^+_{n}(a,\b) = {1\over \b - a}{n^2\pi^2 \over 2(\b-a)^2}
 \equiv {1\over \b - a} \widehat{\lambda}_n^{(a,\b)},\,\,\,\,
{1 \over \m(\b)}\hat\psi^-_{n}(\b,b) = {1\over b - \b}{n^2\pi^2 \over 2(b - \b)^2}
 \equiv {1\over b - \b} \widehat{\lambda}_n^{(\b,b)}.
\end{equation*}
Hence, \eqref{last_passage_pdf_discrete_spec} gives the spectral representation:
\begin{align}\label{drifted_BM_last_passage_pdf_spec}
f_{g^{(a,b)}_{\b}(T)}(t;x) &= p_{(a,b)}(t;x,\b)
\left[ \widehat{\mathcal{S}}(a,b;\b) 
+ 
\sum_{n=1}^{\infty}
\bigg(
{1\over \b - a}{\widehat{\lambda}_n^{(a,\b)} \over \lambda_n^{(a,\b)} }e^{-\lambda_n^{(a,\b)} (T-t)}
+  
{1\over b - \b}{\widehat{\lambda}_n^{(\b,b)} \over \lambda_n^{(\b,b)} } e^{-\lambda_n^{(\b,b)} (T-t)}
\bigg)\right],
\nonumber \\
&\equiv p_{(a,b)}(t;x,\b)
\bigg[ \widehat{\mathcal{S}}(a,b;\b) 
+ 
\sum_{n=1}^{\infty}
\bigg(
{1\over \b - a}{n^2\pi^2 \over n^2\pi^2 + \mu^2 (\b - a)^2}
e^{-({n^2\pi^2 \over 2(\b-a)^2} + {\mu^2\over 2}) (T-t)}
\nonumber \\
&\phantom{\equiv p_{(a,b)}(t;x,\b)
\bigg[ \widehat{\mathcal{S}}(a,b;\b) + 
\sum_{n=1}^{\infty}}
+  
{1\over b - \b}{n^2\pi^2 \over n^2\pi^2 + \mu^2 (b - \b)^2}
e^{-({n^2\pi^2 \over 2(b - \b)^2} + {\mu^2\over 2}) (T-t)}
\bigg)\bigg].
\end{align}
The above formulae complete the specification of the marginal and joint distributions (including the defective portions) of $g^{(a,b)}_{\b}(T)$ and $(g^{(a,b)}_{\b}(T),X_{(a,b),T})$.

We remark that the quantity in square brackets in \eqref{drifted_BM_last_passage_pdf_spec}, i.e., 
$\xi_{(a,b)}(T-t;\b)$ is also readily computed via \eqref{last-passage-density-phi-killed}. In particular, the Green function in \eqref{greenfunc_double} gives 
$H(\lambda) := {1 \over \lambda G_{(a,b)}(\lambda;\b,\b)}$, i.e.,
\begin{align*}\label{drifted_BM_last-passage-density-phi-killed}
\xi_{(a,b)}(u;\b) &= {\mathcal L}_\lambda^{-1}\{H(\lambda) \}(u)
= {\mathcal L}_\lambda^{-1}\big\{
\frac{1}{2\lambda} \frac{\sqrt{2\lambda +\mu^2} \sinh(\sqrt{2\lambda +\mu^2}(b - a))}{\sinh(\sqrt{2\lambda +\mu^2}(\b - a)) \sinh(\sqrt{2\lambda +\mu^2}(b - \b))}\big\}(u).
\end{align*}
Observe that $H(\lambda)$ is meromorphic in $\lambda$ with isolated simple poles at 
$\lambda=0$, $\lambda=-\lambda_n^{(a,\b)}$ and $\lambda=-\lambda_n^{(\b,b)}$, $n\ge 1$. The residues are easily calculated: ${\rm Res}\{H(\lambda);\lambda = 0\} = \widehat{\mathcal{S}}(a,b;\b)$ and
\[
{\rm Res}\{H(\lambda);\lambda  = -\lambda_n^{(a,\b)}\} = {1\over \b - a}{\widehat{\lambda}_n^{(a,\b)} \over \lambda_n^{(a,\b)} },\,\,\,
{\rm Res}\{H(\lambda);\lambda  = -\lambda_n^{(\b,b)}\} = {1\over b - \b}{\widehat{\lambda}_n^{(\b,b)} \over \lambda_n^{(\b,b)} }.
\]
Hence, substituting these residues into the Laplace inversion formula (with $u=T-t$) recovers  \eqref{drifted_BM_last_passage_pdf_spec}.

The CDF of $g^{(a,b)}_{\b}(T)$ can be explicitly expressed as a sum of single and double series by adding 
the series in \eqref{drifted_BM_prop_last_time_discrete_killed_ab} with the series in \eqref{series_CDF_g_K_ab_T} where  
$
\phi_m(x)\phi_m(\b) = {e^{-\mu(x+\b)} \over b - a}
\sin\big({m\pi (x-a)\over b - a}\big)\sin\big({m\pi (\b - a)\over b - a}\big)
$
and with $\mathcal{S}(a,b;\b)$, $\hat\psi^+_{n}(a,\b)$, $\hat\psi^-_{n}(\b,b)$, $\lambda_n^{(a,\b)}$, 
$\lambda_n^{(\b,b)}$, $\lambda_m \equiv \lambda_m^{(a,b)}$ given just above. 

Consider the GBM process, with generator as in \ref{subsect_GBM} and with imposed killing at the endpoints of $(A,B) 
\subset (0,\infty)$. We denote this process by $\{F_{(A,B),t}, t\ge 0\}$. 
Employing \eqref{last_passage_CDF_X_to_F_kill_A_B}--\eqref{joint_last_discrete_X_to_F_kill_A_B} in all of the above relations, while setting $x=0$, 
$\b = {1 \over\sigma}\ln{K \over F_0}$, ${\sf X}(z) = {1 \over\sigma}\ln{z \over F_0}$, 
$a = {1 \over\sigma}\ln{A \over F_0}$, $b = {1 \over\sigma}\ln{B \over F_0}$, and replacing the drift parameter 
$\mu$ by $\nu \equiv (\mu - \sigma^2/2)/\sigma$, leads to all the respective expressions for the marginal and joint distributions of $g^{(A,B), F}_K(T)$ and $(g^{(A,B), F}_K(T), F_{(A,B),T})$, respectively.

%
%
\subsection{Drifted Brownian motion killed at one endpoint}\label{subsect_drifted_BM_kill_one}
Let us now consider drifted Brownian motion with imposed killing at only one endpoint $b\in\R$.  
The respective marginal and joint distributions of $g^{b}_{\b}(T)$ and $(g^{b}_{\b}(T),X_{b,T})$ 
are readily derived in both cases: (i) $x,\b\in \I_b^- \equiv (-\infty,b)$ and (ii) $x,\b\in \I_b^+ \equiv (b,\infty)$. 
Note that $l=-\infty$ and $r=\infty$. 
For the defective portions we apply \eqref{prop_joint_last_b-below_pmf}--\eqref{doubly_defective_kill_up_b} for case (i) and \eqref{prop_joint_last_b-above_pmf}--\eqref{doubly_defective_kill_down_b} for case (ii). For the continuous distributions we shall emply \eqref{last-passage-pdf-explicit-kill-b-below}, \eqref{last-passage-pdf-explicit-kill-b-above}, \eqref{joint_last_passage_pdf_explicit_b_1} and \eqref{joint_last_passage_pdf_explicit_b_2}. Equivalently,  Proposition~\ref{marginal_joint_last-passage-propn-kill-b} can also be used directly. We can simply reuse some of the expressions of Section \ref{section_drfited_BM_killed_a_b} for the transition densities and first hitting time CDFs. 

Consider case (i). Using the expressions for the transition densities $p^-_{\b}(T;x,z)$ and $p_{(\b,b)}(T;x,z)$ within \eqref{prop_joint_last_b-below_pmf} gives
\begin{align}
&\P_x(g^{b}_{\b}(T) = 0, X_{b,T} \in dz) 
\nonumber \\
&= \begin{cases} 
{1\over \sqrt{2\pi T}}\left( e^{-{(z-x - \mu T)^2\over 2T}} 
- e^{2\mu (\b - x)} e^{-{(z+x - 2\b - \mu T)^2\over 2T}} \right) dz &, x,z \in (-\infty,\b),
\\
\displaystyle {2\over b - \b}e^{\mu(z-x) - {\mu^2\over 2}T}\sum_{n=1}^\infty e^{-{n^2\pi^2\over 2(b-\b)^2}T}
\sin\big({n\pi (x-\b)\over b-\b}\big)\sin\big({n\pi (z-\b)\over b-\b}\big) dz &, x,z \in (\b,b).
\label{prop_joint_last_b-below_pmf_driftedBM}
\end{cases}
\end{align}
Substituting the expression for $\P_x(\T^+_\b > T)$, given in \eqref{last_hitting_discrete_drifted_BM} for $x < \b$, and 
$\P_x(\T^-_\b (b) > T)$, given in \eqref{drifted_BM_prop_last_time_discrete_killed_ab} for $x \in (\b,b)$, within 
\eqref{last_time-CDF_discrete_killed_below_b} gives
\begin{align}\label{discrete_killed_below_b_driftedBM}
\P_x(g^b_{\b}(T) = 0) = 
\begin{cases}
\NCDF\left({\b - x - \mu T) \over \sqrt{T}} \right) - e^{2\mu (\b - x)} \NCDF\left({x - \b - \mu T \over \sqrt{T}} \right)
&, x \in (-\infty,\b),
\\
R(x;b,\b)
+ e^{\mu(\b-x) - {\mu^2\over 2}T} 
\displaystyle 
\sum_{n=1}^\infty {2\pi ne^{- {n^2\pi^2 T\over 2(b-\b)^2}} \over n^2\pi^2 + \mu^2 (b-\b)^2}
\sin\big({n\pi (x-\b)\over b-\b}\big)&, x \in (\b,b).
\end{cases}
\end{align}
Note that $\P_x(\T^-_l (\b) \le T) = 0$ since $l=-\infty$ is a natural boundary. 
Moreover, $\P_x(\T^+_b (\b) \le T)$ is given by \eqref{drifted_BM_joint_last_ab_doubly_defective} for $x \in (\b,b)$. 
Hence, by \eqref{doubly_defective_kill_up_b}, we have $\P_x(g^{b}_{\b}(T) = 0, X_{b,T} = \partial^\dagger)=0$ if $x \in (-\infty,\b)$ and $\P_x(g^{b}_{\b}(T) = 0, X_{b,T} = \partial^\dagger)$ is given by \eqref{drifted_BM_joint_last_ab_doubly_defective} for $x \in (\b,b)$.

For case (ii), we use the expressions for $p^+_{\b}(T;x,z)$ and $p_{(b,\b)}(T;x,z)$ within 
\eqref{prop_joint_last_b-above_pmf} to give
\begin{align}
&\P_x(g^{b}_{\b}(T) = 0, X_{b,T} \in dz) 
\nonumber \\
&= \begin{cases} 
\displaystyle {2\over \b - b}e^{\mu(z-x) - {\mu^2\over 2}T}\sum_{n=1}^\infty e^{-{n^2\pi^2\over 2(\b - b)^2}T}
\sin\big({n\pi (x - b)\over \b - b}\big)\sin\big({n\pi (z - b)\over \b - b}\big) dz &, x,z \in (b,\b),
\\
{1\over \sqrt{2\pi T}}\left( e^{-{(z-x - \mu T)^2\over 2T}} 
- e^{2\mu (\b - x)} e^{-{(z+x - 2\b - \mu T)^2\over 2T}} \right) dz &, x,z \in (\b,\infty).
\label{prop_joint_last_b-above_pmf_driftedBM}
\end{cases}
\end{align}
Substituting the expression for $\P_x(\T^-_\b > T)$, given in \eqref{last_hitting_discrete_drifted_BM} for $x > \b$, and 
$\P_x(\T^+_\b(b) > T)$, given by the first expression in 
\eqref{drifted_BM_prop_last_time_discrete_killed_ab} with $b$ replacing $a$, i.e., $x \in (b,\b)$, within 
\eqref{prop_last_time-CDF_discrete_killed_above_b} gives
\begin{align}\label{discrete_killed_above_b_driftedBM}
\P_x(g^b_{\b}(T) = 0) = 
\begin{cases}
R(x;b,\b)
+ e^{\mu(\b-x) - {\mu^2\over 2}T} 
\displaystyle 
\sum_{n=1}^\infty {2\pi ne^{- {n^2\pi^2 T\over 2(\b-b)^2}} \over n^2\pi^2 + \mu^2 (\b-b)^2}
\sin\big({n\pi (\b-x)\over \b - b}\big)&, x \in (b,\b),
\\
\NCDF\left({x - \b + \mu T) \over \sqrt{T}} \right) - e^{2\mu (\b - x)} \NCDF\left({\b - x + \mu T \over \sqrt{T}} \right)
&, x \in (\b,\infty).
\end{cases}
\end{align}
Note that $\P_x(\T^+_r (\b) \le T) = 0$ since $r=\infty$ is a natural boundary. 
Moreover, $\P_x(\T^-_b (\b) \le T)$ is given by the first expression in \eqref{drifted_BM_joint_last_ab_doubly_defective} with $b$ replacing $a$, i.e., for $x \in (b,\b)$. 
Hence, by \eqref{doubly_defective_kill_down_b}, we have $\P_x(g^{b}_{\b}(T) = 0, X_{b,T} = \partial^\dagger)=0$ if $x \in (\b,\infty)$ and for $x \in (b,\b)$,
\begin{align}\label{discrete_killed_above_b_driftedBM}
\P_x(g^{b}_{\b}(T) = 0, X_{b,T} = \partial^\dagger) 
= 
R(x;b,\b)
- e^{\mu(b-x) - {\mu^2\over 2}T} 
\displaystyle 
\sum_{n=1}^\infty {2\pi ne^{- {n^2\pi^2 T\over 2(\b-b)^2}} \over n^2\pi^2 + \mu^2 (\b-b)^2}
\sin\big({n\pi (x-b)\over \b - b}\big).
\end{align}

The marginal and joint densities are now computed. From Section \ref{subsect_drifted_BM} we have 
\begin{align*}
\pm{1\over 2}{\partial \over \partial y} \P_y(\T^\pm_\b \le T -t)\big\vert_{y=\b\mp}
= {e^{-{\mu^2 \over 2} (T-t)}\over \sqrt{2 \pi (T-t)}} \mp \mu \NCDF(\mp\mu \sqrt{T-t}).
\end{align*}
For the first hitting times $\T^-_\b (b)$ and $\T^-_\b (b)$ we have the respective CDFs:
\begin{align*}
\P_y(\T^-_\b (b) \le T - t) = R(y;\b,b) 
- e^{\mu(\b-y) - {\mu^2\over 2}(T-t)} 
\displaystyle 
\sum_{n=1}^\infty {2\pi n e^{- {n^2\pi^2 (T-t)\over 2(b-\b)^2}} \over n^2\pi^2 + \mu^2 (b-\b)^2}
\sin\big({n\pi (y-\b)\over b-\b}\big)&, y \in (\b,b),
\\
\P_y(\T^+_\b (b) \le T - t) = R(y;\b,b) 
- e^{\mu(\b-y) - {\mu^2\over 2}(T-t)} 
\displaystyle 
\sum_{n=1}^\infty {2\pi n e^{- {n^2\pi^2 (T-t)\over 2(\b-b)^2}} \over n^2\pi^2 + \mu^2 (\b-b)^2}
\sin\big({n\pi (\b-y)\over \b-b}\big)&, y \in (b,\b).
\end{align*}
Hence, 
\begin{align*}
{1\over 2}{\partial \over \partial y} \P_y(\T^\pm_\b (b) \le T-t)\bigg\vert_{y=\b\mp}
 = \hat{R}(\b,b) 
+ {e^{-{\mu^2\over 2}(T-t)} \over \b - b}
\displaystyle 
\sum_{n=1}^\infty { n^2\pi^2 e^{- {n^2\pi^2 (T-t)\over 2(\b - b)^2}} \over n^2\pi^2 + \mu^2 (\b - b)^2}
\end{align*}
where $\hat{R}(\b,b):={\mu \over e^{2\mu(\b - b)} - 1}$ if $\mu\ne 0$ and 
$\hat{R}(\b,b):={1 \over 2(\b - b)}$ if $\mu=0$. Note that $\m(\b)\s(\b)=2$. 
Inserting the above respective derivative expressions into \eqref{last-passage-pdf-explicit-kill-b-below} 
and \eqref{last-passage-pdf-explicit-kill-b-above}, along with $p^\pm_{b}(t;x,\b)$, gives 
\begin{align}\label{last-passage-pdf-explicit-kill-b-below_driftedBM_1}
&f_{g^b_{\b}(T)}(t;x) 
= {1\over \sqrt{2\pi t}}\bigg(e^{-{(\b - x - \mu t)^2\over 2t}}
- e^{2\mu (b - x)} e^{-{(\b+x - 2b - \mu t)^2\over 2t}} \bigg)
\\
&\times\begin{cases}
\displaystyle {e^{-{\mu^2 \over 2} (T-t)}\over \sqrt{2 \pi (T-t)}} - \mu \NCDF(-\mu \sqrt{T-t}) 
 - \hat{R}(\b,b) + {e^{-{\mu^2\over 2}(T-t)} \over b - \b}
\sum_{n=1}^\infty {n^2\pi^2 e^{- {n^2\pi^2 (T-t)\over 2(b - \b)^2}} \over n^2\pi^2 + \mu^2 (b - \b)^2}  
&, x,\b\in(-\infty,b),
\nonumber \\
\displaystyle {e^{-{\mu^2 \over 2} (T-t)}\over \sqrt{2 \pi (T-t)}} + \mu \NCDF(\mu \sqrt{T-t}) 
 + \hat{R}(\b,b) + {e^{-{\mu^2\over 2}(T-t)} \over \b - b}
\sum_{n=1}^\infty {n^2\pi^2 e^{- {n^2\pi^2 (T-t)\over 2(\b - b)^2}} \over n^2\pi^2 + \mu^2 (\b - b)^2}
&, x,\b\in(b,\infty).
\end{cases}
\end{align}

The joint PDF now follows from \eqref{joint_last_passage_pdf_explicit_b_1} and \eqref{joint_last_passage_pdf_explicit_b_2}. In particular,
\[
p_{(b,\b)}(T-t;y,z) = {2\over (\b - b)}e^{\mu(z-y) - {\mu^2\over 2}(T-t)}
\sum_{n=1}^\infty e^{-{n^2\pi^2\over 2(\b - b)^2}(T-t)}
\sin\big({n\pi (y-b)\over \b - b}\big)\sin\big({n\pi (z-b)\over \b - b}\big)\,.
\]
For $z\in (b,\b)$, ${\m(z) \over \m(\b)}f^+(T-t; z,\b| b) = 
 -{1\over 2}{\partial \over \partial y} p_{(b,\b)}(T-t;y,z)\big\vert_{y=\b-}$ since $\m(\b)\s(\b)=2$, i.e.,
\begin{align*}
{\m(z) \over \m(\b)}f^+(T-t; z,\b| b) 
 = {\pi\over (\b - b)^2}e^{\mu(z-\b) - {\mu^2\over 2}(T-t)}
\sum_{n=1}^\infty n e^{-{n^2\pi^2\over 2(\b - b)^2}(T-t)}\sin\big({n\pi (\b - z)\over \b - b}\big).
\end{align*}
Similarly, ${\m(z) \over \m(\b)}f^-(T-t; z,\b| b)$ is given by the same series expression for $z\in (\b,b)$.
Note that the series also follow directly from \eqref{joint_last_passage_pdf_spectral_b_1} 
and \eqref{joint_last_passage_pdf_spectral_b_2} using $\psi^-_{n}(z;\b,b)$ and $\psi^+_{n}(z;b,\b)$ of the previous section. 
Combining the above series with \eqref{driftedBM_single_kill_transPDF_der} and 
$p^\pm_{b}(t;x,\b)$, within \eqref{joint_last_passage_pdf_explicit_b_1} and \eqref{joint_last_passage_pdf_explicit_b_2} gives
\begin{align}\label{joint_last_passage_pdf_b_1_driftedBM}
f_{g^{b}_{\b}(T), X_{b,T}}(t,z;x) 
&= {e^{\mu(z-\b) - {\mu^2\over 2}(T-t)}\over \sqrt{2\pi t}}\bigg(e^{-{(\b - x - \mu t)^2\over 2t}}
- e^{2\mu (b - x)} e^{-{(\b+x - 2b - \mu t)^2\over 2t}} \bigg) 
\nonumber \\
&\times\begin{cases}
\displaystyle {\b - z \over \sqrt{2\pi (T-t)^3}} e^{-{(z - \b)^2\over 2(T-t)}} 
&, z \in (-\infty,\b),
\\
{\pi\over (b - \b)^2}
\displaystyle \sum_{n=1}^\infty n e^{-{n^2\pi^2\over 2(b - \b)^2}(T-t)}\sin\big({n\pi (z - \b)\over b - \b}\big)
&, z \in (\b,b),
\end{cases}
\end{align}
for $x,\b\in(-\infty,b)$, and 
\begin{align}\label{joint_last_passage_pdf_b_2_driftedBM}
f_{g^{b}_{\b}(T), X_{b,T}}(t,z;x) 
&= {e^{\mu(z-\b) - {\mu^2\over 2}(T-t)}\over \sqrt{2\pi t}}\bigg(e^{-{(\b - x - \mu t)^2\over 2t}}
- e^{2\mu (b - x)} e^{-{(\b+x - 2b - \mu t)^2\over 2t}} \bigg) 
\nonumber \\
&\times\begin{cases}
 {\pi\over (\b - b)^2}
\displaystyle \sum_{n=1}^\infty n e^{-{n^2\pi^2\over 2(\b - b)^2}(T-t)}\sin\big({n\pi (\b-z)\over \b - b}\big)
&, z \in (b,\b),
\\
\displaystyle {z - \b \over \sqrt{2\pi (T-t)^3}} e^{-{(z - \b)^2\over 2(T-t)}} 
&, z \in (\b,\infty),
\end{cases}
\end{align}
for $x,\b\in (b,\infty)$.

The GBM process with imposed killing at only one point $B\in (0,\infty)$ is given by 
$F_{B,t} = {\sf F}(X_{b,t})$, where the underlying Brownian motion has drift 
parameter $\nu \equiv (\mu - \sigma^2/2)/\sigma$, 
with mapping ${\sf F}$ and its inverse ${\sf X}$ defined in Section~\ref{subsect_GBM}, 
i.e., ${\sf X}(z) = {1 \over\sigma}\ln{z \over F_0}$, where 
$b = {1 \over\sigma}\ln{B \over F_0}$, $\b = {1 \over\sigma}\ln{K \over F_0}$, with initial value $F_0 = F_{B,0}$. The marginal distribution of the lasting hitting time to level $K\in (0,\infty)$, denoted by $g^{B, F}_K(T)$, and the joint distribution of the lasting hitting time and process value, $(g^{B, F}_K(T), F_{B,T})$, follow in the obvious analogous manner stated at the end of Section~\ref{section_drfited_BM_killed_a_b} .

%
%

\subsection{Squared Bessel Process}\label{subsect_SQB}

Consider the squared Bessel (SQB) diffusion $\{X_t, t \geq 0\} \in (0, \infty)$ with generator
\[
G f(x) := 2xf''(x) + 2(\mu + 1)f'(x),\,\,x\in (0, \infty),
\]
i.e., with scale and speed densities $\s(x) = x^{-1-\mu}$ and $\m(x) = {1\over 2}x^\mu$, $\mu\in\R$. [Note: this process is closely related to that satisfying the stochastic differential equation 
$dX_t = \alpha_0 dt + \nu_0\sqrt{X_t}dW_t$, where $\alpha_0 = {\nu_0^2 \over 2}(\mu + 1)$
 with choice $\nu_0=2$.] 
The left endpoint $l = 0$ is entrance-not-exit if $\mu \geq 0$, regular if $\mu \in (-1,0)$ , and exit-not-entrance if $\mu \leq -1$. The right endpoint $r = \infty$ is natural (attracting only for $\mu > 0$). A pair of fundamental solutions 
to \eqref{eq:phi} are (e.g., see \cite{BS02})  
\begin{equation}\label{SQB_fundamental_funcs}
    \varphi_\lambda^+(x) := x^{-\mu/2} I_\nu(\sqrt{2\lambda x});
\ \ \varphi_\lambda^-(x) := x^{-\mu/2} K_\nu(\sqrt{2\lambda x}),
\end{equation}
where $\nu := \mu$ if $\mu \ge 0$ or if $\mu\in(-1,0)$ and $l=0$ is specified as reflecting, 
and $\nu := \vert\mu\vert$ if $\mu \le -1$ or if $\mu\in(-1,0)$ and $l=0$ is specified as killing. 
The functions $I_\nu(x)$ and $K_\nu(x)$ denote, respectively, the modified Bessel functions 
of the first and second kind of order $\nu$, e.g., see \cite{AS72}. 
[Note: $K_\nu(z) \equiv K_{\mu}(z)$ since $K_{-\mu}(z) \equiv K_{\mu}(z)$.]  
The Wronskian factor is simply $w_\lambda = \frac{1}{2}$. 
We consider all possible 
values for $\mu$ and boundary specification (reflecting or killing at $0$) when $\mu\in(-1,0)$. 

The boundary $l=0$ is NONOSC and $r = \infty$ is O-NO natural with spectral cutoff at $0$, i.e., Spectral Category II. The Green function, $G(\lambda;x,y) = (y/x)^{\mu/2} I_\nu(\!\sqrt{2\lambda (x\wedge y)})K_\nu(\!\sqrt{2\lambda (x\vee y)})$, $x,y > 0$, has a branch point at $\lambda=0$ as its only singularity. In particular, we have
\begin{align*}
{1\over \pi}\text{Im}\,G (\epsilon e^{-i\pi};x,y)
= \big({y\over x}\big)^{\mu/2} {1\over \pi} \text{Im}\,\big\{ I_{\nu}(-i\sqrt{2\epsilon (x\wedge y)})
K_{\nu}(-i\sqrt{2\epsilon (x\vee y)})\big\}
= {1\over 2}\big({y\over x}\big)^{\mu/2} J_{\nu}(\sqrt{2\epsilon x})
J_{\nu}(\sqrt{2\epsilon y})\,.
\end{align*}
Here we used ${1\over \pi i} \big[ I_{\nu}(-ia)K_{\nu}(-ib) - I_{\nu}(ia)K_{\nu}(ib)\big] = J_{\nu}(a)J_{\nu}(b)$. 
Throughout, $J_\nu(x)$ and $Y_\nu(x)$ denote, respectively, the Bessel functions of the first and second kind of order 
$\nu$. Substituting the above expression into \eqref{u_spectral_5}, with empty summation, produces the (purely continuous) known spectral representation and closed form expression for the transition density:
\footnote
{
This follows by the integral identity: 
$
\int_0^\infty e^{-\alpha \epsilon} J_{\nu}(2\beta\sqrt{\epsilon}) J_{\nu}(2\gamma\sqrt{\epsilon})\,d \epsilon 
= {e^{-(\beta^2 + \gamma^2)/\alpha}\over \alpha}I_{\nu}\big({2\beta\gamma\over \alpha}\big)$, valid for $\text{Re}\, \nu > -1$ and real $\alpha,\beta,\gamma > 0$, 
while setting $\alpha = t, \beta = \sqrt{x/2}, \gamma = \sqrt{y/2}$.
}
\begin{eqnarray}\label{SQB_trans_PDF_no_kill}
p(t;x,y) = {1\over 2}\big({y\over x}\big)^{\mu \over 2}\!\! \int_0^\infty\! e^{-\epsilon t}J_{\nu}(\sqrt{2\epsilon x}) 
J_{\nu}(\sqrt{2\epsilon y}) d \epsilon
= \left({y\over x}\right)^{\frac{\mu}{2}}
    {e^{-{x + y \over 2t}} \over 2t} I_{\nu}\big({\sqrt{xy}\over t}\big), \,\,x,y\in(0,\infty), t>0.
\end{eqnarray}

Let us first derive the discrete part of the distribution of $g_\b(T)$, $\b\in (0,\infty)$, by implementing \eqref{prop_last_time-t-1-limit}.  Hence, we now compute the quantities required in \eqref{FHT_cdf_complement}. 
Since $l=0$ is NONOSC, we use \eqref{FHT_prop1_2}. The eigenvalues $\lambda_n \equiv \lambda^-_{n,\b}$ 
solve $\varphi_{-\lambda_n}^+(\b) = 0$, i.e., $I_\nu(i\sqrt{2\lambda_n \b}) = 0 \implies J_\nu(\sqrt{2\lambda_n \b}) = 0 
\implies \lambda_n = {1\over 2\b}j_{n,\nu}^2$, where $j_{n,\nu}, n \ge 1,$ 
are all the positive zeros of $J_\nu(z)$.
\footnote{
The zeros are efficiently computed via a root finding algorithm. 
The zeros $z_{n,\nu} = (2n + \nu){\pi\over 2} - {\pi\over 4}$ of the asymptotic ($z\to \infty$) form
$J_\nu(z) \sim \sqrt{2/\pi z}\cos(z - \nu\pi/2 - \pi/4)$ may be used as initial estimates, i.e. $j_{\nu,n}\sim z_{n,\nu}$. The eigenvalues grow roughly like $n^2$ with increasing $n$. Hence, the resulting series converges rapidly and can be truncated using a small number of terms (particularly for larger values of time and will depend on the value of $\b$.)
}
By again using the basic property $I_\nu(iz) =i^\nu J_\nu(z)$ and the differential recurrence, 
$J_\nu'(z) = (\nu/z)J_{\nu}(z) - J_{\nu + 1}(z)$, along with the eigenvalue equation, i.e.,  
$J_{\nu}(\sqrt{2\lambda_n \b}) = 0$, within \eqref{FHT_eigenfunctions_1}, with $\b$ in the place of $b$, gives
\begin{eqnarray}\label{SQB_Psi_plus_func}
\psi_n^+(x;\b) = 
{j_{n,\nu} \over \b}
\frac{(\b/x)^{\mu/2}J_\nu(j_{n,\nu} \sqrt{x/\b})}
{J_{\nu + 1}(j_{n,\nu})}.
\end{eqnarray} 
Hence, by \eqref{FHT_prop1_2},
\begin{eqnarray}\label{SQB_FHT_up_tail}
\P_{x}(T < \Tau^{+}_{\b} < \infty) = 
2\big({\b\over x}\big)^{\mu \over 2}\sum_{n=1}^\infty e^{- {1\over 2\b}j_{n,\nu}^2T} {J_\nu(j_{n,\nu} \sqrt{x/\b}) \over
j_{n,\nu} \, J_{\nu + 1}(j_{n,\nu})},\,\,0 < x < \b < \infty.
\end{eqnarray}
The scale function is given by
\begin{equation}\label{SQB_scale_func}
{\mathcal S}[x,y]=\ln (y/ x) \ind_{\{\mu = 0\}} + {1\over \mu} (x^{-\mu} - y^{-\mu})\ind_{\{\mu \ne 0\}}.
\end{equation}
We have $\ind_{E_0}=0$, if $\mu \ge 0$; $\ind_{E_0}=1$, if $\mu \le -1$ or if $\mu\in(-1,0)$ and 
$\nu=\vert\mu\vert \equiv -\mu$ ($0$ is killing); otherwise, 
$\ind_{E_0}=0$, if $\mu\in(-1,0)$ and $\nu=\mu$ ($0$ is reflecting). From \eqref{FHT-b-up-equal_infinity}, 
$\P_x({\Tau}^+_\b = \infty) = [1 - (\b/x)^\mu] \cdot \ind_{E_0}$, giving
\begin{eqnarray}\label{SQB_FHT_up_all_tail}
\P_{x}(\Tau^{+}_{\b} > T) =  [1 - (\b/x)^\mu] \cdot \ind_{E_0} + 
2\big({\b\over x}\big)^{\mu \over 2}\sum_{n=1}^\infty e^{- {1\over 2\b}j_{n,\nu}^2T} {J_\nu(j_{n,\nu} \sqrt{x/\b}) \over
j_{n,\nu} \, J_{\nu + 1}(j_{n,\nu})},\,\,\,0 < x < \b < \infty.
\end{eqnarray}

We next compute the tail probability in \eqref{FHT_prop_ONO2_2}, where $\Lambda_+=0$ and no summation since the spectrum is now purely continuous. In particular, by using the identity
$$
{1\over 2 i}\bigg[{K_\mu(-ix)\over K_\mu(-iy)} - {K_\mu(ix)\over K_\mu(iy)}\bigg]
=\! \!{1\over 2 i}\bigg[{J_\mu(x) + i Y_\mu(x)\over J_\mu(y) + i Y_\mu(y)} - {J_\mu(x) - i Y_\mu(x)\over J_\mu(y) - i Y_\mu(y)}\bigg]\!
= {Y_\mu(x) J_\mu(y) - J_\mu(x) Y_\mu(y) \over J^2_\mu(y) + Y^2_\mu(y)}
$$ 
we have
\begin{align}\label{SQB_imaginary_ratio}
&\textup{Im}
\bigg\{{\varphi^-_\lambda (x) \over \varphi^-_\lambda (\b)}\bigg\vert_{\lambda = \epsilon e^{-i\pi}}\bigg\}
= \bigg({\b\over x}\bigg)^{\mu \over 2}
{1\over 2 i}\bigg[{K_{\mu}(-i\sqrt{2\epsilon x}) \over K_\mu(-i\sqrt{2\epsilon \b})} 
-{K_{\mu}(i\sqrt{2\epsilon x}) \over K_\mu(i\sqrt{2\epsilon \b})} \bigg]
= \bigg({\b\over x}\bigg)^{\mu \over 2}
\Psi_\mu(\b,x;\epsilon) \rho(\epsilon;\b)\,,
\end{align}
where $\rho(\epsilon;\b):=  [J^2_\mu(\sqrt{2\epsilon \b}) + Y^2_\mu(\sqrt{2\epsilon \b})]^{-1}$ is a spectral density for any given level $\b \in (0,\infty)$. 
Throughout, we also conveniently define the associated Bessel cylinder function:
\begin{equation}\label{cylinder_SQB}
\Psi_{\mu}(x,y;\lambda) := J_{\mu}\big(\sqrt{2\lambda x}\,\big) Y_{\mu}\big(\sqrt{2\lambda y}\,\big)
- Y_{\mu}\big(\sqrt{2\lambda x}\,\big) J_{\mu}\big(\sqrt{2\lambda y}\,\big) 
\end{equation}
for all $\mu \in \R$, $x,y > 0$, $\lambda \in \C$. From the definition of $Y_\mu$ in terms of $J_\mu$ and 
$J_{-\mu}$, it follows that $\Psi_{-\mu}(x,y;\lambda) = \Psi_{\mu}(x,y;\lambda)$. Hence, inserting the expression in  \eqref{SQB_imaginary_ratio} within the integral in \eqref{FHT_prop_ONO2_2}, with $\b$ replacing $b$, gives the purely  continuous spectral (integral) representation:
\begin{eqnarray}\label{SQB_FHT_down_tail}
\P_{x}(T < \Tau^{-}_{\b} < \infty) 
= \bigg({\b\over x}\bigg)^{\mu \over 2}{1\over \pi}
\int_0^\infty {e^{-\epsilon T} \over \epsilon}\Psi_\mu(\b,x;\epsilon) \rho(\epsilon;\b)\,d \epsilon 
\,,\,\,\,\,\,\,\,\,0 < \b < x < \infty,
\end{eqnarray}
Moreover, since $\ind_{E_\infty}=\ind_{\{\mu > 0\}}$, \eqref{FHT-b-up-equal_infinity} gives 
$\P_x({\Tau}^-_\b = \infty) = [1 - (\b/x)^\mu] \cdot \ind_{\{\mu > 0\}}$, i.e., 
\begin{eqnarray}\label{SQB_FHT_down_all_tail}
\P_{x}(\Tau^{-}_{\b} > T) 
= [1 - (\b/x)^\mu]\! \cdot\! \ind_{\mu > 0} + \bigg({\b\over x}\bigg)^{\mu \over 2}{1\over \pi}
\!\int_0^\infty {e^{-\epsilon T} \over \epsilon}\Psi_\mu(\b,x;\epsilon) \rho(\epsilon;\b) 
d \epsilon \,,\,\,\,\,\,\,\,\,0 < \b < x < \infty.
\end{eqnarray}
Hence, combining \eqref{SQB_FHT_up_all_tail} and \eqref{SQB_FHT_down_all_tail} within 
\eqref{prop_last_time-t-1-limit} gives 
\begin{align}\label{prop_last_time_discrete_SQB}
\P_x(g_\b(T) = 0) = \begin{cases} 
\displaystyle [1 - (\b/x)^\mu]\! \cdot\! \ind_{\mu > 0} + \bigg({\b\over x}\bigg)^{\mu \over 2}{1\over \pi}
\!\int_0^\infty {e^{-\epsilon T} \over \epsilon}\Psi_\mu(\b,x;\epsilon) \rho(\epsilon;\b) 
d \epsilon \,,\,\,\,0 < \b < x < \infty,
\\
\displaystyle [1 - (\b/x)^\mu] \cdot \ind_{E_0} + 
2\bigg({\b\over x}\bigg)^{\mu \over 2}\sum_{n=1}^\infty e^{- {1\over 2\b}j_{n,\nu}^2T} {J_\nu(j_{n,\nu} \sqrt{x/\b}) \over
j_{n,\nu} \, J_{\nu + 1}(j_{n,\nu})},\,\,\,\,\,\,\,0 < x < \b < \infty.
\end{cases}
\end{align}

To derive the density of $g_\b(T)$, we can either use \eqref{last-passage-pdf-explicit} or 
Proposition \ref{last-passage-propn-time-t-spectral}. In particular, by direct use of \eqref{SQB_FHT_up_tail} and \eqref{SQB_FHT_down_tail}, we have
\begin{align}\label{eta_plus_k_SQB}
\eta^+(T-t;\b) &\equiv  {\partial \over \partial x} \P_x(T-t < \Tau^+_\b < \infty)\big\vert_{x=\b-}
= - {1\over \b}\sum_{n=1}^\infty e^{- {1\over 2\b}j_{n,\nu}^2(T-t)},
\\
\eta^-(T-t;\b) &\equiv  {\partial \over \partial x} \P_x(T-t < \Tau^-_\b < \infty)\big\vert_{x=\b+}
= {1\over \b\pi^2}
\int_0^\infty  {e^{-\epsilon (T-t)} \over \epsilon} \rho(\epsilon;\b) \,d \epsilon.
\label{eta_minus_k_SQB}
\end{align}
The integral expression was simplified since ${\partial \over \partial x}\Psi_{\mu}(\b,x;\epsilon)\vert_{x=\b} = {1\over \pi\b}$. This obtains by applying the differential recurrence relations for $J_{\mu}$ and $Y_{\mu}$, while differentiating the cylinder function, canceling two terms, and using the Bessel Wronskian relation, 
$J_{\mu + 1}(z)Y_{\mu}(z) - Y_{\mu + 1}(z) J_{\mu}(z) = {2\over \pi z}$, for $z=\sqrt{2\epsilon \b}$. A direct alternative to deriving $\eta^\pm(T-t;\b)$ is to use \eqref{last_passage_pdf_spectral_1} and \eqref{last_passage_pdf_spectral_2}. Since $l=0$ is NONOSC, $\eta^+$ is given by the first expression in \eqref{last_passage_pdf_spectral_2} where it follows from \eqref{SQB_Psi_plus_func} 
that $\widehat\psi_n^+(\b) \equiv {\partial \over \partial x}\psi_n^+(x;\b)\vert_{x=\b} = -{\lambda_{n,\b}^-\over \b}$. 
For $\eta^-$, it is given solely by the integral in \eqref{last_passage_pdf_spectral_1}, with $\Lambda_+=0$, where the integrand is computed by using \eqref{SQB_imaginary_ratio}, ${\partial \over \partial x}\Psi_{\mu}(\b,x;\epsilon)\vert_{x=\b} = {1\over \pi\b}$ and $\Psi_{\mu}(\b,\b;\epsilon)=0$, i.e., 
$\textup{Im} \big\{{\varphi^{-\prime}_\lambda (\b) \over \varphi^-_\lambda (\b)}\big\vert_{\lambda = \epsilon e^{-i\pi}}\big\} = \textup{Im} {\partial \over \partial x}\big\{{\varphi^-_\lambda (x) \over \varphi^-_\lambda (\b)}\big\vert_{\lambda = \epsilon e^{-i\pi}}\big\}\big\vert_{x=\b} = 
{1\over \b\pi}\rho(\epsilon;\b)$.
From the above scale function we have 
\begin{equation*}\label{S_lrk_SQB}
\mathcal{S}(l,r;\b) \equiv \mathcal{S}(0,\infty;\b) = {\ind_{E_0} \over \mathcal{S}(0,\b]} + {\ind_{E_\infty} \over \mathcal{S}[\b,\infty)}
= \mu \b^\mu \big[\ind_{\{\mu > 0\}} - \ind_{\{E_0\}}\big].
\end{equation*}
Using \eqref{last-passage-pdf-spectral} with $\eta^\pm(T-t;\b)$ and \eqref{SQB_trans_PDF_no_kill} above, 
where $\m(\b) = {1\over 2}\b^\mu$, $1/\s(\b) = \b^{1+\mu}$, gives
\begin{align}\label{last-passage-pdf-SQB}
f_{g_\b(T)}(t;x) = \left({\b \over x}\right)^{\!\frac{\mu}{2}}\!
    {e^{-{x + \b \over 2t}} \over t} I_{\nu}\big({\sqrt{x\b}\over t}\big)
\bigg[\mu \big(\ind_{\{\mu > 0\}} - \ind_{E_0}\big)
+ {1\over \pi^2}\!\int_0^\infty  {e^{-\epsilon (T-t)} \over \epsilon} \rho(\epsilon;\b) \,d \epsilon 
+ \sum_{n=1}^\infty e^{- {1\over 2\b}j_{n,\nu}^2(T-t)}\!
\bigg]\!,
\end{align}
$t\in (0,T), x,\b \in (0,\infty)$.

The joint PDF now follows from Proposition~\ref{joint_last-passage-propn-time-t-new-formula}, by using the first line expression in \eqref{joint_last_passage_pdf_spectral_1} and only the integral term with 
$\Lambda_+ = 0$ in  \eqref{joint_last_passage_pdf_spectral_2} and simply substituting \eqref{SQB_Psi_plus_func} and  \eqref{SQB_imaginary_ratio}, with $x$ replaced by $z$, and \eqref{SQB_trans_PDF_no_kill} for $y=\b$, within 
\eqref{joint_last_passage_pdf_spectral_1}--\eqref{joint_last_passage_pdf_spectral_2}. In particular,
\begin{align}\label{eta_plus_z_k_SQB}
f^+(T-t;z,\b) &= \left({\b \over z}\right)^{\!\frac{\mu}{2}}
 {1 \over \b}\displaystyle \sum_{n=1}^\infty e^{- {1\over 2\b}j_{n,\nu}^2(T-t)} \,
{j_{n,\nu}\, J_\nu(j_{n,\nu} \sqrt{z/\b}) \over J_{\nu + 1}(j_{n,\nu})},
\\
f^-(T-t;z,\b) &= \displaystyle {1 \over \pi}\left({\b \over z}\right)^{\!\frac{\mu}{2}}
\int_{0}^\infty e^{-\epsilon (T-t)}\Psi_\mu(\b,z;\epsilon) \rho(\epsilon;\b) d\epsilon.
\label{eta_minus_z_k_SQB}
\end{align}
Hence, 
%
%
\begin{align}\label{joint_last_passage_pdf_SQB_spectral}
f_{g_\b(T), X_T}(t,z;x) 
= \left({z \over x}\right)^{\!\frac{\mu}{2}}{e^{-{x + \b \over 2t}} \over 2t} I_{\nu}\big({\sqrt{x\b}\over t}\big) 
\begin{cases}
\displaystyle {1 \over \b} \sum_{n=1}^\infty e^{- {1\over 2\b}j_{n,\nu}^2(T-t)} \,
{j_{n,\nu}\, J_\nu(j_{n,\nu} \sqrt{z/\b}) \over J_{\nu + 1}(j_{n,\nu})}, & z\in (0,\b),
\\
\displaystyle {1 \over \pi}\int_{0}^\infty e^{-\epsilon (T-t)}\Psi_\mu(\b,z;\epsilon) \rho(\epsilon;\b) d\epsilon, & z\in (\b,\infty),
\end{cases}
\end{align}
$t\in (0,T), x,\b \in (0,\infty)$.

The partly discrete joint distribution is given by \eqref{prop_joint_last_time-t-2}. Employing the transition PDFs given by \eqref{trans_PDF_spectral_SQB_b} and \eqref{trans_PDF_spectral_SQB_b_down} in Section \ref{subsect_SQB_kill_b} within \eqref{prop_joint_last_time-t-2}, for time $T$, level $\b$ and endpoint $z$, gives
\begin{align}\label{prop_joint_last_time-t-2_SQB}
\P_x(g_\b(T) = 0, X_T \in dz) = \bigg({z\over x}\bigg)^{\mu \over 2}
\begin{cases} 
\displaystyle \frac{1}{\b} \sum_{n=1}^\infty \frac{e^{- {1\over 2\b}j_{n,\nu}^2 T}}
{J^2_{\nu + 1}(j_{\nu,n})}
J_{\nu}\big(j_{\nu,n}\sqrt{x/ \b}\big) J_{\nu}\big(j_{\nu,n}\sqrt{z/ \b}\big) dz &, x,z \in (0,\b),
\\
\displaystyle {1\over 2} \int_0^\infty 
e^{-\epsilon T}\Psi_\mu(\b,x;\epsilon) \Psi_\mu(\b,z;\epsilon) \rho(\epsilon;\b) \,d \epsilon \, dz &, x,z \in (\b,\infty).
\end{cases}
\end{align}
For the jointly discrete portion of the distribution we implement \eqref{joint_last_doubly_defective_formula}. Since $r=\infty$ is a natural boundary, $\P_x(g_\b(T) = 0, X_T = \partial^\dagger) = 0$ for $x \in (\b,\infty)$. 
For $x < \b$, note that $\P_x(g_\b(T) = 0, X_T = \partial^\dagger)$ is nonzero only when $l=0$ is nonconservative, i.e., either $\mu \le -1$ or $\mu\in(-1,0)$ and $l=0$ is specified as killing. Hence, we now assume $\mu < 0$ and $\nu = \vert \mu \vert$. To employ \eqref{joint_last_doubly_defective_formula} we consider the $z\to 0+$ asymptotic 
$J_\nu(z) \sim (z/2)^\nu/\Gamma (\nu + 1)$, i.e., 
$\varphi^{+}_{-\lambda_n}(x) = x^{-{\mu\over 2}} I_\nu(i\sqrt{2\lambda_n x}) = 
x^{{\nu\over 2}} i^\nu J_\nu(\sqrt{2\lambda_n x}) 
\sim x^{\nu} i^\nu (\lambda_n/2)^{\nu\over 2}/\Gamma (\nu + 1)$, as $x\to 0+$. Hence, 
$\varphi^{+\,\prime}_{-\lambda_n}(x) \sim x^{\nu - 1} i^\nu (\lambda_n/2)^{\nu\over 2}/\Gamma (\nu)$, as 
$x\to 0+$. Since 
$\s(x) = x^{-\mu - 1} = x^{\nu - 1}$, we have 
$\varphi^{+\,\prime}_{-\lambda_n}(0+) / \s(0+) = {i^\nu \over \Gamma (\nu)} (\lambda_n/2)^{\nu\over 2} = 
{i^\nu \over \Gamma (\nu)}({j_{n,\nu}\over 2\sqrt{\b}})^\nu$. Moreover, 
$\varphi^{-}_{-\lambda_n}(\b) = \b^{-{\mu\over 2}} K_\nu(i\sqrt{2\lambda_n \b}) = 
\b^{{\nu\over 2}} K_\nu(i j_{n,\nu}) = -\b^{{\nu\over 2}}i^{-\nu}{\pi\over 2}Y_\nu(j_{n,\nu})$. The last expression follows by using  the Bessel function identity $i^\nu K_\nu(ix) = -{\pi\over 2}[Y_\nu(x) + i J_\nu(x)]$, $x\in\R$, and setting $x=j_{n,\nu}$. Combining the above with the expression in \eqref{SQB_Psi_plus_func}, and 
${\mathcal{S}[x,\b] \over \mathcal{S}(0,\b]} = 1 - (\b/x)^\mu = 1 - (x/\b)^\nu$, within \eqref{joint_last_doubly_defective_formula} gives
\begin{eqnarray}
\P_x(g_\b(T) = 0, X_T = \partial^\dagger) 
= 1 - (x/\b)^\nu 
- {2^{1-\nu}\pi x^{\nu\over 2}\over  \b^{\nu\over 2}\Gamma (\nu)}
\sum_{n=1}^\infty e^{- {1\over 2\b}j_{n,\nu}^2 T}
{(j_{n,\nu})^{\nu - 1}Y_\nu(j_{n,\nu}) \over J_{\nu + 1}(j_{n,\nu})}J_\nu(j_{n,\nu}\sqrt{x/\b}).
\label{joint_last_doubly_defective_formula_SQB}
\end{eqnarray}
Throughout, $\Gamma(\cdot)$ denotes the standard Gamma function.

%
%
%
%
\subsection{Squared Bessel Process killed at either of two interior points}\label{subsect_SQB_kill_a_b}
Consider the SQB diffusion, $X_{(a,b),t}$, with imposed killing at either level $a$ or $b$, 
$0 < a < b < \infty$. We now derive the marginal and joint distributions in the last hitting time for this process by applying \eqref{prop_last_time-CDF_discrete_killed_ab}--\eqref{joint_last_ab_doubly_defective_new} and Proposition~\ref{joint_last-passage-propn-time-t_ab-new-version}. 
From \eqref{cylinder_SQB} we have the cylinder function 
$\phi(x,y;-\lambda) = {\pi\over 2}(xy)^{-\mu/2}\Psi_{\mu}(x,y;\lambda)$. 
To compute the functions in \eqref{FHT_eigenfunctions_ab} we have $\Delta(a,b;\lambda_n) = {\pi\over 2}(ab)^{-\mu/2} {\partial\over \partial\lambda}\Psi_{\mu}(a,b;\lambda)\big\vert_{\lambda=\lambda_n}$, 
with eigenvalues $\lambda_n = \lambda_n^{(a,b)}$ solving $\Psi_{\mu}(a,b;\lambda_n) = 0$, i.e.,
\footnote
{The eigenvalues can be numerically computed by using a root finding (e.g., bisection) algorithm. 
Combining the $z \to \infty$ asymptotic for $J_\mu(z)$ with $Y_\mu(z) \sim \sqrt{2/\pi z}\sin(z - \mu\pi/2 - \pi/4)$ within the sine addition formula gives $\Psi_\mu(a,b;\lambda) \sim {\sqrt{2}(ab)^{-{1\over 4}}\over \pi\sqrt{\lambda}}
\sin\big(\sqrt{2\lambda}(\sqrt{b} - \sqrt{a})\,\big)$, as $\lambda\to\infty$, with positive zeros in $\lambda$ given by 
$z_n = {n^2\pi^2 \over 2(\sqrt{b} - \sqrt{a})^2}$, $n=1,\ldots$. Hence, $\lambda_n \sim z_n$ may be used as initial estimates. The eigenvalues grow roughly like $n^2$ with increasing $n$. The relevant spectral expansions are rapidly convergent and may be truncated to a relatively small number of terms (particularly for larger values of time) in order to achieve a prescribed accuracy. The number of terms needed to attain a set accuracy will also depend on the relative values of $a$ and $b$.
}
\begin{equation}\label{eigen_SQB_kill_ab}
J_{\mu}(\sqrt{2\lambda_n a})Y_{\mu}(\sqrt{2\lambda_n b}) - 
Y_{\mu}(\sqrt{2\lambda_n a})J_{\mu}(\sqrt{2\lambda_n b}) = 0.
\end{equation}
By using differential recurrence relations, $J_\mu'(z) = (\mu/z)J_{\mu}(z) - J_{\mu + 1}(z)$, 
$Y_\mu'(z) = (\mu/z)Y_{\mu}(z) - Y_{\mu + 1}(z)$, we have
\footnote{Here we also use \eqref{eigen_SQB_kill_ab}, i.e., 
${J_{\mu}(\sqrt{2\lambda_n a}) \over Y_{\mu}(\sqrt{2\lambda_n a})} = 
{J_{\mu}(\sqrt{2\lambda_n b}) \over Y_{\mu}(\sqrt{2\lambda_n b})}$, 
within both terms in the square brackets and then employ the Wronskian identity in the form of  
${J_{\mu}(z) \over Y_{\mu}(z)} - {J_{\mu + 1}(z) \over Y_{\mu + 1}(z)} 
= -{2\over \pi z} {1\over Y_{\mu}(z)Y_{\mu + 1}(z)}$.
}
\begin{align*}
{\partial\over \partial\lambda}\Psi_{\mu}(a,b;\lambda)\big\vert_{\lambda=\lambda_n}
&= {1\over 2\lambda_n}\bigg\{
\sqrt{2\lambda_n a} \, Y_{\mu + 1}(\sqrt{2\lambda_n a})Y_{\mu}(\sqrt{2\lambda_n b}) 
\bigg[ {J_{\mu}(\sqrt{2\lambda_n b}) \over Y_{\mu}(\sqrt{2\lambda_n b})} - 
{J_{\mu + 1}(\sqrt{2\lambda_n a}) \over Y_{\mu + 1}(\sqrt{2\lambda_n a})} \bigg]
\\
&+ \sqrt{2\lambda_n b}\,Y_{\mu + 1}(\sqrt{2\lambda_n b}) Y_{\mu}(\sqrt{2\lambda_n a})
\bigg[ {J_{\mu + 1}(\sqrt{2\lambda_n b}) \over Y_{\mu + 1}(\sqrt{2\lambda_n b})} 
- {J_{\mu}(\sqrt{2\lambda_n a}) \over Y_{\mu}(\sqrt{2\lambda_n a})} \bigg]
\bigg\}
\\
&= {1\over \pi \lambda_n} \bigg[ {Y_{\mu}(\sqrt{2\lambda_n a}) \over Y_{\mu}(\sqrt{2\lambda_n b})} - 
{Y_{\mu}(\sqrt{2\lambda_n b}) \over Y_{\mu}(\sqrt{2\lambda_n a})} \bigg]
\equiv {1\over \pi \lambda_n} \bigg[ {J_{\mu}(\sqrt{2\lambda_n a}) \over J_{\mu}(\sqrt{2\lambda_n b})} - 
{J_{\mu}(\sqrt{2\lambda_n b}) \over J_{\mu}(\sqrt{2\lambda_n a})} \bigg]\,,
\end{align*}
i.e.,
\begin{align}\label{Delta_SQB_kill_ab}
\Delta(a,b;\lambda_n) &= {(ab)^{-\mu/2}\over 2\lambda_n}
\bigg[ {J_{\mu}(\sqrt{2\lambda_n a}) \over J_{\mu}(\sqrt{2\lambda_n b})} - 
{J_{\mu}(\sqrt{2\lambda_n b}) \over J_{\mu}(\sqrt{2\lambda_n a})} \bigg]\,.
\end{align}
Hence, from \eqref{FHT_eigenfunctions_ab} we have
\footnote
{The eigenvalue equation \eqref{eigen_SQB_kill_ab} gives  
$\Psi_{\mu}(b,x; \lambda_n) = {J_{\mu}(\sqrt{2\lambda_n b}) \over J_{\mu}(\sqrt{2\lambda_n a})}
\Psi_{\mu}(a,x; \lambda_n)$. Combining this identity with the antisymmetry 
$\Psi_{\mu}(x,y; \lambda) = - \Psi_{\mu}(y,x; \lambda)$ allows us to also re-express these functions in various ways.
}
\begin{eqnarray}\label{FHT_SQB_eigenfunctions_ab_plus}
\psi_n^+(x;a,b) = \lambda_n^{(a,b)}\pi\alpha_n(a,b)
\bigg({b\over x}\bigg)^{\mu\over 2}\Psi_{\mu}(x,a;\lambda_n)\,,
\\
\psi_n^-(x;a,b) =  \lambda_n^{(a,b)}\pi\alpha_n(a,b)
\bigg({a\over x}\bigg)^{\mu\over 2}\Psi_{\mu}(b,x;\lambda_n),
\label{FHT_SQB_eigenfunctions_ab_minus}
\end{eqnarray}
where throughout we define the coefficients $\alpha_n(a,b) := \bigg[ {J_{\mu}(\sqrt{2\lambda_n a}) \over J_{\mu}(\sqrt{2\lambda_n b})} - 
{J_{\mu}(\sqrt{2\lambda_n b}) \over J_{\mu}(\sqrt{2\lambda_n a})} \bigg]^{-1}$, $\lambda_n \equiv \lambda_n^{(a,b)}$.

Substituting the expressions in \eqref{FHT_SQB_eigenfunctions_ab_plus}--\eqref{FHT_SQB_eigenfunctions_ab_minus}, for the respective intervals $(a,\b)$ and $(\b,b)$, within 
\eqref{prop_last_time-CDF_discrete_killed_ab}--\eqref{joint_last_ab_doubly_defective_new}, with scale function in \eqref{SQB_scale_func}, gives us the explicit spectral series for the nonzero discrete parts of the marginal and joint distributions:
\begin{align}\label{last_time-CDF_discrete_killed_ab_SQB}
&\P_x(g^{(a,b)}_{\b}(T) = 0) 
\\
\nonumber
&= 
\begin{cases}
\displaystyle {\ln (\b/ x)\over \ln (\b/ a)} \ind_{\{\mu = 0\}} 
+ {(\b/ x)^{\mu} - 1 \over (\b/ a)^{\mu} - 1}\ind_{\{\mu \ne 0\}}  
+ \pi\bigg({\b\over x}\bigg)^{\mu\over 2}
\sum_{n=1}^\infty e^{- \lambda_n^{(a,\b)} T} \alpha_n(a,\b) \Psi_{\mu}(x,a;\lambda_n^{(a,\b)})&, x \in (a,\b),
\\
\displaystyle{\ln (\b/ x)\over \ln (\b/ b)} \ind_{\{\mu = 0\}} 
+ {(\b/ x)^{\mu} - 1 \over (\b/ b)^{\mu} - 1}\ind_{\{\mu \ne 0\}}  
+ \pi\bigg({\b\over x}\bigg)^{\mu\over 2}
\sum_{n=1}^\infty e^{- \lambda_n^{(\b,b)} T} \alpha_n(\b,b) \Psi_{\mu}(b,x;\lambda_n^{(\b,b)})&, x \in (\b,b),
\end{cases}
\end{align}
\begin{align}\label{joint_last_ab_doubly_defective_SQB}
&\P_x(g^{(a,b)}_{\b}(T) = 0, X_{(a,b),T} = \partial^\dagger) 
\\
\nonumber
&= \begin{cases}
\displaystyle {\ln (\b/ x)\over \ln (\b/ a)} \ind_{\{\mu = 0\}} 
+ {(\b/ x)^{\mu} - 1 \over (\b/ a)^{\mu} - 1}\ind_{\{\mu \ne 0\}}
 + \pi\bigg({a\over x}\bigg)^{\mu\over 2}\sum_{n=1}^\infty e^{- \lambda_n^{(a,\b)} T} \alpha_n(a,\b) 
\Psi_{\mu}(x,\b;\lambda_n^{(a,\b)})&, x \in (a,\b),
\\
\displaystyle{\ln (\b/ x)\over \ln (\b/ b)} \ind_{\{\mu = 0\}} 
+ {(\b/ x)^{\mu} - 1 \over (\b/ b)^{\mu} - 1}\ind_{\{\mu \ne 0\}}
 + \pi\bigg({b\over x}\bigg)^{\mu\over 2}\sum_{n=1}^\infty e^{- \lambda_n^{(\b,b)} T} \alpha_n(\b,b) 
\Psi_{\mu}(\b,x;\lambda_n^{(\b,b)})&, x \in (\b,b).
\end{cases}
\end{align}
Note: here and below, the respective eigenvalues solve $\Psi_{\mu}(a,\b;\lambda_n^{(a,\b)}) = 0$ and 
$\Psi_{\mu}(\b,b;\lambda_n^{(\b,b)}) = 0$.

Using the above Bessel cylinder function and \eqref{Delta_SQB_kill_ab} within \eqref{spectral_3_product_eigen} also gives the explicit spectral expansion for transition PDF of the SQB process on $(a,b)$ according to (\ref{u_spectral_3}):
\begin{eqnarray}
p_{(a,b)}(t;x,y) = \frac{1}{2}\bigg({y\over x}\bigg)^{{\mu\over 2}} 
\sum_{n=1}^\infty e^{-\lambda_n t} N_{n,(a,b)}^2\, \Psi_{\mu}(a,x;\lambda_n)\Psi_{\mu}(a,y;\lambda_n)\,,\,\,\,
x,y\in (a,b), \, t>0,
\label{trans_PDF_spectral_SQB_ab}
\end{eqnarray}
where $N^2_{n,(a,b)} := \pi^2 \lambda_n \alpha_n(a,b) 
{J_{\mu}(\sqrt{2\lambda_n b}) \over J_{\mu}(\sqrt{2\lambda_n a})} \equiv \pi^2 \lambda_n
\bigg[\bigg({J_{\mu}(\sqrt{2\lambda_n a}) \over J_{\mu}(\sqrt{2\lambda_n b})}\bigg)^2 - 1\bigg]^{-1}$, 
$\lambda_n = \lambda_n^{(a,b)}$, i.e., $N_{n,(a,b)}$ denotes the normalization constant for the eigenfunctions on 
$(a,b)$. Hence, using \eqref{trans_PDF_spectral_SQB_ab}, for time $t=T$, within 
\eqref{prop_joint_last_time_kill_ab-t-3-zero} on the respective intervals gives
\begin{align}\label{joint_last_ab_partly_defective_SQB}
&\P_x(g^{(a,b)}_{\b}(T) = 0, X_{(a,b),T} \in d z) 
\nonumber \\
&= \frac{1}{2}\bigg({z\over x}\bigg)^{{\mu\over 2}}
\begin{cases}
\sum_{n=1}^\infty e^{-\lambda_n^{(a,\b)} T} N_{n,(a,\b)}^2\, \Psi_{\mu}(a,x;\lambda_n^{(a,\b)})
\Psi_{\mu}(a,z;\lambda_n^{(a,\b)}) \,dz
&, x,z \in (a,\b),
\\
\sum_{n=1}^\infty e^{-\lambda_n^{(\b,b)} T} N_{n,(\b,b)}^2\, \Psi_{\mu}(\b,x;\lambda_n^{(\b,b)})
\Psi_{\mu}(\b,z;\lambda_n^{(\b,b)}) \,dz
&, x,z \in (\b,b).
\end{cases}
\end{align}

%
%

To employ Proposition~\ref{joint_last-passage-propn-time-t_ab-new-version} we also need  
$\hat\psi^+_{n}(a,\b)$ and $\hat\psi^-_{n}(\b,b)$. These follow directly by 
using \eqref{Delta_SQB_kill_ab} and the functions $\varphi^\pm_{-\lambda_n}$, evaluated at the respective arguments for $\lambda_n = \lambda_n^{(a,\b)}$ and $\lambda_n = \lambda_n^{(\b,b)}$, i.e., \eqref{psi_hat_derivatives} gives
\begin{eqnarray}\label{FHT_SQB_hat_plus_ak}
{\hat\psi^+_{n}(a,\b) \over \lambda_n} = \b^\mu \alpha_n(a,\b) 
{J_{\mu}(\sqrt{2\lambda_n a}) \over J_{\mu}(\sqrt{2\lambda_n \b})} 
\equiv \b^\mu \bigg[1 - \bigg({J_{\mu}(\sqrt{2\lambda_n \b}) \over J_{\mu}(\sqrt{2\lambda_n a})}\bigg)^2 \bigg]^{-1}
\,,\,\,\,\,\,\,\lambda_n = \lambda_n^{(a,\b)},
\\
{\hat\psi^-_{n}(\b,b) \over \lambda_n} = \b^\mu \alpha_n(\b,b) 
{J_{\mu}(\sqrt{2\lambda_n b}) \over J_{\mu}(\sqrt{2\lambda_n \b})}
\equiv \b^\mu \bigg[\bigg({J_{\mu}(\sqrt{2\lambda_n \b}) \over J_{\mu}(\sqrt{2\lambda_n b})}\bigg)^2 - 1\bigg]^{-1}
\,,\,\,\,\,\,\,\lambda_n = \lambda_n^{(\b,b)}.
\label{FHT_SQB_hat_minus_kb}
\end{eqnarray}
Combining \eqref{FHT_SQB_eigenfunctions_ab_plus} and \eqref{FHT_SQB_eigenfunctions_ab_minus} on the respective intervals $(a,\b)$ and $(\b,b)$, with \eqref{FHT_SQB_hat_plus_ak}, \eqref{FHT_SQB_hat_minus_kb}, and \eqref{trans_PDF_spectral_SQB_ab} for $y=\b$, within \eqref{joint_last_passage_pdf_explicit_ab_new} and \eqref{last_passage_pdf_discrete_spec} produces the explicit spectral series for the joint and marginal densities for 
$t\in (0,T)$, $x,\b\in (a,b)$:
\begin{align}\label{joint_last_passage_pdf_explicit_ab_SQB}
f_{\!g^{(a,b)}_{\b}(T), X_{(a,b),T}}\!(t,z;x) 
= \big({z\over \b}\big)^{\mu \over 2}\pi p_{(a,b)}(t;x,\b) \!
\left\{\!\!
\begin{array}{lr}
        \displaystyle  \sum_{n=1}^{\infty}e^{-\lambda_n^{(a,\b)} (T-t)} \lambda_n^{(a,\b)} \alpha_n(a,\b) 
\Psi_{\mu}(z,a;\lambda_n^{(a,\b)}), &\!\!\!\!\!\!  z\in (a,\b), 
\\
          \displaystyle  \sum_{n=1}^{\infty}e^{-\lambda_n^{(\b,b)} (T-t)}\lambda_n^{(\b,b)} \alpha_n(\b,b) 
\Psi_{\mu}(b,z;\lambda_n^{(\b,b)}), &\!\!\!\!\!\!\!  z\in (\b,b), 
\end{array}
\right.
\end{align}
\begin{align}\label{last_passage_pdf_explicit_ab_SQB}
f_{g^{(a,b)}_{\b}(T)}(t;x) = 2p_{(a,b)}(t;x,\b) 
\bigg[ \b^{-\mu}\mathcal{S}(a,b;\b) 
&+ 
\sum_{n=1}^{\infty}
e^{-\lambda_n^{(a,\b)} (T-t)}
\alpha_n(a,\b) 
{J_{\mu}\big(\scriptstyle{\sqrt{2\lambda_n^{(a,\b)} a}}\big) 
\over J_{\mu}\big(\scriptstyle{\sqrt{2\lambda_n^{(a,\b)} \b}}\big)}
\nonumber \\
&+  
\sum_{n=1}^{\infty}
e^{-\lambda_n^{(\b,b)} (T-t)}  
\alpha_n(\b,b) 
{J_{\mu}\big(\scriptstyle{\sqrt{2\lambda_n^{(\b,b)} b}}\big) 
\over J_{\mu}\big(\scriptstyle{\sqrt{2\lambda_n^{(\b,b)} \b}}\big)}
\bigg],
\end{align}
where $\mathcal{S}(a,b;\b) = {\ln(b/a) \over \ln(\b/a)\ln(b/\b)} \ind_{\{\mu=0\}} 
+ \mu {b^\mu - a^\mu \over (1 - (a/\b)^\mu)((b/\b)^\mu - 1)}\ind_{\{\mu\ne 0\}}$. 
Using the series in \eqref{trans_PDF_spectral_SQB_ab} within the above expressions produces a double series representation for the marginal and joint densities.

%
%
\subsection{Squared Bessel Process killed at one interior point}\label{subsect_SQB_kill_b}
Consider the SQB process with imposed killing at $b\in (0,\infty)$. 
The two cases are: (i) $x,\b\in \I_b^- \equiv (0,b)$, or $\I_b^- \equiv [0,b)$
if the left endpoint $l=0$ is specified as reflecting, and (ii) $x,\b\in \I_b^+ \equiv (b,\infty)$. 
%
%

For case (i) the transition PDF $p_b^-$ is given by \eqref{u_spectral_1}. 
By the Bessel function identities used to derive \eqref{SQB_Psi_plus_func}, and the Wronskian property 
$J_\nu(z)Y_{\nu+1}(z) - J_{\nu+1}(z)Y_{\nu}(z) = -{2\over \pi x}$ for $z=\sqrt{2\lambda_n b} = j_{\nu,n}$, where 
$J_\nu(j_{\nu,n}) = 0$, we have the explicit product eigenfunction in \eqref{spectral_1_product_eigen}, i.e., we recover the spectral series:
\begin{eqnarray}
p_b^-(t;x,y) = \frac{1}{b}\bigg({y\over x}\bigg)^{\mu \over 2} 
\sum_{n=1}^\infty \frac{e^{- {1\over 2b}j_{n,\nu}^2 t}}
{J^2_{\nu + 1}(j_{\nu,n})}
J_{\nu}\big(j_{\nu,n}\sqrt{x/ b}\big) J_{\nu}\big(j_{\nu,n}\sqrt{y/ b}\big),\,\,\,\,\,x,y\in (0,b), \, t>0.
\label{trans_PDF_spectral_SQB_b}
\end{eqnarray}
Note that, throughout, $\nu$ is specified as in Section \ref{subsect_SQB}.

%
%
For case (ii) we have Spectral Category II with absolutely continuous eigenspectrum $(0,\infty)$. The Green function in \eqref{greenfunc_down} takes the form
$
G_b^+(\lambda;x,y) = (y/x)^{\mu  \over 2}\varphi_\mu(b,x\wedge y;\lambda)
{K_\mu(\sqrt{2\lambda (x\vee y)}) \over  K_\mu(\sqrt{2\lambda b})},
$
$\varphi_\mu(b,x\wedge y;\lambda) := K_{\mu}\big(\sqrt{2\lambda b}\,\big) 
I_{\mu}\big(\sqrt{2\lambda (x\wedge y)}\,\big) - I_{\mu}\big(\sqrt{2\lambda b}\,\big) 
K_{\mu}\big(\sqrt{2\lambda (x\wedge y)}\,\big)$, $\mu\in\R$, $x,y \in (b,\infty)$, $\lambda\in\C$. Hence, 
it is analytic in $\lambda$ except along a branch cut with branch point $\lambda=0$. 
Note that $\varphi_\mu(b,x\wedge y;\lambda)$ has no jump discontinuity along any branch cut about  
$\lambda=0$, i.e., 
$\varphi_\mu(b,x\wedge y;\epsilon e^{-i \pi}) = \varphi_\mu(b,x\wedge y;\epsilon e^{i \pi}) 
= \varphi_\mu(b,x\wedge y; -\epsilon) = \Psi_\mu(b,x\wedge y;\epsilon)$, 
for real $\epsilon > 0$. Hence, using the same steps as in the derivation of \eqref{SQB_imaginary_ratio} 
gives 
\begin{align*}
&\text{Im}\,G_b^+ (\epsilon e^{-i\pi};x,y) 
= \bigg({y\over x}\bigg)^{\mu  \over 2}\varphi_\mu(b,x\wedge y;-\epsilon )
{1\over 2 i}\bigg[{K_{\mu}(-i\sqrt{2\epsilon (x\vee y)}) \over K_\mu(-i\sqrt{2\epsilon b})} 
-{K_{\mu}(i\sqrt{2\epsilon (x\vee y)}) \over K_\mu(i\sqrt{2\epsilon b})} \bigg]
\\
&= {\pi \over 2}\bigg({y\over x}\bigg)^{\mu  \over 2}
{\Psi_\mu(b,x\wedge y);\epsilon) \Psi_\mu(b,x\vee y;\epsilon)
\over J^2_\mu(\sqrt{2\epsilon b}) + Y^2_\mu(\sqrt{2\epsilon b})}
= {\pi \over 2}\bigg({y\over x}\bigg)^{\mu  \over 2}
\Psi_\mu(b,x;\epsilon) \Psi_\mu(b,y;\epsilon) \rho(\epsilon;b) \,.
\end{align*}
Substituting this expression within (\ref{u_spectral_5}), with only the integral term as nonzero, gives the purely continuous spectral expansion for the transition density:
\begin{eqnarray}
p^+_{b}(t;x,y) = {1\over 2}\bigg({y\over x}\bigg)^{\mu \over 2}
\int_0^\infty 
e^{-\epsilon t}\Psi_\mu(b,x;\epsilon) \Psi_\mu(b,y;\epsilon) \rho(\epsilon;b)\,d \epsilon\,,\,\,x,y\in(b,\infty), \,t>0.
\label{trans_PDF_spectral_SQB_b_down}
\end{eqnarray}

For case (i) the defective portions are given by \eqref{prop_joint_last_b-below_pmf}--\eqref{doubly_defective_kill_up_b}. 
By making use of \eqref{SQB_FHT_up_all_tail} and 
\eqref{last_time-CDF_discrete_killed_ab_SQB} for $x >\b$, within \eqref{last_time-CDF_discrete_killed_below_b} gives
\begin{align}\label{last_time-CDF_discrete_killed_below_b_SQB}
&\P_x(g^b_{\b}(T) = 0) 
\\
\nonumber 
&=\begin{cases}
\displaystyle [1 - (\b/x)^\mu] \cdot \ind_{E_0} + 
2\big({\b\over x}\big)^{\mu \over 2}\sum_{n=1}^\infty e^{- {1\over 2\b}j_{n,\nu}^2T} {J_\nu(j_{n,\nu} \sqrt{x/\b}) \over
j_{n,\nu} \, J_{\nu + 1}(j_{n,\nu})},\,\,x \in (0,\b),
\\
\displaystyle{\ln (\b/ x)\over \ln (\b/ b)} \ind_{\{\mu = 0\}} 
+ {(\b/ x)^{\mu} - 1 \over (\b/ b)^{\mu} - 1}\ind_{\{\mu \ne 0\}}  
+ \pi\bigg({\b\over x}\bigg)^{\mu\over 2}
\sum_{n=1}^\infty e^{- \lambda_n^{(\b,b)} T} \alpha_n(\b,b) \Psi_{\mu}(b,x;\lambda_n^{(\b,b)}), \,\, x \in (\b,b).
\end{cases}
\end{align}
Using the transition PDF in \eqref{trans_PDF_spectral_SQB_b}, with $t,b,y$ replaced by $T,\b,z$, and \eqref{trans_PDF_spectral_SQB_ab}, with $t,a,y$ replaced by $T,\b,z$, i.e., \eqref{joint_last_ab_partly_defective_SQB} for $x \in (\b,b)$, within \eqref{prop_joint_last_b-below_pmf} gives the partly discrete joint distribution
\begin{align}
\P_x(g^{b}_{\b}(T) = 0, X_{b,T} \in dz) 
= \!\bigg({z\over x}\bigg)^{\!\!{\mu \over 2}} \!
\begin{cases} 
\displaystyle\frac{1}{\b} 
\sum_{n=1}^\infty \frac{e^{- {1\over 2\b}j_{n,\nu}^2 T}}
{J^2_{\nu + 1}(j_{\nu,n})}
J_{\nu}\big(j_{\nu,n}\sqrt{x/ \b}\big) J_{\nu}\big(j_{\nu,n}\sqrt{z/ \b}\big)\, dz , \,\,\,\,\,\, x,z \in (0,\b),
\\
\displaystyle\frac{1}{2} 
\sum_{n=1}^\infty e^{-\lambda_n^{(\b,b)} T} N_{n,(\b,b)}^2\, \Psi_{\mu}(\b,x;\lambda_n^{(\b,b)})
\Psi_{\mu}(\b,z;\lambda_n^{(\b,b)})
\, dz , x,z \in (\b,b).
\label{prop_joint_last_b-below_pmf_SQB}
\end{cases}
\end{align}
The jointly discrete distribution is given by \eqref{doubly_defective_kill_up_b}. 
Hence, for $x < \b$ we have $\P_x(g^{b}_{\b}(T) = 0, X_{b,T} = \partial^\dagger)$ is nonzero only if $l=0$ is nonconservative where 
$\mu \le -1$ or $\mu\in(-1,0)$ and $l=0$ is specified as killing, i.e., $\mu < 0$ and $\nu = \vert \mu \vert$ with $\P_x(g^{b}_{\b}(T) = 0, X_{b,T} = \partial^\dagger)=\P_x(\T^-_0 (\b) \le T)$ given by \eqref{joint_last_doubly_defective_formula_SQB}. 
For $x \in (\b,b)$ we have $\P_x(g^{b}_{\b}(T) = 0, X_{b,T} = \partial^\dagger) = \P_x(\T^+_b (\b) \le T)$ which is given by the second expression in \eqref{joint_last_ab_doubly_defective_SQB}.

For case (ii) the defective portions are given by \eqref{prop_joint_last_b-above_pmf}--\eqref{doubly_defective_kill_down_b}. Using the time-$T$ transition PDFs in \eqref{trans_PDF_spectral_SQB_ab} and \eqref{trans_PDF_spectral_SQB_b_down} on the appropriate intervals within \eqref{prop_joint_last_b-above_pmf} gives
\begin{align}
\P_x(g^{b}_{\b}(T) = 0, X_{b,T} \in dz) 
= \frac{1}{2} \!\bigg({z\over x}\bigg)^{\!\!{\mu \over 2}} \!
\!\begin{cases} 
\displaystyle \sum_{n=1}^\infty e^{-\lambda_n^{(b,\b)} T} N_{n,(b,\b)}^2\, \Psi_{\mu}(b,x;\lambda_n^{(b,\b)})
\Psi_{\mu}(b,z;\lambda_n^{(b,\b)}) 
\, dz , \, x,z \in (b,\b),
\\
\,\,\,\,\displaystyle \int_0^\infty e^{-\epsilon T}\Psi_\mu(\b,x;\epsilon) \Psi_\mu(\b,z;\epsilon) \rho(\epsilon;\b)\,d \epsilon
\, dz , \,\,\,\,\,\, x,z \in (\b,\infty).
\label{prop_joint_last_b-above_pmf_SQB}
\end{cases}
\end{align}
Using \eqref{SQB_FHT_down_all_tail}, and \eqref{last_time-CDF_discrete_killed_ab_SQB} with $a$ replaced by 
$b$, within \eqref{prop_last_time-CDF_discrete_killed_above_b} gives
\begin{align}\label{prop_last_time-CDF_discrete_killed_above_b_SQB}
&\P_x(g^b_{\b}(T) = 0)  
\\
\nonumber
&=\begin{cases}
\displaystyle {\ln (\b/ x)\over \ln (\b/ b)} \ind_{\{\mu = 0\}} 
+ {(\b/ x)^{\mu} - 1 \over (\b/ b)^{\mu} - 1}\ind_{\{\mu \ne 0\}}  
+ \pi\bigg({\b\over x}\bigg)^{\mu\over 2}
\sum_{n=1}^\infty e^{- \lambda_n^{(b,\b)} T} \alpha_n(b,\b) \Psi_{\mu}(x,b;\lambda_n^{(b,\b)})&, x \in (b,\b),
\\
\displaystyle [1 - (\b/x)^\mu]\! \cdot\! \ind_{\{\mu > 0\}} + \bigg({\b\over x}\bigg)^{\mu \over 2}{1\over \pi}
\!\int_0^\infty {e^{-\epsilon T} \over \epsilon}\Psi_\mu(\b,x;\epsilon) \rho(\epsilon;\b) 
d \epsilon &, x \in (\b,\infty).
\end{cases}
\end{align}
The jointly discrete distribution is given by \eqref{doubly_defective_kill_down_b}. 
For $x\in (\b,\infty)$, $\P_x(g^{b}_{\b}(T) = 0, X_{b,T} = \partial^\dagger)=0$ since $r=\infty$ is a natural (conservative) boundary. For $x \in (b,\b)$, $\P_x(g^{b}_{\b}(T) = 0, X_{b,T} = \partial^\dagger) = 
\P_x(\T^-_b (\b) \le T)$, is given by the first expression in \eqref{joint_last_ab_doubly_defective_SQB} 
with $a$ replaced by $b$, i.e.,
\begin{align}\label{joint_last_b_doubly_defective_SQB}
&\P_x(g^{b}_{\b}(T) = 0, X_{b,T} = \partial^\dagger) 
\\
\nonumber
&= 
\displaystyle {\ln (\b/ x)\over \ln (\b/ b)} \ind_{\{\mu = 0\}} 
+ {(\b/ x)^{\mu} - 1 \over (\b/ b)^{\mu} - 1}\ind_{\{\mu \ne 0\}}
 + \pi\bigg({b\over x}\bigg)^{\mu\over 2}\sum_{n=1}^\infty e^{- \lambda_n^{(b,\b)} T} \alpha_n(b,\b) 
\Psi_{\mu}(x,\b;\lambda_n^{(b,\b)}),\,x \in (b,\b).
\end{align}

%
%

We now apply Proposition~\ref{marginal_joint_last-passage-propn-kill-b} by directly using \eqref{eta_plus_k_SQB},\eqref{eta_minus_k_SQB}, \eqref{eta_plus_z_k_SQB}, \eqref{eta_minus_z_k_SQB}, and \eqref{SQB_scale_func}, \eqref{FHT_SQB_eigenfunctions_ab_plus}, \eqref{FHT_SQB_eigenfunctions_ab_minus}, \eqref{FHT_SQB_hat_plus_ak}, \eqref{FHT_SQB_hat_minus_kb} on the appropriate intervals $(\b,b)$ and $(b,\b)$. In particular, combining \eqref{marginal_spectral_kill-b_1} and \eqref{marginal_spectral_kill-b_2} gives the marginal density
\begin{align}\label{marginal_spectral_kill-b_SQB}
&f_{g^{b}_\b(T)}(t;x) 
\nonumber \\
\!\!\!&= \!\begin{cases}
\displaystyle \! 2p^-_{b}(t;x,\b) 
\bigg[ \! R_-(b,\b) + \sum_{n=1}^\infty e^{- {1\over 2\b}j_{n,\nu}^2(T-t)} 
+ \sum_{n=1}^\infty e^{- \lambda_n^{(\b,b)} (T-t)} \alpha_n(\b,b) 
{J_{\mu}\big(\scriptstyle{\sqrt{2\lambda_n^{(\b,b)} b}}\big) \over J_{\mu}\big(\scriptstyle{\sqrt{2\lambda_n^{(\b,b)} \b}}\big)}
 \bigg] , \,\,\,\,\,\,\,\,\,\,\,\,\,\,x,\b \in (0,b),
\\
\displaystyle \! 2p^+_{b}(t;x,\b) 
\bigg[ \! R_+(b,\b) \!+\! {1\over \pi^2}\!\!\int_0^\infty \!\!e^{-\epsilon (T-t)} \rho(\epsilon;\b) {d \epsilon \over \epsilon}
+ \sum_{n=1}^\infty e^{- \lambda_n^{(b,\b)} (T-t)} \alpha_n(b,\b) 
{J_{\mu}\big(\scriptstyle{\sqrt{2\lambda_n^{(b,\b)} b}}\big) \over J_{\mu}\big(\scriptstyle{\sqrt{2\lambda_n^{(b,\b)} \b}}\big)}
 \bigg]\! , x,\!\b \!\in\! (b,\infty),
\end{cases}
\end{align}
where $ R_-(b,\b) := {\b^{-\mu} \over \ln (b/ \b)} \ind_{\{\mu = 0\}} 
+ {\mu \over 1 -  (\b/b)^\mu} \ind_{\{\mu \ne 0\}} - \mu \ind_{E_0}$, 
$R_+(b,\b) := {\b^{-\mu} \over \ln (\b/ b)} \ind_{\{\mu = 0\}} 
+ {\mu \over (\b/b)^\mu - 1} \ind_{\{\mu \ne 0\}} + \mu \ind_{\{\mu>0\}}$, and \eqref{joint_last_passage_pdf_spectral_b_1} and \eqref{joint_last_passage_pdf_spectral_b_2} gives the joint density
\begin{align}\label{joint_last_passage_pdf_b_1_SQB}
f_{g^{b}_{\b}(T), X_{b,T}}(t,z;x) 
= p^-_{b}(t;x,\b) \bigg({z\over \b}\bigg)^{\!\mu\over 2}\!
\begin{cases}
\displaystyle {1\over \b} \sum_{n=1}^\infty e^{- {1\over 2\b}j_{n,\nu}^2(T-t)} j_{n,\nu} 
{J_\nu(j_{n,\nu}\sqrt{z/\b}) \over J_{\nu + 1}(j_{n,\nu})}
&, z \in (0,\b),
\\
\displaystyle \pi \sum_{n=1}^\infty e^{- \lambda_n^{(\b,b)} (T-t)} \lambda_n^{(\b,b)} \alpha_n(\b,b) 
\Psi_\mu(b,z;\lambda_n^{(\b,b)})
&, z \in (\b,b),
\end{cases}
\end{align}
for $x,\b\in(0,b)$, and 
\begin{align}\label{joint_last_passage_pdf_b_2_SQB}
f_{g^{b}_{\b}(T), X_{b,T}}(t,z;x) 
= p^+_{b}(t;x,\b) \bigg({z\over \b}\bigg)^{\!\mu\over 2}\!
\begin{cases}
\displaystyle \pi \sum_{n=1}^\infty e^{- \lambda_n^{(b,\b)} (T-t)} \lambda_n^{(b,\b)} \alpha_n(b,\b) 
\Psi_\mu(z,b;\lambda_n^{(b,\b)})
&, z \in (b,\b),
\\
\displaystyle {1\over \pi} \!\int_0^\infty e^{-\epsilon (T-t)} \Psi_\mu(\b,z;\epsilon) \rho(\epsilon;\b) d \epsilon
&, z \in (\b,\infty),
\end{cases}
\end{align}
for $x,\b\in(b,\infty)$, $t\in (0,T)$. In \eqref{marginal_spectral_kill-b_SQB}--\eqref{joint_last_passage_pdf_b_2_SQB}, 
$p^\pm_{b}(t;x,\b)$ are given by \eqref{trans_PDF_spectral_SQB_b} and \eqref{trans_PDF_spectral_SQB_b_down} with $y=\b$.

%
%
%
%
\subsection{Squared Radial Ornstein-Uhlenbeck (or CIR) Process}\label{subsect_CIR}
The squared radial Ornstein-Uhlenbeck process $\{X_t, t \geq 0\} \in (0,\infty)$ has the generator 
\begin{eqnarray}\label{radial OU gen}
\mathcal{G} f(x):= 2xf''(x) + 2(\mu + 1 - \kappa x)f'(x),\,\, x\in (0,\infty),
\end{eqnarray}
with $\kappa \ne 0$. This is a two-parameter family of diffusions with scale and speed densities  
$\s(x) = x^{-1-\mu}e^{\kappa x}$ and $\m(x) = {1\over 2}x^\mu e^{-\kappa x}$. 
As is well known, both endpoints are NONOSC. The left endpoint $l=0$ is entrance-not-exit if $\mu \ge 0$, regular if $\mu\in (-1,0)$, and exit-not-entrance if $\mu \le -1$. The right endpoint $r=\infty$ is natural (attracting only for $\kappa < 0$). 

We remark that the CIR (Cox-Ingerssol-Ross), or Feller, process formally obeys the SDE:
$$dX_t = (\gamma_0 - \gamma_1 X_t)dt + \nu_0\sqrt{X_t}dW_t, \,\, \gamma_0,\gamma_1 \in \R,\,\gamma_1 \ne 0, \nu_0 > 0.$$ 
The scale and speed densities are $\s(x) = x^{-1-\mu}e^{\kappa x}$ and $\m(x) = {2\over \nu_0^2}x^\mu e^{-\kappa x} \equiv {\kappa\over \gamma_1}x^\mu e^{-\kappa x}$, where $\mu := {2\gamma_0\over \nu_0^2} - 1$, $\kappa := {2\gamma_1\over \nu_0^2}$. 
Hence, the squared radial Ornstein-Uhlenbeck process is a special (standardized) case of the CIR process with volatility parameter choice $\nu_0 = 2$, and where $\gamma_0 =2( \mu + 1)$, 
$\gamma_1 = 2\kappa$ . The generator of the CIR process is that of the squared radial Ornstein-Uhlenbeck  process multiplied by ${\nu_0^2\over 4}$:
$$\mathcal G f(x) := {1\over 2}\nu_0^2 [xf''(x) + (\mu + 1 - \kappa x)f'(x)] \equiv 
{\nu_0^2\over 2} xf''(x) + (\gamma_0 - \gamma_1 x)f'(x), \,x \in (0,\infty).$$ Hence, it follows trivially that a transition PDF of the CIR process $p^{\text{CIR}}(t;x,y) = p({\nu_0^2 \over 4}t;x,y)$, 
where $p$ is the transition PDF of the corresponding squared radial OU process.
\footnote{The well-known constant-elasticity-of-variance (CEV) process $\{F_t\}_{t\ge 0}$ 
with generator $\mathcal{G}^{F} f(y):= {1\over 2}\delta^2 y^{2(\beta + 1)} f''(y) + \nu y f'(y)$, $y\in (0,\infty)$, 
$\delta > 0, \beta \ne 0, \nu \ne 0$, i.e., satisfying the SDE $d F_t = \nu F_t dt + \delta F_t^{\beta + 1}d W_t$, 
arises directly from the squared raidial OU process via a smooth monotonic mapping: 
$F_t = {\sf F}(X_t)$, ${\sf F}(x) := (\delta^2\beta^2 x)^{-\mu}$, $x\in (0,\infty)$, where we set $\mu = {1\over 2\beta}$, $\kappa = \nu\beta$. The unique inverse mapping is ${\sf X}(y) = (\delta^2\beta^2)^{-1}y^{-2\beta}$.
}

In what follows we assume the case where $\kappa < 0$ and $\mu < 0$ and with $l=0$ specified as killing 
for $\mu\in(-1,0)$. We note that the other cases are handled in a similar fashion while using different appropriate fundamental solutions (e.g., see \cite{BS02,CM21}).  
A pair of fundamental solutions to \eqref{eq:phi}, with $\mathcal{G}$ in \eqref{radial OU gen}, and satisfying the above assumptions, is given by
\begin{equation}\label{CIR_fundamental_funcs}
    \varphi_\lambda^+(x) := x^{-\mu} e^{\kappa x} M\big(\frac{\lambda}{2|\kappa|} +1, 1-\mu,|\kappa|x \big)\,;
 \quad \varphi_\lambda^-(x) :=  x^{-\mu} e^{\kappa x} U\big(\frac{\lambda}{2|\kappa|} +1, 1-\mu,|\kappa|x \big),
\end{equation} 
with Wronskian factor $w_\lambda = \vert\kappa\vert^{\mu} \frac{\Gamma(1 - \mu) }
{\Gamma(\frac{\lambda}{2|\kappa|} +1)}$. 
The functions $M(\alpha,\beta,z)$ and $U(\alpha,\beta,z)$ are the standard confluent hypergeometric (Kummer) functions of the first and second kind, e.g., see \cite{AS72}. We note that the pair $\varphi_\lambda^\pm(x)$ are entire functions in $\lambda$.
Applying \eqref{u_spectral_4}--\eqref{eigenfunc_regular}, where $w_{-\lambda_n} = 0 \implies 
\Gamma(-\frac{\lambda_n}{2|\kappa|} +1) = \infty \implies \lambda_n = 2|\kappa|n, n=1,\ldots$, and evaluating the coefficients in \eqref{eigenfunc_regular} while using 
$M(-n,1 + \vert\mu\vert,z) = {n!\Gamma(1+ \vert\mu\vert)\over \Gamma(1+ \vert\mu\vert + n)}
L_n^{(\vert\mu\vert)}(z)$ and $U(-n,1 + \vert\mu\vert,z) = (-1)^n n!L_n^{(\vert\mu\vert)}(z)$, gives the known spectral expansion for the transition PDF,
\begin{align}
p(t;x,y) &= \vert\kappa\vert^{1+\vert\mu\vert} x^{-\mu}e^{\kappa x}
e^{- 2\vert\kappa\vert t}
\sum_{m=0}^\infty  {(e^{-2\vert\kappa\vert t})^m m!\over \Gamma(1+\vert\mu\vert + m)}
L_{m}^{(\vert\mu\vert)}(\vert\kappa\vert x)L_{m}^{(\vert\mu\vert)}(\vert\kappa\vert y)
\nonumber 
\\
&= {\kappa e^{\kappa x + (1+\mu)\kappa t}(y/x)^{\mu \over 2} \over 2\sinh(\kappa t)}
 \exp\left(-{\kappa e^{\kappa t}(x + y) \over 2\sinh(\kappa t)}\right)
I_{\vert\mu\vert}\left({\kappa\sqrt{xy}\over \sinh(\kappa t)}\right)\,,
\label{trans_PDF_spectral_CIR_4}
\end{align}
$x,y\in(0,\infty)$, $t>0$. 
Here, $L_m^{(\alpha)}(z)$ are the associated (or generalized) Laguerre polynomials of integer order $m$ and parameter $\alpha\in\R$, e.g., see \cite{AS72}. The second closed-form expression follows by a direct application of the Hille-Hardy summation formula.
\footnote{$\sum\limits_{m=0}^\infty  {t^m \, m! \over \Gamma(1+\alpha + m)}
L_{m}^{(\alpha)}(x)L_{m}^{(\alpha)}(y)
= {e^{-(x+y)t/(1-t)} \over (xyt)^{\alpha/2}(1-t)}I_\alpha\bigg({2\sqrt{xyt} \over 1-t} \bigg)$ 
is valid for $\vert t \vert < 1$ and $\alpha > -1$}

The discrete part of the distribution of $g_\b(T)$, $\b\in (0,\infty)$, is given by \eqref{prop_last_time-t-1-limit}.  
Both endpoints are NONOSC, i.e., Proposition~\ref{prop_spec_first_hit_1} applies. 
The eigenvalues $\lambda_n \equiv \lambda_{n,\b}^-$, $n\ge 1$, are the positive simple zeros solving 
$\varphi^+_{-\lambda_n}(\b) = 0$, i.e.,
\begin{equation}\label{eigenvalues_CIR_4}
M\big(\!-\!\frac{\lambda_n}{2\vert\kappa\vert} + 1, 1 - \mu,\vert\kappa\vert \b\big) = 0,
\end{equation}
and the eigenvalues $\lambda_n \equiv \lambda_{n,\b}^+$, $n\ge 1$, are the positive simple zeros solving 
$\varphi^-_{-\lambda_n}(\b) = 0$, i.e.,
\footnote{All eigenvalues in \eqref{eigenvalues_CIR_4} are readily computed by using a root finding (e.g., bisection) algorithm. We can also use the leading asymptotic 
$M\big(-\nu,\beta,z) \sim \Gamma(\beta)\pi^{-{1\over 2}}e^{z/2}
(({\beta \over 2} + \nu)z)^{{1\over 4} - {\beta\over 2}}
\cos( \sqrt{(2\beta + 4\nu)z} - {\pi \over 2}\beta + {\pi\over 4})$, as $\nu \to \infty$, for $\beta$, $z\in\R$. 
By setting $\nu = \frac{\lambda_n}{2\vert\kappa\vert} - 1$, $\beta = 1 - \mu$, $z= \vert\kappa\vert k$, in the cosine argument, and equating it to $(n-{1\over 2})\pi$, gives a simple expression for the initial estimates of the eigenvalues. 
Observe that the $\lambda_n$ grow roughly in proportion to $n^2$. By the leading asymptotic 
$U\big(-\nu,\beta,z) \sim \Gamma({1\over 2}\beta + \nu + {1\over 4})
\pi^{-{1\over 2}} e^{z/2}z^{{1\over 4} - {\beta\over 2}}\cos( \sqrt{(2\beta + 4\nu)z} - {\pi \over 2}\beta - \nu\pi + {\pi\over 4})$, as $\nu \to \infty$, for $\beta$, $z\in\R$, the eigenvalues in \eqref{eigenvalues_CIR_5} can be computed in similar fashion. Again, using the same assignment of the parameters leads to an initial estimate of the eigenvalues. In this case the $\lambda_n$ approach a linear growth in $n$ for large $n$.
}
\begin{equation}\label{eigenvalues_CIR_5}
 U\big(\!-\!\frac{\lambda_n}{2\vert\kappa\vert} + 1, 1 - \mu,\vert\kappa\vert \b\big) = 0.
\end{equation}
Using \eqref{CIR_fundamental_funcs} within \eqref{FHT_eigenfunctions_1} and \eqref{FHT_eigenfunctions_2} gives
\begin{eqnarray}\label{CIR_Psi_plus_func}
\psi_n^+(x;\b) = 2\vert\kappa\vert \bigg({\b\over x}\bigg)^{\mu}e^{\kappa(x-\b)}
\frac{M(-\frac{\lambda_{n,k}^-}{2|\kappa|} + 1,1 - \mu,|\kappa| x)}
{M_1(-\frac{\lambda_{n,k}^-}{2|\kappa|} + 1,1 - \mu,|\kappa| \b)},
\\
\psi_n^-(x;\b) = 2\vert\kappa\vert \bigg({\b\over x}\bigg)^{\mu}e^{\kappa(x-\b)}
\frac{U(-\frac{\lambda_{n,k}^+}{2|\kappa|} + 1,1 - \mu,|\kappa| x)}
{U_1(-\frac{\lambda_{n,k}^+}{2|\kappa|} + 1,1 - \mu,|\kappa| \b)}.
\label{CIR_Psi_minus_func}
\end{eqnarray} 
Throughout, we denote the derivative w.r.t. the first argument of the Kummer functions as 
$M_1\big(\alpha,\beta,z) := {\partial\over \partial\alpha}M\big(\alpha,\beta,z)$ and $U_1\big(\alpha,\beta,z) :=  {\partial\over \partial\alpha}U\big(\alpha,\beta,z)$. 
In practice, these derivatives are most efficiently and accurately computed by a simple numerical differentiation, e.g., 
$M_1\big(\alpha,\beta,z) \approx (M\big(\alpha + \delta,\beta,z) - M\big(\alpha,\beta,z))/\delta$, $\delta \simeq 10^{-6}$, and similarly for $U_1$. 
Alternatively, we can use the power series
$M(\alpha,\beta,z) = \sum_{n=0}^\infty \frac{(\alpha)_n z^n}{(\beta)_n n!}$,  
where $(\alpha)_n = \Gamma(\alpha+n)/\Gamma(\alpha) = \alpha(\alpha+1)\ldots (\alpha+n-1) ,\quad (\alpha)_0=1$, 
$(\alpha)_n $, is Pochhammer's symbol. Differentiating, termise w.r.t. 
$\alpha$, gives
\begin{align}
	\label{Kummer_M_der}
	M_1(\alpha,\beta,z) = \sum_{n=0}^\infty \frac{(\alpha)_n}{(\beta)_n n!} \Psi(\alpha+n)z^n - \Psi(\alpha)M(\alpha,\beta,z),
\end{align}
where $\Psi(x) := \Gamma '(x)/\Gamma(x)$ is the digamma function. By using the formal definition of $U$,
\begin{align}\label{Kummer_U_defn}
U(\alpha,\beta,z) = {\pi \over \sin(\pi \beta)}\bigg[
{M(\alpha,\beta,z) \over\Gamma(1+\alpha - \beta) \Gamma(\beta)} 
- z^{1-\beta}{M(1+\alpha - \beta,2-\beta,z) \over\Gamma(\alpha) \Gamma(2-\beta)} \bigg],
\end{align}
and differentiating w.r.t. $\alpha$ gives an alternative formula for computing $U_1$ in terms of $M$ and $M_1$:
\begin{eqnarray}\label{Kummer_U_der}
	&U_1(\alpha,\beta,z) = {\pi \over \sin(\pi \beta)}
\bigg\{
{1 \over\Gamma(1+\alpha - \beta) \Gamma(\beta)}\bigg[ M_1(\alpha,\beta,z) - \Psi(1+\alpha - \beta)M(\alpha,\beta,z) \bigg]
\nonumber \\
&\,\,\,\,\,\,\,\,\,- {z^{1-\beta} \over \Gamma(\alpha) \Gamma(2-\beta)}
\bigg[ M_1(1+\alpha - \beta,2-\beta,z) - \Psi(\alpha)M(1+\alpha - \beta,2-\beta,z) \bigg]
 \bigg\}.
\end{eqnarray}

From the leading asymptotic term, as $\nu \to \infty$, we see that $U(-\nu,\beta,z)$, as a function of $\nu > 0$, 
for real $\beta,z$, oscillates between negative and positive values and its amplitude grows very rapidly in proportion to 
$\Gamma({1\over 2}\beta + \nu + {1\over 4})$. Hence, for large values of $\nu$, a numerical evaluation of 
$U(-\nu,\beta,z)$ that uses a standard numerical library routine can fail due to overflow. This can arise when numerically implementing any of the spectral expansions involving the Kummer $U$ function with large values of 
$\nu =\frac{\lambda_{n,k}^+}{2|\kappa|} - 1$, i.e., for relatively larger eigenvalues. We note that such an overflow is artificial since all relevant expressions, e.g., the ratio of $U$ and $U_1$ in \eqref{CIR_Psi_minus_func}, are well-behaved. 
Hence, to remove such numerical artifacts, in what follows we introduce a {\it rescaled} Kummer $U$ function defined as 
$\widetilde{U}(-\nu,\beta,z) := {U(-\nu,\beta,z) \over \Gamma(\nu + 1)}$, i.e.,
\begin{align}\label{rescaled_U}
&\widetilde{U}(-\nu,\beta,z) = {1\over \sin(\pi \beta)}
\Bigg[
{\sin(\pi (\nu + \beta)) \over \Gamma(\beta)}{\Gamma(\nu + \beta) \over \Gamma(\nu + 1)} M(-\nu,\beta,z) 
+ {\sin(\pi \nu) \over \Gamma(2 - \beta)} z^{1 - \beta}M(1 - \nu - \beta,2 - \beta,z) 
\Bigg].
\end{align}
This expression arises by applying the Gamma reflection formula twice in \eqref{Kummer_U_defn} where $\alpha=-\nu$, e.g., 
${1\over \Gamma(-\nu)} = - {1 \over \pi}\Gamma(\nu+1)\sin(\pi\nu)$. 
All the terms in \eqref{rescaled_U} are directly computed without overflow issues except the ratio 
${\Gamma(\nu + \beta) \over \Gamma(\nu + 1)}$ which is computed without overflow as follows. 
For small values of the argument of $\Gamma(z)$, typically $z < 30$, each Gamma function is directly computed without overflow. For larger arguments, $z \ge 30$, we use Stirling's asymptotic formula, 
$\Gamma (z) \sim \sqrt{2\pi}e^{-z}z^{z -1/2}$. For large arguments, the Gamma function ratio is very accurately approximated by 
$\frac{\Gamma(\nu + \beta)}{\Gamma(\nu + 1)} 
\simeq \big({\nu + \beta \over \nu + 1}\big)^{\nu + {1\over 2}}\big({\nu + \beta \over e}\big)^{\beta - 1}$. 

Hence, by using \eqref{rescaled_U} for $\nu = \frac{\lambda_{n,k}^+}{2|\kappa|} - 1$, $\beta = 1-\mu$, $z=\vert\kappa\vert \b$, and the fact that $\Gamma(\frac{\lambda_n}{2\vert\kappa\vert}) > 0$, we see that all eigenvalues $\lambda_n = \lambda_{n,k}^+$ computed via \eqref{eigenvalues_CIR_5} are now equivalently given by
\begin{equation}\label{eigenvalues_CIR_rescaled}
 \widetilde{U}\big(\!-\!\frac{\lambda_n}{2\vert\kappa\vert} + 1, 1 - \mu,\vert\kappa\vert \b\big) = 0
\end{equation}
which avoids numerical overflow. Moreover, taking the derivative w.r.t. the first argument of $\widetilde{U}$, 
$\widetilde{U}_1(\alpha,\beta,z) := {\partial \over \partial \alpha}\widetilde{U}(\alpha,\beta,z)$, while using the definition of $\widetilde{U}$ and \eqref{eigenvalues_CIR_5}, gives  
$\widetilde{U}_1\big(\!-\!\frac{\lambda_n}{2\vert\kappa\vert} + 1, 1 - \mu,\vert\kappa\vert \b\big) = 
{1\over \Gamma(\frac{\lambda_n}{2\vert\kappa\vert})}
U_1\big(\!-\!\frac{\lambda_n}{2\vert\kappa\vert} + 1, 1 - \mu,\vert\kappa\vert \b \big)$. 
Since $\widetilde{U}\big(\!-\!\frac{\lambda_n}{2\vert\kappa\vert} + 1, 1 - \mu,\vert\kappa\vert x\big) = 
{1\over \Gamma(\frac{\lambda_n}{2\vert\kappa\vert})}
U\big(\!-\!\frac{\lambda_n}{2\vert\kappa\vert} + 1, 1 - \mu,\vert\kappa\vert x\big)$, we have an equivalent expression for the ratio in \eqref{CIR_Psi_minus_func}:
\begin{align}\label{equiv_rescaled_ratio}
\frac{U(-\frac{\lambda_{n,k}^+}{2|\kappa|} + 1,1 - \mu,|\kappa| x)}
{U_1(-\frac{\lambda_{n,k}^+}{2|\kappa|} + 1,1 - \mu,|\kappa| \b)} 
= 
\frac{\widetilde{U}(-\frac{\lambda_{n,k}^+}{2|\kappa|} + 1,1 - \mu,|\kappa| x)}
{\widetilde{U}_1(-\frac{\lambda_{n,k}^+}{2|\kappa|} + 1,1 - \mu,|\kappa| \b)}.
\end{align}
The latter ratio avoids numerical overflow where the numerator is computed using \eqref{rescaled_U} and the denominator is computed most efficiently by simply applying numerical differentiation, 
e.g., $\widetilde{U}_1(-\nu,\beta,z) \approx (\widetilde{U}\big(-\nu + \delta,\beta,z) 
- \widetilde{U}\big(-\nu,\beta,z))/\delta$, $\delta \simeq 10^{-6}$. Alternatively, differentiating \eqref{rescaled_U}, where $\widetilde{U}_1(-\nu,\beta,z) = - 
{\partial \over \partial \nu}\widetilde{U}_1(-\nu,\beta,z)$, gives 
\begin{align}\label{Kummer_rescaled_U_der}
	&\widetilde{U}_1(-\nu,\beta,z) = {1 \over \sin(\pi \beta)}
\bigg\{
{\Gamma(\nu + \beta) \over \Gamma(\nu + 1)\Gamma(\beta)}
\bigg[ 
\Big(\sin(\pi(\nu + \beta)) [\Psi(\nu + 1) - \Psi(\nu + \beta)]
\nonumber \\
&\phantom{\widetilde{U}_1(-\nu,\beta,z) = {1 \over \sin(\pi \beta)}
\bigg\{
{\Gamma(\nu + \beta) \over \Gamma(\nu + 1)\Gamma(\beta)}}
- \pi \cos(\pi(\nu + \beta))\Big) M(-\nu,\beta,z)
+ \sin(\pi(\nu + \beta)) M_1(-\nu,\beta,z)
\bigg]
\nonumber \\
&\phantom{\widetilde{U}_1}
- {z^{1-\beta} \over \Gamma(2-\beta)}
\bigg[ \pi \cos(\pi\nu) M(-\nu + 1 - \beta, 2-\beta,z) - \sin(\pi\nu) M_1(-\nu + 1 - \beta, 2-\beta,z) \bigg]
 \bigg\}.
\end{align}

Using \eqref{CIR_Psi_plus_func} within \eqref{FHT_prop1_2} gives, for $x< \b$,
\begin{eqnarray}\label{CIR_FHT_up_tail}
\P_{x}(T < \Tau^{+}_{\b} < \infty) = 
2|\kappa|\bigg(\frac{\b}{x}\bigg)^{\mu}e^{\kappa(x-\b)} \sum_{n=1}^{\infty} 
\frac{e^{-\lambda_{n,k}^{-}T}}{\lambda_{n,k}^{-}}
\frac{M(-\frac{\lambda_{n,k}^{-}}{2|\kappa|} + 1,1 - \mu,|\kappa| x)}
{M_1(-\frac{\lambda_{n,k}^{-}}{2|\kappa|} + 1,1 - \mu,|\kappa| \b)}.
\end{eqnarray}
Similarly, using \eqref{CIR_Psi_minus_func} within \eqref{FHT_prop2_2} gives, for $x\in (\b,\infty)$,
\begin{eqnarray}\label{CIR_FHT_down_tail}
\P_{x}(T < \Tau^{-}_{\b} < \infty) = 
2|\kappa|\bigg(\frac{\b}{x}\bigg)^{\mu}e^{\kappa(x-\b)} \sum_{n=1}^{\infty} 
\frac{e^{-\lambda_{n,k}^{+}T}}{\lambda_{n,k}^{+}}
\frac{U(-\frac{\lambda_{n,k}^{+}}{2|\kappa|} + 1,1 - \mu,|\kappa| x)}
{U_1(-\frac{\lambda_{n,k}^{+}}{2|\kappa|} + 1,1 - \mu,|\kappa| \b)},
\end{eqnarray}
where ${U \over U_1} = {\widetilde{U} \over \widetilde{U}_1}$ according to \eqref{equiv_rescaled_ratio}.

The scale function is given by
\begin{equation}\label{CIR_scale_func}
{\mathcal S}[x,y] = |\kappa|^\mu [\gamma(|\mu|, |\kappa|y) - \gamma(|\mu|, |\kappa|x)]
\end{equation}
where $\gamma(a,z):= \int_0^z u^{a-1}e^{-u}du$ denotes the incomplete Gamma function. Since both endpoints are attracting, \eqref{FHT-b-up-equal_infinity} gives
$\P_x({\Tau}^+_\b = \infty) = 1 - {\gamma(|\mu|, |\kappa|x) \over \gamma(|\mu|, |\kappa|\b)}$, $x\in (0,\b)$, and 
$\P_x({\Tau}^-_\b = \infty) = 1 - {\Gamma(|\mu|, |\kappa|x) \over \Gamma(|\mu|, |\kappa|\b)}$, $x\in (\b,\infty)$, where $\Gamma(a,z):= \int_z^\infty u^{a-1}e^{-u}du$ is the complementary incomplete Gamma function.
Combining the above expressions into \eqref{prop_last_time-t-1-limit} gives
\begin{align}\label{prop_last_time_discrete_CIR}
\P_x(g_\b(T) = 0) = \!
\begin{cases} 
\!
\displaystyle 1 - {\gamma(|\mu|, |\kappa|x) \over \gamma(|\mu|, |\kappa|\b)} 
+ 2|\kappa|\bigg(\frac{\b}{x}\bigg)^{\mu}e^{\kappa(x-\b)} \sum_{n=1}^{\infty} 
\frac{e^{-\lambda_{n,k}^{-}T}}{\lambda_{n,k}^{-}}
\frac{M(-\frac{\lambda_{n,k}^{-}}{2|\kappa|} + 1,1 - \mu,|\kappa| x)}
{M_1(-\frac{\lambda_{n,k}^{-}}{2|\kappa|} + 1,1 - \mu,|\kappa| \b)}
, x\in (0,\b),
\\
\displaystyle 1 - {\Gamma(|\mu|, |\kappa|x) \over \Gamma(|\mu|, |\kappa|\b)} 
+ 2|\kappa|\bigg(\frac{\b}{x}\bigg)^{\mu}e^{\kappa(x-\b)} \sum_{n=1}^{\infty} 
\frac{e^{-\lambda_{n,k}^{+}T}}{\lambda_{n,k}^{+}}
\frac{U(-\frac{\lambda_{n,k}^{+}}{2|\kappa|} + 1,1 - \mu,|\kappa| x)}
{U_1(-\frac{\lambda_{n,k}^{+}}{2|\kappa|} + 1,1 - \mu,|\kappa| \b)}
, x\in (\b,\infty).
\end{cases}
\end{align}

The density of $g_\b(T)$ follows by Proposition 
\ref{last-passage-propn-time-t-spectral} with purely discrete spectrum. Computing $\hat\psi^\pm_{n}(\b) 
= {\partial \over \partial x}\psi_n^\pm(x;\b)|_{x= \b}$, where  
${\partial \over \partial z}M(\alpha,\beta,z) = {\alpha \over \beta}M(\alpha + 1,\beta + 1,z)$ and 
${\partial \over \partial z}U(\alpha,\beta,z) = -\alpha U(\alpha + 1,\beta + 1,z)$, gives
\begin{align}\label{eta_plus_CIR}
\eta^+(T-t;\b) 
&= {|\kappa| \over 1 - \mu} \sum_{n=1}^{\infty}e^{-\lambda_{n,\b}^- (T-t)} 
\bigg({2|\kappa|\over \lambda_{n,\b}^-} - 1\bigg)
\frac{M(-\frac{\lambda_{n,k}^{-}}{2|\kappa|} + 2,2 - \mu,|\kappa| \b)}
{M_1(-\frac{\lambda_{n,k}^{-}}{2|\kappa|} + 1,1 - \mu,|\kappa| \b)},
\\
\eta^-(T-t;\b) 
&= |\kappa| \sum_{n=1}^{\infty}e^{-\lambda_{n,\b}^+ (T-t)} 
\bigg(1 - {2|\kappa|\over \lambda_{n,\b}^+}\bigg)
\frac{U(-\frac{\lambda_{n,k}^{+}}{2|\kappa|} + 2,2 - \mu,|\kappa| \b)}
{U_1(-\frac{\lambda_{n,k}^{+}}{2|\kappa|} + 1,1 - \mu,|\kappa| \b)}.
\label{eta_minus_CIR}
\end{align}
Both endpoints are attracting, i.e.,  
$\mathcal{S}(0,\infty;\b) = \frac{1}{\mathcal{S}(0,\b]} + \frac{1}{\mathcal{S}[\b,\infty)}$. Using the above scale function, where ${1\over \m(\b)\s(\b)} = 2\b$, 
${1\over \m(\b)} = 2\b^{-\mu}e^{\kappa k}$, together with \eqref{eta_plus_CIR} and \eqref{eta_minus_CIR} within 
\eqref{last-passage-pdf-spectral} produces the density,
\begin{align}\label{last-passage-pdf-spectral_CIR}
f_{g_\b(T)}(t;x) = 2p(t;x,\b) 
&\bigg[{e^{\kappa k} \over (\b|\kappa|)^\mu}\bigg({1 \over \gamma(|\mu|, |\kappa|\b)} 
+ {1 \over \Gamma(|\mu|, |\kappa|\b)} \bigg)
\nonumber \\
&+  {\b|\kappa| \over 1 - \mu}\sum_{n=1}^{\infty}e^{-\lambda_{n,\b}^- (T-t)} 
\bigg(1 - {2|\kappa|\over \lambda_{n,\b}^-}\bigg)
\frac{M(-\frac{\lambda_{n,k}^{-}}{2|\kappa|} + 2,2 - \mu,|\kappa| \b)}
{M_1(-\frac{\lambda_{n,k}^{-}}{2|\kappa|} + 1,1 - \mu,|\kappa| \b)}
\nonumber \\
&+  \b|\kappa| \sum_{n=1}^{\infty}e^{-\lambda_{n,\b}^+ (T-t)} 
\bigg(1 - {2|\kappa|\over \lambda_{n,\b}^+}\bigg)
\frac{U(-\frac{\lambda_{n,k}^{+}}{2|\kappa|} + 2,2 - \mu,|\kappa| \b)}
{U_1(-\frac{\lambda_{n,k}^{+}}{2|\kappa|} + 1,1 - \mu,|\kappa| \b)}
\bigg],
\end{align}
$t\in (0,T)$, $x,\b\in (0,\infty)$, with $p(t;x,\b)$ given by \eqref{trans_PDF_spectral_CIR_4}. 

The ratio $U/ U_1$ in \eqref{eta_minus_CIR}--\eqref{last-passage-pdf-spectral_CIR} can be computed in  equivalent ways that avoid overflow for large 
$\frac{\lambda_{n,k}^{+}}{2|\kappa|}$. One way is to express the numerator in terms of its rescaled  function and the denominator in terms of $\widetilde{U}_1$ as in \eqref{equiv_rescaled_ratio}, where 
$\Gamma(\frac{\lambda_{n,k}^{+}}{2|\kappa|}) = (\frac{\lambda_{n,k}^{+}}{2|\kappa|} - 1)\Gamma(\frac{\lambda_{n,k}^{+}}{2|\kappa|} - 1)$, giving
\[
\frac{U(-\frac{\lambda_{n,k}^{+}}{2|\kappa|} + 2,2 - \mu,|\kappa| \b)}
{U_1(-\frac{\lambda_{n,k}^{+}}{2|\kappa|} + 1,1 - \mu,|\kappa| \b)} = 
{1\over \frac{\lambda_{n,k}^{+}}{2|\kappa|} - 1}
\frac{\widetilde{U}(-\frac{\lambda_{n,k}^{+}}{2|\kappa|} + 2,2 - \mu,|\kappa| \b)}
{\widetilde{U}_1(-\frac{\lambda_{n,k}^+}{2|\kappa|} + 1,1 - \mu,|\kappa| \b)}.
\]
Another way is to compute 
${\partial \over \partial x}U(-\frac{\lambda_{n,k}^{+}}{2|\kappa|} + 1,1 - \mu,|\kappa| x)|_{x= \b} = 
|\kappa|\Gamma(\frac{\lambda_{n,k}^{+}}{2|\kappa|})
\widetilde{U}^\prime(-\frac{\lambda_{n,k}^+}{2|\kappa|} + 1,1 - \mu,|\kappa| \b)$, where  
$\widetilde{U}^\prime(\alpha,\beta,z) \equiv {\partial \over \partial z}\widetilde{U}(\alpha,\beta,z)$ is computed by numerical differentiation. This quantity is then divided into 
$U_1(-\frac{\lambda_{n,k}^{+}}{2|\kappa|} + 1,1 - \mu,|\kappa| \b) = 
\Gamma(\frac{\lambda_{n,k}^{+}}{2|\kappa|})\widetilde{U}_1(-\frac{\lambda_{n,k}^+}{2|\kappa|} + 1,1 - \mu,|\kappa| \b)$ with $\Gamma(\frac{\lambda_{n,k}^{+}}{2|\kappa|})$ canceling out.

We now apply Proposition~\ref{joint_last-passage-propn-time-t-new-formula}. Using 
\eqref{CIR_Psi_plus_func} and \eqref{CIR_Psi_minus_func} within \eqref{joint_last_passage_pdf_spectral_1} and 
\eqref{joint_last_passage_pdf_spectral_2} gives 
\begin{align}\label{eta_plus_z_k_CIR}
&f^+(T-t,z;\b) 
= 2\vert\kappa\vert \bigg({\b\over z}\bigg)^{\mu}e^{\kappa(z-\b)}
\sum_{n=1}^{\infty}e^{-\lambda_{n,k}^- (T-t)} 
\frac{M(-\frac{\lambda_{n,k}^-}{2|\kappa|} + 1,1 - \mu,|\kappa| z)}
{M_1(-\frac{\lambda_{n,k}^-}{2|\kappa|} + 1,1 - \mu,|\kappa| \b)}, &  z\in (0,\b),
\\
&f^-(T-t,z;\b) 
= 2\vert\kappa\vert \bigg({\b\over z}\bigg)^{\mu}e^{\kappa(z-\b)}
\sum_{n=1}^{\infty}e^{-\lambda_{n,k}^+ (T-t)}  
\frac{U(-\frac{\lambda_{n,k}^+}{2|\kappa|} + 1,1 - \mu,|\kappa| z)}
{U_1(-\frac{\lambda_{n,k}^+}{2|\kappa|} + 1,1 - \mu,|\kappa| \b)}, &  z\in (\b,\infty).
\label{eta_minus_z_k_CIR}
\end{align}
Hence, we have the joint density
\begin{align}\label{joint_last_passage_pdf_CIR}
 f_{g_k(T),X_T}(t,z;x)  
    =  2|\kappa| p(t;x,\b) \left\{
             \begin{array}{lr}
\displaystyle \sum\limits_{n=1}^{\infty} e^{-\lambda_{n,k}^- (T-t)}   
\frac{M(- \frac{\lambda_{n,k}^-}{2|\kappa|}+1 ,1-\mu,|\kappa|z ) }
{M_1(- \frac{\lambda_{n,k}^-}{2|\kappa|}+1 ,1-\mu,|\kappa|\b )}
, &  z\in (0,\b), 
\nonumber \\
\displaystyle \sum\limits_{n=1}^{\infty}  e^{-\lambda_{n,k}^+ (T-t)} 
\frac{U(- \frac{\lambda_{n,k}^+}{2|\kappa|}+1 ,1-\mu,|\kappa|z ) }
{U_1(- \frac{\lambda_{n,k}^+}{2|\kappa|}+1 ,1-\mu,|\kappa|k )}  , &  z\in (\b,\infty),
\nonumber
             \end{array}
\right.\\
\end{align}
$t\in (0,T)$, $x,\b \in (0,\infty)$, with $p(t;x,\b)$ given by \eqref{trans_PDF_spectral_CIR_4}. 
Note the equivalence ${U\over U_1} = {\widetilde{U}\over \widetilde{U}_1}$ in \eqref{joint_last_passage_pdf_CIR}.

The partly discrete joint distribution follows by \eqref{prop_joint_last_time-t-2}. Hence, we simply employ the transition densities in \eqref{trans_PDF_spectral_CIR_b_below} and \eqref{trans_PDF_spectral_CIR_b_above} of Section \ref{subsect_CIR_kill_b}, with $t,b,y$ replaced by $T,\b,z$, respectively, giving
\begin{align}\label{prop_joint_last_time-t-2_CIR}
&\P_x(g_\b(T) = 0, X_T \in dz) 
\nonumber \\
&= {1\over 2} x^{-\mu} e^{\kappa x}
\begin{cases} 
\displaystyle  \sum_{n=1}^\infty  e^{-\lambda_{n,k}^- T} 
N_{n,(0,\b)}^2 \, M\big(\!\!-\!\frac{\lambda_{n,k}^-}{2\vert\kappa\vert} + 1, 1 - \mu,\vert\kappa\vert x\big)
 M\big(\!\!-\!\frac{\lambda_{n,k}^-}{2\vert\kappa\vert} + 1, 1 - \mu,\vert\kappa\vert z\big)\, dz \!\!\!\!&\!\!, x,z \in (0,\b),
\\
\displaystyle \sum_{n=1}^\infty  e^{-\lambda_{n,k}^+ T} 
N_{n,(\b,\infty)}^2 \,  U\big(\!-\!\frac{\lambda_{n,k}^+}{2\vert\kappa\vert} + 1, 1 - \mu,\vert\kappa\vert  x\big)
U\big(\!-\!\frac{\lambda_{n,k}^+}{2\vert\kappa\vert} + 1, 1 - \mu,\vert\kappa\vert  z\big)\, dz \!\!\!\!&\!\!, x,z \in (\b,\infty).
\end{cases}
\end{align}
The latter series is equally expressed in terms of the $\widetilde{U}$ functions as in the second series in \eqref{trans_PDF_spectral_CIR_b_above}.

For the jointly discrete portion of the distribution we implement \eqref{joint_last_doubly_defective_formula}. Since $r=\infty$ is a natural boundary, $\P_x(g_\b(T) = 0, X_T = \partial^\dagger) = 0$ for $x \in (\b,\infty)$. 
For $x \in (0,\b)$, $\P_x(g_\b(T) = 0, X_T = \partial^\dagger)$ is nonzero since $l=0$ is nonconservative, and is given by $\P_x(\Tau^-_0(\b) \le T)$. By the leading term asymptotic of the M function in \eqref{CIR_fundamental_funcs} we have $\varphi_{-\lambda_n}^+(x) \sim x^{-\mu}$, 
as $x\to 0+$, i.e., $\varphi^{+\,\prime}_{-\lambda_n}(0+)/\s(0+) = -\mu = |\mu|$. Combining this with \eqref{CIR_scale_func}, $\varphi_{-\lambda^-_{n,k}}^-(\b)$, \eqref{CIR_Psi_plus_func} and the above Wronskian $w_\lambda$ for $\lambda = -\lambda^-_{n,k}$, as well as using the identity $\Gamma(z + 1)=z\Gamma(z)$, within \eqref{joint_last_doubly_defective_formula}, gives
\begin{align}
&\P_x(g_\b(T) = 0, X_T = \partial^\dagger) 
= \displaystyle 1 - {\gamma(|\mu|, |\kappa|x) \over \gamma(|\mu|, |\kappa|\b)} 
\nonumber \\
&- 2 {|\kappa|^{1+|\mu|}\over \Gamma(|\mu|)}e^{\kappa x}x^{|\mu|}
\sum_{n=1}^\infty  e^{-\lambda_{n,k}^- T} 
\Gamma\big(\!-\! \frac{\lambda_{n,k}^-}{2|\kappa|}\big)
U\big(\!-\!\frac{\lambda_{n,k}^-}{2\vert\kappa\vert} + 1, 1 - \mu,\vert\kappa\vert  \b\big)
\frac{M(- \frac{\lambda_{n,k}^-}{2|\kappa|}+1 ,1-\mu,|\kappa|x ) }
{M_1(- \frac{\lambda_{n,k}^-}{2|\kappa|}+1 ,1-\mu,|\kappa|\b )},
\label{joint_last_doubly_defective_formula_CIR}
\end{align}
$x \in (0,\b)$. 
By using the Gamma reflection formula, this series is also re-expressed in terms of the $\widetilde{U}$ function, i.e., $\Gamma\big(\!-\! \frac{\lambda_{n,k}^-}{2|\kappa|}\big)
U\big(\!-\!\frac{\lambda_{n,k}^-}{2\vert\kappa\vert} + 1, 1 - \mu,\vert\kappa\vert  \b\big) 
= -{\pi \over \frac{\lambda_{n,k}^-}{2|\kappa|} \sin\big(\pi\frac{\lambda_{n,k}^-}{2|\kappa|} \big)}
\widetilde{U}\big(\!-\!\frac{\lambda_{n,k}^-}{2\vert\kappa\vert} + 1, 1 - \mu,\vert\kappa\vert  \b\big)$.

%
%
%
\subsection{Squared Radial Ornstein-Uhlenbeck process killed at either of two interior points}\label{subsect_CIR_kill_a_b}
Here we consider the Squared Radial Ornstein-Uhlenbeck process, $X_{(a,b),t}$, assuming   
$\kappa < 0$ and $\mu < 0$ as above, with imposed killing at either level $a$ or $b$, 
$0 < a < b < \infty$. Using \eqref{CIR_fundamental_funcs} within \eqref{phi_function} gives 
\begin{equation}\label{CIR_cylinder}
    \phi(x,y;\lambda) = (xy)^{-\mu} e^{\kappa(x+y)} S\big(\frac{\lambda}{2|\kappa|}+1,1-\mu, |\kappa|y, |\kappa|x\big)
\end{equation}
where we conveniently define the associated Kummer cylinder function 
\begin{equation}\label{CIR_S_cylinder}
S(\alpha,\beta;x,y) := M(\alpha,\beta,x)  U(\alpha,\beta,y) -U(\alpha,\beta,x) M(\alpha,\beta,y),\,\,\,x,y \in (0,\infty).
\end{equation}
Note the antisymmetry, $S(\alpha,\beta;y,x) = -S(\alpha,\beta;x,y)$. 
We also define  
$S_1(\alpha,\beta;x,y) := \frac{\partial}{\partial \alpha}S(\alpha,\beta;x,y)$ which can be computed in terms of $M,M_1,U,U_1$. Numerical differentiation can also be employed, as noted above for the $M_1$ and $U_1$ functions. 
As in Section \ref{subsect_CIR}, to avoid numerical overflow in a direct computation of the Kummer cylinder function and its derivatives for large negative values of its first argument, it is useful to define the rescaled cylinder function:
\begin{equation}\label{CIR_S_cylinder_rescaled}
\widetilde{S}(-\nu,\beta;x,y) := {S(-\nu,\beta;x,y) \over \Gamma(\nu + 1)} 
= M(-\nu,\beta,x) \widetilde{U}(-\nu,\beta,y) - \widetilde{U}(-\nu,\beta,x) M(-\nu,\beta,y)
\end{equation}
where $\widetilde{U}$ is defined by \eqref{rescaled_U}.

Using \eqref{CIR_cylinder}, \eqref{Delta_derivative} gives $\Delta(a,b;\lambda_n) = {(ab)^{-\mu}\over 2|\kappa|} e^{\kappa(a+b)} 
S_1\big(-\frac{\lambda_n}{2|\kappa|}+1,1-\mu, |\kappa|a, |\kappa|b\big)$, where  
the eigenvalues $\lambda_n \equiv \lambda_n^{(a,b)}$, $n\geq 1$, are the positive simple zeros solving $\phi(a,b;-\lambda_n) = 0$, i.e.,
\footnote{The eigenvalues in \eqref{CIR_eigen_ab} are readily computed by using a root finding (e.g., bisection) algorithm. The leading term asymptotics of the Kummer $M$ and $U$ functions within the cylinder function 
for large $\lambda_n$ can also be combined in terms of trigonometric functions whose zeros provide initial estimates for the eigenvalues. It can be shown that $\lambda_n$ grows approximately in proportion to $n^2$ for large $n$.
}
\begin{equation}\label{CIR_eigen_ab}
    S\big(\!-\!\frac{\lambda_n}{2|\kappa|}+1,1-\mu, |\kappa|a, |\kappa|b\big) = 0
\end{equation}
or equivalently $\widetilde{S}\big(-\frac{\lambda_n}{2|\kappa|}+1,1-\mu, |\kappa|a, |\kappa|b\big) = 0$, which follows since $S\big(\!-\!\frac{\lambda_n}{2|\kappa|}+1,1-\mu, |\kappa|a, |\kappa|b\big)
= \Gamma(\frac{\lambda_n}{2|\kappa|}) \widetilde{S}\big(-\frac{\lambda_n}{2|\kappa|}+1,1-\mu, |\kappa|a, |\kappa|b\big)$ where $\Gamma(\frac{\lambda_n}{2|\kappa|})>0$. Moreover, differentiating \eqref{CIR_S_cylinder_rescaled} w.r.t. its first argument (with $\nu = \frac{\lambda_n}{2|\kappa|}-1$, $\beta = 1-\mu$) and employing \eqref{CIR_eigen_ab} gives 
$S_1\big(\!-\!\frac{\lambda_n}{2|\kappa|}+1,1-\mu, |\kappa|a, |\kappa|b\big)
= \Gamma(\frac{\lambda_n}{2|\kappa|}) 
\widetilde{S}_1\big(-\frac{\lambda_n}{2|\kappa|}+1,1-\mu, |\kappa|a, |\kappa|b\big)$. We define  
$\widetilde{S}_1(\alpha,\beta;x,y) := \frac{\partial}{\partial \alpha}\widetilde{S}(\alpha,\beta;x,y)$ which can be computed by numerical differentiation or in terms of $M,M_1,\widetilde{U}, \widetilde{U}_1$ via \eqref{CIR_S_cylinder_rescaled}. 
Hence, using \eqref{spectral_3_product_eigen} within \eqref{u_spectral_3} produces a spectral series for the transition PDF expressed equally in terms of $S$, $S_1$ or $\widetilde{S},\widetilde{S}_1$ functions:
\begin{align}\label{CIR transition density killed}
p_{(a,b)}(t;x,y) &= {\vert\kappa\vert^{1 - \mu} x^{-\mu} e^{\kappa x}\over \Gamma\big(1 - \mu\big)} 
\sum_{n=1}^\infty  e^{-\lambda_n t} 
\frac{S(-{\lambda_n\over 2\vert\kappa\vert} + 1, 1 - \mu; \vert\kappa\vert a, \vert\kappa\vert x)
S(-{\lambda_n\over 2\vert\kappa\vert} + 1, 1 - \mu; \vert\kappa\vert b, \vert\kappa\vert y)}
{S_1(-{\lambda_n\over 2\vert\kappa\vert} + 1, 1 - \mu; \vert\kappa\vert a, \vert\kappa\vert b)
 / \Gamma\big(\!-\!\frac{\lambda_n}{2\vert\kappa\vert} + 1\big)}
\nonumber \\
&=  {\pi\vert\kappa\vert^{1 - \mu} x^{-\mu} e^{\kappa x}\over \Gamma\big(1 - \mu\big)} 
\sum_{n=1}^\infty  e^{-\lambda_n t} 
\frac{\widetilde{S}(-{\lambda_n\over 2\vert\kappa\vert} + 1, 1 - \mu; \vert\kappa\vert a, \vert\kappa\vert x)
\widetilde{S}(-{\lambda_n\over 2\vert\kappa\vert} + 1, 1 - \mu; \vert\kappa\vert b, \vert\kappa\vert y)}
{\sin(- {\lambda_n\over 2\vert\kappa\vert} \pi) \widetilde{S}_1(-{\lambda_n\over 2\vert\kappa\vert} + 1, 1 - \mu; \vert\kappa\vert a, \vert\kappa\vert b)},
\end{align}
where $\lambda_n \equiv \lambda_n^{(a,b)}$, $x,y\in (a,b)$, $t>0$. We note that the Gamma reflection formula was used in the second expression involving the rescaled cylinder functions.

From \eqref{FHT_eigenfunctions_ab} we have
\begin{align}\label{FHT_CIR_eigenfunctions_ab_plus}
\psi_n^+(x;a,b) &=  2|\kappa|\bigg({b\over x}\bigg)^\mu e^{\kappa(x - b)}
\frac{S(-{\lambda_n\over 2\vert\kappa\vert} + 1, 1 - \mu; \vert\kappa\vert a, \vert\kappa\vert x)}
{S_1(-{\lambda_n\over 2\vert\kappa\vert} + 1, 1 - \mu; \vert\kappa\vert a, \vert\kappa\vert b)},
\\
\psi_n^-(x;a,b) &=  2|\kappa|\bigg({a\over x}\bigg)^\mu e^{\kappa(x - a)}
\frac{S(-{\lambda_n\over 2\vert\kappa\vert} + 1, 1 - \mu; \vert\kappa\vert x, \vert\kappa\vert b)}
{S_1(-{\lambda_n\over 2\vert\kappa\vert} + 1, 1 - \mu; \vert\kappa\vert a, \vert\kappa\vert b)},
\label{FHT_CIR_eigenfunctions_ab_minus}
\end{align}
where $\lambda_n \equiv \lambda_n^{(a,b)}$. Note the equivalence of the ratio 
$S/S_1 = \widetilde{S} /\widetilde{S}_1$ for each respective arguments.

Using \eqref{FHT_CIR_eigenfunctions_ab_plus}--\eqref{FHT_CIR_eigenfunctions_ab_minus}, adapted to the respective intervals $(a,\b)$ and $(\b,b)$, within \eqref{prop_last_time-CDF_discrete_killed_ab}--\eqref{joint_last_ab_doubly_defective_new}, with scale function in \eqref{CIR_scale_func}, gives explicit spectral series for the nonzero discrete parts of the marginal and joint distributions (expressed below using the rescaled cylinder functions):
\begin{align}\label{last_time-CDF_discrete_killed_ab_CIR}
&\P_x(g^{(a,b)}_{\b}(T) = 0) 
\\
\nonumber 
&= 
\begin{cases}
\!\displaystyle {\gamma(|\mu|, |\kappa|\b)  - \gamma(|\mu|, |\kappa|x)
\over \gamma(|\mu|, |\kappa|\b) - \gamma(|\mu|, |\kappa|a)}  
+ 2|\kappa|\big({\b\over x}\big)^\mu \!e^{\kappa(x - \b)}
\sum_{n=1}^\infty {e^{- \lambda_n^{(a,\b)} T} \over \lambda_n^{(a,\b)}}
\frac{\widetilde{S}(-{\lambda_n^{(a,\b)}\over 2\vert\kappa\vert} + 1, 1 - \mu; \vert\kappa\vert a, \vert\kappa\vert x)}
{\widetilde{S}_1(-{\lambda_n^{(a,\b)}\over 2\vert\kappa\vert} + 1, 1 - \mu; \vert\kappa\vert a, \vert\kappa\vert \b)}
, x \in \!(a,\b),
\\
\!\displaystyle {\gamma(|\mu|, |\kappa|x)  - \gamma(|\mu|, |\kappa|\b)
\over \gamma(|\mu|, |\kappa|b) - \gamma(|\mu|, |\kappa|\b)}  
+ 2|\kappa|\big({\b\over x}\big)^\mu\! e^{\kappa(x - \b)}
\sum_{n=1}^\infty {e^{- \lambda_n^{(\b,b)} T} \over \lambda_n^{(\b,b)}}
\frac{\widetilde{S}(-{\lambda_n^{(\b,b)}\over 2\vert\kappa\vert} + 1, 1 - \mu; \vert\kappa\vert x, \vert\kappa\vert b)}
{\widetilde{S}_1(-{\lambda_n^{(\b,b)}\over 2\vert\kappa\vert} + 1, 1 - \mu; \vert\kappa\vert \b, \vert\kappa\vert b)}
, x \in \!(\b,b),
\end{cases}
\end{align}
\begin{align}\label{joint_last_ab_doubly_defective_CIR}
&\P_x(g^{(a,b)}_{\b}(T) = 0, X_{(a,b),T} = \partial^\dagger) 
\\
&= 
\begin{cases}
\!\displaystyle {\gamma(|\mu|, |\kappa|\b)  - \gamma(|\mu|, |\kappa|x)
\over \gamma(|\mu|, |\kappa|\b) - \gamma(|\mu|, |\kappa|a)}  
- 2|\kappa|\big({a\over x}\big)^\mu \!e^{\kappa(x - a)}
\sum_{n=1}^\infty {e^{- \lambda_n^{(a,\b)} T} \over \lambda_n^{(a,\b)}}
\frac{\widetilde{S}(-{\lambda_n^{(a,\b)}\over 2\vert\kappa\vert} + 1, 1 - \mu; \vert\kappa\vert x, \vert\kappa\vert \b)}
{\widetilde{S}_1(-{\lambda_n^{(a,\b)}\over 2\vert\kappa\vert} + 1, 1 - \mu; \vert\kappa\vert a, \vert\kappa\vert \b)}
, x \in \!(a,\b),
\nonumber \\
\!\displaystyle {\gamma(|\mu|, |\kappa|x)  - \gamma(|\mu|, |\kappa|\b)
\over \gamma(|\mu|, |\kappa|b) - \gamma(|\mu|, |\kappa|\b)}  
- 2|\kappa|\big({b\over x}\big)^\mu\! e^{\kappa(x - b)}
\sum_{n=1}^\infty {e^{- \lambda_n^{(\b,b)} T} \over \lambda_n^{(\b,b)}}
\frac{\widetilde{S}(-{\lambda_n^{(\b,b)}\over 2\vert\kappa\vert} + 1, 1 - \mu; \vert\kappa\vert \b, \vert\kappa\vert x)}
{\widetilde{S}_1(-{\lambda_n^{(\b,b)}\over 2\vert\kappa\vert} + 1, 1 - \mu; \vert\kappa\vert \b, \vert\kappa\vert b)}
, x \in \!(\b,b).
\end{cases}
\end{align}
The respective eigenvalues solve 
$\widetilde{S}\big(\!-\!\frac{\lambda_n^{(a,\b)}}{2|\kappa|}+1,1-\mu, |\kappa|a, |\kappa|\b\big) = 0$ and 
$\widetilde{S}\big(\!-\!\frac{\lambda_n^{(\b,b)}}{2|\kappa|}+1,1-\mu, |\kappa|\b, |\kappa|b\big) = 0$. 
Again we note that $\widetilde{S}/\widetilde{S}_1$ in \eqref{last_time-CDF_discrete_killed_ab_CIR}--\eqref{joint_last_ab_doubly_defective_CIR} are equal to the respective ratios $S/S_1$.

Using \eqref{CIR transition density killed}, for time $t=T$, within 
\eqref{prop_joint_last_time_kill_ab-t-3-zero} on the respective intervals gives the partly discrete portion of the distribution (expressed in terms of the rescaled cylinder functions):
\begin{align}\label{joint_last_ab_partly_defective_CIR}
&\P_x(g^{(a,b)}_{\b}(T) = 0, X_{(a,b),T} \in d z) / dz
= {\pi\vert\kappa\vert^{1 - \mu} x^{-\mu} e^{\kappa x}\over \Gamma\big(1 - \mu\big)}
\nonumber \\
&\times 
\begin{cases}
\displaystyle \sum_{n=1}^\infty \!e^{-\lambda_n^{(a,\b)} T} 
\frac{\widetilde{S}(-{\lambda_n^{(a,\b)}\over 2\vert\kappa\vert}\! +\! 1, 1 - \mu; \vert\kappa\vert a, \vert\kappa\vert x)
\widetilde{S}(-{\lambda_n^{(a,\b)}\over 2\vert\kappa\vert} \! +\! 1, 1 - \mu; \vert\kappa\vert \b, \vert\kappa\vert z)}
{\sin(- {\lambda_n^{(a,\b)}\over 2\vert\kappa\vert} \pi) 
\widetilde{S}_1(-{\lambda_n^{(a,\b)}\over 2\vert\kappa\vert} \! +\! 1, 1 - \mu; \vert\kappa\vert a, \vert\kappa\vert \b)}
, x,z\! \in \!(a,\b),
\\
\displaystyle \sum_{n=1}^\infty \!e^{-\lambda_n^{(\b,b)} T} 
\frac{\widetilde{S}(-{\lambda_n^{(\b,b)}\over 2\vert\kappa\vert}\! +\! 1, 1 - \mu; \vert\kappa\vert \b, \vert\kappa\vert x)
\widetilde{S}(-{\lambda_n^{(\b,b)}\over 2\vert\kappa\vert} \! +\! 1, 1 - \mu; \vert\kappa\vert b, \vert\kappa\vert z)}
{\sin(- {\lambda_n^{(\b,b)}\over 2\vert\kappa\vert} \pi) 
\widetilde{S}_1(-{\lambda_n^{(\b,b)}\over 2\vert\kappa\vert} \! +\! 1, 1 - \mu; \vert\kappa\vert \b, \vert\kappa\vert b)}
, x,z \in (\b,b).
\end{cases}
\end{align}

We now employ Proposition~\ref{joint_last-passage-propn-time-t_ab-new-version}, which requires  
$\hat\psi^+_{n}(a,\b)$ and $\hat\psi^-_{n}(\b,b)$. These follow directly by simply using 
$\Delta(a,\b;\lambda_n)$ and $\varphi^+_{-\lambda_n}$, for $\lambda_n = \lambda_n^{(a,\b)}$, and 
$\Delta(\b,b;\lambda_n)$ and $\varphi^+_{-\lambda_n}$, for $\lambda_n = \lambda_n^{(\b,b)}$, within \eqref{psi_hat_derivatives}:
\begin{align}\label{FHT_CIR_hat_plus_ak}
{\hat\psi^+_{n}(a,\b) \over \lambda_n} 
&= 
{\vert\kappa\vert^\mu \b^{2\mu} e^{-2\kappa \b} \Gamma(1 - \mu) \over 
{\lambda_n\over 2\vert\kappa\vert} \Gamma(-{\lambda_n\over 2\vert\kappa\vert} + 1)
S_1(-{\lambda_n\over 2\vert\kappa\vert} + 1, 1 - \mu; \vert\kappa\vert a, \vert\kappa\vert \b)}
\frac{M(- \frac{\lambda_n}{2|\kappa|}\! +\! 1 ,1 \!- \!\mu, |\kappa| a ) }
{M(- \frac{\lambda_n}{2|\kappa|}\! +\! 1 ,1 \!- \!\mu,|\kappa| \b )}
\nonumber \\
&= 
{\vert\kappa\vert^\mu \b^{2\mu} e^{-2\kappa \b} \Gamma(1 - \mu) \sin(\pi \frac{\lambda_n}{2|\kappa|})
\over 
\pi {\lambda_n\over 2\vert\kappa\vert} 
\widetilde{S}_1(-{\lambda_n\over 2\vert\kappa\vert} + 1, 1 - \mu; \vert\kappa\vert a, \vert\kappa\vert \b)}
\frac{M(- \frac{\lambda_n}{2|\kappa|}\! +\! 1 ,1 \!- \!\mu, |\kappa| a ) }
{M(- \frac{\lambda_n}{2|\kappa|}\! +\! 1 ,1 \!- \!\mu,|\kappa| \b )},\,\,\lambda_n \equiv \lambda_n^{(a,\b)},
\end{align}
\begin{align}\label{FHT_CIR_hat_minus_kb}
{\hat\psi^-_{n}(\b,b) \over \lambda_n} 
&= 
{\vert\kappa\vert^\mu \b^{2\mu} e^{-2\kappa \b} \Gamma(1 - \mu) \over 
{\lambda_n\over 2\vert\kappa\vert} \Gamma(-{\lambda_n\over 2\vert\kappa\vert} + 1)
S_1(-{\lambda_n\over 2\vert\kappa\vert} + 1, 1 - \mu; \vert\kappa\vert \b, \vert\kappa\vert b)}
\frac{M(- \frac{\lambda_n}{2|\kappa|}\! +\! 1 ,1 \!- \!\mu, |\kappa| b ) }
{M(- \frac{\lambda_n}{2|\kappa|}\! +\! 1 ,1 \!- \!\mu,|\kappa| \b )}
\nonumber \\
&= 
{\vert\kappa\vert^\mu \b^{2\mu} e^{-2\kappa \b} \Gamma(1 - \mu) \sin(\pi \frac{\lambda_n}{2|\kappa|})
\over 
\pi {\lambda_n\over 2\vert\kappa\vert} 
\widetilde{S}_1(-{\lambda_n\over 2\vert\kappa\vert} + 1, 1 - \mu; \vert\kappa\vert \b, \vert\kappa\vert b)}
\frac{M(- \frac{\lambda_n}{2|\kappa|}\! +\! 1 ,1 \!- \!\mu, |\kappa| b ) }
{M(- \frac{\lambda_n}{2|\kappa|}\! +\! 1 ,1 \!- \!\mu,|\kappa| \b )},\,\, \lambda_n \equiv \lambda_n^{(\b,b)}.
\end{align}
The second expressions in \eqref{FHT_CIR_hat_plus_ak}--\eqref{FHT_CIR_hat_minus_kb} arise by the Gamma reflection formula and the derivative of the rescaled cylinder function. 
Substituting the expressions in \eqref{FHT_CIR_hat_plus_ak}--\eqref{FHT_CIR_hat_minus_kb} into 
\eqref{last_passage_pdf_discrete_spec} gives the marginal density
\begin{align}\label{last_passage_pdf_explicit_ab_CIR}
&f_{g^{(a,b)}_{\b}(T)}(t;x) = 2p_{(a,b)}(t;x,\b) 
\bigg[ \b^{-\mu}e^{\kappa \b}\mathcal{S}(a,b;\b) 
+ {2\over \pi} |\kappa|^{1+\mu}\b^\mu e^{-\kappa \b}\Gamma(1 - \mu)
\nonumber \\
&\times
\sum_{n=1}^{\infty}
\bigg(
{e^{-\lambda_n^{(a,\b)} (T-t)} \over \lambda_n^{(a,\b)}}
{\sin(\pi \frac{\lambda_n^{(a,\b)}}{2|\kappa|})
\over 
\widetilde{S}_1(-{\lambda_n^{(a,\b)}\over 2\vert\kappa\vert} + 1, 1 - \mu; \vert\kappa\vert a, \vert\kappa\vert \b)}
\frac{M(- \frac{\lambda_n^{(a,\b)}}{2|\kappa|} + 1 ,1 - \mu, |\kappa| a ) }
{M(- \frac{\lambda_n^{(a,\b)}}{2|\kappa|} + 1 ,1 - \mu,|\kappa| \b )}
\nonumber \\
&\phantom{ \sum_{n=1}^{\infty}\bigg(}
+ {e^{-\lambda_n^{(\b,b)} (T-t)} \over \lambda_n^{(\b,b)}}
{\sin(\pi \frac{\lambda_n^{(\b,b)}}{2|\kappa|})
\over 
\widetilde{S}_1(-{\lambda_n^{(\b,b)}\over 2\vert\kappa\vert} + 1, 1 - \mu; \vert\kappa\vert \b, \vert\kappa\vert b)}
\frac{M(- \frac{\lambda_n^{(\b,b)}}{2|\kappa|} + 1 ,1 - \mu, |\kappa| b ) }
{M(- \frac{\lambda_n^{(\b,b)}}{2|\kappa|} + 1 ,1 - \mu,|\kappa| \b )}
\bigg)
\bigg],
\end{align}
$t\in (0,T)$, $x,\b\in (a,b)$, where $\mathcal{S}(a,b;\b) = \displaystyle{1\over |\kappa|}\bigg[{1 \over \gamma(|\mu|, |\kappa|\b) - \gamma(|\mu|, |\kappa|a)} 
+ {1 \over \gamma(|\mu|, |\kappa|b) - \gamma(|\mu|, |\kappa|\b)}\bigg]$. 
Using $\psi_n^+(z;a,\b)$ and $\psi_n^-(z;\b,b)$, i.e., \eqref{FHT_CIR_eigenfunctions_ab_plus} and \eqref{FHT_CIR_eigenfunctions_ab_minus} adapted to the respective intervals $(a,\b)$ and $(\b,b)$, within \eqref{joint_last_passage_pdf_explicit_ab_new} gives the joint PDF
\begin{align}\label{joint_last_passage_pdf_explicit_ab_CIR}
f_{\!g^{(a,b)}_{\b}(T), X_{(a,b),T}}\!(t,z;x) 
= 2 |\kappa| p_{(a,b)}(t;x,\b) \!
\left\{\!\!
\begin{array}{lr}
         \displaystyle  \sum_{n=1}^{\infty}e^{-\lambda_n^{(a,\b)} (T-t)} 
\frac{\widetilde{S}(-{\lambda_n^{(a,\b)} \over 2\vert\kappa\vert} + 1, 1 - \mu; \vert\kappa\vert a, \vert\kappa\vert z)}
{\widetilde{S}_1(-{\lambda_n^{(a,\b)}\over 2\vert\kappa\vert} + 1, 1 - \mu; \vert\kappa\vert a, \vert\kappa\vert \b)}, 
&\!\!\!\!\!\!  z\in (a,\b), 
\\
    \displaystyle \sum_{n=1}^{\infty} e^{-\lambda_n^{(\b,b)} (T-t)}
\frac{\widetilde{S}(-{\lambda_n^{(\b,b)} \over 2\vert\kappa\vert} + 1, 1 - \mu; \vert\kappa\vert z, \vert\kappa\vert b)}{\widetilde{S}_1(-{\lambda_n^{(\b,b)}\over 2\vert\kappa\vert} + 1, 1 - \mu; \vert\kappa\vert \b, \vert\kappa\vert b)}, &\!\!\!\!\!\!\!  z\in (\b,b), 
\end{array}
\right.
\end{align}
$t\in (0,T)$, $x,\b\in (a,b)$. We note that $p_{(a,b)}(t;x,\b)$ is given by \eqref{CIR transition density killed} and hence both \eqref{last_passage_pdf_explicit_ab_CIR} and \eqref{joint_last_passage_pdf_explicit_ab_CIR} also represent double series for the marginal and joint densities.

%
%
\subsection{Squared Radial Ornstein-Uhlenbeck process killed at one interior point}\label{subsect_CIR_kill_b}
We now consider the Squared Radial Ornstein-Uhlenbeck process, $X_{b,t}$, killed at $b\in (0,\infty)$, and derive distributions of $g^{b}_{\b}(T)$ and $(g^{b}_{\b}(T),X_{b,T})$ for the two cases: (i) $x,\b\in (0,b)$ and (ii) $x,\b\in  (b,\infty)$. As above, we assume $\kappa < 0$ and $\mu < 0$ and with $l=0$ specified as killing when 
$\mu\in(-1,0)$ for case (i). 
%
%

The transition PDF $p_b^-$ for $X_{b,t} \in (0,b)$, i.e., case (i), is given by \eqref{u_spectral_1} with product eigenfunction in \eqref{spectral_1_product_eigen} computed using \eqref{eigenfunc1}. In particular,
\begin{align}\label{trans_PDF_spectral_CIR_b_below}
p_b^-(t;x,y) = {1\over 2} x^{-\mu} e^{\kappa x} 
\sum_{n=1}^\infty  e^{-\lambda_{n,b}^- t} 
N_{n,(0,b)}^2 \, M\big(\!\!-\!\frac{\lambda_{n,b}^-}{2\vert\kappa\vert} + 1, 1 - \mu,\vert\kappa\vert x\big)
 M\big(\!\!-\!\frac{\lambda_{n,b}^-}{2\vert\kappa\vert} + 1, 1 - \mu,\vert\kappa\vert y\big),
\end{align}
$x,y\in (0,b)$, $t>0$, with squared normalization constant for eigenfunctions on $(0,b)$ given equivalently as
$
N_{n,(0,b)}^2 = -2\vert\kappa\vert^{1 - \mu}
\frac{\Gamma\big(-\frac{\lambda_{n,b}^-}{2\vert\kappa\vert} + 1\big)}{\Gamma\big(1 - \mu\big)} 
\frac{U\big(-\frac{\lambda_{n,b}^-}{2\vert\kappa\vert} + 1, 1 - \mu,\vert\kappa\vert b\big)}
{M_1\big(-\frac{\lambda_{n,b}^-}{2\vert\kappa\vert} + 1, 1 - \mu,\vert\kappa\vert b\big)}
= \frac{2\pi\vert\kappa\vert^{1 - \mu}}
{\Gamma\big(1 - \mu\big) \sin\big(-\pi\frac{\lambda_{n,b}^-}{2\vert\kappa\vert}\big)} 
\frac{\widetilde{U}\big(-\frac{\lambda_{n,b}^-}{2\vert\kappa\vert} + 1, 1 - \mu,\vert\kappa\vert b\big)}
{M_1\big(-\frac{\lambda_{n,b}^-}{2\vert\kappa\vert} + 1, 1 - \mu,\vert\kappa\vert b\big)}.
$
The latter expression uses the rescaled Kummer function in \eqref{rescaled_U} and arises simply by using the reflection formula for the Gamma function. 
The eigenvalues $\lambda_{n,b}^-$ solve $M\big(\!\!-\!\frac{\lambda_{n,b}^-}{2\vert\kappa\vert} + 1, 1 - \mu,\vert\kappa\vert b\big) = 0$.

By a similar derivation, the transition PDF $p_b^+$ for $X_{b,t} \in (b,\infty)$, i.e., case (ii), follows by \eqref{u_spectral_2} with product eigenfunction in \eqref{spectral_2_product_eigen} 
computed using \eqref{eigenfunc2}:
\begin{align}\label{trans_PDF_spectral_CIR_b_above}
p_b^+(t;x,y) &= {1\over 2} x^{-\mu} e^{\kappa x} 
\sum_{n=1}^\infty  e^{-\lambda_{n,b}^+ t} 
N_{n,(b,\infty)}^2 \,  U\big(\!-\!\frac{\lambda_{n,b}^+}{2\vert\kappa\vert} + 1, 1 - \mu,\vert\kappa\vert  x\big)
U\big(\!-\!\frac{\lambda_{n,b}^+}{2\vert\kappa\vert} + 1, 1 - \mu,\vert\kappa\vert  y\big)
\nonumber \\
&= {1\over 2} x^{-\mu} e^{\kappa x} 
\sum_{n=1}^\infty  e^{-\lambda_{n,b}^+ t} 
\frac{2\pi\vert\kappa\vert^{1 - \mu}}
{\Gamma\big(1 - \mu\big) \sin\big(\!-\!\pi\frac{\lambda_{n,b}^+}{2\vert\kappa\vert} \big)}
{M\big(\!-\!\frac{\lambda_{n,b}^+}{2\vert\kappa\vert} + 1, 1 - \mu,\vert\kappa\vert  b\big)
\over \widetilde{U}_1\big(\!-\!\frac{\lambda_{n,b}^+}{2\vert\kappa\vert} + 1, 1 - \mu,\vert\kappa\vert  b\big)}
\nonumber \\
&\phantom{{1\over 2} x^{-\mu} e^{\kappa x} \sum_{n=1}^\infty e^{-\lambda_{n,b}^+ t}}
\times \widetilde{U}\big(\!-\!\frac{\lambda_{n,b}^+}{2\vert\kappa\vert} + 1, 1 - \mu,\vert\kappa\vert  x\big)
\widetilde{U}\big(\!-\!\frac{\lambda_{n,b}^+}{2\vert\kappa\vert} + 1, 1 - \mu,\vert\kappa\vert  y\big),
\end{align}
$x,y\in (b,\infty)$, $t>0$.
In the first series, $N_{n,(b,\infty)}^2 = -2\vert\kappa\vert^{1 - \mu}
\frac{\Gamma\big(-\frac{\lambda_{n,b}^+}{2\vert\kappa\vert} + 1\big)}{\Gamma\big(1 - \mu\big)} 
\frac{M\big(-\frac{\lambda_{n,b}^+}{2\vert\kappa\vert} + 1, 1 - \mu,\vert\kappa\vert b\big)}
{U_1\big(-\frac{\lambda_{n,b}^+}{2\vert\kappa\vert} + 1, 1 - \mu,\vert\kappa\vert b\big)}$ is a squared normalization constant for eigenfunctions on $(b,\infty)$ and $\lambda_{n,b}^+$ solve  
$U\big(\!\!-\!\frac{\lambda_{n,b}^+}{2\vert\kappa\vert} + 1, 1 - \mu,\vert\kappa\vert b\big) = 0$ or equivalently $\widetilde{U}\big(\!\!-\!\frac{\lambda_{n,b}^+}{2\vert\kappa\vert} + 1, 1 - \mu,\vert\kappa\vert b\big) = 0$.

%
%
We first compute the defective distributions. 
For case (i) we use \eqref{prop_joint_last_b-below_pmf}--\eqref{doubly_defective_kill_up_b}. Hence, by direct use of the first line of \eqref{prop_last_time_discrete_CIR} and the second line of 
\eqref{last_time-CDF_discrete_killed_ab_CIR}, we have
\begin{align}\label{last_time-CDF_discrete_killed_below_b_CIR}
&\P_x(g^b_{\b}(T) = 0) 
\\
\nonumber
&=\begin{cases}
\displaystyle 1 - {\gamma(|\mu|, |\kappa|x) \over \gamma(|\mu|, |\kappa|\b)} 
+ 2|\kappa|\bigg(\frac{\b}{x}\bigg)^{\mu}e^{\kappa(x-\b)} \sum_{n=1}^{\infty} 
\frac{e^{-\lambda_{n,k}^{-}T}}{\lambda_{n,k}^{-}}
\frac{M(-\frac{\lambda_{n,k}^{-}}{2|\kappa|} + 1,1 - \mu,|\kappa| x)}
{M_1(-\frac{\lambda_{n,k}^{-}}{2|\kappa|} + 1,1 - \mu,|\kappa| \b)}
,\,\,\,\,\,\,\,\,\,\,\,\,\,\,\,\,\,\,\,\,\,\,\,\,\,\,\,\,\,\,\,\,\,\,x\in (0,\b),
\\
\displaystyle {\gamma(|\mu|, |\kappa|x)  - \gamma(|\mu|, |\kappa|\b)
\over \gamma(|\mu|, |\kappa|b) - \gamma(|\mu|, |\kappa|\b)}  
+ 2|\kappa|\big({\b\over x}\big)^\mu\! e^{\kappa(x - \b)}
\sum_{n=1}^\infty {e^{- \lambda_n^{(\b,b)} T} \over \lambda_n^{(\b,b)}}
\frac{\widetilde{S}(-{\lambda_n^{(\b,b)}\over 2\vert\kappa\vert} + 1, 1 - \mu; \vert\kappa\vert x, \vert\kappa\vert b)}
{\widetilde{S}_1(-{\lambda_n^{(\b,b)}\over 2\vert\kappa\vert} + 1, 1 - \mu; \vert\kappa\vert \b, \vert\kappa\vert b)}
, x \in \!(\b,b).
\end{cases}
\end{align}
By direct use of the time-$T$ transition densities $p_k^-(T;x,z)$ and $p_{(\b,b)}(T;x,z)$, i.e., \eqref{trans_PDF_spectral_CIR_b_below} with $b$ replaced by $\b$ and \eqref{CIR transition density killed} with $a$ replaced by $\b$, we have
\begin{align}\label{prop_joint_last_b-below_pmf_CIR}
&\P_x(g^{b}_{\b}(T) = 0, X_{b,T} \in dz) / dz
\\ \nonumber
&=\begin{cases}
 \displaystyle {1\over 2} x^{-\mu} e^{\kappa x} 
\sum_{n=1}^\infty  e^{-\lambda_{n,\b}^- T} 
N_{n,(0,\b)}^2 \, M\big(\!\!-\!\frac{\lambda_{n,\b}^-}{2\vert\kappa\vert} + 1, 1 - \mu,\vert\kappa\vert x\big)
 M\big(\!\!-\!\frac{\lambda_{n,\b}^-}{2\vert\kappa\vert} + 1, 1 - \mu,\vert\kappa\vert z\big), \,\,\,\,\,\,\,\,\, x,z \in (0,\b),
\\ \\
\displaystyle {\pi\vert\kappa\vert^{1 - \mu} x^{-\mu} e^{\kappa x}\over \Gamma\big(1 - \mu\big)} 
\sum_{n=1}^\infty  e^{-\lambda_n^{(\b,b)} T} 
\frac{\widetilde{S}(-{\lambda_n^{(\b,b)}\over 2\vert\kappa\vert}  \! +\! 1, 1 \!- \! \mu; \vert\kappa\vert \b, \vert\kappa\vert x)
\widetilde{S}(-{\lambda_n^{(\b,b)}\over 2\vert\kappa\vert}  \! +\! 1, 1 \!- \! \mu; \vert\kappa\vert b, \vert\kappa\vert z)}
{\sin(- {\lambda_n^{(\b,b)}\over 2\vert\kappa\vert} \pi) 
\widetilde{S}_1(-{\lambda_n^{(\b,b)}\over 2\vert\kappa\vert} + 1, 1 - \mu; \vert\kappa\vert \b, \vert\kappa\vert b)}, x,z \in (\b,b).
\end{cases}
\end{align}
The jointly discrete distribution is given by \eqref{doubly_defective_kill_up_b}. 
Hence, for $x \in (0,\b)$, $\P_x(g^{b}_{\b}(T) = 0, X_{b,T} = \partial^\dagger)=\P_x(\T^-_0 (\b) \le T)$ is given by \eqref{joint_last_doubly_defective_formula_CIR}. 
For $x \in (\b,b)$, $\P_x(g^{b}_{\b}(T) = 0, X_{b,T} = \partial^\dagger) = \P_x(\T^+_b (\b) \le T)$ is given by the second expression in \eqref{joint_last_ab_doubly_defective_CIR}.

%
%
%
For case (ii) the defective portions are given by \eqref{prop_joint_last_b-above_pmf}--\eqref{doubly_defective_kill_down_b}.  Direct use of the first line of \eqref{last_time-CDF_discrete_killed_ab_CIR}, with $a$ replaced by $b$, and the second line of 
\eqref{prop_last_time_discrete_CIR}, gives
\begin{align}\label{last_time-CDF_discrete_killed_above_b_CIR}
&\P_x(g^b_{\b}(T) = 0) 
\\ \nonumber
&=\begin{cases}
\!\displaystyle {\gamma(|\mu|, |\kappa|\b)  - \gamma(|\mu|, |\kappa|x)
\over \gamma(|\mu|, |\kappa|\b) - \gamma(|\mu|, |\kappa|b)}  
+ 2|\kappa|\big({\b\over x}\big)^\mu \!e^{\kappa(x - \b)}
\sum_{n=1}^\infty {e^{- \lambda_n^{(b,\b)} T} \over \lambda_n^{(b,\b)}}
\frac{\widetilde{S}(-{\lambda_n^{(b,\b)}\over 2\vert\kappa\vert} + 1, 1 - \mu; \vert\kappa\vert b, \vert\kappa\vert x)}
{\widetilde{S}_1(-{\lambda_n^{(b,\b)}\over 2\vert\kappa\vert} + 1, 1 - \mu; \vert\kappa\vert b, \vert\kappa\vert \b)}
, x \in (b,\b),
\\
\displaystyle 1 - {\Gamma(|\mu|, |\kappa|x) \over \Gamma(|\mu|, |\kappa|\b)} 
+ 2|\kappa|\bigg(\frac{\b}{x}\bigg)^{\mu}e^{\kappa(x-\b)} \sum_{n=1}^{\infty} 
\frac{e^{-\lambda_{n,k}^{+}T}}{\lambda_{n,k}^{+}}
\frac{\widetilde{U}(-\frac{\lambda_{n,k}^{+}}{2|\kappa|} + 1,1 - \mu,|\kappa| x)}
{\widetilde{U}_1(-\frac{\lambda_{n,k}^{+}}{2|\kappa|} + 1,1 - \mu,|\kappa| \b)}
,\,\,\,\,\,\,\,\,\,\,\,\,\,\,\,\,\,\,\,\,\,\,\,\,\,\,\,\,\,\,\,\, x\in (\b,\infty).
\end{cases}
\end{align}
By direct use of $p_{(b,\b)}(T;x,z)$ and $p_k^+(T;x,z)$, i.e., 
\eqref{CIR transition density killed} with interval $(a,b)$ replaced by $(b,\b)$ and  \eqref{trans_PDF_spectral_CIR_b_above} with $b$ replaced by $\b$, we have
\begin{align}\label{prop_joint_last_b-above_pmf_CIR}
&\P_x(g^{b}_{\b}(T) = 0, X_{b,T} \in dz) / dz
\\ \nonumber
&=\begin{cases}
\displaystyle {\pi\vert\kappa\vert^{1 - \mu} x^{-\mu} e^{\kappa x}\over \Gamma\big(1 - \mu\big)} 
\sum_{n=1}^\infty  e^{-\lambda_n^{(b,\b)} T} 
\frac{\widetilde{S}(-{\lambda_n^{(b,\b)}\over 2\vert\kappa\vert}  \! +\! 1, 1 \!- \! \mu; \vert\kappa\vert b, \vert\kappa\vert x)
\widetilde{S}(-{\lambda_n^{(b,\b)}\over 2\vert\kappa\vert}  \! +\! 1, 1 \!- \! \mu; \vert\kappa\vert \b, \vert\kappa\vert z)}
{\sin(- {\lambda_n^{(b,\b)}\over 2\vert\kappa\vert} \pi) 
\widetilde{S}_1(-{\lambda_n^{(b,\b)}\over 2\vert\kappa\vert} + 1, 1 - \mu; \vert\kappa\vert b, \vert\kappa\vert \b)}, x,z \in (b,\b),
\\
\displaystyle {1\over 2} x^{-\mu} e^{\kappa x} 
\sum_{n=1}^\infty  e^{-\lambda_{n,\b}^+ T} 
N_{n,(\b,\infty)}^2 \,  U\big(\!-\!\frac{\lambda_{n,\b}^+}{2\vert\kappa\vert} + 1, 1 - \mu,\vert\kappa\vert  x\big)
U\big(\!-\!\frac{\lambda_{n,\b}^+}{2\vert\kappa\vert} + 1, 1 - \mu,\vert\kappa\vert  z\big),
\,\,\,\,\,\,\,\,x,z \in (\b,\infty).
\end{cases}
\end{align}
The jointly discrete distribution is given by \eqref{doubly_defective_kill_down_b}. 
For $x\in (\b,\infty)$, $\P_x(g^{b}_{\b}(T) = 0, X_{b,T} = \partial^\dagger)=0$ since $r=\infty$ is a natural (conservative) boundary. For $x \in (b,\b)$, $\P_x(g^{b}_{\b}(T) = 0, X_{b,T} = \partial^\dagger) = \P_x(\T^-_b (\b) \le T)$, as given by the first expression in \eqref{joint_last_ab_doubly_defective_CIR} 
with $a$ replaced by $b$, i.e.,
\begin{align}\label{joint_last_b_doubly_defective_CIR}
&\P_x(g^{b}_{\b}(T) = 0, X_{b,T} = \partial^\dagger) 
\nonumber \\
&= 
\!\displaystyle {\gamma(|\mu|, |\kappa|\b)  - \gamma(|\mu|, |\kappa|x)
\over \gamma(|\mu|, |\kappa|\b) - \gamma(|\mu|, |\kappa|b)}  
- 2|\kappa|\big({b\over x}\big)^\mu \!e^{\kappa(x - b)}
\sum_{n=1}^\infty {e^{- \lambda_n^{(b,\b)} T} \over \lambda_n^{(b,\b)}}
\frac{\widetilde{S}(-{\lambda_n^{(b,\b)}\over 2\vert\kappa\vert} + 1, 1 - \mu; \vert\kappa\vert x, \vert\kappa\vert \b)}
{\widetilde{S}_1(-{\lambda_n^{(b,\b)}\over 2\vert\kappa\vert} + 1, 1 - \mu; \vert\kappa\vert b, \vert\kappa\vert \b)}.
\end{align}

We now apply Proposition~\ref{marginal_joint_last-passage-propn-kill-b} by directly using \eqref{CIR_scale_func},  \eqref{eta_plus_CIR},\eqref{eta_minus_CIR}, \eqref{eta_plus_z_k_CIR}, \eqref{eta_minus_z_k_CIR} and  \eqref{FHT_CIR_eigenfunctions_ab_plus}, \eqref{FHT_CIR_eigenfunctions_ab_minus}, \eqref{FHT_CIR_hat_plus_ak}, \eqref{FHT_CIR_hat_minus_kb} on the appropriate intervals $(\b,b)$ and $(b,\b)$. Note: $\ind_{\{E_0\}} = 
\ind_{\{E_\infty\}} = 1$ since both $l=0$ and $r=\infty$ are attracting. 
By combining \eqref{marginal_spectral_kill-b_1} and \eqref{marginal_spectral_kill-b_2} we have the marginal density
\begin{align}
&f_{g^{b}_\b(T)}(t;x) 
\nonumber \\
\!\!\!\!&= \!\begin{cases}
\displaystyle \! 2p^-_{b}(t;x,\b) 
\bigg[ R_-(b,\b) + {\b |\kappa| \over 1 - \mu}
\sum_{n=1}^\infty  e^{-\lambda^-_{n,\b}  (T-t)} \bigg(1 - {2|\kappa|\over \lambda^-_{n,\b}} \bigg)
\frac{M(-\frac{\lambda_{n,k}^{-}}{2|\kappa|} + 2,2 - \mu,|\kappa| \b)}
{M_1(-\frac{\lambda_{n,k}^{-}}{2|\kappa|} + 1,1 - \mu,|\kappa| \b)}
\\
\! + \displaystyle {2\over \pi} |\kappa|^{1+\mu}\b^\mu e^{-\kappa \b}\Gamma(1 - \mu)
\sum_{n=1}^{\infty}
{{e^{-\lambda_n^{(\b,b)} (T-t)} \over \lambda_n^{(\b,b)}}\sin(\pi \frac{\lambda_n^{(\b,b)}}{2|\kappa|}) 
\over 
 \widetilde{S}_1(-{\lambda_n^{(\b,b)}\over 2\vert\kappa\vert} \!+ \!1 ,1 \!-\! \mu; \vert\kappa\vert \b, \vert\kappa\vert b)}
\frac{M(- \frac{\lambda_n^{(\b,b)}}{2|\kappa|} \!+ \!1 ,1 \!-\! \mu, |\kappa| b ) }
{M(- \frac{\lambda_n^{(\b,b)}}{2|\kappa|} \!+ \!1 ,1 \!-\! \mu,|\kappa| \b )}\,
\bigg] 
 , x,\b \!\in \! (0,b),
\\
\displaystyle \! 2p^+_{b}(t;x,\b) 
\bigg[ R_+(b,\b) + 2\b \kappa^2
\sum_{n=1}^\infty  {e^{-\lambda^+_{n,\b}  (T-t)} \over \lambda_{n,k}^{+}}
\frac{\widetilde{U}(-\frac{\lambda_{n,k}^{+}}{2|\kappa|} + 2,2 - \mu,|\kappa| \b)}
{\widetilde{U}_1(-\frac{\lambda_{n,k}^{+}}{2|\kappa|} + 1,1 - \mu,|\kappa| \b)}
\nonumber \\
\! + \displaystyle {2\over \pi} |\kappa|^{1+\mu}\b^\mu e^{-\kappa \b}\Gamma(1 - \mu)
\sum_{n=1}^{\infty}
{{e^{-\lambda_n^{(b,\b)} (T-t)}\over \lambda_n^{(b,\b)}} \sin(\pi \frac{\lambda_n^{(b,\b)}}{2|\kappa|}) 
\over 
 \widetilde{S}_1(-{\lambda_n^{(b,\b)}\over 2\vert\kappa\vert} \!+ \!1 ,1 \!-\! \mu; \vert\kappa\vert b, \vert\kappa\vert \b)}
\frac{M(- \frac{\lambda_n^{(b,\b)}}{2|\kappa|} \!+ \!1 ,1 \!-\! \mu, |\kappa| b ) }
{M(- \frac{\lambda_n^{(b,\b)}}{2|\kappa|} \!+ \!1 ,1 \!-\! \mu,|\kappa| \b )}
\,\bigg]
, x,\!\b \!\in\! (b,\infty),
\end{cases}
\nonumber \\
\label{marginal_spectral_kill-b_CIR}
\end{align}
$t\in (0,T)$, $R_-(b,\b) :=\displaystyle {e^{k\kappa} \over (k|\kappa|)^\mu}
\bigg[{1 \over \gamma(|\mu|, |\kappa|\b)} 
+ {1 \over \gamma(|\mu|, |\kappa|b) - \gamma(|\mu|, |\kappa| \b)}\bigg]$,  
$R_+(b,\b) :=\displaystyle {e^{k\kappa} \over (k|\kappa|)^\mu}
\bigg[{1 \over \Gamma(|\mu|, |\kappa|\b)} 
+ {1 \over \Gamma(|\mu|, |\kappa|b) - \Gamma(|\mu|, |\kappa| \b)}\bigg]$. 
Note: $\gamma(|\mu|, \infty) = \Gamma(|\mu|)$ and 
$\Gamma(|\mu|) - \gamma(|\mu|, |\kappa| \b) = \Gamma(|\mu|, |\kappa| \b)$.

Combining \eqref{joint_last_passage_pdf_spectral_b_1} and \eqref{joint_last_passage_pdf_spectral_b_2} gives the joint density
\begin{align}\label{joint_last_passage_pdf_b_1_CIR}
f_{g^{b}_{\b}(T), X_{b,T}}(t,z;x) 
= 2|\kappa| p^-_{b}(t;x,\b) 
\begin{cases}
\displaystyle \sum\limits_{n=1}^{\infty} e^{-\lambda_{n,k}^- (T-t)}   
\frac{M(- \frac{\lambda_{n,k}^-}{2|\kappa|}+1 ,1-\mu,|\kappa|z ) }
{M_1(- \frac{\lambda_{n,k}^-}{2|\kappa|}+1 ,1-\mu,|\kappa|\b )}
&, z \in (0,\b),
\\
\displaystyle \sum_{n=1}^\infty e^{- \lambda_n^{(\b,b)} (T-t)} 
\frac{\widetilde{S}(-{\lambda_n^{(\b,b)}\over 2\vert\kappa\vert} + 1, 
1 - \mu; \vert\kappa\vert z, \vert\kappa\vert b)}
{\widetilde{S}_1(-{\lambda_n^{(\b,b)}\over 2\vert\kappa\vert} + 1, 
1 - \mu; \vert\kappa\vert \b, \vert\kappa\vert b)}
&, z \in (\b,b),
\end{cases}
\end{align}
for $x,\b\in(0,b)$, and 
\begin{align}\label{joint_last_passage_pdf_b_2_CIR}
f_{g^{b}_{\b}(T), X_{b,T}}(t,z;x) 
= 2|\kappa| p^+_{b}(t;x,\b) 
\begin{cases}
\displaystyle \sum_{n=1}^\infty e^{- \lambda_n^{(b,\b)} (T-t)} 
\frac{\widetilde{S}(-{\lambda_n^{(b,\b)}\over 2\vert\kappa\vert} + 1, 
1 - \mu; \vert\kappa\vert b, \vert\kappa\vert z)}
{\widetilde{S}_1(-{\lambda_n^{(b,\b)}\over 2\vert\kappa\vert} + 1, 
1 - \mu; \vert\kappa\vert b, \vert\kappa\vert \b)}
&, z \in (b,\b),
\\
\displaystyle \sum\limits_{n=1}^{\infty}  e^{-\lambda_{n,k}^+ (T-t)} 
\frac{\widetilde{U}(- \frac{\lambda_{n,k}^+}{2|\kappa|}+1 ,1-\mu,|\kappa|z ) }
{\widetilde{U}_1(- \frac{\lambda_{n,k}^+}{2|\kappa|}+1 ,1-\mu,|\kappa|k )}
&, z \in (\b,\infty),
\end{cases}
\end{align}
for $x,\b\in(b,\infty)$, $t\in (0,T)$. 
We note that using \eqref{trans_PDF_spectral_CIR_b_below} and \eqref{trans_PDF_spectral_CIR_b_above}
within \eqref{marginal_spectral_kill-b_CIR}--\eqref{joint_last_passage_pdf_b_2_CIR} gives double series representations for the marginal and joint densities.

%
%
%
%

\subsection{The Ornstein-Uhlenbeck (OU) Process}\label{subsect_OU}

The regular OU diffusion has state space $\{X_t, t \geq 0\} \in (-\infty, \infty)$ with linear  
drift $\alpha(x) = \gamma_0 -\gamma_1 x$ and constant diffusion coefficient $\nu(x) = \nu_0$, i.e., with SDE 
$d X_t = (\gamma_0 -\gamma_1 X_t)dt + \nu_0\, dW_t.$ 
In what follows we shall simply set $\gamma_0 = 0$. 
[The corresponding expressions for the fundamental solutions, transition PDFs, etc., for $\gamma_0 \ne 0$ follow by applying a simple translation of the spatial variables by an amount $\theta:= \gamma_0/\gamma_1$.]  
Hence, the scale density is $\s(x) = e^{\kappa  x^2/2}$ and the speed density is 
$\m(x) = {\kappa\over \gamma_1}e^{-\kappa  x^2/2}\equiv {2\over \nu_0^2}e^{-\kappa  x^2/2}$, with parameters $\kappa = 2 \gamma_1/ \nu_0^2$, $\nu_0 \ne 0$. Throughout, we shall assume the family of diffusions where $\kappa > 0$ 
($\gamma_1 > 0$). The case where $\kappa < 0$ ($\gamma_1 < 0$) can also be handled in a very similar fashion where a closely related set of fundamental solutions is employed. 
We have a two-parameter ($\kappa,\gamma_1$) family that reduces to a standard one-parameter family by 
setting $\nu_0 = 1$ (i.e., in the special case when $\kappa = 2\gamma_1$). In what follows, the OU diffusion we are specifically considering has generator
\begin{equation}\label{OU_generator}
\mathcal G f(x) := {\gamma_1\over \kappa}f''(x) -\gamma_1 x f'(x) \equiv 
{\nu_0^2\over 2}f''(x) -\gamma_1 x f'(x),\,\,x\in\R.
\end{equation}
Both endpoints, $l=-\infty$ and $r=\infty$, are non-attracting natural and hence conservative. 
Moreover, they are NONOSC, i.e., Spectral Category I, and therefore all spectral expansions with relevance to the OU diffusion are discrete (as in the case of the squared Radial OU diffusions). 

A pair of fundamental solutions to (\ref{eq:phi}) for $x\in\R$ is (e.g., see \cite{BS02,CM21})
\begin{equation}
\varphi^+_\lambda(x)= e^{{\kappa\over 4} x^2}\,D_{-{\lambda\over \gamma_1}}(-\sqrt{\kappa}\,x)\quad\mbox{and}\quad
\varphi^-_\lambda(x)=\varphi^+_\lambda(-x) = e^{{\kappa\over 4} x^2}\,D_{-{\lambda\over \gamma_1}}(\sqrt{\kappa}\,x)
\label{OU_fundamental_funcs}
\end{equation}
with Wronskian factor $w_\lambda = \frac{\sqrt{2\kappa \pi}}{\Gamma(\lambda/\gamma_1)}$. Throughout, $D_\nu(z)$ denotes Whittaker’s parabolic cylinder function of order $\nu$ (e.g., see \cite{AS72}). 
In terms of Kummer $M$ functions,
\begin{align}\label{parabolic_M_funcs}
     D_\nu(z) &= 2^{\frac{\nu}{2}} e^{-\frac{z^2}{4}} \sqrt{\pi} 
\Big[\frac{1}{\Gamma({-\frac{\nu}{2}+\frac{1}{2}})} M\Big(\!-\frac{\nu}{2},\frac{1}{2},\frac{z^2}{2}\Big) 
- \frac{\sqrt{2}z}{\Gamma(-\frac{\nu}{2})} 
M\Big(\!-\frac{\nu}{2}+\frac {1}{2},\frac{3}{2},\frac{z^2}{2}\Big)\Big].
\end{align}

Since $D_\nu(z)$ is entire in $\nu$, it follows that the pair $\varphi_\lambda^\pm(x)$ are entire functions 
in $\lambda$. 
In applying \eqref{u_spectral_4}--\eqref{eigenfunc_regular}, we have $w_{-\lambda_n} = 0 \implies 
\Gamma(-\frac{\lambda_n}{\gamma_1}) = \infty \implies \lambda_n = (n-1)\gamma_1, n=1,\ldots$. 
Hence, $\varphi^-_{-\lambda_n}(x) = e^{{\kappa\over 4} x^2}D_{n-1}(\sqrt{\kappa}\,x) = 
2^{-(n-1)/2}H_{n-1}(\sqrt{\kappa \over 2}\,x)$, where $H_m(z)$ denotes the Hermite polynomial of integer order $m\ge 0$ (e.g., see \cite{AS72}). Note that the symmetry property $H_m(-z) = (-1)^{m}H_m(z)$ leads to 
$\varphi^-_{-\lambda_n}(x) = (-1)^{n-1}\varphi^+_{-\lambda_n}(x)$, 
i.e., $A_n = (-1)^{n-1}$. Moreover, 
$C_n = {\sqrt{2\pi\kappa} \over \gamma_1}{d\over dx}{1\over \Gamma(x)}\vert_{x = -(n-1)} 
= {\sqrt{2\pi\kappa} \over \gamma_1}  (-1)^{n-1} (n-1)!$. Combining gives the known spectral series and its equivalent closed-form Gaussian expression for the transition PDF
\footnote{
The equivalence also follows by setting $z_1 = x\sqrt{\kappa /2}, z_2 = y\sqrt{\kappa /2}, 
w ={1\over 2}e^{-\gamma_1 t}$ in the summation identity 
$\sum_{m=0}^\infty {w^m \over m!} H_m(z_1) H_m(z_2) = {1\over \sqrt{1- 4w^2}}
\exp\big({2w(z_1^2 + z_2^2) - 2 z_1z_2 \over 2w - (2w)^{-1}}\big)$, $z_1,z_2 \in\R$, $\vert w\vert < {1\over 2}$.
}
\begin{align}
p(t;x,y) &= \sqrt{\kappa \over 2\pi}e^{-{\kappa\over 2} y^2}
\sum_{m=0}^\infty {e^{-m \gamma_1 t} \over 2^m m!}
H_{m}(x\sqrt{\kappa /2})H_{m}(y\sqrt{\kappa / 2})
\nonumber \\
&= \sqrt{\kappa \over 2\pi(1 - e^{-2\gamma_1 t})}\,
    \exp \left(- \frac{\kappa (y - e^{-\gamma_1 t} x)^2}{2 (1 - e^{-2\gamma_1 t})}\right), \,\,\,x,y\in \R, \, t>0. 
\label{trans_PDF_spectral_OU}
\end{align}

The discrete distribution of $g_\b(T)$, $X_0=x\in\R$, $\b\in \R$, is given by \eqref{prop_last_time-t-1-limit} where we apply Proposition \ref{prop_spec_first_hit_1}. 
The eigenvalues $\lambda_n \equiv \lambda_{n,\b}^-$, $n\ge 1$, are the positive simple zeros solving 
$\varphi^+_{-\lambda_n}(\b) = 0$, i.e.,
\begin{equation}\label{eigenvalues_OU_1}
D_{\lambda_n/ \gamma_1}(-\sqrt{\kappa} \b) = 0,
\end{equation} 
and $\lambda_n \equiv \lambda_{n,\b}^+$, $n\ge 1$, are the positive simple zeros solving 
$\varphi^-_{-\lambda_n}(\b) = 0$, i.e.,
\begin{equation}\label{eigenvalues_OU_2}
D_{\lambda_n/ \gamma_1}(\sqrt{\kappa} \b) = 0.
\end{equation}
Note the symmetry: $\lambda_{n,-\b}^+ = \lambda_{n,\b}^-$. 
All eigenvalues are readily numerically computed by a bisection method. The leading term asymptotic of $D_\nu(z)$ for large $\nu = \frac{\lambda_n}{\gamma_1}$ gives an initial approximation. 

For large values of $\nu$ (i.e., for large eigenvalues) the numerators and denominators in (\ref{parabolic_M_funcs}) become very large which can lead to numerical overflow. Hence, in a similar manner to \eqref{rescaled_U}, we avoid overflow in ratios of such functions and their derivatives by defining a rescaled function:
\begin{align}\label{parabolic_tilde_M_funcs}
\widetilde{D}_\nu(z) := {\sqrt{\pi}2^{-{\nu\over 2}} \over \Gamma({\nu\over 2})}D_\nu(z) 
&= e^{-\frac{z^2}{4}}
\Big[
{\Gamma({\nu + 1\over 2}) \over \Gamma({\nu\over 2})}
\cos\big({\pi\nu \over 2}\big)
M\Big(\!\!-\frac{\nu}{2},\frac{1}{2},\frac{z^2}{2}\Big) 
+ \frac{\nu}{\sqrt{2}} \sin\big({\pi\nu \over 2}\big) z 
M\Big(\!\!-\frac{\nu}{2}+\frac{1}{2},\frac{3}{2},\frac{z^2}{2}\Big) \Big].
\end{align}
The Gamma reflection formula was used twice in \eqref{parabolic_M_funcs}. 
The only term that can cause overflow is the Gamma function ratio. For smaller values  
(e.g., $\nu = \frac{\lambda_n}{\gamma_1} < 30$) we evaluate the ratio directly by computing each Gamma function without overflow. For larger argument (e.g., $\nu=\frac{\lambda_n}{\gamma_1} \ge 30$) we use the highly accurate Stirling  fomula, i.e., ${\Gamma({\nu + 1\over 2}) \over \Gamma({\nu\over 2})} \approx \sqrt{\nu\over 2e}(1 + {1\over \nu})^{\nu \over 2}$. 
Since the eigenvalues are positive and $2^{-\nu/2}/\Gamma({\nu\over 2}) >0$ for $\nu > 0$, it follows trivially that the above two sets of eigenvalues are equivalently given by 
$\widetilde{D}_{\lambda_{n,\b}^-/\gamma_1}(-\sqrt{\kappa} \b) = 0$ and 
$\widetilde{D}_{\lambda_{n,\b}^+/\gamma_1}(\sqrt{\kappa} \b) = 0$. 
These eigenvalue equations avoid overflow.

In what follows we denote the derivative of the Parabolic cylinder function w.r.t. its order by 
$D^{(1)}_\nu(z) \equiv {\partial \over \partial \nu}D_\nu(z)$ and 
$\widetilde{D}^{(1)}_\nu(z) \equiv {\partial \over \partial \nu}\widetilde{D}_\nu(z)$.
Either $D_\nu^{(1)}(z)$ or $\widetilde{D}_\nu^{(1)}(z)$ is accurately and most efficiently computed by a simple numerical differentiation, e.g., $\widetilde{D}_\nu^{(1)}(z)  
\simeq \frac{\widetilde{D}_{\nu+\delta}(z) - \widetilde{D}_\nu(z)}{\delta}$, with $\delta \simeq 10^{-6}$. 
Alternatively, we can analytically differentiate the expression in \eqref{parabolic_M_funcs} or \eqref{parabolic_tilde_M_funcs} in the same manner that lead to \eqref{Kummer_U_der} and \eqref{Kummer_rescaled_U_der}. In particular, differentiating \eqref{parabolic_tilde_M_funcs} w.r.t. $\nu$ gives
\begin{align}\label{re-scaled Parabolic Cylinder function derivative}
    \widetilde{D}_\nu^{(1)}(z) 
&= {1\over 2} e^{-\frac{z^2}{4}} 
\Bigg\{{\Gamma({\nu + 1\over 2}) \over \Gamma({\nu\over 2})}
\bigg[\bigg(\big[\big(\Psi({\nu + 1\over 2}) - \Psi({\nu\over 2})\big] \cos\big({\pi\nu \over 2}\big) 
-\pi \sin\big({\pi\nu \over 2}\big)\bigg) M\Big(\!\!-\frac{\nu}{2},\frac{1}{2},\frac{z^2}{2}\Big) 
\nonumber \\
&\phantom{{1\over 2} e^{-\frac{z^2}{4}} 
\Bigg\{\,\,\,\,}
- \cos\big({\pi\nu \over 2}\big) M_1\Big(\!\!-\frac{\nu}{2},\frac{1}{2},\frac{z^2}{2}\Big)\bigg]
+ 
\bigg(\sqrt{2}\sin\big({\pi\nu \over 2}\big) + {\pi\nu \over \sqrt{2}} \cos\big({\pi\nu \over 2}\big)
\bigg) z M\Big(\!\!-\frac{\nu}{2},\frac{3}{2},\frac{z^2}{2}\Big) 
\nonumber \\
&\phantom{{1\over 2} e^{-\frac{z^2}{4}} \Bigg\{\,\,\,\,}
- {\nu \over \sqrt{2}}\sin\big({\pi\nu \over 2}\big) z M_1\Big(\!\!-\frac{\nu}{2},\frac{3}{2},\frac{z^2}{2}\Big) 
\Bigg\}.
\end{align}  
Note that the ratio $\frac{\Gamma(\frac{\nu + 1}{2})}{\Gamma(\frac{\nu}{2})}$ is computed as stated above.
By using the definition in \eqref{parabolic_tilde_M_funcs}, differentiating w.r.t. $\nu$ and evaluating at 
$\nu = \lambda_n/\gamma_1$, while invoking the eigenvalue equation for each respective set of eigenvalues, we have:
$D^{(1)}_{\lambda_n/\gamma_1}(-\sqrt{\kappa} \b) = 
{1\over \sqrt{\pi}}\Gamma({\lambda_n\over 2\gamma_1}) 
2^{{\lambda_n \over 2\gamma_1}}\widetilde{D}^{(1)}_{\lambda_n/\gamma_1}(-\sqrt{\kappa} \b)$, 
for $\lambda_n = \lambda_{n,\b}^-$, 
and 
$D^{(1)}_{\lambda_n/\gamma_1}(\sqrt{\kappa} \b) = 
{1\over \sqrt{\pi}}\Gamma({\lambda_n\over 2\gamma_1}) 
2^{{\lambda_n\over 2\gamma_1}}
\widetilde{D}^{(1)}_{\lambda_n/\gamma_1}(\sqrt{\kappa} \b)$, for $\lambda_n = \lambda_{n,\b}^+$. From \eqref{parabolic_tilde_M_funcs}, for the respective eigenvalues 
$\lambda_n = \lambda_{n,\b}^\pm$, $D_{\lambda_n/\gamma_1}(\pm\sqrt{\kappa} x) 
= {1\over \sqrt{\pi}}\Gamma({\lambda_n\over 2\gamma_1}) 
2^{{\lambda_n \over 2\gamma_1}}\widetilde{D}_{\lambda_n/\gamma_1}(\pm\sqrt{\kappa} x)$. Hence, we have the equivalent ratios:
\begin{align}\label{equiv_rescaled_ratio_OU}
\frac{D_{\lambda_{n,\b}^\pm/\gamma_1}(\pm\sqrt{\kappa} x)}
{D^{(1)}_{\lambda_{n,\b}^\pm/\gamma_1}(\pm\sqrt{\kappa} \b)} 
= 
\frac{\widetilde{D}_{\lambda_{n,\b}^\pm/\gamma_1}(\pm\sqrt{\kappa} x)}
{\widetilde{D}^{(1)}_{\lambda_{n,\b}^\pm/\gamma_1}(\pm\sqrt{\kappa} \b)} .
\end{align}
The ratios in the rescaled functions are computed without numerical overflow, as described above. 
%
%

Using \eqref{OU_fundamental_funcs} within \eqref{FHT_eigenfunctions_1} and \eqref{FHT_eigenfunctions_2} gives
\begin{align}\label{OU_Psi_plus_func}
\psi_n^+(x;\b) &= -\gamma_1 e^{{\kappa\over 4}(x^2 - \b^2)}
\frac{D_{\lambda_{n,\b}^-/\gamma_1}(-\sqrt{\kappa} x)}
{D^{(1)}_{\lambda_{n,\b}^-/\gamma_1}(-\sqrt{\kappa} \b)} 
\equiv 
-\gamma_1 e^{{\kappa\over 4}(x^2 - \b^2)}
\frac{\widetilde{D}_{\lambda_{n,\b}^-/\gamma_1}(-\sqrt{\kappa} x)}
{\widetilde{D}^{(1)}_{\lambda_{n,\b}^-/\gamma_1}(-\sqrt{\kappa} \b)}
,
\\
\psi_n^-(x;\b) &= -\gamma_1 e^{{\kappa\over 4}(x^2 - \b^2)}
\frac{D_{\lambda_{n,\b}^+/\gamma_1}(\sqrt{\kappa} x)}
{D^{(1)}_{\lambda_{n,\b}^+/\gamma_1}(\sqrt{\kappa} \b)}
\,\,\,\,\equiv 
-\gamma_1 e^{{\kappa\over 4}(x^2 - \b^2)}
\frac{\widetilde{D}_{\lambda_{n,\b}^+/\gamma_1}(\sqrt{\kappa} x)}
{\widetilde{D}^{(1)}_{\lambda_{n,\b}^+/\gamma_1}(\sqrt{\kappa} \b)}.
\label{OU_Psi_minus_func}
\end{align} 
Hence, \eqref{FHT_prop1_2} gives
\begin{eqnarray}\label{OU_FHT_up_tail}
\P_{x}(T < \Tau^{+}_{\b} < \infty) = -\gamma_1 e^{{\kappa\over 4}(x^2 - \b^2)}
\sum_{n=1}^\infty 
{e^{- \lambda_{n,\b}^- T} \over  \lambda_{n,\b}^-}
\frac{D_{\lambda_{n,\b}^-/\gamma_1}(-\sqrt{\kappa} x)}
{D^{(1)}_{\lambda_{n,\b}^-/\gamma_1}(-\sqrt{\kappa} \b)},\,\,\,x\in (-\infty,\b),
\end{eqnarray}
and \eqref{FHT_prop2_2} gives
\begin{eqnarray}\label{OU_FHT_down_tail}
\P_{x}(T < \Tau^{-}_{\b} < \infty) = -\gamma_1 e^{{\kappa\over 4}(x^2 - \b^2)}
\sum_{n=1}^\infty 
{e^{- \lambda_{n,\b}^+ T} \over  \lambda_{n,\b}^+}
\frac{D_{\lambda_{n,\b}^+/\gamma_1}(\sqrt{\kappa} x)}
{D^{(1)}_{\lambda_{n,\b}^+/\gamma_1}(\sqrt{\kappa} \b)},\,\,\,x\in (\b,\infty).
\end{eqnarray}
Note: both $\pm\infty$ are non-attracting, i.e., 
$\P_x({\Tau}^+_\b = \infty) = \P_x({\Tau}^-_\b = \infty) = 0$. Hence, \eqref{prop_last_time-t-1-limit}  gives
\begin{align}\label{prop_last_time_discrete_OU}
\P_x(g_\b(T) = 0) = -\gamma_1 e^{{\kappa\over 4}(x^2 - \b^2)}
\begin{cases} 
\displaystyle \sum_{n=1}^\infty 
{e^{- \lambda_{n,\b}^- T} \over  \lambda_{n,\b}^-}
\frac{D_{\lambda_{n,\b}^-/\gamma_1}(-\sqrt{\kappa} x)}
{D^{(1)}_{\lambda_{n,\b}^-/\gamma_1}(-\sqrt{\kappa} \b)},&\,\,\,\,\, x\in (-\infty,\b),
\\
\displaystyle \sum_{n=1}^\infty 
{e^{- \lambda_{n,\b}^+ T} \over  \lambda_{n,\b}^+}
\frac{D_{\lambda_{n,\b}^+/\gamma_1}(\sqrt{\kappa} x)}
{D^{(1)}_{\lambda_{n,\b}^+/\gamma_1}(\sqrt{\kappa} \b)},& \,\,\,\,\,x\in (\b,\infty).
\end{cases}
\end{align}
Note that \eqref{equiv_rescaled_ratio_OU} provides the obvious equivalence for the expressions in 
\eqref{OU_FHT_up_tail}--\eqref{prop_last_time_discrete_OU} 
in terms of rescaled functions.

%
%
We now compute the density of $g_\b(T)$ using Proposition 
\ref{last-passage-propn-time-t-spectral} for the purely discrete spectrum. 
By the identity $\frac{\partial}{\partial z} \big(e^{z^2/4}D_{\nu} (z) \big) = \nu e^{z^2/4} D_{\nu-1}(z)$, we have 
$$
\frac{\partial}{\partial x} \bigg(e^{{\kappa \over 4} x^2}D_{\lambda_{n,\b}^\pm\over \gamma_1} (\pm\sqrt{\kappa} x) \bigg)\bigg\vert_{x=\b} = \pm {\sqrt{\kappa}\over \gamma_1}
\lambda_{n,\b}^\pm e^{{\kappa \over 4}\b^2}
D_{{\lambda_{n,\b}^\pm\over \gamma_1} - 1} (\pm\sqrt{\kappa} \b).
$$
Applying this to \eqref{OU_Psi_plus_func}--\eqref{OU_Psi_minus_func} gives the respective expressions
\begin{align}\label{eta_minus_plus_OU}
\eta^\pm(T-t;\b) 
=\pm \sqrt{\kappa} \sum_{n=1}^{\infty}e^{-\lambda_{n,\b}^\mp (T-t)} 
{D_{{\lambda_{n,\b}^\mp\over \gamma_1} - 1} (\mp\sqrt{\kappa} \b) 
\over D^{(1)}_{\lambda_{n,\b}^\mp\over \gamma_1}(\mp\sqrt{\kappa} \b)}.
\end{align}
Note that $\mathcal{S}(l,r;\b) \equiv \mathcal{S}(-\infty,\infty;\b) = 0$ since 
$\ind_{\{E_{-\infty}\}} = \ind_{\{E_\infty\}} = 0$ as both endpoints are non-attracting. 
Since ${1\over \m(\b)\s(\b)} = \gamma_1/\kappa$, \eqref{last-passage-pdf-spectral} gives the density:
\begin{align}\label{last-passage-pdf-spectral_OU}
f_{g_\b(T)}(t;x) = -{\gamma_1\over \sqrt{\kappa}} p(t;x,\b) 
\sum_{n=1}^{\infty}
\bigg[
e^{-\lambda_{n,\b}^+ (T-t)} 
{D_{{\lambda_{n,\b}^+\over \gamma_1} - 1} (\sqrt{\kappa} \b) 
\over D^{(1)}_{\lambda_{n,\b}^+\over \gamma_1}(\sqrt{\kappa} \b)}
+
e^{-\lambda_{n,\b}^- (T-t)} 
{D_{{\lambda_{n,\b}^-\over \gamma_1} - 1} (-\sqrt{\kappa} \b) 
\over D^{(1)}_{\lambda_{n,\b}^-\over \gamma_1}(-\sqrt{\kappa} \b)}
\bigg],
\end{align}
$t\in (0,T)$, $x,\b\in \R$, with $p(t;x,\b)$ given by \eqref{trans_PDF_spectral_OU}. 
Note that \eqref{last-passage-pdf-spectral_OU} is also equivalently expressible in terms of rescaled functions, 
${D_{{\lambda_{n,\b}^\pm \over \gamma_1} - 1} (\pm\sqrt{\kappa} \b) 
\over D^{(1)}_{\lambda_{n,\b}^\pm\over \gamma_1}(\pm\sqrt{\kappa} \b)} = 
{1\over\sqrt{2}}{\Gamma({\lambda_{n,\b}^\pm\over 2\gamma_1} - {1\over 2})\over 
\Gamma({\lambda_{n,\b}^\pm\over 2\gamma_1})}
{\widetilde{D}_{{\lambda_{n,\b}^\pm \over \gamma_1} - 1} (\pm\sqrt{\kappa} \b) 
\over \widetilde{D}^{(1)}_{\lambda_{n,\b}^\pm \over \gamma_1}(\pm\sqrt{\kappa} \b)}$, where 
${1\over\sqrt{2}}{\Gamma((\nu - 1)/2)\over \Gamma(\nu/2)}
\approx {\sqrt{e \nu} \over \nu -1}(1 - {1\over \nu})^{\nu \over 2}$ for large values of 
$\nu = {\lambda_{n,\b}^\pm/ \gamma_1}$.

%
%

The joint density now follows directly by Proposition~\ref{joint_last-passage-propn-time-t-new-formula} where only the discrete series contribute. We have the first hitting time densities
\begin{align}\label{eta_plus_minus_z_k_OU}
&f^\pm(T-t,z;\b) 
= -\gamma_1 e^{{\kappa \over 4}(z^2 - \b^2)}\sum_{n=1}^{\infty}e^{-\lambda_{n,k}^\mp (T-t)} 
{D_{{\lambda_{n,\b}^\mp\over \gamma_1}} (\mp\sqrt{\kappa} z) 
\over D^{(1)}_{\lambda_{n,\b}^\mp\over \gamma_1}(\mp\sqrt{\kappa} \b)}.
\end{align}
Hence, by \eqref{joint_last_passage_pdf_explicit} we have the joint density
\begin{align}\label{joint_last_passage_pdf_OU}
\!\!\!\!\!\!\!\! f_{g_k(T),X_T}(t,z;x)  
    =  -\gamma_1\,  p(t;x,\b) e^{{\kappa \over 4}(\b^2 - z^2)}
\begin{cases}
\displaystyle \sum\limits_{n=1}^{\infty} e^{-\lambda_{n,k}^- (T-t)}   
{D_{{\lambda_{n,\b}^-\over \gamma_1}} (-\sqrt{\kappa} z) 
\over D^{(1)}_{\lambda_{n,\b}^-\over \gamma_1}(-\sqrt{\kappa} \b)}
,  & z\in (-\infty,\b), 
\\
\displaystyle \sum\limits_{n=1}^{\infty}  e^{-\lambda_{n,k}^+ (T-t)} 
{D_{{\lambda_{n,\b}^+\over \gamma_1}} (\sqrt{\kappa} z) 
\over D^{(1)}_{\lambda_{n,\b}^+\over \gamma_1}(\sqrt{\kappa} \b)}
  ,   &z\in (\b,\infty),
\end{cases}
\end{align}
$t\in (0,T)$, $x,\b \in\R$, with $p(t;x,\b)$ given by \eqref{trans_PDF_spectral_OU}. 
Again, we note that \eqref{equiv_rescaled_ratio_OU} provides the equivalent expressions in terms of rescaled functions. The OU process is conservative on $\R$, i.e., 
\eqref{last_density_time-t-conserve_from_joint} holds. This is readily shown by integrating the joint PDF in \eqref{joint_last_passage_pdf_OU}, i.e., each term of the series in \eqref{last-passage-pdf-spectral_OU} is recovered:  
$\int_{-\infty}^\b e^{-{\kappa \over 4}z^2}D_{\lambda_{n,\b}^-/ \gamma_1} (-\sqrt{\kappa} z) dz 
= {1\over \sqrt{\kappa}}
e^{-{\kappa \over 4}\b^2}D_{{\lambda_{n,\b}^-/ \gamma_1} - 1} (-\sqrt{\kappa} \b)$
and 
$\int_\b^{\infty} e^{-{\kappa \over 4}z^2}D_{\lambda_{n,\b}^+/ \gamma_1} (\sqrt{\kappa} z) dz 
= {1\over \sqrt{\kappa}}e^{-{\kappa \over 4}\b^2}D_{{\lambda_{n,\b}^+/ \gamma_1} - 1} (\sqrt{\kappa} \b)$. Here we used the antiderivative 
$\int e^{-{x^2\over 4}}D_\nu (x)dx = -e^{-{x^2\over 4}}D_{\nu - 1} (x)$ and the asymptotic $D_{\nu-1}(x) \sim x^{\nu -1} e^{-{x^2\over 4}} \sim 0$, as $x\to\infty$, for $\nu = {\lambda_{n,\b}^\pm/ \gamma_1} > 0$.

By conservation, $\P_x(g_\b(T) = 0, X_T = \partial^\dagger) = 0$, $x\in \R$. 
The partly discrete joint distribution in \eqref{prop_joint_last_time-t-2} follows directly from \eqref{trans_PDF_spectral_OU_b_below} and \eqref{trans_PDF_spectral_OU_b_above}, written here in terms of the (unscaled) parabolic functions:
\begin{align}\label{prop_joint_last_time-t-2_OU}
&\P_x(g_\b(T) = 0, X_T \in dz) / dz = \sqrt{\kappa \over 2\pi}e^{\frac{\kappa}{4}(x^2-z^2)}
\nonumber \\
& \times
\begin{cases} 
\displaystyle  \sum_{n=1}^{\infty}e^{-\lambda_{n,\b}^- T}\,
\Gamma\Big(\!\!-{\lambda_{n,\b}^-\over \gamma_1}\Big)
\frac{D_\frac{\lambda_{n,\b}^-}{\gamma_1}(\sqrt{\kappa} \b)}
{D^{(1)}_\frac{\lambda_{n,\b}^-}{\gamma_1}(-\sqrt{\kappa} \b)}
D_\frac{\lambda_{n,\b}^-}{\gamma_1}(-\sqrt{\kappa}x)
D_\frac{\lambda_{n,\b}^-}{\gamma_1}(-\sqrt{\kappa}z),
 & x,z \in (-\infty,\b),
\\
\displaystyle  \sum_{n=1}^{\infty}e^{-\lambda_{n,\b}^+ T}\,
\Gamma\Big(\!\!-{\lambda_{n,\b}^+\over \gamma_1}\Big)
\frac{D_\frac{\lambda_{n,\b}^+}{\gamma_1}(-\sqrt{\kappa} \b)}
{D^{(1)}_\frac{\lambda_{n,\b}^+}{\gamma_1}(\sqrt{\kappa} \b)}
D_\frac{\lambda_{n,\b}^+}{\gamma_1}(\sqrt{\kappa}x)
D_\frac{\lambda_{n,\b}^+}{\gamma_1}(\sqrt{\kappa}z),
  & x,z \in (\b,\infty).
\end{cases}
\end{align}

%
%
%
\subsection{OU process killed at either of two interior points}\label{subsect_OU_kill_a_b}
Here we consider the Ornstein-Uhlenbeck process, $X_{(a,b),t}$ with imposed killing at either level $a$ or $b$, $-\infty < a < b < \infty$. As above, we assume $\kappa > 0$ and $\gamma_1 > 0$. 
Throughout, to make expressions more compact, we define the (one-parameter) cylinder function associated to the parabolic cylinder functions:
\begin{align}
S(\nu;x,y) :=   e^{(x^2+y^2)/4}\left[D_{-\nu}(x) D_{-\nu}(-y) - D_{-\nu}(y) D_{-\nu}(-x)\right].
\label{OU_param_func_1}
\end{align}
We also define 
$S_1(-\nu;x,y) := -{\partial \over \partial \nu}S(-\nu;x,y)$ which is most simply computed by numerical differentiation or by differentiating \eqref{OU_param_func_1}, 
\begin{equation*}
S_1(-\nu;x,y)  := e^{(x^2+y^2)/4}\left[D_{\nu}(-x) D^{(1)}_{\nu}(y) 
+ D_{\nu}(y) D^{(1)}_{\nu}(-x) - D_{\nu}(-y)D^{(1)}_{\nu}(x) - D_{\nu}(x)D^{(1)}_{\nu}(-y) 
\right].
\label{OU_param_func_1_der}
\end{equation*}
As in the previous section, to avoid overflow that can arise in the functions  
$D_\nu$ and $D^{(1)}_{\nu}$ for large positive values of $\nu$ (i.e., large eigenvalues) we introduce a rescaled cylinder function,
\begin{align}\label{parabolic_cylinder_rescaled_func}
\widetilde{S}(-\nu;x,y) := {\pi 2^{-\nu} \over \Gamma^2({\nu\over 2})}S(-\nu;x,y) 
= e^{(x^2+y^2)/4}\left[\widetilde{D}_{\nu}(x) \widetilde{D}_{\nu}(-y) 
- \widetilde{D}_{\nu}(y) \widetilde{D}_{\nu}(-x)\right],
\end{align}
where $\widetilde{D}_\nu$ is given by \eqref{parabolic_tilde_M_funcs}. Also of interest is the derivative 
$\widetilde{S}_1(-\nu;x,y) := -{\partial \over \partial \nu}\widetilde{S}(-\nu;x,y)$ which is efficiently computed by numerical differentiation or by differentiating \eqref{parabolic_cylinder_rescaled_func},
\begin{equation*}
\widetilde{S}_1(-\nu;x,y)  := e^{(x^2+y^2)/4}\left[\widetilde{D}_{\nu}(-x) \widetilde{D}^{(1)}_{\nu}(y) 
+ \widetilde{D}_{\nu}(y) \widetilde{D}^{(1)}_{\nu}(-x) 
- \widetilde{D}_{\nu}(-y)\widetilde{D}^{(1)}_{\nu}(x) - \widetilde{D}_{\nu}(x)\widetilde{D}^{(1)}_{\nu}(-y) 
\right].
\label{OU_rescaled_func_1_der}
\end{equation*}

Using \eqref{OU_fundamental_funcs} within \eqref{phi_function} gives 
$\phi(x,y;\lambda) = S({\lambda\over \gamma_1};\sqrt{\kappa}x,\sqrt{\kappa}y)$. The eigenvalues 
$\lambda_n \equiv \lambda_n^{(a,b)}$, $n\geq 1$, are the positive simple zeros solving 
$\phi(a,b;-\lambda_n) = 0$, i.e.,
\footnote{Again, we note that the eigenvalues in \eqref{OU_eigen_ab} are readily computed by using a root finding (e.g., bisection) algorithm. The leading term asymptotics of the parabolic cylinder functions 
for large $\lambda_n$ can also be combined in terms of trigonometric functions whose zeros provide initial estimates for the eigenvalues. It can be shown that $\lambda_n$ grows approximately in proportion to $n^2$ for large $n$.
}
\begin{equation}\label{OU_eigen_ab}
    S(-{\lambda_n\over \gamma_1};\sqrt{\kappa}a,\sqrt{\kappa}b) = 0
\end{equation}
or equivalently $\widetilde{S}(-{\lambda_n\over \gamma_1};\sqrt{\kappa}a,\sqrt{\kappa}b) = 0$. 
Using \eqref{Delta_derivative} gives $\Delta(a,b;\lambda_n) = -{1\over \gamma_1}
S_1(-{\lambda_n\over \gamma_1};\sqrt{\kappa}a,\sqrt{\kappa}b)$. 
Moreover, differentiating \eqref{parabolic_cylinder_rescaled_func} w.r.t. its first argument 
(with $\nu = \lambda_n/ \gamma_1$) and employing \eqref{OU_eigen_ab} gives 
$S_1(-{ \lambda_n\over \gamma_1};\sqrt{\kappa}a,\sqrt{\kappa}b) = {1\over \pi}
2^{\lambda_n/ \gamma_1} \Gamma^2({ \lambda_n\over 2\gamma_1})
\widetilde{S}_1(-{ \lambda_n\over \gamma_1};\sqrt{\kappa}a,\sqrt{\kappa}b)$. 
Hence, using \eqref{spectral_3_product_eigen} within \eqref{u_spectral_3} produces a spectral series for the transition PDF expressible in terms of the cylinder or rescaled cylinder functions:
\begin{align}\label{OU_transition_PDF_a_b}
p_{(a,b)}(t;x,y) &= \sqrt{\kappa \over 2\pi}e^{-\frac{\kappa}{2}y^2} 
\sum_{n=1}^{\infty}e^{-\lambda_n t}\,
\Gamma\Big(\!-\!{\lambda_n \over \gamma_1}\Big)
{S(-{\lambda_n\over \gamma_1};\sqrt{\kappa}a,\sqrt{\kappa}x)
S(-{\lambda_n\over \gamma_1};\sqrt{\kappa}y,\sqrt{\kappa}b)
\over S_1(-{\lambda_n\over \gamma_1};\sqrt{\kappa}a,\sqrt{\kappa}b)}
\nonumber \\
&= \sqrt{\kappa \over 2\pi}e^{-\frac{\kappa}{2}y^2} 
\sum_{n=1}^{\infty}e^{-\lambda_n t}
{2^{\frac{\lambda_n}{\gamma_1}} \, \Gamma^2({\lambda_n \over 2\gamma_1}) 
\over \sin(\pi \frac{\lambda_n}{\gamma_1})\Gamma({\lambda_n \over \gamma_1} + 1)}
{\widetilde{S}(-{\lambda_n\over \gamma_1};\sqrt{\kappa}a,\sqrt{\kappa}x)
\widetilde{S}(-{\lambda_n\over \gamma_1};\sqrt{\kappa}b,\sqrt{\kappa}y)
\over \widetilde{S}_1(-{\lambda_n\over \gamma_1};\sqrt{\kappa}a,\sqrt{\kappa}b)},
\end{align}
where $\lambda_n \equiv \lambda_n^{(a,b)}$, $x,y\in (a,b)$, $t>0$. The Gamma reflection formula and the antisymmetry property, $\widetilde{S}(-{\lambda_n\over \gamma_1};\sqrt{\kappa}y,\sqrt{\kappa}b) = -\widetilde{S}(-{\lambda_n\over \gamma_1};\sqrt{\kappa}b,\sqrt{\kappa}y)$, was used in the second expression. The rescaled functions avoid numerical overflow, where 
${2^\nu\Gamma^2({\nu \over 2}) \over \Gamma(\nu + 1)}$, $\nu = \frac{\lambda_n}{\gamma_1}$, is evaluated directly for smaller values, $\nu < 30$, and by using the very accurate asymptotic formula ${2^\nu\Gamma^2({\nu \over 2}) \over \Gamma(\nu + 1)} \simeq \sqrt{\pi}({\nu\over 2})^{-3/2}$, for larger values, $\nu \ge 30$.

From \eqref{FHT_eigenfunctions_ab} we have
\begin{align}\label{FHT_OU_eigenfunctions_ab_plus}
\psi_n^+(x;a,b) &= \gamma_1 
{S(-{\lambda_n\over \gamma_1};\sqrt{\kappa}a,\sqrt{\kappa}x)
\over S_1(-{\lambda_n\over \gamma_1};\sqrt{\kappa}a,\sqrt{\kappa}b)} 
\equiv 
\gamma_1 
{\widetilde{S}(-{\lambda_n\over \gamma_1};\sqrt{\kappa}a,\sqrt{\kappa}x)
\over \widetilde{S}_1(-{\lambda_n\over \gamma_1};\sqrt{\kappa}a,\sqrt{\kappa}b)},\,\,\,\,\,\,\lambda_n \equiv \lambda_n^{(a,b)},
\\
\psi_n^-(x;a,b) &=  \gamma_1 
{S(-{\lambda_n\over \gamma_1};\sqrt{\kappa}x,\sqrt{\kappa}b)
\over S_1(-{\lambda_n\over \gamma_1};\sqrt{\kappa}a,\sqrt{\kappa}b)}
\equiv 
\gamma_1 
{\widetilde{S}(-{\lambda_n\over \gamma_1};\sqrt{\kappa}x,\sqrt{\kappa}b)
\over \widetilde{S}_1(-{\lambda_n\over \gamma_1};\sqrt{\kappa}a,\sqrt{\kappa}b)}, \,\,\,\lambda_n \equiv \lambda_n^{(a,b)}.
\label{FHT_OU_eigenfunctions_ab_minus}
\end{align}
Using \eqref{FHT_OU_eigenfunctions_ab_plus}--\eqref{FHT_OU_eigenfunctions_ab_minus}, adapted to the respective intervals $(a,\b)$ and $(\b,b)$, within 
\eqref{prop_last_time-CDF_discrete_killed_ab}--\eqref{joint_last_ab_doubly_defective_new} produces explicit  series for the discrete parts of the marginal and joint distributions:
\begin{align}\label{last_time-CDF_discrete_killed_ab_OU}
\P_x(g^{(a,b)}_{\b}(T) = 0) 
=
\begin{cases}
\displaystyle R(x;a,\b) + \gamma_1 
\sum_{n=1}^\infty {e^{- \lambda_n^{(a,\b)} T} \over \lambda_n^{(a,\b)}}
\frac{S(-{\lambda_n^{(a,\b)}\over \gamma_1};\sqrt{\kappa}a,\sqrt{\kappa}x)}
{S_1(-{\lambda_n^{(a,\b)}\over \gamma_1};\sqrt{\kappa}a,\sqrt{\kappa}\b)}
, x \in \!(a,\b),
\\
\displaystyle R(x;\b,b) + \gamma_1 
\sum_{n=1}^\infty {e^{- \lambda_n^{(\b,b)} T} \over \lambda_n^{(\b,b)}}
\frac{S(-{\lambda_n^{(\b,b)}\over \gamma_1};\sqrt{\kappa}x,\sqrt{\kappa}b)}
{S_1(-{\lambda_n^{(\b,b)}\over \gamma_1};\sqrt{\kappa}\b,\sqrt{\kappa} b)}
, x \in \!(\b,b),
\end{cases}
\end{align}

\begin{align}\label{joint_last_ab_doubly_defective_OU}
\P_x(g^{(a,b)}_{\b}(T) = 0, X_{(a,b),T} = \partial^\dagger) 
= 
\begin{cases}
\displaystyle R(x;a,\b) - \gamma_1 
\sum_{n=1}^\infty {e^{- \lambda_n^{(a,\b)} T} \over \lambda_n^{(a,\b)}}
\frac{S(-{\lambda_n^{(a,\b)}\over \gamma_1};\sqrt{\kappa}x,\sqrt{\kappa}\b)}
{S_1(-{\lambda_n^{(a,\b)}\over \gamma_1};\sqrt{\kappa}a,\sqrt{\kappa}\b)}
, x \in \!(a,\b),
\\
\displaystyle R(x;\b,b) - \gamma_1 
\sum_{n=1}^\infty {e^{- \lambda_n^{(\b,b)} T} \over \lambda_n^{(\b,b)}}
\frac{S(-{\lambda_n^{(\b,b)}\over \gamma_1};\sqrt{\kappa}\b,\sqrt{\kappa}x)}
{S_1(-{\lambda_n^{(\b,b)}\over \gamma_1};\sqrt{\kappa}\b,\sqrt{\kappa} b)}
, x \in \!(\b,b),
\end{cases}
\end{align}
where $R(x;a,\b) := {\int_x^\b e^{{\kappa\over 2}y^2} dy \over \int_a^\b e^{{\kappa\over 2}y^2} dy}$, 
$R(x;\b,b) := {\int_\b^x e^{{\kappa\over 2}y^2} dy \over \int_\b^b e^{{\kappa\over 2}y^2} dy}$; 
$\lambda_n^{(a,\b)}$ solve $S(-{\lambda_n^{(a,\b)}\over \gamma_1};\sqrt{\kappa}a,\sqrt{\kappa}\b) = 0$, equivalently 
$\widetilde{S}(-{\lambda_n^{(a,\b)}\over \gamma_1};\sqrt{\kappa}a,\sqrt{\kappa}\b) = 0$, and 
$\lambda_n^{(\b,b)}$ solve $S(-{\lambda_n^{(\b,b)}\over \gamma_1};\sqrt{\kappa}\b,\sqrt{\kappa}b) = 0$, equivalently 
$\widetilde{S}(-{\lambda_n^{(\b,b)}\over \gamma_1};\sqrt{\kappa}\b,\sqrt{\kappa}b) = 0$. 
The ratios $S/S_1$ in \eqref{last_time-CDF_discrete_killed_ab_OU}--\eqref{joint_last_ab_doubly_defective_OU} are equally expressed using the respective ratios 
$\widetilde{S}/\widetilde{S}_1$. 
By using \eqref{OU_transition_PDF_a_b} on the respective intervals $(a,\b)$ and $(\b,b)$, 
\eqref{prop_joint_last_time_kill_ab-t-3-zero} gives us the partly discrete portion of the joint distribution 
(expressed more compactly here in terms of the cylinder functions $S$ and $S_1$):
\begin{align}\label{joint_last_ab_partly_defective_OU}
&\P_x(g^{(a,b)}_{\b}(T) = 0, X_{(a,b),T} \in d z) / dz
= \sqrt{\kappa \over 2\pi}e^{-\frac{\kappa}{2}z^2} 
\nonumber \\
&\times 
\begin{cases}
\displaystyle \sum_{n=1}^{\infty}e^{-\lambda_n^{(a,\b)} T}\,
\Gamma\big(\!-\!{\lambda_n^{(a,\b)} \over \gamma_1}\big)
{S(-{\lambda_n^{(a,\b)}\over \gamma_1};\sqrt{\kappa}a,\sqrt{\kappa}x)
S(-{\lambda_n^{(a,\b)}\over \gamma_1};\sqrt{\kappa}z,\sqrt{\kappa}\b)
\over S_1(-{\lambda_n^{(a,\b)}\over \gamma_1};\sqrt{\kappa}a,\sqrt{\kappa}\b)}, &\,\,\,x,z \in (a,\b),
\\
\displaystyle \sum_{n=1}^\infty \!e^{-\lambda_n^{(\b,b)} T} 
\Gamma\big(\!-\!{\lambda_n^{(\b,b)} \over \gamma_1}\big)
{S(-{\lambda_n^{(\b,b)}\over \gamma_1};\sqrt{\kappa}\b,\sqrt{\kappa}x)
S(-{\lambda_n^{(\b,b)}\over \gamma_1};\sqrt{\kappa}z,\sqrt{\kappa}b)
\over S_1(-{\lambda_n^{(\b,b)}\over \gamma_1};\sqrt{\kappa}\b,\sqrt{\kappa}b)}, &\,\,\,x,z \in (\b,b).
\end{cases}
\end{align}

We now employ Proposition~\ref{joint_last-passage-propn-time-t_ab-new-version}. By directly using 
$\Delta(a,\b;\lambda_n)$ and $\varphi^-_{-\lambda_n}$, for $\lambda_n = \lambda_n^{(a,\b)}$, and 
$\Delta(\b,b;\lambda_n)$ and $\varphi^-_{-\lambda_n}$, for $\lambda_n = \lambda_n^{(\b,b)}$, within \eqref{psi_hat_derivatives} gives
\begin{align}\label{FHT_OU_hat_plus_ak}
\hat\psi^+_{n}(a,\b) 
&= 
- \gamma_1 {\sqrt{2\pi\kappa} \over  \Gamma(-{\lambda_n \over \gamma_1})}
{e^{\frac{\kappa}{4}(a^2 - \b^2)} D_{\lambda_n \over \gamma_1}(\sqrt{\kappa}a) \over 
D_{\lambda_n \over \gamma_1}(\sqrt{\kappa}\b)
\,S_1(-{\lambda_n \over \gamma_1};\sqrt{\kappa}a,\sqrt{\kappa}\b)}
\nonumber \\
&= 
\gamma_1 \sqrt{2\pi\kappa} \sin\Big(\pi {\lambda_n \over \gamma_1}\Big)
{\Gamma({\lambda_n \over \gamma_1} + 1) \over 
2^{\frac{\lambda_n}{\gamma_1}} \, \Gamma^2({\lambda_n \over 2\gamma_1})}
{e^{\frac{\kappa}{4}(a^2 - \b^2)} \widetilde{D}_{\lambda_n \over \gamma_1}(\sqrt{\kappa}a) \over 
\widetilde{D}_{\lambda_n \over \gamma_1}(\sqrt{\kappa}\b)
\,\widetilde{S}_1(-{\lambda_n \over \gamma_1};\sqrt{\kappa}a,\sqrt{\kappa}\b)},\,\,\,\lambda_n \equiv \lambda_n^{(a,\b)},
\end{align}
\begin{align}\label{FHT_OU_hat_minus_kb}
\hat\psi^-_{n}(\b,b)  
&= 
- \gamma_1 {\sqrt{2\pi\kappa} \over  \Gamma(-{\lambda_n \over \gamma_1})}
{e^{\frac{\kappa}{4}(b^2 - \b^2)} D_{\lambda_n \over \gamma_1}(\sqrt{\kappa}b) \over 
D_{\lambda_n \over \gamma_1}(\sqrt{\kappa}\b)
\,S_1(-{\lambda_n \over \gamma_1};\sqrt{\kappa}\b,\sqrt{\kappa}b)}
\nonumber \\
&= 
\gamma_1 \sqrt{2\pi\kappa} \sin\Big(\pi {\lambda_n \over \gamma_1}\Big)
{\Gamma({\lambda_n \over \gamma_1} + 1) \over 
2^{\frac{\lambda_n}{\gamma_1}} \, \Gamma^2({\lambda_n \over 2\gamma_1})}
{e^{\frac{\kappa}{4}(b^2 - \b^2)} \widetilde{D}_{\lambda_n \over \gamma_1}(\sqrt{\kappa}b) \over 
\widetilde{D}_{\lambda_n \over \gamma_1}(\sqrt{\kappa}\b)
\,\widetilde{S}_1(-{\lambda_n \over \gamma_1};\sqrt{\kappa}\b,\sqrt{\kappa}b)},\,\,\,\lambda_n \equiv \lambda_n^{(\b,b)}.
\end{align}
In the above second expressions we used the Gamma reflection formula, the rescaled parabolic functions and the derivative of the rescaled cylinder function. These expressions avoid numerical overflow where  
${\Gamma(\nu + 1) \over 2^{\nu} \Gamma^2(\nu/2)}$, $\nu = \frac{\lambda_n}{\gamma_1}$, is computed as described above for its reciprocal. 
Substituting the first expressions in \eqref{FHT_OU_hat_plus_ak}--\eqref{FHT_OU_hat_minus_kb} into 
\eqref{last_passage_pdf_discrete_spec} gives the more compact expression for the marginal density:
\begin{align}\label{last_passage_pdf_explicit_ab_OU}
&f_{g^{(a,b)}_{\b}(T)}(t;x) = p_{(a,b)}(t;x,\b) 
\nonumber \\
&\times \bigg[ {\gamma_1 \over \kappa} e^{{\kappa\over 2} \b^2} \mathcal{S}(a,b;\b) 
 - \gamma_1^2 \sqrt{2\pi\over \kappa}e^{{\kappa\over 4} \b^2}
\bigg(e^{{\kappa\over 4} a^2}
\sum_{n=1}^{\infty} {e^{-\lambda_n^{(a,\b)} (T-t)} \over \lambda_n^{(a,\b)}}
{D_{\lambda_n^{(a,\b)} \over \gamma_1}(\sqrt{\kappa}a) \Big/ 
D_{\lambda_n^{(a,\b)} \over \gamma_1}(\sqrt{\kappa}\b) \over
\Gamma(-{\lambda_n^{(a,\b)} \over \gamma_1}) S_1(-{\lambda_n^{(a,\b)} \over \gamma_1};\sqrt{\kappa}a,\sqrt{\kappa}\b)}
\nonumber \\
&\phantom{\times \bigg[ {\gamma_1 \over \kappa} e^{{\kappa\over 2} \b^2} \mathcal{S}(a,b;\b) 
 - \gamma_1^2 \sqrt{2\pi\over \kappa} \bigg(}
\,\,\,\,\,\, + e^{{\kappa\over 4} b^2}
\sum_{n=1}^{\infty} {e^{-\lambda_n^{(\b,b)} (T-t)} \over \lambda_n^{(\b,b)}}
{D_{\lambda_n^{(\b,b)} \over \gamma_1}(\sqrt{\kappa}b) \Big/ 
D_{\lambda_n^{(\b,b)} \over \gamma_1}(\sqrt{\kappa}\b) \over
\Gamma(-{\lambda_n^{(\b,b)} \over \gamma_1}) S_1(-{\lambda_n^{(\b,b)} \over \gamma_1};\sqrt{\kappa}\b,\sqrt{\kappa} b)}
\bigg) \bigg],
\end{align}
$t\in (0,T)$, $x,\b\in (a,b)$, where 
$\mathcal{S}(a,b;\b) = \Big(\int_a^\b e^{{\kappa\over 2}y^2} dy\Big)^{-1} + 
\Big(\int_\b^b e^{{\kappa\over 2}y^2} dy\Big)^{-1}$.

By using \eqref{FHT_OU_eigenfunctions_ab_plus}--\eqref{FHT_OU_eigenfunctions_ab_minus}, for the respective intervals $(a,\b)$ and $(\b,b)$, \eqref{joint_last_passage_pdf_explicit_ab_new} gives the joint density: 
\begin{align}\label{joint_last_passage_pdf_explicit_ab_OU}
f_{\!g^{(a,b)}_{\b}(T), X_{(a,b),T}}\!(t,z;x) 
= p_{(a,b)}(t;x,\b)  e^{{\kappa\over 2}(\b^2 - z^2)} \gamma_1
\left\{
\begin{array}{lr}
         \displaystyle  \sum_{n=1}^{\infty}e^{-\lambda_n^{(a,\b)} (T-t)}
{S(-{\lambda_n^{(a,\b)}\over \gamma_1};\sqrt{\kappa}a,\sqrt{\kappa}z)
\over S_1(-{\lambda_n^{(a,\b)}\over \gamma_1};\sqrt{\kappa}a,\sqrt{\kappa}b)}, &  z\in (a,\b), 
\\
\displaystyle \sum_{n=1}^{\infty} e^{-\lambda_n^{(\b,b)} (T-t)}
{S(-{\lambda_n^{(\b,b)}\over \gamma_1};\sqrt{\kappa}z,\sqrt{\kappa}b)
\over S_1(-{\lambda_n^{(\b,b)}\over \gamma_1};\sqrt{\kappa}\b,\sqrt{\kappa} b)}, &  z\in (\b,b), 
\end{array}
\right.
\end{align}
$t\in (0,T)$, $x,\b\in (a,b)$. We note that $p_{(a,b)}(t;x,\b)$ is given by \eqref{OU_transition_PDF_a_b} and hence \eqref{last_passage_pdf_explicit_ab_OU} and \eqref{joint_last_passage_pdf_explicit_ab_OU} also represent double series for the marginal and joint densities. 

%
%
%
%
\subsection{OU process killed at one interior point}\label{subsect_OU_kill_b}
We now consider the OU process, $X_{b,t}$, killed at $b\in (0,\infty)$, 
for the two cases: (i) $x,\b\in (-\infty,b)$ and (ii) $x,\b\in (b,\infty)$, where $\kappa > 0$ and $\gamma_1 > 0$.
%
%
The transition PDF for case (i) is given by \eqref{u_spectral_1} with product eigenfunction in \eqref{spectral_1_product_eigen} computed using \eqref{eigenfunc1}. In particular, we have the equivalent series:
\begin{align}\label{trans_PDF_spectral_OU_b_below}
p_b^-(t;x,y) &= \sqrt{\kappa \over 2\pi}e^{\frac{\kappa}{4}(x^2-y^2)} 
\sum_{n=1}^{\infty}e^{-\lambda_{n,b}^- t}\,
\Gamma\Big(\!\!-{\lambda_{n,b}^-\over \gamma_1}\Big)
\frac{D_\frac{\lambda_{n,b}^-}{\gamma_1}(\sqrt{\kappa} b)}
{D^{(1)}_\frac{\lambda_{n,b}^-}{\gamma_1}(-\sqrt{\kappa} b)}
D_\frac{\lambda_{n,b}^-}{\gamma_1}(-\sqrt{\kappa}x)
D_\frac{\lambda_{n,b}^-}{\gamma_1}(-\sqrt{\kappa}y)
\nonumber \\
&= \sqrt{\kappa \over 2\pi}e^{\frac{\kappa}{4}(x^2-y^2)} 
\sum_{n=1}^{\infty}e^{-\lambda_{n,b}^- t}
{2^{\frac{\lambda_{n,b}^-}{\gamma_1}} \, \Gamma^2({\lambda_{n,b}^-\over 2\gamma_1}) 
\over \sin(-\pi \frac{\lambda_{n,b}^-}{\gamma_1})\Gamma({\lambda_{n,b}^-\over \gamma_1} + 1)}
\frac{\widetilde{D}_\frac{\lambda_{n,b}^-}{\gamma_1}\!(\sqrt{\kappa} b)}
{\widetilde{D}^{(1)}_\frac{\lambda_{n,b}^-}{\gamma_1}\!(-\sqrt{\kappa} b)}
\widetilde{D}_\frac{\lambda_{n,b}^-}{\gamma_1}\!(-\sqrt{\kappa}x)
\widetilde{D}_\frac{\lambda_{n,b}^-}{\gamma_1}\!(-\sqrt{\kappa}y),
\end{align}
$x,y\in (-\infty,b)$, $t>0$, where $\lambda_{n,b}^-$ solve 
$D_{\lambda_{n,b}^-/\gamma_1}(-\sqrt{\kappa}b)=0$ or equivalently 
$\widetilde{D}_{\lambda_{n,b}^-/\gamma_1}(-\sqrt{\kappa}b)=0$. 
The second expression arises from \eqref{equiv_rescaled_ratio_OU}, the definition in \eqref{parabolic_tilde_M_funcs} and the Gamma reflection formula. This avoids numerical overflow where the ratio 
${2^\nu\Gamma^2({\nu \over 2}) \over \Gamma(\nu + 1)}$, $\nu = \frac{\lambda_{n,b}^-}{\gamma_1}$, is evaluated as discussed above.

By a similar derivation, the transition PDF for case (ii) follows by \eqref{u_spectral_2}:
\footnote{We also observe the symmetry of the OU process when reflected about the origin. Namely, $p_b^+(t;x,y)$ obtains simply from $p_b^-(t;x,y)$ upon sending $x\to -x, y\to -y, b \to -b$ and where 
$\lambda_{n,-b}^- = \lambda_{n,b}^+$.
}
\begin{align}\label{trans_PDF_spectral_OU_b_above}
p_b^+(t;x,y) &= \sqrt{\kappa \over 2\pi}e^{\frac{\kappa}{4}(x^2-y^2)} 
\sum_{n=1}^{\infty}e^{-\lambda_{n,b}^+ t}\,
\Gamma\Big(\!\!-{\lambda_{n,b}^+\over \gamma_1}\Big)
\frac{D_\frac{\lambda_{n,b}^+}{\gamma_1}(-\sqrt{\kappa} b)}
{D^{(1)}_\frac{\lambda_{n,b}^+}{\gamma_1}(\sqrt{\kappa} b)}
D_\frac{\lambda_{n,b}^+}{\gamma_1}(\sqrt{\kappa}x)
D_\frac{\lambda_{n,b}^+}{\gamma_1}(\sqrt{\kappa}y)
\nonumber \\
&= \sqrt{\kappa \over 2\pi}e^{\frac{\kappa}{4}(x^2-y^2)} 
\sum_{n=1}^{\infty}e^{-\lambda_{n,b}^+ t}
{2^{\frac{\lambda_{n,b}^+}{\gamma_1}}\, \Gamma^2({\lambda_{n,b}^+\over 2\gamma_1}) 
\over \sin(-\pi \frac{\lambda_{n,b}^+}{\gamma_1})\Gamma({\lambda_{n,b}^+\over \gamma_1} + 1)}
\frac{\widetilde{D}_\frac{\lambda_{n,b}^+}{\gamma_1}\!(-\sqrt{\kappa} b)}
{\widetilde{D}^{(1)}_\frac{\lambda_{n,b}^+}{\gamma_1}\!(\sqrt{\kappa} b)}
\widetilde{D}_\frac{\lambda_{n,b}^+}{\gamma_1}\!(\sqrt{\kappa}x)
\widetilde{D}_\frac{\lambda_{n,b}^+}{\gamma_1}\!(\sqrt{\kappa}y),
\end{align}
$x,y\in (b,\infty)$, $t>0$. The eigenvalues $\lambda_{n,b}^+$ solve $D_{\lambda_{n,b}^+/\gamma_1}(\sqrt{\kappa}b)=0$ or equivalently 
$\widetilde{D}_{\lambda_{n,b}^+/\gamma_1}(\sqrt{\kappa}b)=0$. 

Consider first the defective portions of the distribution. 
For case (i), we use \eqref{prop_joint_last_b-below_pmf}--\eqref{doubly_defective_kill_up_b}. Hence, by the first line of \eqref{prop_last_time_discrete_OU} and the second line of 
\eqref{last_time-CDF_discrete_killed_ab_OU}, we have
\begin{align}\label{last_time-CDF_discrete_killed_below_b_OU}
\P_x(g^b_{\b}(T) = 0)
=\begin{cases}
\displaystyle  -\gamma_1 e^{{\kappa\over 4}(x^2 - \b^2)} \sum_{n=1}^\infty 
{e^{- \lambda_{n,\b}^- T} \over  \lambda_{n,\b}^-}
\frac{D_{\lambda_{n,\b}^-/\gamma_1}(-\sqrt{\kappa} x)}
{D^{(1)}_{\lambda_{n,\b}^-/\gamma_1}(-\sqrt{\kappa} \b)},& x\in (-\infty,\b),
\\
\displaystyle R(x;\b,b) + \gamma_1 
\sum_{n=1}^\infty {e^{- \lambda_n^{(\b,b)} T} \over \lambda_n^{(\b,b)}}
\frac{S(-{\lambda_n^{(\b,b)}\over \gamma_1};\sqrt{\kappa}x,\sqrt{\kappa}b)}
{S_1(-{\lambda_n^{(\b,b)}\over \gamma_1};\sqrt{\kappa}\b,\sqrt{\kappa} b)},
& x \in \!(\b,b).
\end{cases}
\end{align}
By the transition densities $p_k^-(T;x,z)$ and $p_{(\b,b)}(T;x,z)$, i.e., \eqref{trans_PDF_spectral_OU_b_below} with level $b$ replaced by $\b$ and \eqref{OU_transition_PDF_a_b} with $a$ replaced by $\b$, we have
\begin{align}
&\P_x(g^{b}_{\b}(T) = 0, X_{b,T} \in dz) / dz  = \sqrt{\kappa \over 2\pi}
\label{prop_joint_last_b-below_pmf_OU}
\\
&\times \begin{cases} 
\displaystyle e^{\frac{\kappa}{4}(x^2-z^2)}
\sum_{n=1}^{\infty}e^{-\lambda_{n,\b}^- T}\,
\Gamma\Big(\!\!-{\lambda_{n,\b}^-\over \gamma_1}\Big)
\frac{D_\frac{\lambda_{n,\b}^-}{\gamma_1}(\sqrt{\kappa} \b)}
{D^{(1)}_\frac{\lambda_{n,\b}^-}{\gamma_1}(-\sqrt{\kappa} \b)}
D_\frac{\lambda_{n,\b}^-}{\gamma_1}(-\sqrt{\kappa}x)
D_\frac{\lambda_{n,\b}^-}{\gamma_1}(-\sqrt{\kappa}z),
 & x,z \in (-\infty,\b),
\\
\displaystyle e^{-\frac{\kappa}{2}z^2} 
\sum_{n=1}^{\infty}e^{-\lambda_n^{(\b,b)} T}\,
\Gamma\Big(\!-\!{\lambda_n^{(\b,b)} \over \gamma_1}\Big)
{S(-{\lambda_n^{(\b,b)}\over \gamma_1};\sqrt{\kappa}\b,\sqrt{\kappa}x)
S(-{\lambda_n^{(\b,b)}\over \gamma_1};\sqrt{\kappa}z,\sqrt{\kappa}b)
\over S_1(-{\lambda_n^{(\b,b)}\over \gamma_1};\sqrt{\kappa}\b,\sqrt{\kappa}b)}, & x,z \in (\b,b).
\nonumber
\end{cases}
\end{align}
The jointly discrete distribution is given by \eqref{doubly_defective_kill_up_b}. 
For $x \in (-\infty,\b)$, $\P_x(g^{b}_{\b}(T) = 0, X_{b,T} = \partial^\dagger) = 0$ since the lower boundary $l= -\infty$ is conservative. For $x \in (\b,b)$, $\P_x(g^{b}_{\b}(T) = 0, X_{b,T} = \partial^\dagger) = \P_x(\T^+_b (\b) \le T)$ is given by the second expression in \eqref{joint_last_ab_doubly_defective_OU}.

For case (ii), the defective portions are given by \eqref{prop_joint_last_b-above_pmf}--\eqref{doubly_defective_kill_down_b}. Using the first line of 
\eqref{last_time-CDF_discrete_killed_ab_OU}, with $a$ replaced by $b$, and the second line of 
\eqref{prop_last_time_discrete_OU}, gives
\begin{align}\label{last_time-CDF_discrete_killed_above_b_OU}
\P_x(g^b_{\b}(T) = 0) 
=\begin{cases}
\displaystyle R(x;b,\b) + \gamma_1 
\sum_{n=1}^\infty {e^{- \lambda_n^{(b,\b)} T} \over \lambda_n^{(b,\b)}}
\frac{S(-{\lambda_n^{(b,\b)}\over \gamma_1};\sqrt{\kappa}b,\sqrt{\kappa}x)}
{S_1(-{\lambda_n^{(b,\b)}\over \gamma_1};\sqrt{\kappa}b,\sqrt{\kappa}\b)}, & x \in (b,\b),
\\
\displaystyle -\gamma_1 e^{{\kappa\over 4}(x^2 - \b^2)} 
\sum_{n=1}^\infty 
{e^{- \lambda_{n,\b}^+ T} \over  \lambda_{n,\b}^+}
\frac{D_{\lambda_{n,\b}^+/\gamma_1}(\sqrt{\kappa} x)}
{D^{(1)}_{\lambda_{n,\b}^+/\gamma_1}(\sqrt{\kappa} \b)}, & x\in (\b,\infty).
\end{cases}
\end{align}
Again, by direct use of $p_{(b,\b)}(T;x,z)$ and $p_k^+(T;x,z)$, i.e., 
\eqref{OU_transition_PDF_a_b} with interval $(a,b)$ replaced by $(b,\b)$ and  \eqref{trans_PDF_spectral_OU_b_above} with level $b$ replaced by $\b$, we have
\begin{align}
&\P_x(g^{b}_{\b}(T) = 0, X_{b,T} \in dz) / dz = \sqrt{\kappa \over 2\pi}
\nonumber \\
&\times \begin{cases}
\displaystyle e^{-\frac{\kappa}{2}z^2} 
\sum_{n=1}^{\infty}e^{-\lambda_n^{(b,\b)} T}\,
\Gamma\Big(\!-\!{\lambda_n^{(b,\b)} \over \gamma_1}\Big)
{S(-{\lambda_n^{(b,\b)}\over \gamma_1};\sqrt{\kappa}b,\sqrt{\kappa}x)
S(-{\lambda_n^{(b,\b)}\over \gamma_1};\sqrt{\kappa}z,\sqrt{\kappa}\b)
\over S_1(-{\lambda_n^{(b,\b)}\over \gamma_1};\sqrt{\kappa}b,\sqrt{\kappa}\b)} , x,z \in (b,\b),
\\
\displaystyle e^{\frac{\kappa}{4}(x^2-z^2)} 
\sum_{n=1}^{\infty}e^{-\lambda_{n,\b}^+ T}\,
\Gamma\Big(\!\!-{\lambda_{n,\b}^+\over \gamma_1}\Big)
\frac{D_\frac{\lambda_{n,\b}^+}{\gamma_1}(-\sqrt{\kappa} \b)}
{D^{(1)}_\frac{\lambda_{n,\b}^+}{\gamma_1}(\sqrt{\kappa} \b)}
D_\frac{\lambda_{n,\b}^+}{\gamma_1}(\sqrt{\kappa}x)
D_\frac{\lambda_{n,\b}^+}{\gamma_1}(\sqrt{\kappa}z), x,z \in (\b,\infty).
\label{prop_joint_last_b-above_pmf_OU}
\end{cases}
\end{align}
The jointly discrete distribution is given by \eqref{doubly_defective_kill_down_b}. 
For $x\in (\b,\infty)$, $\P_x(g^{b}_{\b}(T) = 0, X_{b,T} = \partial^\dagger)=0$ since $r=\infty$ 
is a conservative boundary. For $x \in (b,\b)$, $\P_x(g^{b}_{\b}(T) = 0, X_{b,T} = \partial^\dagger) = 
\P_x(\T^-_b (\b) \le T)$, as given by the first expression in \eqref{joint_last_ab_doubly_defective_OU} 
with $a$ replaced by $b$.

The continuous distributions are now simply given by applying 
Proposition~\ref{marginal_joint_last-passage-propn-kill-b} with the direct use of the scale function, 
\eqref{eta_minus_plus_OU}, \eqref{eta_plus_minus_z_k_OU}, 
\eqref{FHT_OU_eigenfunctions_ab_plus}, \eqref{FHT_OU_eigenfunctions_ab_minus}, and \eqref{FHT_OU_hat_plus_ak}--\eqref{FHT_OU_hat_minus_kb} on the appropriate intervals 
$(\b,b)$ and $(b,\b)$. Note: $\ind_{\{E_{-\infty}\}} = \ind_{\{E_\infty\}} = 0$. 
Combining \eqref{marginal_spectral_kill-b_1} and \eqref{marginal_spectral_kill-b_2} gives the marginal density:
\begin{align}\label{last_passage_pdf_explicit_b_OU}
&f_{g^b_{\b}(T)}(t;x) 
 \\ \nonumber
&= \begin{cases}
\displaystyle p_b^-(t;x,\b) {\gamma_1 \over \kappa}
\bigg[
{e^{{\kappa\over 2} \b^2} \over \mathcal{S}[\b,b]} 
 - \sqrt{\kappa}
\sum_{n=1}^{\infty} e^{-\lambda_{n,\b}^- (T-t)}
D_{{\lambda_{n,\b}^- \over \gamma_1} - 1}(-\sqrt{\kappa}\b) 
\Big/ D^{(1)}_{\lambda_{n,\b}^- \over \gamma_1}(-\sqrt{\kappa}\b)
 \\
\phantom{p_b^-(t;x,\b) {\gamma_1 \over \kappa}}
\displaystyle - e^{{\kappa\over 4} (b^2+\b^2)}\gamma_1\sqrt{2\pi\kappa}
\sum_{n=1}^{\infty} {e^{-\lambda_n^{(\b,b)} (T-t)} \over \lambda_n^{(\b,b)}}
{D_{\lambda_n^{(\b,b)} \over \gamma_1}(\sqrt{\kappa}b) \Big/ 
D_{\lambda_n^{(\b,b)} \over \gamma_1}(\sqrt{\kappa}\b) \over
\Gamma(-{\lambda_n^{(\b,b)} \over \gamma_1}) 
S_1(-{\lambda_n^{(\b,b)} \over \gamma_1};\sqrt{\kappa}\b,\sqrt{\kappa} b)}\bigg], &\!\! x,\b \in (-\infty,b),
\\
\displaystyle p_b^+(t;x,\b) {\gamma_1 \over \kappa}
\bigg[
{e^{{\kappa\over 2} \b^2} \over \mathcal{S}[b,\b]} 
 - \sqrt{\kappa}
\sum_{n=1}^{\infty} e^{-\lambda_{n,\b}^+ (T-t)}
D_{{\lambda_{n,\b}^+ \over \gamma_1} - 1}(\sqrt{\kappa}\b) 
\Big/ D^{(1)}_{\lambda_{n,\b}^+ \over \gamma_1}(\sqrt{\kappa}\b)
\\
\phantom{p_b^+(t;x,\b) {\gamma_1 \over \kappa}}
\displaystyle - e^{{\kappa\over 4} (b^2+\b^2)}\gamma_1\sqrt{2\pi\kappa}
\sum_{n=1}^{\infty} {e^{-\lambda_n^{(b,\b)} (T-t)} \over \lambda_n^{(b,\b)}}
{D_{\lambda_n^{(b,\b)} \over \gamma_1}(\sqrt{\kappa}b) \Big/ 
D_{\lambda_n^{(b,\b)} \over \gamma_1}(\sqrt{\kappa}\b) \over
\Gamma(-{\lambda_n^{(\b,b)} \over \gamma_1}) 
S_1(-{\lambda_n^{(b,\b)} \over \gamma_1};\sqrt{\kappa}b,\sqrt{\kappa} \b)}\bigg], &\!\!  x,\b \in (b,\infty),
\end{cases}
\end{align}
$t\in (0,T)$, with scale function $\mathcal{S}[x_1,x_2] = \int_{x_1}^{x_2} e^{{\kappa\over 2}y^2} dy$.   
Lastly, combining \eqref{joint_last_passage_pdf_spectral_b_1} and \eqref{joint_last_passage_pdf_spectral_b_2} gives the joint density:
\begin{eqnarray}\label{joint_last_passage_pdf_b_1_OU}
f_{g^{b}_{\b}(T), X_{b,T}}(t,z;x) 
= p^-_{b}(t;x,\b) e^{{\kappa\over 2} (\b^2 - z^2)}
\begin{cases}
\displaystyle \sum_{n=1}^{\infty}e^{-\lambda_{n,k}^- (T-t)} 
{D_{\lambda_{n,\b}^-/ \gamma_1} (-\sqrt{\kappa} z) 
\over D^{(1)}_{\lambda_{n,\b}^-/ \gamma_1}(-\sqrt{\kappa} \b)}  &, z \in (-\infty,\b),
\\
\displaystyle \gamma_1 \sum_{n=1}^\infty e^{- \lambda_n^{(\b,b)} (T-t)} 
{S(-{\lambda_n^{(\b,b)}\over \gamma_1};\sqrt{\kappa}z,\sqrt{\kappa}b)
\over S_1(-{\lambda_n^{(\b,b)} \over \gamma_1};\sqrt{\kappa}\b,\sqrt{\kappa}b)}
&, z \in (\b,b),
\end{cases}
\end{eqnarray}
for $x,\b\in(-\infty,b)$, and 
\begin{eqnarray}\label{joint_last_passage_pdf_b_2_OU}
f_{g^{b}_{\b}(T), X_{b,T}}(t,z;x) 
= p^+_{b}(t;x,\b) e^{{\kappa\over 2} (\b^2 - z^2)}
\begin{cases}
\displaystyle \gamma_1 \sum_{n=1}^\infty e^{- \lambda_n^{(b,\b)} (T-t)} 
{S(-{\lambda_n^{(b,\b)}\over \gamma_1};\sqrt{\kappa}b,\sqrt{\kappa}z)
\over S_1(-{\lambda_n^{(b,\b)} \over \gamma_1};\sqrt{\kappa}b,\sqrt{\kappa}\b)}
&, z \in (b,\b),
\\
\displaystyle \sum_{n=1}^{\infty}e^{-\lambda_{n,k}^+ (T-t)} 
{D_{\lambda_{n,\b}^+ / \gamma_1} (\sqrt{\kappa} z) 
\over D^{(1)}_{\lambda_{n,\b}^+ / \gamma_1}(\sqrt{\kappa} \b)}  &, z \in (\b,\infty),
\end{cases}
\end{eqnarray}
for $x,\b\in (b,\infty)$, $t\in (0,T)$. 
Using \eqref{trans_PDF_spectral_OU_b_below} and \eqref{trans_PDF_spectral_OU_b_above}
within \eqref{last_passage_pdf_explicit_b_OU}--\eqref{joint_last_passage_pdf_b_2_OU} gives a double series representation. Lastly, we again note that the series in \eqref{prop_joint_last_b-below_pmf_OU}--\eqref{joint_last_passage_pdf_b_2_OU} have their equivalent representations in terms of the respective rescaled functions $\widetilde{D}, \widetilde{D}^{(1)}, \widetilde{S}, \widetilde{S}_1$, as discussed above.

%
%
\section{Some Numerical Results} \label{sectNumerical}

We now present numerical calculations for only a few examples of the explicit spectral formulae derived in Section~\ref{subsect_spectral_formulae}. Additional numerical results can be found in \cite{YaodeThesis}. 
Our calculations serve to demonstrate how the distributions are efficiently and accurately computed by truncating the discrete spectral series, which typically have a rapid convergence for finite time horizon $T$. We note that for larger values of $T$, the series converge more rapidly since a smaller subset of the lowest eigenvalue terms are needed. This is a generic property of all spectral series. 

For cases with killing imposed at two interior endpoints, the CDF of the last hitting time $g^{(a,b)}_{\b}(T)$ is computed by using \eqref{last-passage-CDF-kill_a_b_time_0_to_t}, where the discrete portion is computed by truncating the single series in \eqref{prop_last_time-CDF_discrete_killed_ab} to the first $N$ terms and the integral term is computed by truncating the double series in \eqref{series_CDF_g_K_ab_T} to the first $N$ terms in the inner series and truncating to the first $M$ terms in the outer sum. The first computational step involves attaining an accurate convergence as $N$ is increased in the single series and then attaining accurate convergence for a sufficiently large number for $M$ in the double series. In most calculations presented below, an accurate convergence was achieved with $M\simeq 100$. As a second part of our calculations, we compute the joint PDF of the last hitting time and the process value $g^{(a,b)}_{\b}(T), X_{(a,b),T}$ by implementing \eqref{joint_last_passage_pdf_explicit_ab_new}. We perform such calculations on Brownian motion (BM) and the SQB processes. As an example of a process without imposed killing, we calculate the CDF of $g_{\b}(T)$ and joint PDF of $g_{\b}(T), X_T$ for the OU process.

For killed BM on $(a,b)$ with drift $\mu$, we compute the CDF of $g^{(a,b)}_{\b}(T)$ by adding the truncated ($N$-term) series in \eqref{drifted_BM_prop_last_time_discrete_killed_ab} to the truncated double series in \eqref{series_CDF_g_K_ab_T} which uses the truncated series ($N$ terms for the inner sum and $M=100$ for the outer sum) in \eqref{drifted_BM_last_passage_pdf_spec}. For the case of zero drift we simply use the expressions for $\mu=0$. 
Here, we set the killing levels \(a = 1\) and \(b = 11\), the last hitting level \(\b = 5\), the initial value \(X_0 = x = 3\), and the length of the time interval \(T = 20\).  Figure \ref{fig:bm_conv} overlays computed CDF curves with $\mu=0$ for increasing values of $N$. An accurate convergence (with nearly overlapping curves) is already achieved for $N=8$. Although the rate of convergence has a dependence on the interval length $b-a$, the quadratic growth of the eigenvalues generally leads to a rapid series convergence. Observe that the CDF is positive at time $t=0$ which is given by $\P_x(g^{(a,b)}_{\b}(T) = 0) > 0$. The CDF climbs to unity at $t=T$, as required. Figure \ref{fig:dbm_conv} is a repeat of the calculations for nonzero drift \(\mu = 0.05\). Since the process starts relatively close to the lower killing level, and below the last hitting level with upper killing level being relatively further above 
($a=1, b =11, \b = 5, x=3$), a slightly positive drift decreases the probability that the hitting level is never attained within time $T$, i.e., we see that $\P_x(g^{(a,b)}_{\b}(T) = 0)$ decreases from about 0.5 to 0.45. 
For drifted BM with killing, we further conduct sensitivity analysis on parameter impacts using the numerically converged CDFs. 
Figure \ref{fig:dbm_x} shows that decreasing the starting value $x$ increases the value of $\P_x(g^{(a,b)}_{\b}(T) = 0)$. Starting values that are close to the upper killing level $b$ lead to higher probabilities of being killed before hitting $\b$ and this is more so for positive drift 
$\mu = 0.1$. 
In Figure \ref{fig:dbm_mu}, \(\mu \) is varied from -0.25 to 1. A negative \(\mu = -0.25\) gives the highest value of $\P_x(g^{(a,b)}_{\b}(T) = 0) = 0.73$, i.e., the process is more likely to be killed at the lower level $a$. As $\mu$ increases, $\P_x(g^{(a,b)}_{\b}(T) = 0)$ decreases, and the CDF curve grows faster towards unity.

\begin{figure}[h!]
\centering
\begin{tabular}{@{\hspace{-2mm}}c@{\hspace{-4mm}}c@{\hspace{-2mm}}}
\raisebox{0mm}{\begin{minipage}{0.45\textwidth}
\centering
\includegraphics[width=\linewidth, height=3.5cm]{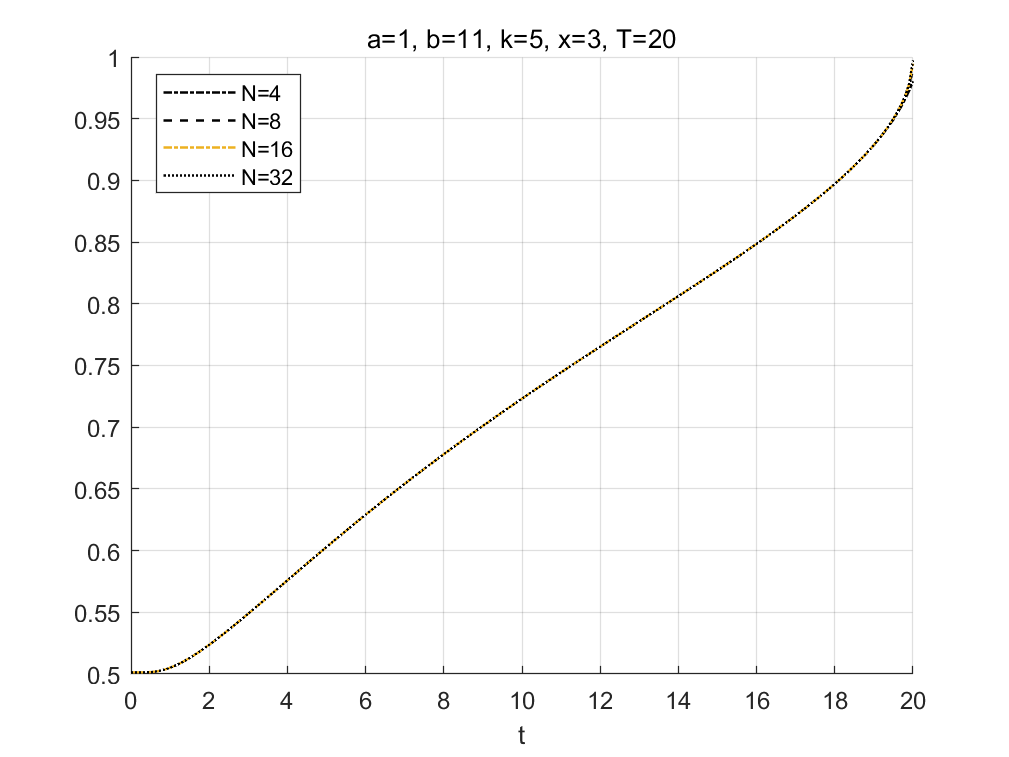} \\
\vspace{-3.5mm}\caption{\small CDF convergence for killed BM.}
\label{fig:bm_conv}
\end{minipage}} &
\raisebox{0mm}{\begin{minipage}{0.45\textwidth}
\centering
\includegraphics[width=\linewidth, height=3.5cm]{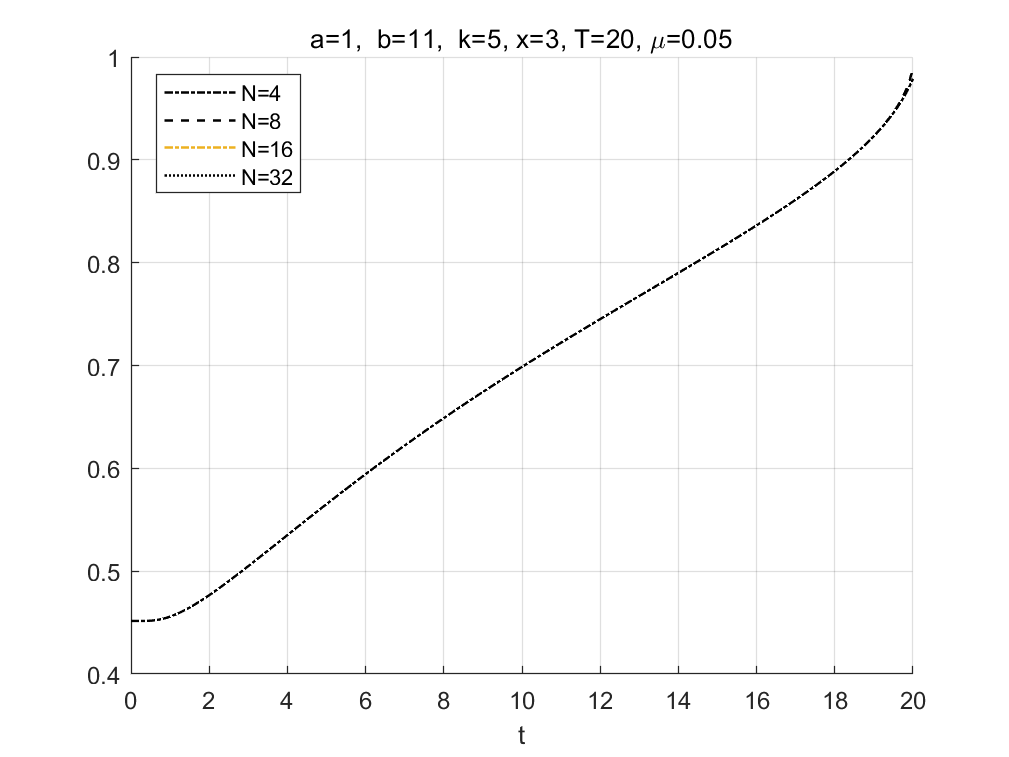} \\
\vspace{-3.5mm}\caption{\small CDF convergence for drifted killed BM.}
\label{fig:dbm_conv}
\end{minipage}} \\

\raisebox{0mm}{\begin{minipage}{0.48\textwidth}
\centering
\includegraphics[width=\linewidth, height=3.5cm]{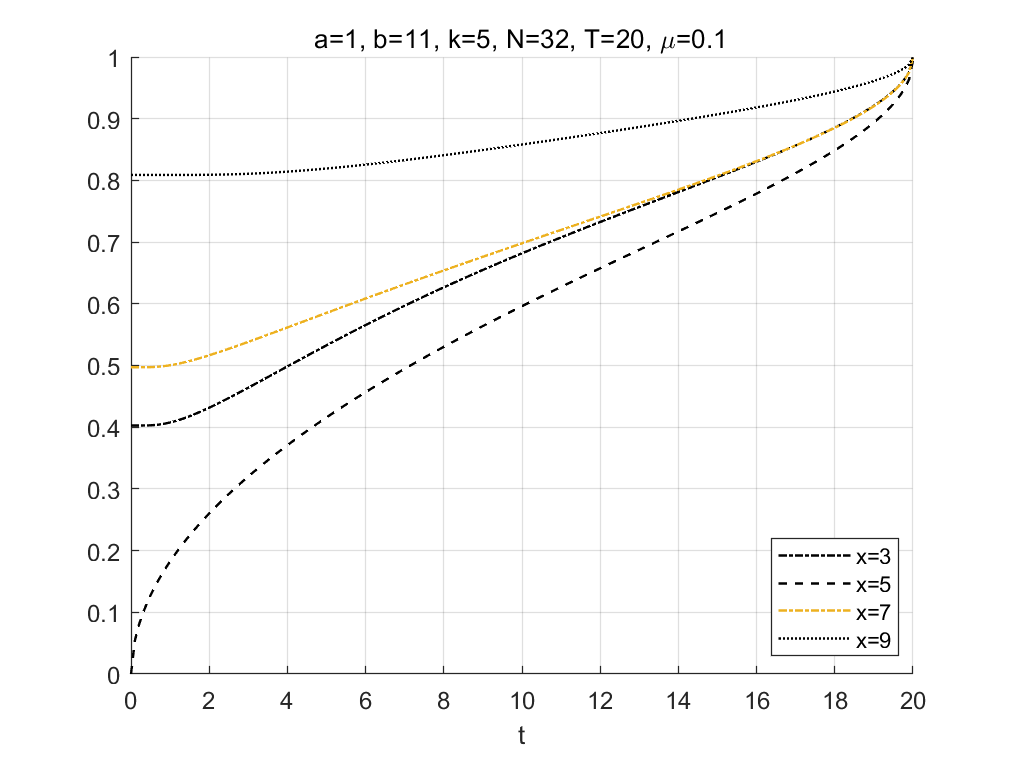} \\
\vspace{-3.5mm}\caption{\small CDF with varying $X_0=x$.}
\label{fig:dbm_x}
\end{minipage}} &
\raisebox{0mm}{\begin{minipage}{0.48\textwidth}
\centering
\includegraphics[width=\linewidth, height=3.5cm]{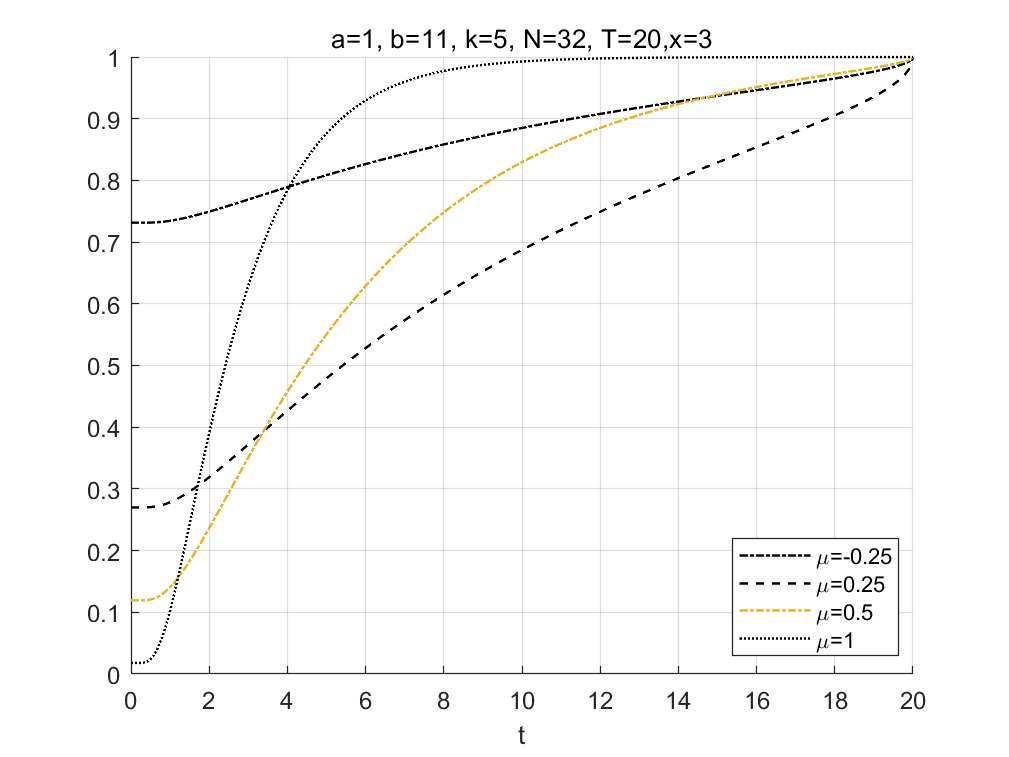} \\
\vspace{-3.5mm}\caption{\small CDF with varying drift $\mu$.}
\label{fig:dbm_mu}
\end{minipage}} \\
\end{tabular}
\end{figure}

We now compute the joint PDF of the last hitting time and process value for zero-drift and drifted BM with killing on $(a,b)$ by utilizing \eqref{drifted_BM_kill_a_b_joint_last_passage} with each single series truncated to 100 terms. 
We set $x=5, a = 1, \b = 10$ and the upper killing boundary to $b = 20$. For drifted BM we set 
$\mu = 0.1$. 
Figure \ref{fig:BM_ab_joint} demonstrates that, for zero drift, the joint PDF is nearly symmetric about the zero density line $z=\b$. This is expected for killing levels chosen relatively far from the starting value $x$. We note that, as the respective killing levels are taken 
to $\pm\infty$, the expression in \eqref{drifted_BM_kill_a_b_joint_last_passage} converges to the (symmetric about $z=\b$) known joint PDF in \eqref{last_hitting_joint_PDF_BM} for standard BM with no imposed killing. Figure \ref{fig:driftedBM_ab_joint}, for BM with positive drift 
$\mu=0.1$, has a similar overall shape. However, due to the drift term, the joint PDF values in the upper interval $z\in(\b,b)$ are notably higher with a more prominent peak.

\begin{figure}[h]
\centering
\begin{minipage}{0.32\textwidth}
\centering
\includegraphics[width=1.02\linewidth, height=3.5cm]{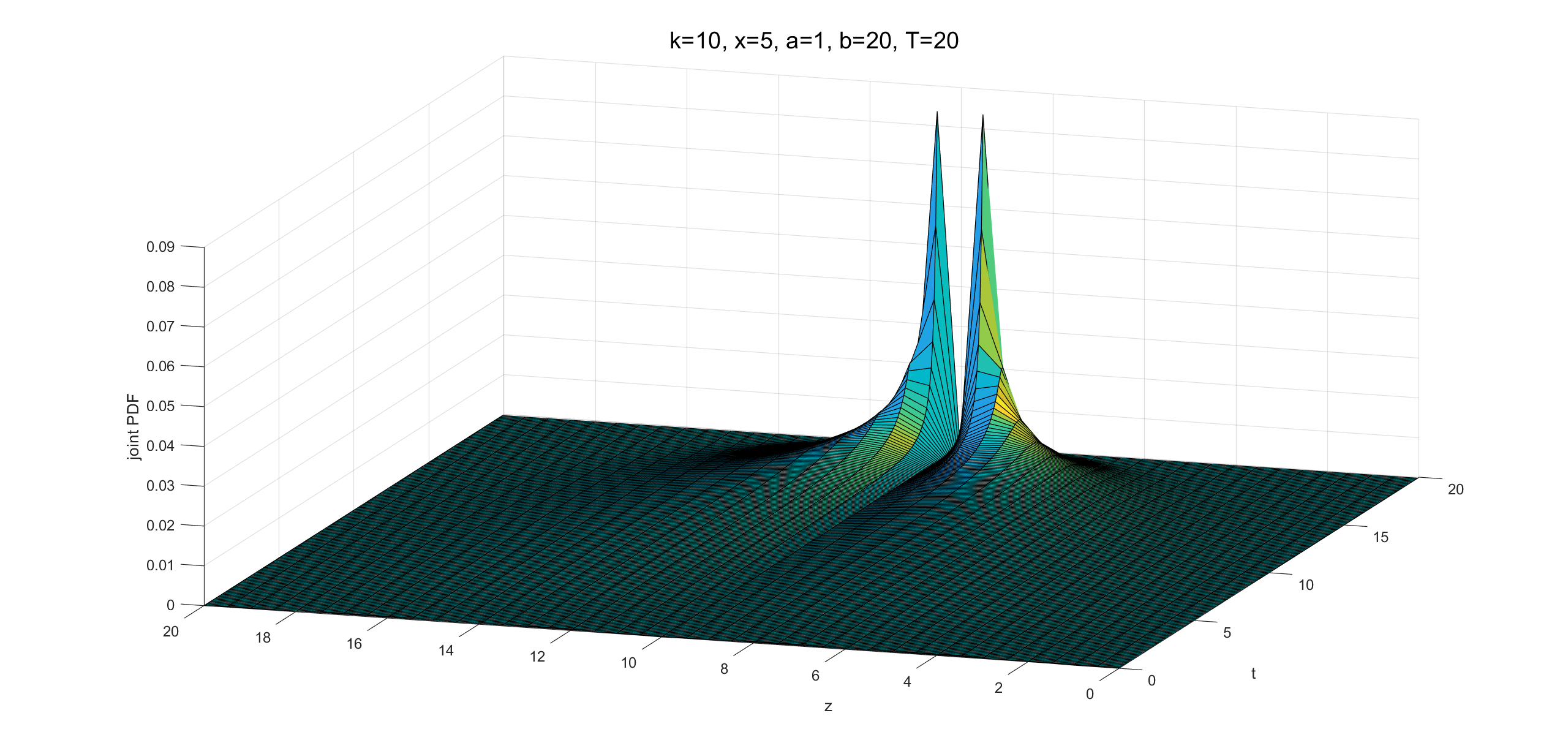} 
\vspace{-7mm}
\caption{\small Joint PDF for killed BM.}
\label{fig:BM_ab_joint}
\end{minipage}\hfill
\begin{minipage}{0.32\textwidth}
\centering
\includegraphics[width=1.02\linewidth,height=3.5cm]{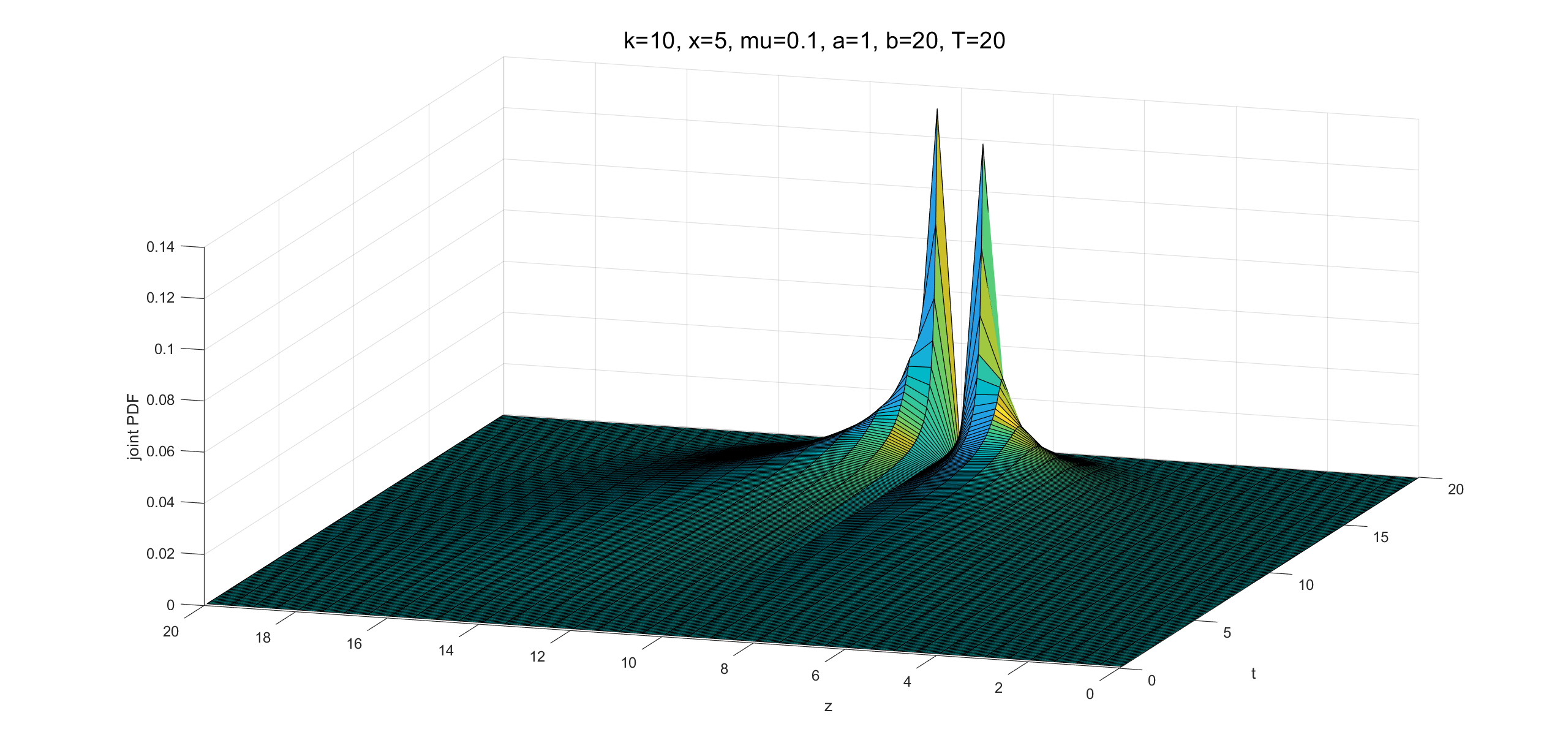}
\vspace{-7mm}
\caption{\small Joint PDF for killed drifted BM.}
\label{fig:driftedBM_ab_joint}
\end{minipage}\hfill
\begin{minipage}{0.32\textwidth}
\centering
\includegraphics[width=1.02\linewidth,height=3.5cm]{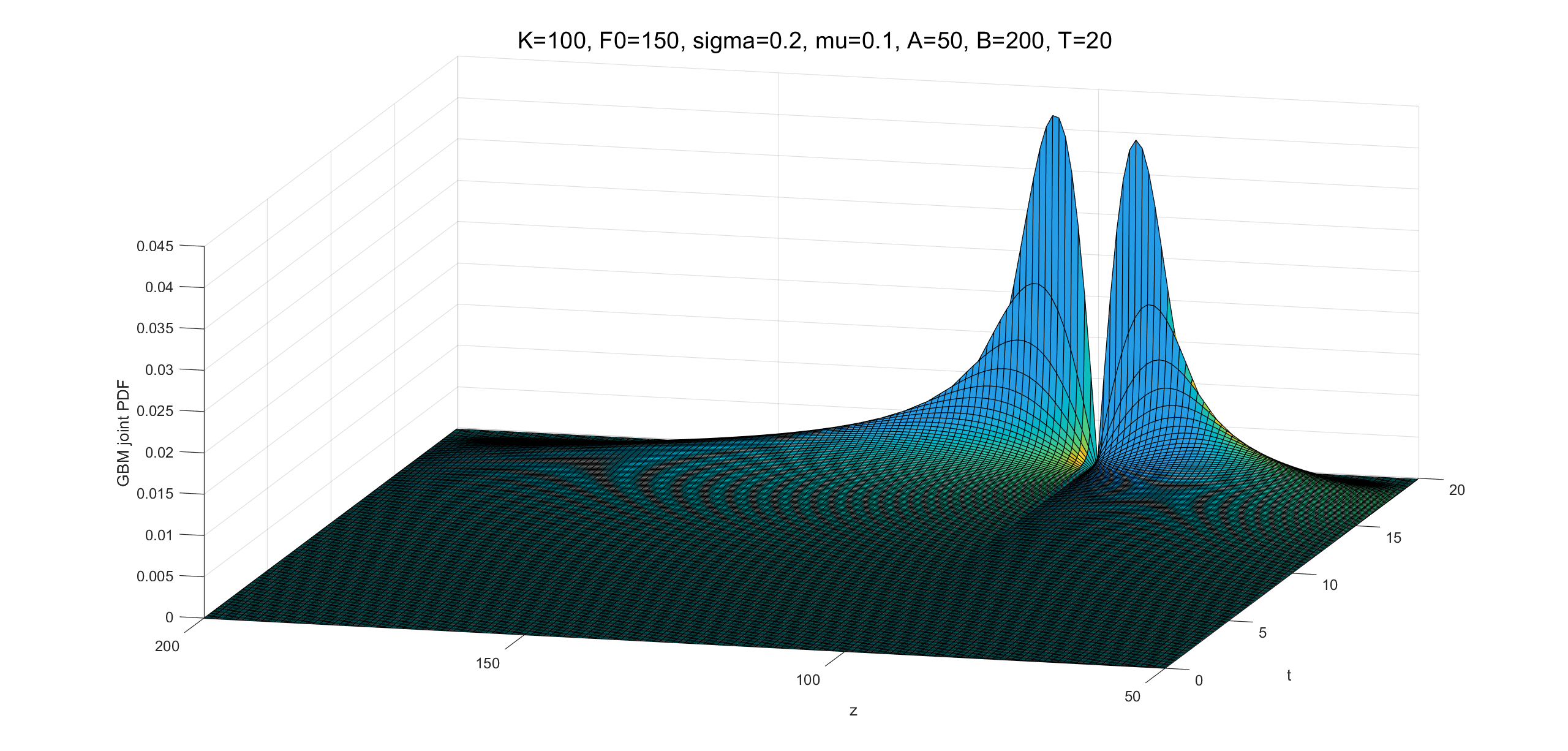}
\vspace{-7mm}
\caption{\small  Joint PDF for killed GBM.}
\label{fig:joint_GBM}
\end{minipage}
\end{figure}

The CDF and joint PDF for GBM on a positive interval $(A,B)$, with killing at the endpoints, follow directly from those for killed drifted BM  using \eqref{last_passage_CDF_X_to_F_kill_A_B}--\eqref{joint_last_PDF_X_to_F_kill_A_B} with exponential mapping defined in Section \ref{subsect_GBM}. As a numerical example, we set the parameters as: $\sigma=0.2$, $A=50$, $B=200$, $K=100$, $F_0 =150$, $\mu=0.1$ and $T=20$. The joint PDF is shown in Figure \ref{fig:joint_GBM} where we observe the same characteristic pattern as in drifted BM.

We next consider the SQB process on the positive interval $(a,b)$ with killing at the endpoints. The CDF of $g^{(a,b)}_{\b}(T)$ is computed using \eqref{last-passage-CDF-kill_a_b_time_0_to_t}, where the discrete portion is computed by truncating the series in 
\eqref{last_time-CDF_discrete_killed_ab_SQB} to $N$ terms and adding it to the resulting double series obtained by integrating the marginal density in \eqref{last_passage_pdf_explicit_ab_SQB}. The CDF curves in Figure \ref{fig:SQB_CDF_convergence} again demonstrate very rapid convergence of the series for the CDF. Figure \ref{fig:SQB_diff_x} demonstrates the change in CDF with the starting point $X_0=x$. The joint PDF of $g^{(a,b)}_{\b}(T), X_{(a,b),T}$ in Figure \ref{fig:SQB_joint}
was computed by truncating the series in \eqref{joint_last_passage_pdf_explicit_ab_SQB} with $N$ terms in the sum over $n$ and $M$ terms for the spectral series in \eqref{trans_PDF_spectral_SQB_ab} for $p_{(a,b)}$.
\begin{figure}[h]
\centering
\begin{minipage}{0.32\textwidth}
\centering
\includegraphics[width=\linewidth,height=3.5cm]{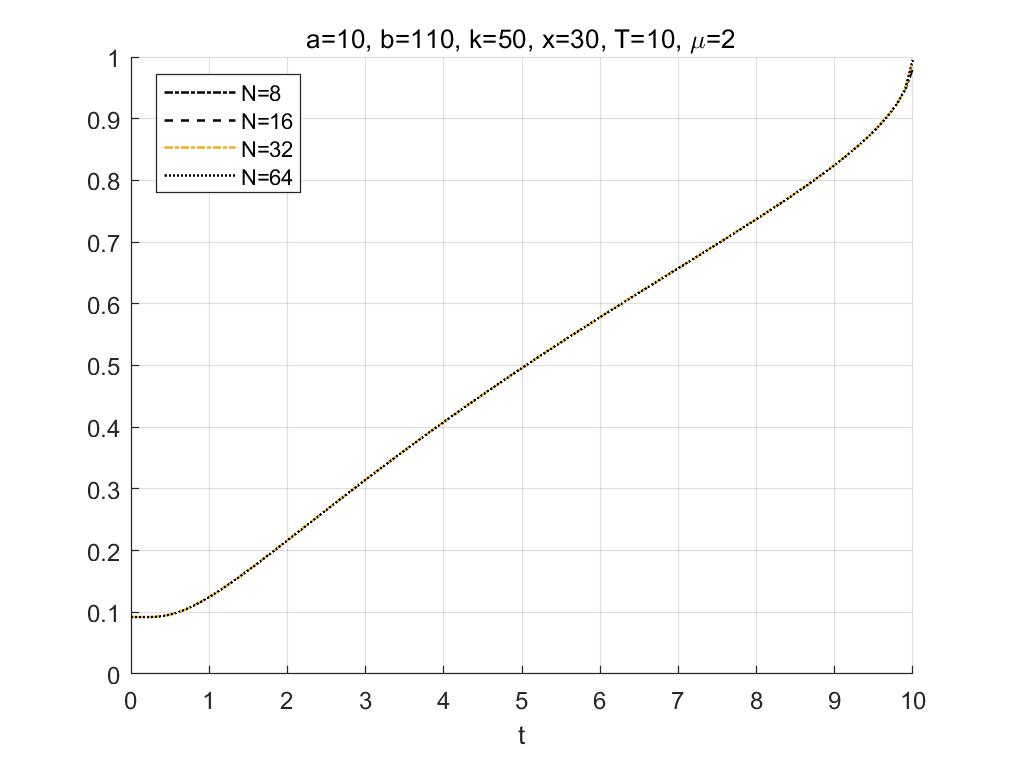}
\vspace{-7mm}
\caption{\small CDF convergence for killed SQB.}
\label{fig:SQB_CDF_convergence}
\end{minipage}\hfill
\begin{minipage}{0.32\textwidth}
\centering
\includegraphics[width=\linewidth,height=3.5cm]{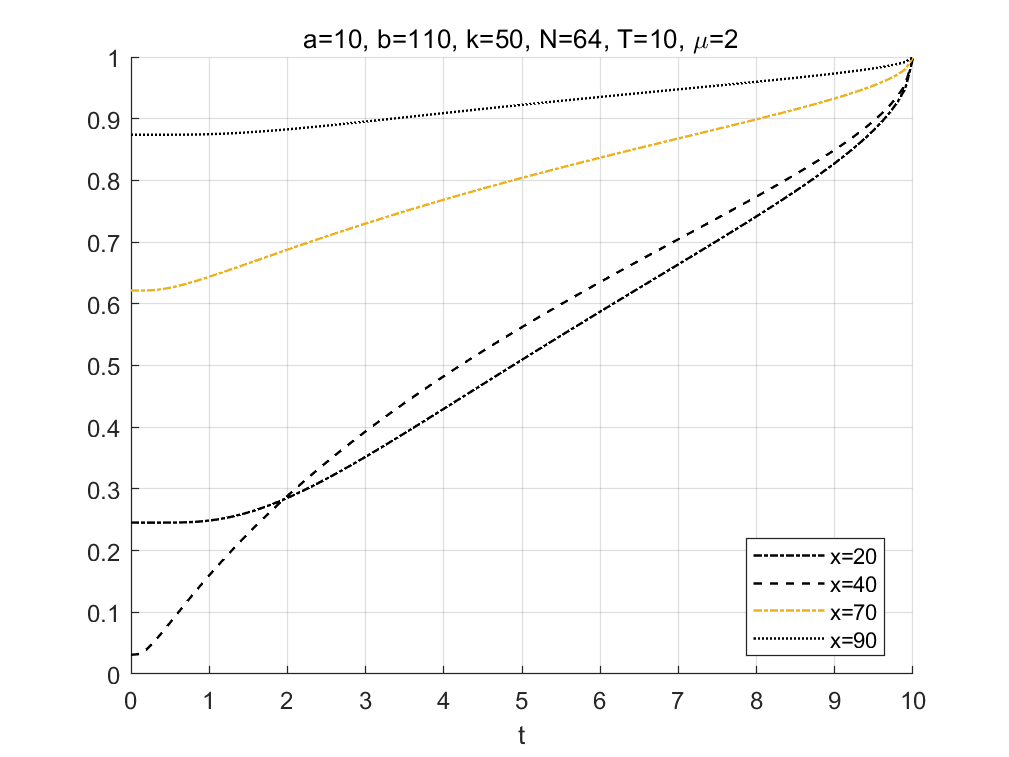}
\vspace{-7mm}
\caption{\small Converged CDF curves for killed SQB with varying $X_0=x$.}
\label{fig:SQB_diff_x}
\end{minipage}\hfill
\begin{minipage}{0.32\textwidth}
\centering
\includegraphics[width=\linewidth,height=3.5cm]{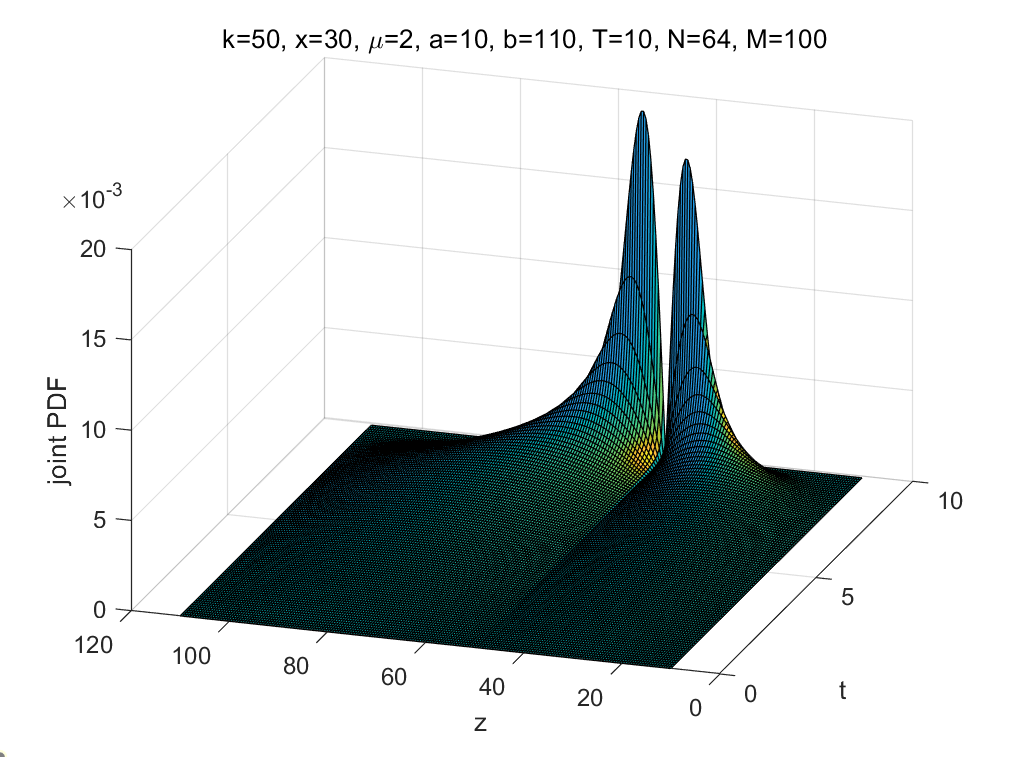}
\vspace{-7mm}
\caption{\small Converged joint PDF for killed SQB.}
\label{fig:SQB_joint}
\end{minipage}
\end{figure}

%
%
As a last example we consider the OU process on $\R$, i.e., without killing, where the distribution of $g_\b(T)$ is given by combining \eqref{prop_last_time_discrete_OU} and \eqref{last-passage-pdf-spectral_OU} and the joint PDF of $g_\b(T), X_T$ is given by \eqref{joint_last_passage_pdf_OU} which only involves a single series truncated to $N$ terms. Each single series makes use of the same eigenvalue set for given parameters $\kappa, \gamma_1$. 
The CDF is calculated via \eqref{last-passage-CDF-time_0_to_t}, where the series in \eqref{prop_last_time_discrete_OU} is truncated to $N$ terms and the continuous portion is computed by termwise integration of the PDF which results in an explicit double series upon using the Hermite polynomial series in \eqref{trans_PDF_spectral_OU} truncated to $M=100$ terms. 
To avoid any overflow we make use of the re-scaled parabolic cylinder functions. For instance, with parameters \(\kappa = 1\), \(x = 0.5\), \(\gamma_1 = 2\), \(k = 7\), \(T = 10\) we obtain a rapid convergence for both the CDF and joint PDF. 
Figure \ref{fig:OU_no_kill_PDF} shows the sensitive dependence of the CDF of on the initial value of the OU process relative to the last hitting level. Figure \ref{fig:OU_no_kill_CDF} displays the calculated joint PDF (plotted here for positive values of the endpoint value $z$).  This displays similar basic characteristics as the other processes. However, the relative peaks and shapes of the distribution for $z <\b$ and $z > \b$ vary significantly with changing parameters $\kappa, \gamma_1$ and initial value $X_0=x$.

\begin{figure}[h]
    \centering

    \begin{minipage}{0.4\textwidth} 
        \centering
        \includegraphics[width=5.5cm, height=3.5cm]{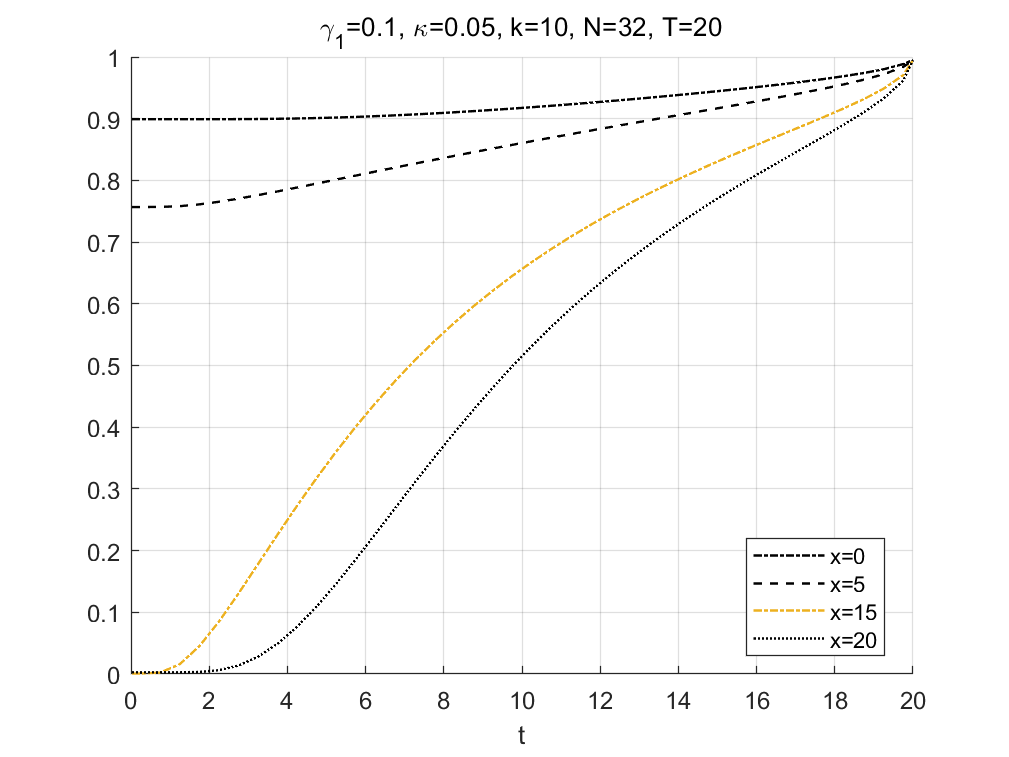} 
        \vspace{-7mm}
        \caption{\small OU process: CDF of last hitting time $g_\b(T)$ with varying initial value $X_0=x$.}
        \label{fig:OU_no_kill_PDF}
    \end{minipage}%
    \hspace{0.05\textwidth} 
    \begin{minipage}{0.4\textwidth}
        \centering
        \includegraphics[width=5.5cm, height=3.5cm]{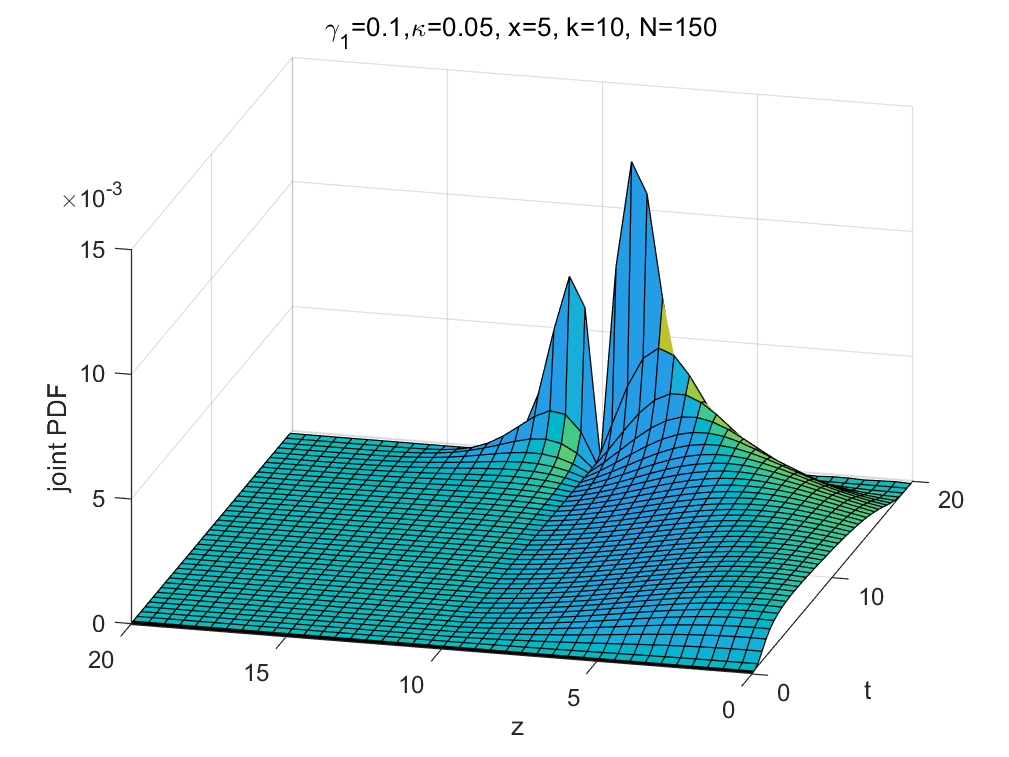}
        \vspace{-7mm}
        \caption{\small Joint PDF of $g_\b(T), X_T$ for OU process.}
        \label{fig:OU_no_kill_CDF}
    \end{minipage}
\end{figure}

\newpage


\appendix

\section{Appendix: Proofs} \label{sect_A1}

%
%
%

\subsection{Proof of Lemma \ref{Lemma_hitting_time}} \label{sect_Lemma1_proof}
Using the tower property, while conditioning on the natural filtration at time 
$t$, ${\mathcal F}^X_t$, with $\ind_{\{t < \Tau^{\pm}_{b} < \infty\}} = 
\ind_{\{\Tau^{\pm}_{b} > t\}}\ind_{\{\Tau^{\pm}_{b} < \infty\}}$ and  
$\ind_{\{\Tau^{\pm}_{b} > t\}}$ as ${\mathcal F}^X_t$-measurable, 
together with the Markov property on the inner expectation, gives \eqref{X_hit_lemma_1}:
\begin{eqnarray*}
\P_{x}(t < \Tau^{\pm}_{b} < \infty) 
&=& \E_{x}\big[\ind_{\{\Tau^{\pm}_{b} > t\}} \, \E\big[\ind_{\{\Tau^{\pm}_{b} < \infty\}}
\vert {\mathcal F}^X_t\big]\big]
= \E_{x}\big[\ind_{\{\Tau^{\pm}_{b} > t\}} \, \P_{X_t}(\Tau^{\pm}_{b} < \infty)\big]
\nonumber \\
&=& \E_{x}\big[\ind_{\{\Tau^{\pm}_{b}> t\}} \, \Phi^{\pm}_b(X_t)\big]
= \E_{x}\big[\Phi^{\pm}_b(X_{b,t})\big]
= \int_{\I_b^{\mp}} \Phi^{\pm}_b(y)\, \P_x(X_{b,t}\in d y)\,,
\end{eqnarray*}
where 
$\P_x(X_{b,t}\in d y) = p_b^\mp(t;x,y) d y$ for $x,y \in \I_b^{\mp}$, respectively. 

We now prove (\ref{fhit_up_down_pdf}) for $f^{+}(t;x,b)$. The proof for $f^{-}(t;x,b)$ follows by similar steps with left boundary $l$ replaced by right boundary $r$. 
By definition, $f^{+}(t;x,b)=-{\partial\over \partial t}\P_{x}(t < \Tau^+_b < \infty)$, and 
using the forward
\footnote{
Both the backward (in $t,x$) and forward (in $t,y$) Kolmogorov PDEs are readily shown to hold for all transition PDFs given by any of the spectral categories. In particular, since 
${\mathcal G}\varphi^\pm(x) = \lambda \varphi^\pm(x)$ we have 
${\mathcal G}_x G(\lambda;x,y) = \hat{\mathcal G}_y G(\lambda;x,y) = \lambda G(\lambda;x,y)$ for any of the Green functions.
}
 Kolmogorov PDE, in the variables $(t,y)$, for the transition PDF of the diffusion killed at upper level $b$, 
${\partial \over \partial t}p^-_b(t;x,y) = \hat{\mathcal G}_y \,p^-_b(t;x,y)$, 
$\hat{\mathcal G}_y\, h(y):= \frac{\partial}{\partial y}(\frac{1}
{\s(y)}\frac{\partial}{\partial y}(\frac{h(y)}{\m(y)}))$, while
differentiating (\ref{X_hit_lemma_1}) and integrating by parts:
\begin{equation*}
f^{+}(t;x,b)
= - \int_l^b \Phi^{+}_b(y)\, \hat{\mathcal G}_y\, p^-_b(t;x,y) \, d y
= -\frac{\Phi^{+}_b(y)}{\s(y)}\frac{\partial}{\partial y}\bigg(\frac{p^-_b(t;x,y)}{\m(y)}\bigg)\bigg|_{y=l+}^{y=b-}
+\, \chi(l,b]\,\frac{p^-_b(t;x,y)}{\m(y)}\bigg|_{y=l+}^{y=b-}
\end{equation*}
where $\chi(l,b] = {1\over {\mathcal S}(l,b]}$
if $l$ is attracting and non-reflecting, and is otherwise zero. The second limit vanishes by the boundary condition
$p^-_b(t;x,b)/\m(b)=0$ and $p^-_b(t;x,l+)/\m(l+)=0$ for attracting and non-reflecting 
(i.e. exit-not-entrance, killing or attracting natural) boundary $l$. In the first limit expression, we note that $\Phi^{+}_b(b) = 1$. Moreover,
$\Phi^{+}_b(l+) = 1$ when the left boundary $l$ is non-attracting or reflecting, and equals zero otherwise,
whereas $\frac{1}{\s(y)}\frac{\partial}{\partial y}\big(\frac{p^-_b(t;x,y)}{\m(y)}\big)\big\vert_{y=l+}=0$ 
when $l$ is non-attracting or reflecting and is otherwise bounded. 
This reduces the above expression to $f^{+}(t;x,b) = -\frac{1}{\s(b)}\frac{\partial}{\partial y}
\big(\frac{p^-_b(t;x,y)}{\m(y)}\big)\big\vert_{y=b-}$, which proves the first equality in \eqref{fhit_up_down_pdf}. 
The second equality in \eqref{fhit_up_down_pdf} follows simply from the symmetry of the transition PDFs w.r.t. the speed measure, i.e., 
$q_b^\mp(t;x,y) := {p_b^\mp(t;x,y) \over \m(y)} = q_b^\mp(t;y,x)$. Hence, 
$\frac{\partial}{\partial y}\big(\frac{p^\mp_b(t;x,y)}{\m(y)}\big)\big\vert_{y=b\mp}
=\frac{\partial}{\partial y}q_b^\mp(t;x,y)\vert_{y=b\mp} = 
\frac{\partial}{\partial y}q_b^\mp(t;y,x)\vert_{y=b\mp} = {1\over \m(x)}
\frac{\partial}{\partial y}p_b^\mp(t;y,x)\vert_{y=b\mp}$.

Remark: Taking the Laplace transform w.r.t. $t$ on both sides of \eqref{fhit_up_down_pdf}, 
where ${\mathcal L}_t\{p^-(t;x,y)\}(\lambda) = G_b^-(\lambda;x,y)$, and then differentiating, w.r.t. $y$, the Green function in (\ref{greenfunc_up}) evaluated at $y=b$ (note that $x < b$, i.e., we have $x\wedge y = x, x \vee y = y$ since $y\nearrow b$) gives
\begin{align*}
{\mathcal L}_t\{f^+(t;x,b)\}(\lambda) 
&= 
 - \displaystyle\frac{1}{\s(b)} \frac{\partial}{\partial y}
\left( \frac{G_b^-(\lambda;x,y)}{\m(y)}\right)\bigg|_{y=b-}
= \displaystyle\frac{1}{\s(b)}
\frac{\varphi_\lambda^+(x )}{w_\lambda \varphi_\lambda^+(b)}
\,{\partial \over \partial y}\phi(b,y;\lambda)\big|_{y=b}
\\
&= \frac{\varphi_\lambda^+(x )}{\varphi_\lambda^+(b)}
\displaystyle\frac{1}{w_\lambda \s(b)}
\,{\partial \over \partial y}\left[\varphi_\lambda^-(b) \varphi_\lambda^+(y)
- \varphi_\lambda^+(b) \varphi_\lambda^-(y) \right]\big|_{y=b}
\\
&= \frac{\varphi_\lambda^+(x )}{\varphi_\lambda^+(b)}
\displaystyle\frac{1}{w_\lambda \s(b)}
W[\varphi_\lambda^-,\varphi_\lambda^+](b) = \frac{\varphi_\lambda^+(x )}{\varphi_\lambda^+(b)}.
\end{align*}
In the last two equation lines we used (\ref{phi_function}) and (\ref{wronskian}). In fact, we have the identity
\begin{eqnarray}\label{Laplace_FHT_Green_derivative}
\mp \displaystyle\frac{1}{\s(b)} \frac{\partial}{\partial y}
\left( \frac{G_b^\mp(\lambda;x,y)}{\m(y)}\right)\bigg|_{y=b\mp} = {\varphi_\lambda^\pm(x) \over \varphi_\lambda^\pm(b)}.
\end{eqnarray}
The same steps follow for $f^-$ where 
the use of $G_b^+(\lambda;x,y)$ in (\ref{greenfunc_down}) produces ${\mathcal L}_t\{f^-(t;x,b)\}(\lambda) = \varphi_\lambda^-(x) /\varphi_\lambda^-(b)$ for $x > b$. 
Hence, we have  shown
\begin{eqnarray}\label{Laplace_FHT_density}
{\mathcal L}_t\{f^\pm(t;x,b)\}(\lambda) = \varphi_\lambda^\pm(x) /\varphi_\lambda^\pm(b)
\end{eqnarray}
which is consistent with the known formula for the Laplace transform of the first hitting time PDF in terms of the  fundamental solutions $\varphi_\lambda^\pm$ to \eqref{eq:phi}, e.g., see \cite{BS02}.

%
%

\subsection{Proof of Proposition \ref{prop_spec_first_hit_1}}\label{sect_FHT_Prop1_proof}
Since both boundaries $l$ and $b$ are NONOSC,
the SL problem has a simple purely discrete positive spectrum $\{\lambda_n \equiv \lambda^-_{n,b}\}_{n=1}^\infty$. By Spectral Category I, the transition density $p_b^-$ for the $X$-diffusion on $\I_b^-$ 
is given by (\ref{u_spectral_1}) with eigenvalues solving 
$\varphi^+_{-\!\lambda_n}(b)=0$. Hence, using (\ref{fhit_up_down_pdf}) for $p_b^-$, while inserting the product eigenfunctions in (\ref{spectral_1_product_eigen}) and then differentiating the series term-by-term w.r.t. $y=b$, as well as invoking the function $\psi_n^+(x;b)$ in (\ref{FHT_eigenfunctions_1}), gives
\begin{eqnarray*}
f^{+}(t;x,b) &=& \sum_{n=1}^\infty e^{-\lambda_n t}\psi_n^+(x;b)
\frac{\varphi^-_{-\lambda_n}(b) \varphi^{+\prime}_{-\lambda_n}(b)}{\s(b) w_{_{-\lambda_n}}}.
\end{eqnarray*}
This is equivalent to (\ref{FHT_prop1_1}) since the above ratio term is identically one. This is easily seen from the zero boundary condition, i.e., the eigenvalue equation $\varphi^+_{-\!\lambda_n}(b)=0$, 
and hence the Wronskian in (\ref{wronskian}) implies
$\frac{\varphi^-_{-\lambda_n}(b) \varphi^{+\prime}_{-\lambda_n}(b)}{\s(b) w_{-\lambda_n}} 
= \frac{W[\varphi^-_{-\lambda_n},\varphi^+_{-\lambda_n}](b)}{\s(b) w_{-\lambda_n}} \equiv 1$. 

Equation (\ref{FHT_prop1_2}) follows by termwise integration, with respect to time, of the density in (\ref{FHT_prop1_1}), i.e., $\P_{x}(t < \Tau^{+}_{b} < \infty) = \int_t^\infty f^{+}(u;x,b) du$, where 
$\int_t^\infty e^{- \lambda_n u} du = {e^{- \lambda_n t} \over  \lambda_n}$. This completes the proof.

We remark that there are (at least) two other alternative proofs of Proposition~\ref{prop_spec_first_hit_1}. 
One alternate proof 
\footnote{
This approach is essentially equivalent to the proofs of the tail probabilities given in 
\cite{Linetsky2004b} for both Propositions \ref{prop_spec_first_hit_1}-\ref{prop_spec_first_hit_2}, where the conditional expectation in (\ref{X_hit_lemma_1}) is evaluated by applying the
spectral theorem for semigroups of self-adjoint contractions in the Hilbert space of real-valued functions
that are square-integrable with respect to the speed measure. This alternate proof requires the extra condition that 
$\Phi_b^{\mp}$ be square-integrable (w.r.t. $\m$) on $\I_b^\mp$ in the respective cases that $l$ (or $r$) is a natural boundary.
}
 is to firstly derive (\ref{FHT_prop1_2}) by inserting the series in (\ref{u_spectral_1}) 
into the integral in (\ref{X_hit_lemma_1}), giving rise to a series of integrals: 
$\P_{x}(t < \Tau^{+}_{b} < \infty) = \sum_{n=1}^\infty e^{- \lambda_n t}\phi_n(x)\,{c}_{n,b}$, where
${c}_{n,b} = \int_l^b \phi_n(y)\Phi_b^{+}(y) \m(y) dy$. 
Each of these integrals is simplified by writing the eigenfunction $\phi_n(y) = -(\lambda_n)^{-1}{\mathcal G}\phi_n(y)$, with generator ${\mathcal G}$, and then integrating by parts, while applying appropriate boundary conditions as $x\to l+$ and using the fact that ${1\over \s(y)}{\partial \over \partial y}\Phi_b^{+}(y)$ is constant w.r.t. $y$. 
This gives the coefficient
${c}_{n,b} = -(\lambda_n)^{-1}{\phi_n'(b)\over \s(b)}$. The rest of the steps follow from the Wronskian relation as in the above proof.
Then, (\ref{FHT_prop1_1}) follows by termwise differentiation of (\ref{FHT_prop1_2}).

A second (and simpler) alternative proof involves the Laplace inversion of the above relation, i.e., 
${\mathcal L}_t\{f^+(t;x,b)\}(\lambda) = \frac{\varphi_\lambda^+(x )}{\varphi_\lambda^+(b)}$. Hence, 
$f^+(t;x,b) = {\mathcal L}_\lambda^{-1}\big\{\frac{\varphi_\lambda^+(x )}{\varphi_\lambda^+(b)}\big\}(t)$. Since we are in Spectral Category I, $e^{\lambda t}\frac{\varphi_\lambda^+(x )}{\varphi_\lambda^+(b)}$ is a ratio of analytic functions (i.e., a meromorphic function) of $\lambda$ with simple poles 
at $\lambda = -\lambda_{n,b}^-,\, n =1,2, \ldots,$ which are the zeros of 
$\varphi_\lambda^+(b)$. By the standard procedure of closing the Bromwich contour to the left and applying the Residue Theorem, we recover (\ref{FHT_prop1_1}):
\begin{equation*}
f^+(t;x,b) = \sum_{n=1}^\infty 
\text{Res}\left[ e^{\lambda t}\frac{\varphi_\lambda^+(x )}{\varphi_\lambda^+(b)} ; 
\lambda = - \lambda_{n,\b}^-\right]  
= \sum_{n=1}^\infty e^{-\lambda_{n,b}^- t} {\varphi^+_\lambda (x) \over {\partial \over \partial \lambda}\,
\varphi^+_{\lambda}(b)}\bigg\vert_{\lambda=-\lambda_{n,b}^-}.
\end{equation*}

The proofs for the hitting time down mirror those given above, 
wherein the right boundary now plays the role of the left. For completeness, we summarize the steps as follows. 
Since both boundaries $b$ and $r$ are NONOSC, 
the SL problem has a simple purely discrete positive spectrum given by the eigenvalues 
$\{\lambda_n \equiv \lambda^+_{n,b}\}_{n=1}^\infty$. We are in Spectral Category I with transition density $p_b^+$ for the $X$-diffusion on $\I_b^+$ given by (\ref{u_spectral_2}) with eigenvalues solving (\ref{eigen_trans_pdf_2}). Hence, using (\ref{fhit_up_down_pdf}) for $p_b^+$, while inserting the product eigenfunctions in (\ref{spectral_2_product_eigen}) and then differentiating the series term-by-term w.r.t. $y=b$, as well as invoking the function $\psi_n^-(x;b)$ in (\ref{FHT_eigenfunctions_2}), gives
\begin{eqnarray*}
f^{-}(t;x,b) &=& \sum_{n=1}^\infty e^{-\lambda_n t}\psi_n^-(x;b)
\frac{-\varphi^+_{-\lambda_n}(b) \varphi^{-\prime}_{-\lambda_n}(b)}{\s(b) w_{_{-\lambda_n}}}.
\end{eqnarray*}
This is equivalent to (\ref{FHT_prop2_1}) since the eigenvalue equation gives $\varphi^-_{-\!\lambda_n}(b)=0$, and hence the Wronskian in (\ref{wronskian}) implies 
$-\frac{\varphi^+_{-\lambda_n}(b) \varphi^{-\prime}_{-\lambda_n}(b)}{\s(b) w_{-\lambda_n}} = 
\frac{W[\varphi^-_{-\lambda_n},\varphi^+_{-\lambda_n}](b)}{\s(b) w_{-\lambda_n}} \equiv 1$. 
Equation (\ref{FHT_prop2_2}) follows directly by termwise integration. We also remark that alternative proofs follow analogously to those stated above. In particular, we note the Laplace transform relation ${\mathcal L}_t\{f^-(t;x,b)\}(\lambda) = \frac{\varphi_\lambda^-(x )}{\varphi_\lambda^-(b)}$ where  
$f^-(t;x,b) = {\mathcal L}_\lambda^{-1}\big\{\frac{\varphi_\lambda^-(x )}{\varphi_\lambda^-(b)}\big\}(t)$ leads to (\ref{FHT_prop2_1}).

%
%

\subsection{Proof of Proposition \ref{prop_spec_first_hit_2}}\label{sect_FHT_Prop2_proof}
The densities in \eqref{FHT_prop_ONO1_1} and \eqref{FHT_prop_ONO2_1} follow by simply applying (\ref{fhit_up_down_pdf}) to the respective spectral expansions of $p_b^\pm$ in (\ref{u_spectral_2_b}), which hold for Spectral Category II. The respective summation terms follow in identical fashion as in the above proof in  \ref{sect_FHT_Prop1_proof}. The respective integral terms in the first equation line in \eqref{FHT_prop_ONO1_1} and \eqref{FHT_prop_ONO2_1} arise by dividing the Green function by $\m(y)$ while moving the derivative w.r.t. $y$ inside the first integral in (\ref{u_spectral_2_b}) and using \eqref{Laplace_FHT_Green_derivative} with $\lambda = \epsilon e^{-i\pi}$, i.e.,
\begin{align*}
\mp \displaystyle\frac{1}{\s(b)} \frac{\partial}{\partial y}
\left[  \frac{1}{\m(y)}\textup{Im} \{ G_b^\mp(\epsilon e^{-i\pi};x,y)\}\right]_{y=b\mp}
&=\!  
\textup{Im} \bigg\{\! \mp \displaystyle\frac{1}{\s(b)}
\frac{\partial}{\partial y}\left[ {G_b^\mp(\epsilon e^{-i\pi};x,y) \over \m(y)}\right]_{y=b\mp}\!
\bigg\}
= 
\textup{Im} 
\bigg\{{\varphi^\pm_{\epsilon e^{-i\pi}} (x) \over \varphi^\pm_{\epsilon e^{-i\pi}} (b)}\!\bigg\}.
\end{align*}
The respective integral terms in the second equation line of \eqref{FHT_prop_ONO1_1} and \eqref{FHT_prop_ONO2_1} follow after dividing out $\m(y)$ and directly differentiating the integrand w.r.t. $y$ within the second line of 
 (\ref{u_spectral_2_b}), i.e., ${\partial \Psi_b(y,\epsilon) \over \partial y}\vert_{y=b\mp} = 
{\partial \Psi_b(y,\epsilon) \over \partial y}\vert_{y=b} \equiv \Psi_b^\prime(b,\epsilon)$. Note that since  
$\Psi_b(y,\epsilon) = C \phi(b,y;-\epsilon)$, for any constant $C\ne 0$ with $\phi(b,y;-\epsilon)$ given by \eqref{phi_function} for $\lambda = -\epsilon$, it is differentiable at any $y=b\in (l,r)$. 
Finally, the expressions in \eqref{FHT_prop_ONO1_2} and \eqref{FHT_prop_ONO2_2} now follow simply by termwise integration of \eqref{FHT_prop_ONO1_1} and \eqref{FHT_prop_ONO2_1}.

%
%
\subsection{Proof of Lemma \ref{Lemma_hitting_time_ab}} \label{sect_Lemma2_proof}
The proof follows closely that in \ref{sect_Lemma1_proof}. Here we only prove  
\eqref{hit_ab_lemma_1} and \eqref{hit_ab_lemma_3}, since \eqref{hit_ab_lemma_2} and \eqref{hit_ab_lemma_4} 
follow similarly. By the tower and Markov properties we derive \eqref{hit_ab_lemma_1}:
\begin{eqnarray}
\P_{x}(t < {\T}^{+}_{b}\!(a) < \infty) &=& \E_{x}\big[\ind_{\{{\T}^{+}_{b}\!(a) > t\}} \,
\E\big[\ind_{\{{\T}^{+}_{b}\!(a) < \infty\}}\vert {\mathcal F}_t\big]\big]
= \E_{x}\big[\ind_{\{{\T}^{+}_{b}\!(a) > t \}} \,
\E\big[\ind_{\{\T_b < {\T}_a\}}\vert {\mathcal F}_t\big]\big]
\nonumber \\
&=& \E_{x}\big[\ind_{\{m_t > a,\, M_t < b \}} \, \P_{X_t}(\T_b < \T_a )\big]
= \E_{x}\big[\ind_{\{m_t > a,\, M_t < b \}} \, \Phi^{+}_b(X_t \vert a )\big]
\nonumber \\
&=& \E_{x}\big[\Phi^{+}_b(X_{(a,b),t}\,\vert\, a )\big] = \int_{a}^{b}  \Phi^{+}_b(y\vert a) \,p_{(a,b)}(t;x,y) \, dy.
\label{Lemma_ab_cond_expect}
\end{eqnarray}
The density in \eqref{hit_ab_lemma_3} arises by using the forward 
Kolmogorov PDE,  
${\partial \over \partial t}p_{(a,b)}(t;x,y) = \hat{\mathcal G}_y \,p_{(a,b)}(t;x,y)$, 
$\hat{\mathcal G}_y\, h(y):= \frac{\partial}{\partial y}(\frac{1}
{\s(y)}\frac{\partial}{\partial y}(\frac{h(y)}{\m(y)}))$, while  
differentiating \eqref{hit_ab_lemma_1} and integrating by parts, with $\s(y) = {\partial\over \partial y} \mathcal{S}[a,y]$ and $\Phi^{+}_b(y\vert a) = {\mathcal{S}[a,y] \over \mathcal{S}[a,b]}$):
\[
f^{+}(t;x,b\vert a) = - {1 \over \mathcal{S}[a,b]}
\bigg[\frac{\mathcal{S}[a,y]}{\s(y)}\frac{\partial}{\partial y}
\bigg(\frac{p_{(a,b)}(t;x,y)}{\m(y)}\bigg)
- \frac{p_{(a,b)}(t;x,y)}{\m(y)}\bigg]_{y=a+}^{y=b-}\,.
\]
Since $p_{(a,b)}(t;x,a) = p_{(a,b)}(t;x,b) = 0$, the second term vanishes. Moreover, 
$\mathcal{S}[a,y]\to 0$, while all other terms
are finite at $y=a$ in the first expression, so we have the first expression in \eqref{hit_ab_lemma_3}. 
The equivalent second expression in \eqref{hit_ab_lemma_3} follows simply by the symmetry 
${p_{(a,b)}(t;x,y) \over \m(y)} = {p_{(a,b)}(t;y,x) \over \m(x)}$.

%
%

\subsection{Proof of Proposition \ref{prop_spec_first_hit_ab}} \label{sect_Prop_FHT_ab_proof}
This follows the same steps as in \ref{sect_FHT_Prop1_proof}. 
We only prove \eqref{FHT_prop3_up_1} and \eqref{FHT_prop3_up_2} since \eqref{FHT_prop3_down_1} and \eqref{FHT_prop3_down_2} follow in the obvious similar manner. By using $p_{(a,b)}$, with product eigenfunctions in \eqref{spectral_3_product_eigen}, within \eqref{hit_ab_lemma_3} and differentiating the series termwise (w.r.t. $y$ at $b$) gives
\begin{equation*}
f^+(t;x,b\vert a) = \sum_{n=1}^\infty e^{-\lambda_n t} 
\bigg[-\frac{\phi(a,x;-\lambda_n)}{\Delta(a,b;\lambda_n)} \bigg] 
{\frac{\partial}{\partial y} \phi(b,y=b;-\lambda_n) \over w_{_{-\lambda_n}}\s(b) }
= \sum_{n=1}^\infty e^{-\lambda_n t} \psi_n^+(x;a,b).
\end{equation*}
In the last equality we used the definition in \eqref{FHT_eigenfunctions_ab} and the Wronskian relation in \eqref{wronskian} together with the definition of the cylinder function, i.e., 
$\frac{\partial}{\partial y} \phi(b,y=b;-\lambda_n) = 
W[\varphi^-_{-\lambda_n},\varphi^+_{-\lambda_n}](b) = w_{_{-\lambda_n}}\s(b)$. 
The series in \eqref{FHT_prop3_up_2} now follows by termwise integration, i.e., 
$\P_{x}(t < \Tau^{+}_{b}(a) < \infty) = \int_t^\infty f^+(u;x,b\vert a) \,du$.

We remark that one alternative proof is to first derive \eqref{FHT_prop3_up_2} by inserting the spectral expansion for 
$p_{(a,b)}$ within \eqref{hit_ab_lemma_1} and integrate termwise, where $\Phi^{+}_b(y\vert a)$ is 
square-integrable (w.r.t. $\m(y)$) for $y\in (a,b)$. The rest of the derivation follows by standard manipulations as mentioned in \ref{sect_FHT_Prop1_proof}. Another alternate proof follows from the known Laplace transform relation: 
${\mathcal L}_t\{f^+(t;x,b\vert a)\}(\lambda) = {\phi(a,x;\lambda) \over \phi(a,b;\lambda)}$. The Laplace inverse is simply computed from the fact that ${\phi(a,x;\lambda) \over \phi(a,b;\lambda)}$ is a ratio of analytic functions of 
$\lambda$ with simple poles at $\lambda = -\lambda_n^{(a,b)}$, where 
$\lambda_n\equiv \lambda_n^{(a,b)}$ are given by \eqref{eigen_trans_pdf_3}. 
The Residue Theorem recovers \eqref{FHT_prop3_up_1}.

%
%

\subsection{Proof of Theorem \ref{last-passage-propn-time-t}} \label{sect_a1}

\begin{proof} Applying $\lambda{\mathcal L}_T\{\cdot\}(\lambda)$ to 
both sides of (\ref{prop_last_time-t-1-prime}) gives
\begin{align*}
\lambda{\mathcal L}_T\{\P_x(g_\b(T) \le t) \}(\lambda)
= 1 - \int_l^r \lambda{\mathcal L}_T\{\P_y(\T_\b \le T - t)\}(\lambda)\,  p(t;x,y) dy\,.
\end{align*}
Using the Laplace transform of the first hitting time density, 
${\mathcal L}_t\{f^\pm(u;y,\b)\}(\lambda) = {\varphi_\lambda^\pm(y) \over \varphi_\lambda^\pm(\b)}$, and from standard properties of the Laplace transform, we have
\begin{align}
\lambda{\mathcal L}_T\{\P_y(\T_\b \le T - t)\}(\lambda) 
&= e^{-\lambda t}\lambda{\mathcal L}_u\{\P_y(\T_\b \le u)\}(\lambda) 
\nonumber \\
&= e^{-\lambda t}\left[{\mathcal L}_u\{f^+(u;y,\b)\}(\lambda) \,\mathbb {I}_{\{ y < \b\}} 
+ {\mathcal L}_u\{f^-(u;y,\b)\}(\lambda)\, \mathbb {I}_{\{ y \ge \b\}}
\right]
\nonumber \\
&= e^{-\lambda t}\left[{\varphi_\lambda^+(y) \over \varphi_\lambda^+(\b)} \mathbb {I}_{\{ y < \b\}} 
+ {\varphi_\lambda^-(y) \over \varphi_\lambda^-(\b)} \mathbb {I}_{\{ y \ge \b\}} \right]\,.
\end{align}
Substituting this expression into the above integral gives 
\begin{align*}
\lambda{\mathcal L}_T\{\P_x(g_\b(T) \le t) \}(\lambda) = 1 - e^{-\lambda t}
 \left[\int_l^\b {\varphi_\lambda^+(y) \over \varphi_\lambda^+(\b)}   \,  p(t;x,y) dy
+ \int_\b^r {\varphi_\lambda^-(y) \over \varphi_\lambda^-(\b)} \,  p(t;x,y) dy \right] \,.
\end{align*}
Differentiating this expression w.r.t. $t$ gives 
$\lambda {\mathcal L}_T\{f_{g_\b(T)}(t;x) \}(\lambda)$ as a sum of two integral terms
\begin{align}\label{last_passage_two_integrals}
\lambda {\mathcal L}_T\{f_{g_\b(T)}(t;x) \}(\lambda) &= e^{-\lambda t}
\left[{1 \over \varphi_\lambda^+(\b)} \int_l^\b \left(\lambda \varphi_\lambda^+(y)  \,  p(t;x,y) 
- \varphi_\lambda^+(y) \,  {\partial \over \partial t}p(t;x,y)\right) dy \right.
\nonumber \\
&\left. +
{1 \over \varphi_\lambda^-(\b)} \int_\b^r  \left(\lambda \varphi_\lambda^-(y)  \,  p(t;x,y) 
- \varphi_\lambda^-(y) \,  {\partial \over \partial t}p(t;x,y)\right) dy \right]\,.
\end{align}

We now simplify these integrals by using the forward Kolmogorov PDE, 
${\partial \over \partial t}p(t;x,y) = \hat{\mathcal G}_y p(t;x,y) \equiv \frac{\partial}{\partial y}(\frac{1}
{\s(y)}\frac{\partial}{\partial y}(q(t;x,y)))$, where $q(t;x,y) := p(t;x,y) / \m(y)$. 
Applying an integration by parts within the first integral gives
\begin{align*}
\int_l^\b  \varphi_\lambda^+(y) \,  {\partial \over \partial t}p(t;x,y) \,dy
&= \int_l^\b  \varphi_\lambda^+(y) {\partial \over \partial y}\left({1 \over \s(y)} {\partial \over \partial y} q(t;x,y) \right)  \,dy
\nonumber \\
&= {\varphi_\lambda^+(y) \over \s(y)} {\partial \over \partial y}q(t;x,y)\bigg\vert_{y=l+}^{y = \b}
- \int_l^\b  {\partial \over \partial y}q(t;x,y)\left({1 \over \s(y)} {d \over d y}\varphi_\lambda^+(y)\right) dy.
\end{align*}
Applying another integration by parts on the second integral and combining terms gives
\begin{align*}
\int_l^\b  \varphi_\lambda^+(y) \,  {\partial \over \partial t}p(t;x,y) \,dy
&= \left[{\varphi_\lambda^+(y) \over \s(y)} {\partial \over \partial y}q(t;x,y) 
- q(t;x,y){1 \over \s(y)} {d \over d y}\varphi_\lambda^+(y) \right]_{y=l+}^{y = \b}
\nonumber \\
&\,\,\,+ \int_l^\b  \lambda \varphi_\lambda^+(y) p(t;x,y)  dy\,.
\end{align*}
The last term follows since ${\mathcal G}\varphi_\lambda^+(y) 
\equiv {1 \over \m(y)} {d \over d y}\left({1 \over \s(y)}{d \over d y}\varphi_\lambda^+(y)\right) 
= \lambda \varphi_\lambda^+(y)$. The left limit in the above first term is zero since  
$\varphi_\lambda^+(l+) = 0$ and $q(t;x,l+) =0$ (with bounded ${1 \over \s(l+)} {d \over d y}\varphi_\lambda^+(l+)$ and 
${1 \over \s(l+)} {\partial \over \partial y}q(t;x,l+)$) if the left boundary $l$ is regular killing, natural or exit-not-entrance;   otherwise ${1 \over \s(l+)} {d \over d y}\varphi_\lambda^+(l+) = 0$ and 
${1 \over \s(l+)} {\partial \over \partial y}q(t;x,l+) = 0$ (with bounded $\varphi_\lambda^+(l+)$ and $q(t;x,l+)$) if 
$l$ is regular reflecting or entrance-not-entrance. Hence, we only have the term with $y=\b$:
\begin{align*}
\int_l^\b \!\!\left(\!\lambda \varphi_\lambda^+(y)  \,  p(t;x,y) 
- \varphi_\lambda^+(y) \,  {\partial \over \partial t}p(t;x,y)\!\!\right) \!dy 
=  {1 \over \s(\b)}\!\left[q(t;x,\b) {d \varphi_\lambda^+(\b)\over d \b} - 
\varphi_\lambda^+(\b) {\partial \over \partial \b}q(t;x,\b) \right]\!.
\end{align*}
Applying similar steps and arguments as above to the integral on $(\b,r)$ in (\ref{last_passage_two_integrals}) gives
\begin{align*}
\int_\b^r \!\! \left(\!\lambda \varphi_\lambda^-(y)  \,  p(t;x,y) 
- \varphi_\lambda^-(y) \,  {\partial \over \partial t}p(t;x,y)\!\!\right) \!dy 
= {1 \over \s(\b)}\left[ \varphi_\lambda^-(\b) {\partial \over \partial \b}q(t;x,\b) 
- q(t;x,\b) {d \varphi_\lambda^-(\b)\over d \b}\right]\!.
\end{align*}
Combining these two expressions into (\ref{last_passage_two_integrals}), while canceling two terms and invoking 
(\ref{wronskian}) and (\ref{greenfunc}) gives the result in (\ref{last-passage-density-time-t-Laplace}): 
\begin{align*}
\lambda {\mathcal L}_T\{f_{g_\b(T)}(t;x) \}(\lambda) &= e^{-\lambda t}q(t;x,\b)
 {1 \over \varphi_\lambda^+(\b)\varphi_\lambda^-(\b)}\cdot {W[\varphi_\lambda^-, \varphi_\lambda^+](\b) \over  \s(\b)}
\nonumber \\
&= e^{-\lambda t}{p(t;x,\b) \over w_\lambda^{-1}\m(\b)\varphi_\lambda^+(\b)\varphi_\lambda^-(\b)}
= e^{-\lambda t}{p(t;x,\b) \over G(\lambda;\b,\b)}.
\end{align*}
Equations (\ref{last-passage-density-time-t-1})--(\ref{last-passage-density-phi}) follow directly by Laplace inversion. 

Equation (\ref{last-passage-density-phi-explicit}), and hence (\ref{last-passage-pdf-explicit}), can be proven in multiple ways. A simple proof follows by showing 
${\mathcal L}_u\{\xi(u;\b)\}(\lambda) := {1 \over \lambda G(\lambda;\b,\b)}$. In particular, using the Laplace transforms ${\mathcal L}_u\{\P_{y}({\Tau}^\pm_\b \le u)\}(\lambda) = {1\over \lambda}
{\varphi_\lambda^\pm(y) \over \varphi_\lambda^\pm(\b)}$ and moving the derivative w.r.t. the initial value $y$ gives
\begin{align*}
{\mathcal L}_u\big\{{\partial \over \partial y} \P_y(\T^+_\b \le u)\big\}(\lambda) 
= {\partial \over \partial y} {\mathcal L}_u\{\P_y(\T^+_\b \le u)\}(\lambda) 
= {1\over \lambda} {\varphi_\lambda^{+\,\prime} (y) \over \varphi_\lambda^+(\b)}\,,\,y\in(\ell,\b).
\end{align*}
Hence, ${\mathcal L}_u\big\{{\partial \over \partial y} \P_y(\T^+_\b \le u)\vert_{y=\b-}\big\}(\lambda)
 = {1\over \lambda} {\varphi_\lambda^{+\,\prime} (\b) \over \varphi_\lambda^+(\b)}$. 
By using similar steps, for $y\in (\b,r)$, we have 
${\mathcal L}_u\big\{{\partial \over \partial y} \P_y(\T^-_\b \le u)\vert_{y=\b+}\big\}(\lambda)
 = {1\over \lambda} {\varphi_\lambda^{-\,\prime} (\b) \over \varphi_\lambda^-(\b)}$. Combining the two expressions gives the Laplace transform of the right-hand side of (\ref{last-passage-density-phi-explicit}):
\begin{align*}
{1 \over \m(\b)\s(\b)}{\mathcal L}_u
&\left\{{\partial \over \partial y} \P_y(\T^+_\b \le u)\vert_{y=\b-} 
- {\partial \over \partial y} \P_y(\T^-_\b \le u)\vert_{y=\b+} \right\}(\lambda) 
\\
&= {1 \over \m(\b)\s(\b)}{1\over \lambda}\left[ {\varphi_\lambda^{+\,\prime} (\b) \over \varphi_\lambda^+(\b)}
-  {\varphi_\lambda^{-\,\prime} (\b) \over \varphi_\lambda^-(\b)} \right]
\\
&=  {1 \over \lambda}{1 \over \m(\b)\varphi_\lambda^+(\b)\varphi_\lambda^-(\b)}{W[\varphi_\lambda^-, \varphi_\lambda^+](\b) \over  \s(\b)} = {1 \over \lambda G(\lambda;\b,\b)}.
\end{align*}
\end{proof}
\noindent {\it Remark}: An alternative direct proof of the density in (\ref{last-passage-pdf-explicit}) follows from (\ref{prop_last_time-t-1-prime}), where 
$$\int_l^r \P_y(\T_\b \le T - t)\,  p(t;x,y) dy = \int_l^\b \P_y(\T^+_\b \le T - t) p(t;x,y) dy + 
\int_\b^r \P_y(\T^-_\b \le T - t) p(t;x,y) dy.$$
Differentiating w.r.t. time $t$ gives
\begin{align*}
f_{g_\b(T)}(t;x) = - \int_l^\b {\partial \over \partial t}\left[\P_y(\T^+_\b \le T - t)\,  p(t;x,y) \right]dy 
 - \int_\b^r {\partial \over \partial t}\left[\P_y(\T^-_\b \le T - t)\,  p(t;x,y) \right]dy.
\end{align*}
We now evaluate the first integral over $(l,\b)$ by using the forward Kolmogorov PDE for $p(t;x,y)$ and the fact that 
$\P_y(\T^+_\b < T - t)$ satisfies the backward Kolmogorov PDE in the variables $(t,y)$. Hence,
\begin{align*}
- \int_l^\b {\partial \over \partial t}\left[\P_y(\T^+_\b \le T - t)\,  p(t;x,y) \right]dy 
&= - \int_l^\b \P_y(\T^+_\b \le T - t) 
{\partial \over \partial y}\left({1 \over \s(y)} {\partial \over \partial y} q(t;x,y) \right) dy
\\
 &
+ \int_l^\b q(t;x,y) 
{\partial \over \partial y}\left({1 \over \s(y)} {\partial \over \partial y}  \P_y(\T^+_\b \le T - t) \right) dy
\end{align*}
Applying integration by parts on both integrals, while cancelling integral terms, gives
\begin{align*}
- \int_l^\b {\partial \over \partial t}\left[\P_y(\T^+_\b \le T - t)\,  p(t;x,y) \right]dy 
&= - \P_y(\T^+_\b \le T - t) \cdot {1 \over \s(y)} {\partial \over \partial y} q(t;x,y) \bigg\vert_{y=l+}^{y=\b-}
\\
 &
+ q(t;x,y) 
\cdot {1 \over \s(y)} {\partial \over \partial y}  \P_y(\T^+_\b \le T - t)\bigg\vert_{y=l+}^{y=\b-}\,.
\end{align*}
If $l$ is regular killing, natural or exit-not-entrance, then we have ${1 \over \s(y)} {\partial \over \partial y} q(t;x,y) < \infty$ and 
$\P_y(\T^+_\b \le T - t) = 0$ in the limit ${y=l+}$. Otherwise, if $l$ is regular reflecting or entrance-not-exit, we have 
$0 < \P_y(\T^+_\b \le T - t) \le 1$ and ${1 \over \s(y)} {\partial \over \partial y} q(t;x,y) = 0$ 
when ${y=l+}$. Hence, for all the boundary cases we have $\P_y(\T^+_\b \le T - t) \cdot {1 \over \s(y)} {\partial \over \partial y} q(t;x,y) \bigg\vert_{y=l+} = 0$. In the limit ${y=\b-}$, we have $\P_y(\T^+_\b \le T - t) = 1$ and 
by continuity ${\partial \over \partial y} q(t;x,y=\b-) = {\partial \over \partial y} q(t;x,y=\b)$. By similar boundary arguments, we have 
$q(t;x,y) \cdot {1 \over \s(y)} {\partial \over \partial y}  \P_y(\T^+_\b \le T - t)\bigg\vert_{y=l+} = 0$. Hence, 
\begin{align*}
- \int_l^\b {\partial \over \partial t}\left[\P_y(\T^+_\b \le T - t)\,  p(t;x,y) \right]dy 
&= - {1 \over \s(\b)} {\partial \over \partial y} q(t;x,y=\b)  
\\
&+ 
q(t;x,\b) 
\cdot {1 \over \s(\b)} {\partial \over \partial y}  \P_y(\T^+_\b \le T - t)\bigg\vert_{y=\b-}\,.
\end{align*}
Repeating the above steps and analysis to the second integral on $(\b,r)$ gives
\begin{align*}
- \int_\b^r {\partial \over \partial t}\left[\P_y(\T^-_\b \le T - t)\,  p(t;x,y) \right]dy 
&= {1 \over \s(\b)} {\partial \over \partial y} q(t;x,y=\b)  
\\
&- 
q(t;x,\b) 
\cdot {1 \over \s(\b)} {\partial \over \partial y}  \P_y(\T^-_\b \le T - t)\bigg\vert_{y=\b+}\,.
\end{align*}
Adding these two terms produces the formula in (\ref{last-passage-pdf-explicit}).

%
%

\subsection{Proof of Proposition~\ref{last-passage-propn-time-t-spectral}}\label{sect_proof_last_hitting_spectral}
The formulae in \eqref{last-passage-pdf-spectral} follows directly by using \eqref{X_full_cdf} within 
\eqref{last-passage-pdf-explicit}, where 
$\s(\b) = {d \over d y} \mathcal{S}(l,y]\vert_{y=\b} = - {d \over d y} \mathcal{S}[y,r)\vert_{y=\b}$, and where termwise differentiation of the series in \eqref{FHT_prop1_2}, \eqref{FHT_prop2_2}, \eqref{FHT_prop_ONO1_2} and \eqref{FHT_prop_ONO2_2}, in the respective cases that $l$ or $r$ is NONOSC or O-NO, produces the respective series in \eqref{last_passage_pdf_spectral_1}--\eqref{last_passage_pdf_spectral_2} given by 
$\eta^\pm(T-t;\b) \equiv  {\partial \over \partial y} \P_y(T-t < \Tau^\pm_\b < \infty)\vert_{y=\b\mp}$. 
We note that the integrals for the O-NO cases arise simply by differentiating the integrands, where 
${\partial \over \partial y} \textup{Im} {\varphi^{\pm}_\lambda (y) \over \varphi^\pm_\lambda (\b)}\big\vert_{y=\b}
= \textup{Im} {\varphi^{\pm\prime}_\lambda (\b) \over \varphi^\pm_\lambda (\b)}$.

%
%
%

\subsection{Derivation of the formula in \eqref{joint_last_doubly_defective_formula}}\label{Proof_doubly_defective} 
To employ \eqref{joint_last_doubly_defective_1} we use \eqref{FHT_cdf_complement} with \eqref{prop_joint_last_time-t-2} and combine the integral terms to give
\begin{eqnarray*}
\P_x(g_\b(T) = 0, X_T = \partial^\dagger) 
= \begin{cases} 
{\mathcal{S}[x,\b] \over \mathcal{S}(l,\b]} 
- {1 \over \mathcal{S}(l,\b]}\int_l^\b  \mathcal{S}[y,\b] p_{\b}^-(T;x,y)  d y&, x < \b,
\\
{\mathcal{S}[\b,x] \over \mathcal{S}[\b,r)} 
- {1 \over \mathcal{S}[\b,r)}\int_\b^r  \mathcal{S}[\b,y] p_{\b}^+(T;x,y)  d y&, x > \b.
\end{cases}
\end{eqnarray*}
It suffices to derive the spectral series for $x \le \b$ since the same steps apply for $x \ge \b$ with $l$ replaced by $r$. 
First note that $l$ is NONOSC, i.e., nonconservative (regular killing or exit-not-entrance). Hence, 
we can directly substitute the discrete spectral series for $p_{\b}^-(T;x,y)$, using \eqref{u_spectral_1}, i.e., 
\eqref{spectral_1_product_eigen} and \eqref{FHT_eigenfunctions_1}, and integrate termwise to obtain
\begin{eqnarray*}
-\int_l^\b  \mathcal{S}[y,\b] p_{\b}^-(T;x,y)  d y 
= \sum_{n=1}^\infty e^{-\lambda_n T} \psi^+_n(x;\b)  
{\varphi^-_{-\lambda_n}\!(\b) \over w_{_{\!-\lambda_n}}} 
\int_l^\b \mathcal{S}[y,\b] \varphi^+_{-\lambda_n}\!(y) \m(y) dy
\end{eqnarray*}
where $\lambda_n = \lambda^-_{n,\b}$. 
Since ${\mathcal G}\varphi^+_{-\lambda_n}\!(y) = -\lambda_n\varphi^+_{-\lambda_n}\!(y)$, then 
$\varphi^+_{-\lambda_n}\!(y) \m(y) = -{1\over \lambda_n}{d \over d y}
\big( {\varphi^+_{-\lambda_n}\!(y) \over s(y)}\big)$. 
Using this relation within the above integral and integrating by parts, 
where ${d \over d y} \mathcal{S}[y,\b] = -\s(y)$, gives
\begin{eqnarray*}
\int_l^\b \mathcal{S}[y,\b] \varphi^+_{-\lambda_n}\!(y) \m(y) dy
= {1\over \lambda_n} 
\bigg[\mathcal{S}(l,\b] {\varphi^{+ \prime}_{-\lambda_n}\!(l+) \over \s(l+)}
- \varphi^{+}_{-\lambda_n}\!(\b) + \varphi^{+}_{-\lambda_n}\!(l+) \bigg] = 
{1\over \lambda_n} \mathcal{S}(l,\b] {\varphi^{+ \prime}_{-\lambda_n}\!(l+) \over \s(l+)}.
\end{eqnarray*}
Substituting this into the above summation, with $\mathcal{S}(l,\b]$ canceling, gives the series in 
\eqref{joint_last_doubly_defective_formula} for $x \le \b$. Note: the eigenvalue equation gives 
$\varphi^{+}_{-\lambda_n}\!(\b) = 0$ and $\varphi^{+}_{-\lambda_n}\!(l+)=0$ since $l$ is either 
regular killing or exit-not-entrance.

An alternate derivation based on \eqref{joint_last_doubly_defective_2} is also instructive. In particular, we have 
\begin{eqnarray*}
\P_x(g_\b(T) = 0, X_T = \partial^\dagger) 
=
\begin{cases} 
\P_x(\T_l^-\!(\b) \!< \!\infty)  - \P_x(T < \T_l^-\!(\b) < \infty) 
= 
{\mathcal{S}[x,\b] \over \mathcal{S}(l,\b]} - \int_T^\infty \! f^-(t;x,l\vert \b) dt, x < \b,&
\\
\P_x(\T_r^+\!(\b) \!< \!\infty) - \P_x(T < \T_r^+\!(\b) < \infty) 
=
{\mathcal{S}[\b,x] \over \mathcal{S}[\b,r)} - \!\int_T^\infty\! f^+(t;x,r\vert \b) dt, x > \b.&
\end{cases} 
\end{eqnarray*} 
The densities $f^-(t;x,l\vert \b):= \frac{\partial}{\partial t} \P_x(\T_l^-\!(\b) \le t)$ 
and $f^+(t;x,r\vert \b):= \frac{\partial}{\partial t} \P_x(\T_r^+\!(\b) \le t)$, 
for first hitting down at $l$ before $\b$ and up at $r$ before $\b$, are given by
\begin{equation*}
f^-(t;x,l\vert \b) = \displaystyle\frac{1}{\s(l+)} \frac{\partial}{\partial y}
\left( \frac{p_\b^-(t;x,y)}{\m(y)}\right)\bigg|_{y=l+}
\,\,\,\,\,\text{and}\,\,\,\,\,
f^+(t;x,r\vert \b)= -\displaystyle\frac{1}{\s(r-)} \frac{\partial}{\partial y}
\left( \frac{p_\b^+(t;x,y)}{\m(y)}\right)\bigg|_{y=r-}.
\end{equation*}
We now only derive the spectral series for $x \le \b$, as the same steps apply for $x \ge \b$ with $l$ replaced by $r$. 
By substituting the series for $p_{\b}^-(t;x,y)$, again given by \eqref{u_spectral_1}, and differentiating termwise at $y=l+$, gives
\begin{equation*}
f^-(t;x,l\vert \b) = -\sum_{n=1}^\infty e^{-\lambda_n t} \psi^+_n(x;\b)  
{\varphi^-_{-\lambda_n}\!(\b) \over w_{_{\!-\lambda_n}}} {\varphi^{+ \prime}_{-\lambda_n}\!(l+) \over \s(l+)}
\end{equation*}
where $\lambda_n = \lambda^-_{n,\b}$ and $\psi^+_n(x;\b)$ given by \eqref{FHT_eigenfunctions_1}. 
Integrating this series termwise, using $\int_T^\infty e^{-\lambda_n t}dt = {1\over \lambda_n}e^{-\lambda_n T}$, gives the series in \eqref{joint_last_doubly_defective_formula} for $x\le \b$.

%
%
%
\subsection{Proof of Theorem \ref{joint_last-passage-propn-time-t}} \label{sect_a2}
One way to prove (\ref{joint_last_passage_pdf_explicit}) is to take ${\partial \over \partial t}$ of 
(\ref{prop_joint_last_time-t-1_alt}) giving:
\begin{align*}
f_{g_\b(T), X_T}(t,z;x)  = 
\begin{cases}
 \int_l^\b  {\partial \over \partial t}\left[ p_{\b}^-(T-t;y,z)\,p(t;x,y) \right] dy
&, z \in (l,\b),
\\
\int_\b^r {\partial \over \partial t}\left[ p_{\b}^+(T-t;y,z) \,p(t;x,y) \right] dy 
&, z \in (\b,r).
\end{cases}
\end{align*}
Consider the case when $z \in (l,\b)$. By using the product derivative rule, the forward Kolmogorov PDE 
for $p(t;x,y)$ and the backward Kolmogorov PDE for $p_{\b}^-(T-t;y,z)$, in the variables $(t,y)$:
\begin{align*}
\int_l^\b  {\partial \over \partial t}\left[ p_{\b}^-(T-t;y,z)\,p(t;x,y) \right] dy
&= \int_l^\b p_{\b}^-(T-t;y,z) {\partial \over \partial y}\left({1 \over \s(y)}
{\partial \over \partial y}  q(t;x,y) \right) dy
\\
& - \int_l^\b q(t;x,y) {\partial \over \partial y}\left({1 \over \s(y)}
{\partial \over \partial y}  p_{\b}^-(T-t;y,z) \right) dy
\end{align*}
where throughout we define $q(t;x,y) \equiv {p(t;x,y) \over \m(y)}$. Applying integration by parts on both integrals, while cancelling integral terms, gives
\begin{align*}
\int_l^\b  {\partial \over \partial t}\!\left[ p_{\b}^-(T-t;y,z)\,p(t;x,y) \right] \! dy 
&= {p_{\b}^-(T-t;y,z) \over \s(y)} {\partial \over \partial y} q(t;x,y) \bigg\vert_{y=l+}^{y=\b-}
- {q(t;x,y) \over \s(y)} {\partial \over \partial y} p_{\b}^-(T-t;y,z)\bigg\vert_{y=l+}^{y=\b-}
\\
&= - {q(t;x,\b) \over \s(\b)} {\partial \over \partial y} p_{\b}^-(T-t;y,z)\bigg\vert_{y=\b-}\,.
\end{align*}
The last equation follows since the upper limit in the first term is zero, i.e., $p_{\b}^-(T-t;\b-,z) = p_{\b}^-(T-t;\b,z) = 0$ 
and ${\partial \over \partial y} q(t;x,\b-)={\partial \over \partial y} q(t;x,\b)$ is bounded, and the two left limits at $y=l+$ are zero for similar reasons as in the proof of Theorem \ref{last-passage-propn-time-t}. In particular, if $l$ is regular killing, natural or exit-not-entrance then 
${1 \over \s(l+)} {\partial \over \partial y} p_{\b}^-(T-t;l+,z)$ and
${1 \over \s(l+)} {\partial \over \partial y} q(t;x,l+)$ are bounded (zero if natural) and $p_{\b}^-(T-t;l+,z)=q(t;x,l+)=0$. If $l$ is regular reflecting or entrance-not-exit then we have 
${1 \over \s(l+)} {\partial \over \partial y} p_{\b}^-(T-t;l+,z) = {1 \over \s(l+)} {\partial \over \partial y} q(t;x,l+) = 0$, with bounded $p_{\b}^-(T-t;l+,z)$ and $q(t;x,l+)$. 
Hence, this proves the formula in (\ref{joint_last_passage_pdf_explicit}) for $z \in (l,\b)$.

For $z \in (\b,r)$, we use similar steps as above where we now have
\begin{align*}
\int_\b^r  {\partial \over \partial t}\!\left[ p_{\b}^+(T-t;y,z)\,p(t;x,y) \right]\!  dy 
&= {p_{\b}^+(T-t;y,z) \over \s(y)} {\partial \over \partial y} q(t;x,y) \bigg\vert_{y=\b+}^{y=r-}
- {q(t;x,y) \over \s(y)} {\partial \over \partial y} p_{\b}^+(T-t;y,z)\bigg\vert_{y=\b+}^{y=r-}
\\
&= {q(t;x,\b) \over \s(\b)} {\partial \over \partial y} p_{\b}^+(T-t;y,z)\bigg\vert_{y=\b+}\,.
\end{align*}
Again, the last equation follows by similar arguments as above where the left boundary $l+$ limit is replaced by the right boundary $r-$. In particular, the lower limit in the first expression and the two limit terms at $y=r-$ vanish. This completes the proof.
\vskip 0.1in
\noindent {\it Remark}: An alternative and instructive proof is to consider the Laplace transform w.r.t. time $T$ (as in the proof of Theorem~\ref{last-passage-propn-time-t}). 
Taking ${\mathcal L}_T\{\cdot\}(\lambda)$ on both sides of the first equation in the above proof gives:
\begin{align*}
&{\mathcal L}_T\{f_{g_\b(T), X_T}(t,z;x) \}(\lambda) = 
\begin{cases}
 \int_l^\b  G_{\b}^-(\lambda;y,z)\,{\partial \over \partial t}\left[ e^{-\lambda t} p(t;x,y) \right] dy
&, z \in (l,\b),
\\
\int_\b^r G_{\b}^+(\lambda;y,z) \,{\partial \over \partial t}\left[ e^{-\lambda t} p(t;x,y) \right] dy 
&, z \in (\b,r),
\end{cases}
\\
&= e^{-\lambda t} 
\begin{cases}
 \int_l^\b  \left[ G_{\b}^-(\lambda;y,z){\partial \over \partial t}p(t;x,y) - \lambda G_{\b}^-(\lambda;y,z) p(t;x,y) \right] dy
&, z \in (l,\b),
\\
\int_\b^r \left[ G_{\b}^+(\lambda;y,z){\partial \over \partial t}p(t;x,y) - \lambda G_{\b}^+(\lambda;y,z) p(t;x,y) \right] dy 
&, z \in (\b,r).
\end{cases}
\end{align*}
Here we note that ${\mathcal L}_T\{p_{\b}^\pm(T-t;y,z)\}(\lambda) = e^{-\lambda t}G_{\b}^\pm(\lambda;y,z)$, with  Green functions $G_{\b}^\pm(\lambda;y,z)$ on the respective intervals $\I_\b^\pm$ (see (\ref{greenfunc_up})-(\ref{greenfunc_down})). Consider the case when $z \in (l,\b)$. By using the forward Kolmogorov PDE for $p(t;x,y)$ and then an integration by parts in the above first integral term gives (note that the integral over $y\in (l,\b)$ is equivalently split into integrals over $y\in(l,z)$ and $y\in(z,\b)$)
\begin{align*}
\int_l^\b G_{\b}^-(\lambda;y,z){\partial \over \partial t}p(t;x,y) dy &= 
\int_l^\b G_{\b}^-(\lambda;y,z) {\partial \over \partial y}\left( {1 \over \s(y)} {\partial \over \partial y}q(t;x,y) \right) dy
\\
&=  {G_{\b}^-(\lambda;y,z) \over \s(y)} {\partial \over \partial y}q(t;x,y)\bigg\vert_{y=l+}^{y=\b} 
- \int_l^\b {1 \over \s(y)} {\partial \over \partial y}q(t;x,y) {\partial \over \partial y}G_{\b}^-(\lambda;y,z)  dy\,.
\end{align*}
Now applying another integration by parts on the last integral,
\begin{align*}
\int_l^\b {1 \over \s(y)} {\partial \over \partial y}q(t;x,y) {\partial \over \partial y}G_{\b}^-(\lambda;y,z)  dy
&= {q(t;x,y) \over \s(y)} {\partial \over \partial y} G_{\b}^-(\lambda;y,z)\bigg\vert_{y=l+}^{y=\b} 
- \int_l^\b \lambda G_{\b}^-(\lambda;y,z) p(t;x,y) dy\,,
\end{align*}
where we used $\mathcal{G}_yG_{\b}^-(\lambda;y,z) \equiv 
{1\over \m(y)} {\partial \over \partial y}\left( {1 \over \s(y)} {\partial \over \partial y}G_{\b}^-(\lambda;y,z) \right) = \lambda G_{\b}^-(\lambda;y,z)$, $y\ne z$. Substituting the above expression into the previous equation gives
\begin{align*}
&\int_l^\b  \!\left[ G_{\b}^-(\lambda;y,z){\partial \over \partial t}p(t;x,y) - \lambda G_{\b}^-(\lambda;y,z) p(t;x,y) \right] dy
\\
&= \! \left[ {G_{\b}^-(\lambda;y,z) \over \s(y)} {\partial \over \partial y}q(t;x,y) 
- {q(t;x,y) \over \s(y)}{\partial \over \partial y}G_{\b}^-(\lambda;y,z)  \right]_{y=l+}^{y=\b-} 
\!= - {q(t;x,\b) \over \s(\b)}{\partial \over \partial y}G_{\b}^-(\lambda;y,z)\bigg\vert_{y=\b-} .
\end{align*}
The last expression follows since $G_{\b}^-(\lambda;\b,z) = 0$ and 
${1 \over \s(\b)}{\partial \over \partial y}q(t;x,\b)$ is bounded. 
By the same arguments given in the proof of Theorem~\ref{last-passage-propn-time-t}, 
the two limit terms at $y=l+$ are also zero. Hence, for $z \in (l,\b)$,
\begin{align*}
{\mathcal L}_T\{f_{g_\b(T), X_T}(t,z;x) \}(\lambda) =  - e^{-\lambda t} {q(t;x,\b) \over \s(\b)}{\partial \over \partial y}G_{\b}^-(\lambda;y,z)\bigg\vert_{y=\b-}.
\end{align*}
By employing similar steps as above, for $z \in (\b,r)$, we obtain
\begin{align*}
{\mathcal L}_T\{f_{g_\b(T), X_T}(t,z;x) \}(\lambda) =  e^{-\lambda t} {q(t;x,\b) \over \s(\b)}{\partial \over \partial y}G_{\b}^+(\lambda;y,z)\bigg\vert_{y=\b+}.
\end{align*}
Finally, a Laplace inversion of the above two expressions recovers the joint PDF in (\ref{joint_last_passage_pdf_explicit}) since
\begin{align*}
{\mathcal L}^{-1}_\lambda\{ e^{-\lambda t} {\partial \over \partial y}G_{\b}^\pm(\lambda;y,z) \}(T) 
= {\partial \over \partial y}{\mathcal L}^{-1}_\lambda\{ G_{\b}^\pm(\lambda;y,z) \}(T-t) 
= {\partial \over \partial y}p_\b^\pm(T- t;y,z).
\end{align*}

%
%
\subsection{Proof of Proposition \ref{joint_last-passage-propn-time-t-new-formula}}\label{spectral_prop_joint_last_proof}
For $z\in (\b,r)$, where $r$ is NONOSC, we make use of \eqref{u_spectral_2} and \eqref{spectral_2_product_eigen} 
for $p_{\b}^+(T-t;y,z)$, $y \in (\b,r)$, and differentiate the series termwise w.r.t. $y=\b+$:
\begin{align*}
{1\over \s(\b)}{\partial \over \partial y} p_{\b}^+(T-t;y,z)\bigg\vert_{y=\b+}
= -\m(z)\sum_{n=1}^\infty e^{-\lambda_n t}\psi_n^-(z;\b)
\frac{\varphi^+_{-\lambda_n}(\b) \varphi^{-\,\prime}_{-\lambda_n}(\b)}{\s(\b) w_{_{-\lambda_n}}}
= \m(z)\sum_{n=1}^\infty e^{-\lambda_n t}\psi_n^-(z;\b).
\end{align*}
Here we used \eqref{FHT_eigenfunctions_2} and combined \eqref{wronskian} with the eigenvalue equation, 
$\varphi^-_{-\lambda_n}(\b) = 0$, i.e., $\lambda_n \equiv \lambda^+_{n,\b}$, which implies $-\frac{\varphi^+_{-\lambda_n}(\b) \varphi^{-\,\prime}_{-\lambda_n}(\b)}{\s(\b) w_{_{-\lambda_n}}}= \frac{W[\varphi^-_{-\lambda_n},\varphi^+_{-\lambda_n}](\b)}{\s(\b) w_{-\lambda_n}} \equiv 1$. 

Similarly, for $z\in (l,\b)$ and $r$ as NONOSC, we use \eqref{u_spectral_1} and \eqref{spectral_1_product_eigen} 
for $p_{\b}^-(T-t;y,z)$, $y\in (l,\b)$, and differentiate termwise w.r.t. $y=\b-$:
\begin{align*}
-{1\over \s(\b)}{\partial \over \partial y} p_{\b}^-(T-t;y,z)\bigg\vert_{y=\b-}
= \m(z)\sum_{n=1}^\infty e^{-\lambda_n t}\psi_n^+(z;\b)
\frac{\varphi^-_{-\lambda_n}(\b) \varphi^{+\,\prime}_{-\lambda_n}(\b)}{\s(\b) w_{_{-\lambda_n}}}
= \m(z)\sum_{n=1}^\infty e^{-\lambda_n t}\psi_n^+(z;\b),
\end{align*}
where we used \eqref{FHT_eigenfunctions_1} and
$\frac{\varphi^-_{-\lambda_n}(\b) \varphi^{+\,\prime}_{-\lambda_n}(\b)}{\s(\b) w_{_{-\lambda_n}}} 
= \frac{W[\varphi^-_{-\lambda_n},\varphi^+_{-\lambda_n}](\b)}{\s(\b) w_{-\lambda_n}} \equiv 1$ since 
$\varphi^+_{-\lambda_n}(\b) = 0$, i.e., $\lambda_n \equiv \lambda^-_{n,\b}$.

For the O-NO cases, we make use of \eqref{u_spectral_2_b}. The derivation of the discrete summation portion (if nonempty) follows exactly as above. Hence, we have left to derive the integral portions. 
The respective expressions for $\pm{1\over \s(\b)}{\partial \over \partial y} p_{\b}^\pm(T-t;y=\b\pm,z)$ 
corresponding to only the integral portions of $p_{\b}^\pm(T-t;y,z)$, given by \eqref{u_spectral_2_b} for killing level $k$ and time $T-t$, 
follow immediately from the identity:
\begin{eqnarray}\label{Green_derivative_2}
\pm \displaystyle\frac{1}{\s(\b)} \frac{\partial}{\partial y}
G_\b^\pm(\lambda;y=\b\pm,z) = \m(z){\varphi_\lambda^\mp(z) \over \varphi_\lambda^\mp(\b)}.
\end{eqnarray}
This is derived in the same fashion as \eqref{Laplace_FHT_Green_derivative} above. 
Employing \eqref{Green_derivative_2} for $\lambda = \epsilon e^{-i\pi}$ within the respective integrals over 
$[\Lambda_\pm,\infty)$ then gives the integral portions in \eqref{joint_last_passage_pdf_spectral_1}--\eqref{joint_last_passage_pdf_spectral_2}. This completes the derivation.

%
%

\subsection{Derivation of the formula in \eqref{joint_last_ab_doubly_defective_new}}\label{Proof_doubly_defective_ab} 
The derivation follows similar steps as in \ref{Proof_doubly_defective}. 
To employ the first equation line in \eqref{joint_last_ab_doubly_defective_new}, we identify  
$\P_x(g^{(a,b)}_{\b}(T) = 0)$ as $\P_x(\T^+_\b (a) > T)$ or $\P_x(\T^-_\b (b) > T)$, 
as in \eqref{prop_last_time-CDF_discrete_killed_ab}, and then adopt  
\eqref{FHT_cdf_complement_tau_ab_1}, \eqref{FHT_cdf_complement_tau_ab_2}, \eqref{hit_ab_lemma_1} and \eqref{hit_ab_lemma_2}, in combination with \eqref{prop_joint_last_ab_pmf}. Combining integrals gives
\begin{eqnarray*}
\P_x(g^{(a,b)}_{\b}(T) = 0, X_{(a,b), T} = \partial^\dagger) 
= \begin{cases} 
{\mathcal{S}[x,\b] \over \mathcal{S}[a,\b]} 
- {1 \over \mathcal{S}[a,\b]}\int_a^\b  \mathcal{S}[y,\b] p_{(a,\b)}(T;x,y)  d y&, x \in (a,\b),
\\
{\mathcal{S}[\b,x] \over \mathcal{S}[\b,b]} 
- {1 \over \mathcal{S}[\b,b]}\int_\b^b  \mathcal{S}[\b,y] p_{(\b,b)}(T;x,y)  d y&, x \in (\b,b).
\end{cases}
\end{eqnarray*}
We only derive the series for $x \in (a,\b)$ since the series for $x \in (\b,b)$ follows by similar steps. By adopting \eqref{u_spectral_3} for the spectral series of 
$p_{(a,\b)}(T;x,y)$ and integrating termwise gives
\begin{eqnarray*}
-\int_a^\b  \mathcal{S}[y,\b] p_{(a,\b)}(T;x,y)  d y 
= \sum_{n=1}^\infty e^{-\lambda_n T} \psi^+_n(x;a,\b)  
{1 \over w_{_{\!-\lambda_n}}} 
\int_a^\b \mathcal{S}[y,\b] \phi(\b,y;{-\lambda_n}) \m(y) dy
\end{eqnarray*}
where $\lambda_n \equiv \lambda^{(a,\b)}_n$ and $\psi^+_n(x;a,\b)$ defined in \eqref{FHT_eigenfunctions_ab}. 

Since $\phi(\b,y;{-\lambda_n})$ is an eigenfunction, i.e., 
${\mathcal G}\phi(\b,y;{-\lambda_n}) = -\lambda_n\phi(\b,y;{-\lambda_n})$, then 
$\phi(\b,y;{-\lambda_n}) \m(y) = -{1\over \lambda_n}{\partial \over \partial y}
\big( {1 \over s(y)} {\partial \over \partial y}\phi(\b,y;{-\lambda_n})\big)$. Using this 
relation within the above last integral and integrating by parts, 
where ${\partial \over \partial y} \mathcal{S}[y,\b] = -\s(y)$, $\mathcal{S}[\b,\b] = \phi(\b,\b;{-\lambda_n}) = 0$ and  
$\phi(\b,a;{-\lambda_n}) = -\phi(a,\b;{-\lambda_n}) = 0$ (by the eigenvalue equation), gives
\begin{eqnarray*}
\int_a^\b \mathcal{S}[y,\b] \phi(\b,y;{-\lambda_n}) \m(y) dy
= - {1\over \lambda_n} {1 \over \s(a)} \phi^\prime(a,\b;{-\lambda_n})
\cdot \mathcal{S}(a,\b],
\end{eqnarray*}
where $\phi^\prime(a,\b;{-\lambda_n}) \equiv {\partial \over \partial y}\phi(y,\b;{-\lambda_n})\big\vert_{y=a} 
= \varphi^+_{-\lambda_n}(\b)\varphi^{-\,\prime}_{-\lambda_n}(a) 
- \varphi^-_{-\lambda_n}(\b)\varphi^{+\,\prime}_{-\lambda_n}(a)$. 
Substituting this into the above series gives
\begin{eqnarray*}
\P_x(g^{(a,b)}_{\b}(T) = 0, X_{(a,b), T} = \partial^\dagger) 
 = {\mathcal{S}[x,\b] \over \mathcal{S}[a,\b]} 
- \sum_{n=1}^\infty {e^{-\lambda_n T} \over \lambda_n}   
{\psi^+_n(x;a,\b) \phi^\prime(a,\b;{-\lambda_n}) \over w_{_{\!-\lambda_n}} \s(a)}.
\end{eqnarray*}
This series is simplified to its equivalent one in \eqref{joint_last_ab_doubly_defective_new}. In particular, 
using \eqref{FHT_eigenfunctions_ab} and the eigenvalue equation, i.e.,  
$\varphi^+_{-\lambda_n}(\b) 
= \varphi^{+}_{-\lambda_n}(a) {\varphi^{-}_{-\lambda_n}(\b) \over \varphi^{-}_{-\lambda_n}(a)}$,  
$\varphi^+_{-\lambda_n}(a) 
= \varphi^{+}_{-\lambda_n}(\b) {\varphi^{-}_{-\lambda_n}(a) \over \varphi^{-}_{-\lambda_n}(\b)}$, 
$\varphi^-_{-\lambda_n}(\b) 
= \varphi^{-}_{-\lambda_n}(a) {\varphi^{+}_{-\lambda_n}(\b) \over \varphi^{+}_{-\lambda_n}(a)}$ and 
$\varphi^-_{-\lambda_n}(a) 
= \varphi^{-}_{-\lambda_n}(\b) {\varphi^{+}_{-\lambda_n}(a) \over \varphi^{+}_{-\lambda_n}(\b)}$, while factoring and canceling out these ratios in the product gives
\begin{eqnarray*}
{\psi^+_n(x;a,\b) \phi^\prime(a,\b;{-\lambda_n}) \over w_{_{\!-\lambda_n}} \s(a)}
= -{\phi(x,\b;{-\lambda_n}) \over \Delta(a,\b;\lambda_n)}
{W[\varphi^-_{-\lambda_n},\varphi^+_{-\lambda_n}](a) \over w_{_{\!-\lambda_n}} \s(a)}
= -{\phi(x,\b;{-\lambda_n}) \over \Delta(a,\b;\lambda_n)} = \psi^-_n(x;a,\b).
\end{eqnarray*}

%
%
%

\subsection{Proof of Theorem \ref{last-passage-propn-time-t-kill-ab}} \label{sect_a3}
The proof follows closely that of Theorem \ref{last-passage-propn-time-t}. 
In particular, taking the Laplace transform w.r.t. $T$ on both sides of (\ref{prop_last_time-CDF_2_killed_ab}) gives
\begin{align*}
\lambda {\cal L}_T \{ \P_x(g^{(a,b)}_{\b}(T) \le t) \}(\lambda) 
= 1 &- \int_a^\b \lambda {\cal L}_T \{ \P_y(\T^+_\b (a) \le T - t)\}(\lambda) \,  p_{(a,b)}(t;x,y) dy 
\nonumber \\
&- \int_\b^b \lambda {\cal L}_T \{ \P_y(\T^-_\b (b) \le T - t) \}(\lambda) \,  p_{(a,b)}(t;x,y) dy \,.
\end{align*}
The above Laplace transforms are given simply by using the known relations for the Laplace transforms of the CDFs of the first hitting times $\T^+_\b (a)$ and $\T^-_\b (b)$ as ratios of cylinder functions, i.e., 
\begin{align}\label{Laplace_last_ab_1}
\lambda {\cal L}_T \{ \P_y(\T^+_\b (a) \le T - t)\}(\lambda) 
&= e^{-\lambda t}\lambda{\mathcal L}_u\{\P_y(\T^+_\b (a)  \le u)\}(\lambda) 
= e^{-\lambda t} {\phi(a,y;\lambda) \over \phi(a,\b;\lambda)}\,,
\\
\label{Laplace_last_ab_2}
\lambda {\cal L}_T \{ \P_y(\T^-_\b (b) \le T - t) \}(\lambda) 
&= e^{-\lambda t}\lambda{\mathcal L}_u\{\P_y(\T^-_\b (b) \le u)\}(\lambda) 
= e^{-\lambda t} {\phi(y,b;\lambda) \over \phi(\b,b;\lambda)}\,.
\end{align}
Substituting these expressions into the above integrals gives
\begin{align*}
\lambda {\cal L}_T \{ \P_x(g^{(a,b)}_{\b}(T) \le t) \}(\lambda) 
= 1 &- {e^{-\lambda t} \over \phi(a,\b;\lambda)}\int_a^\b  \phi(a,y;\lambda) \,  p_{(a,b)}(t;x,y) dy 
\nonumber \\
&
- {e^{-\lambda t} \over \phi(\b,b;\lambda)}\int_\b^b  \phi(y,b;\lambda) \, p_{(a,b)}(t;x,y) dy \,. 
\end{align*}
Differentiating w.r.t. $t$, where ${\partial \over \partial t}\P_x(g^{(a,b)}_{\b}(T) \le t) = f_{g^{(a,b)}_{\b}(T)}(t;x)$, gives the analogue of (\ref{last_passage_two_integrals}):
\begin{align}\label{Laplace_last_marg_ab}
\lambda {\cal L}_T \{ f_{g^{(a,b)}_{\b}(T)}(t;x) \}(\lambda) 
&= e^{-\lambda t}
\left[{1 \over \phi(a,\b;\lambda)} \int_a^\b \left(\lambda \phi(a,y;\lambda)  \,  p_{(a,b)}(t;x,y) 
- \phi(a,y;\lambda) \,  {\partial \over \partial t}p_{(a,b)}(t;x,y)\right)\! dy \right.
\nonumber \\
&\left. +
{1 \over \phi(\b,b;\lambda)} \int_\b^r  \left(\lambda \phi(y,b;\lambda)  \,  p_{(a,b)}(t;x,y) 
- \phi(y,b;\lambda) \,  {\partial \over \partial t}p_{(a,b)}(t;x,y)\right)\! dy \right]\,.
\end{align}
As in the proof of Theorem~\ref{last-passage-propn-time-t}, we use the forward Kolmogorov PDE within the two integrals involving ${\partial \over \partial t}p_{(a,b)}(t;x,y)$ and apply integration by parts. For the first integral in 
(\ref{Laplace_last_marg_ab}) we have
\begin{align*}
&\int_a^\b  \phi(a,y;\lambda) \,  {\partial \over \partial t}p_{(a,b)}(t;x,y) \,dy 
= \int_a^\b  \phi(a,y;\lambda) {\partial \over \partial y}\left({1 \over \s(y)} {\partial \over \partial y} q_{(a,b)}(t;x,y) \right)  \,dy
\nonumber \\
&= {\phi(a,y;\lambda) \over \s(y)} {\partial \over \partial y}q_{(a,b)}(t;x,y)\bigg\vert_{y=a}^{y = \b}
- \int_a^\b  {\partial \over \partial y}q_{(a,b)}(t;x,y)\left({1 \over \s(y)} {\partial \over \partial y}\phi(a,y;\lambda) \right) dy
\end{align*}
where $q_{(a,b)}(t;x,y) := p_{(a,b)}(t;x,y) / \m(y)$. 
At this point we use another integration by parts on the latter integral while using the boundary condition $\phi(a,a;\lambda) = 0$ and the fact that $\phi(a,y;\lambda)$ satisfies (\ref{eq:phi}) in $y$, i.e., ${1 \over \m(y)}{\partial \over \partial y}\bigg({1 \over \s(y)} {\partial \over \partial y}\phi(a,y;\lambda)\bigg) = \lambda \phi(a,y;\lambda)$. Combining all terms gives the first integral in \eqref{Laplace_last_marg_ab}:
\begin{align*}
&\int_a^\b \left(\lambda \phi(a,y;\lambda)  \, p_{(a,b)}(t;x,y) 
- \phi(a,y;\lambda) \,  {\partial \over \partial t}p_{(a,b)}(t;x,y)\right) dy
\nonumber \\
&= {1 \over \s(\b)} q_{(a,b)}(t;x,\b) {\partial \over \partial y}\phi(a,y=\b-;\lambda)
-  {\phi(a,\b;\lambda) \over \s(\b)}{\partial \over \partial \b} q_{(a,b)}(t;x,\b)\,.
\end{align*}
Applying similar steps to the second integral in (\ref{Laplace_last_marg_ab}), where $\phi(b,b;\lambda)=0$ and 
$\phi(y,b;\lambda)$ satisfies (\ref{eq:phi}) in $y$, gives
\begin{align*}
&\int_\b^b \left(\lambda \phi(y,b;\lambda)  \,  p_{(a,b)}(t;x,y) 
- \phi(y,b;\lambda) \, {\partial \over \partial t}p_{(a,b)}(t;x,y)\right) dy 
\nonumber \\
&= {\phi(\b,b;\lambda) \over \s(\b)}{\partial \over \partial \b} q_{(a,b)}(t;x,\b)
- {1 \over \s(\b)} q_{(a,b)}(t;x,\b) {\partial \over \partial y}\phi(y=\b+,b;\lambda) \,.
\end{align*} 
Substituting the above two expressions into (\ref{Laplace_last_marg_ab}), and canceling terms in 
${\partial \over \partial \b} q_{(a,b)}(t;x,\b)$, gives
\begin{align*}
&{\cal L}_T \{ f_{g^{(a,b)}_{\b}(T)}(t;x) \}(\lambda) 
= {q_{(a,b)}(t;x,\b) \over \s(\b)} {e^{-\lambda t} \over \lambda}
\bigg[{{\partial \over \partial y}\phi(a,y=\b-;\lambda) \over \phi(a,\b;\lambda)} 
- {{\partial \over \partial y}\phi(y=\b+,b;\lambda) \over \phi(\b,b;\lambda)} \bigg]\,.
\end{align*}
Note that ${\partial \over \partial y}\phi(y=\b+,b;\lambda) 
= {\partial \over \partial \b}\phi(\b,b;\lambda)$ and ${\partial \over \partial y}\phi(a,y=\b-;\lambda) = {\partial \over \partial \b}\phi(a,\b;\lambda)$ since the derivatives of the cylinder functions at any interior point $\b\in (l,r)$ are continuous. 
Now using (\ref{Laplace_last_ab_1}) and (\ref{Laplace_last_ab_2}) we have
\begin{align*}\label{Laplace_last_ab_1_derivative}
&{\cal L}_T \{ {\partial \over \partial y} \P_y(\T^+_\b (a) \le T - t)\}(\lambda) \big\vert_{y=\b-}
= {e^{-\lambda t} \over \lambda} {{\partial \over \partial \b}\phi(a,\b;\lambda) \over \phi(a,\b;\lambda)},
\nonumber \\
&{\cal L}_T \{ {\partial \over \partial y} \P_y(\T^-_\b (b) \le T - t)\}(\lambda) \big\vert_{y=\b+}
= {e^{-\lambda t} \over \lambda} {{\partial \over \partial \b}\phi(\b,b;\lambda) \over \phi(\b,b;\lambda)}.
\end{align*}
Hence, (\ref{last-passage-pdf-explicit-kill-ab}) follows. Moreover, the first line in (\ref{last-passage-density-phi-killed}) involving the Green function holds by a simple manipulation of the above expression in ${\cal L}_T \{ f_{g^{(a,b)}_{\b}(T)}(t;x) \}(\lambda)$ where
\begin{align*}
{\cal L}_u\{\xi_{(a,b)}(u;\b)\}(\lambda) 
&= {1 \over \m(\b) \s(\b)}{1 \over \lambda}
\bigg[{{\partial \over \partial \b}\phi(a,\b;\lambda) \over \phi(a,\b;\lambda)} 
- {{\partial \over \partial \b}\phi(\b,b;\lambda) \over \phi(\b,b;\lambda)} \bigg]
\nonumber \\
&= {1 \over \m(\b) \s(\b)}{1 \over \lambda}
{\phi(\b,b;\lambda){\partial \over \partial \b}\phi(a,\b;\lambda) 
- \phi(a,\b;\lambda) {\partial \over \partial \b}\phi(\b,b;\lambda) \over \phi(a,\b;\lambda) \phi(\b,b;\lambda)}
\nonumber \\
&= {1 \over \lambda}{w_\lambda \phi(a,b;\lambda) \over \m(\b)\phi(a,\b;\lambda) \phi(\b,b;\lambda)}
= {1 \over \lambda \,G_{(a,b)}(\lambda;\b,\b)}\,.
\end{align*}
The last line follows from \eqref{greenfunc_double}, where we used the Wronskian:  
$W[\phi(\cdot,b;\lambda), \phi(a,\cdot;\lambda)](\b) = w_\lambda \s(\b)\phi(a,b;\lambda)$.

%
%

\subsection{Proof of Theorem \ref{joint_last-passage-propn-time-t_ab}} \label{sect_a4}
The proof follows the same steps as in Theorem~\ref{joint_last-passage-propn-time-t}. 
Noting \eqref{last-passage-CDF-time_0_to_t_kill_ab} and taking ${\partial \over \partial t}$ on both sides of (\ref{prop_joint_last_time_kill_ab-t-3}), with l.h.s. given by (\ref{joint_PDF_last_ab_defn}), gives
\begin{align*}
f_{g^{(a,b)}_{\b}(T), X_{(a,b),T}}(t,z;x) = 
\begin{cases}
\int_a^\b {\partial \over \partial t}\left[p_{(a,\b)}(T-t;y,z)  p_{(a,b)}(t;x,y) \right] dy\,,\,\, z \in (a,\b),
\\
\int_\b^b {\partial \over \partial t}\left[p_{(\b,b)}(T-t;y,z)  p_{(a,b)}(t;x,y) \right] dy\,,\,\, z \in (\b,b).
\end{cases}
\end{align*}
Consider the case when $z \in (a,\b)$. By using the forward Kolmogorov PDE 
for $p_{(a,b)}(t;x,y)$ and the backward Kolmogorov PDE for $p_{(a,\b)}(T-t;y,z)$, in the variables $(t,y)$:
\begin{align*}
\int_a^\b {\partial \over \partial t}\!\left[p_{(a,\b)}(T-t;y,z)  p_{(a,b)}(t;x,y) \right] \!dy
&= \int_a^\b p_{(a,\b)}(T-t;y,z) {\partial \over \partial y}\!\left({1 \over \s(y)}
{\partial \over \partial y}  q_{(a,b)}(t;x,y) \!\!\right) \! dy
\\
& - \int_a^\b q_{(a,b)}(t;x,y) {\partial \over \partial y}\!\left({1 \over \s(y)}
{\partial \over \partial y}  p_{(a,\b)}(T-t;y,z) \!\!\right)\! dy
\end{align*}
where $q_{(a,b)}(t;x,y) \equiv {p_{(a,b)}(t;x,y) \over \m(y)}$. 
Applying integration by parts on both integrals, while cancelling identical integral terms, gives
\begin{align*}
\int_a^\b {\partial \over \partial t}\left[p_{(a,\b)}(T-t;y,z)  p_{(a,b)}(t;x,y) \right] dy 
&= {p_{(a,\b)}(T-t;y,z) \over \s(y)} {\partial \over \partial y} q_{(a,b)}(t;x,y) \bigg\vert_{y=a+}^{y=\b-}
\\
&\,\,\,- {q_{(a,b)}(t;x,y) \over \s(y)} {\partial \over \partial y} p_{(a,\b)}(T-t;y,z)\bigg\vert_{y=a+}^{y=\b-}
\\
&= - {q_{(a,b)}(t;x,\b) \over \s(\b)} {\partial \over \partial y} p_{(a,\b)}(T-t;y,z)\bigg\vert_{y=\b-}\,.
\end{align*}
This is the expression in (\ref{joint_last_passage_pdf_explicit_ab}) for $z \in (a,\b)$. 
The reduction to the last equation line follows by applying the killing boundary conditions, 
$p_{(a,\b)}(T-t;a,z) = p_{(a,\b)}(T-t;\b,z) = 0$ and $q_{(a,b)}(t;x,a) = 0$, and the boundedness of the derivatives at $y=a+$ and $y=\b$. The derivation of the expression in (\ref{joint_last_passage_pdf_explicit_ab}) for $z \in (\b,b)$ follows using similar steps with killing boundary $b$ in the place of $a$.

%
%

\subsection{Proof of Proposition \ref{joint_last-passage-propn-time-t_ab-new-version}} \label{proof_joint_last_spec_ab}
The series in \eqref{joint_last_passage_pdf_explicit_ab_new} follows directly from \eqref{joint_last_passage_pdf_explicit_ab} in Theorem~\ref{joint_last-passage-propn-time-t_ab} upon using \eqref{FHT_prop3_up_1}--\eqref{FHT_prop3_down_1}. Alternatively, from \eqref{joint_last_passage_pdf_explicit_ab}, we simply differentiate termwise the series for $p_{(a,\b)}(T-t;y,z)$, using \eqref{u_spectral_3} and the first expression in \eqref{spectral_3_product_eigen}, where 
$\lambda_n\equiv \lambda_n^{(a,\b)}$:
\begin{align*}
-{1\over \s(\b)}{\partial \over \partial y} p_{(a,\b)}(T-t;y,z)\bigg\vert_{y=\b-}
= \m(z)\sum_{n=1}^\infty e^{-\lambda_n (T-t)}
{\varphi^+_{-\lambda_n}\!(\b){\partial\over \partial \b}\phi(a,\b;-\lambda_n) 
\over w_{_{-\lambda_n}} \varphi^+_{-\lambda_n}\!(a)\s(\b)}
\bigg[{-\phi(a,z;-\lambda_n) \over \Delta(a,\b;\lambda_n)}\bigg],
\end{align*}
where throughout we write ${\partial\over \partial \b}\phi(a,\b;-\lambda_n)
\equiv {\partial\over \partial y}\phi(a,y=\b;-\lambda_n)$. 
This is equivalent to the series in \eqref{joint_last_passage_pdf_explicit_ab_new} for $z\in (a,\b)$ upon using the definition for $\psi_n^+(z;a,\b)$ in \eqref{FHT_eigenfunctions_ab} and since 
${\varphi^+_{-\lambda_n}\!(\b){\partial\over \partial \b}\phi(a,\b;-\lambda_n) \over 
w_{_{-\lambda_n}} \varphi^+_{-\lambda_n}\!(a)\s(\b)}=1$. 
The latter relation follows by the Wronskian w.r.t. $\b$, $W[\varphi^+_{\lambda}(\b), \phi(a,\b;\lambda)] = w_{_{\lambda}} \varphi^+_{\lambda}\!(a)\s(\b)$ 
and then setting $\lambda = -\lambda_n$. Moreover, for $\lambda = -\lambda_n$, the Wronskian is also equivalent to    
$\varphi^+_{-\lambda_n}\!(\b){\partial\over \partial \b}\phi(a,\b;-\lambda_n)$ due to the eigenvalue equation, 
$\phi(a,\b;-\lambda_n)=0$.

For $z\in (\b,b)$, we differentiate termwise the spectral series for $p_{(\b,b)}(T-t;y,z)$, where $\lambda_n\equiv \lambda_n^{(\b,b)}$: 
\begin{align*}
{1\over \s(\b)}{\partial \over \partial y} p_{(\b,b)}(T-t;y,z)\bigg\vert_{y=\b+}
= \m(z)\sum_{n=1}^\infty e^{-\lambda_n (T-t)}
{\varphi^+_{-\lambda_n}\!(\b){\partial\over \partial \b}\phi(\b,b;-\lambda_n) 
\over w_{_{-\lambda_n}} \varphi^+_{-\lambda_n}\!(b)\s(\b)}
{\phi(z,b;-\lambda_n) \over \Delta(\b,b;\lambda_n)}.
\end{align*}
This recovers \eqref{joint_last_passage_pdf_explicit_ab_new} upon using the definition for $\psi_n^-(z;\b,b)$ in \eqref{FHT_eigenfunctions_ab} and ${\varphi^+_{-\lambda_n}\!(\b){\partial\over \partial \b}\phi(\b,b;-\lambda_n) \over w_{_{-\lambda_n}} \varphi^+_{-\lambda_n}\!(b)\s(\b)} = -1$. This follows from the Wronskian w.r.t. $\b$, $W[\varphi^+_{\lambda}(\b), \phi(\b,b;\lambda)] 
= - w_{_{\lambda}} \varphi^+_{\lambda}\!(b)\s(\b)$ and setting $\lambda = -\lambda_n$. Due to the eigenvalue equation, $\phi(\b,b;-\lambda_n)=0$, the Wronskian is also equal to 
$\varphi^+_{-\lambda_n}\!(\b){\partial\over \partial \b}\phi(\b,b;-\lambda_n)$.

The series in \ref{last_passage_pdf_discrete_spec} follows by appropriately adopting \eqref{X_full_cdf_tau_ab_1}, \eqref{X_full_cdf_tau_ab_2}, \eqref{FHT_prop3_up_2} and \eqref{FHT_prop3_down_2} within 
\eqref{last-passage-pdf-explicit-kill-ab}, where 
${\partial \over \partial y}\mathcal{S}[a,y]\vert_{y=\b} = - {\partial \over \partial y}\mathcal{S}[y,b]\vert_{y=\b} = \s(\b)$, and applying termwise differentiation of the series which give rise to \eqref{psi_hat_derivatives}. In particular, by the definition in \eqref{FHT_eigenfunctions_ab},
\begin{align*}
\hat\psi^+_{n}(a,\b) &= {1 \over \s(\b)}{1 \over \Delta(a,\b;\lambda_n)}
{\partial \over \partial y}\phi(a,y; -\lambda_n)\vert_{y=\b} 
= {1 \over \Delta(a,\b;\lambda_n)}
{\varphi^-_{-\lambda_n}(a) \varphi^{+\,\prime}_{-\lambda_n}(\b) - 
\varphi^+_{-\lambda_n}(a) \varphi^{-\,\prime}_{-\lambda_n}(\b) \over \s(\b)}
\\
&= {1 \over \Delta(a,\b;\lambda_n)}{\varphi^+_{-\lambda_n}\!(a) \over \varphi^+_{-\lambda_n}\!(\b)}
{W[\varphi^-_{-\lambda_n},\varphi^+_{-\lambda_n}](\b) \over \s(\b)}
= { w_{-\lambda_n}\over \Delta(a,\b; \lambda_n)}{\varphi^+_{-\lambda_n}\!(a) \over \varphi^+_{-\lambda_n}\!(\b)}.
\end{align*}
In the second line we used the eigenvalue equation, i.e.,  
$\varphi^-_{-\lambda_n}(a) = \varphi^{-}_{-\lambda_n}(\b) 
{\varphi^{+}_{-\lambda_n}(a) \over \varphi^{+}_{-\lambda_n}(\b)}$ and 
$\varphi^+_{-\lambda_n}(a) = \varphi^{+}_{-\lambda_n}(\b) 
{\varphi^{-}_{-\lambda_n}(a) \over \varphi^{-}_{-\lambda_n}(\b)}$, where 
${\varphi^{-}_{-\lambda_n}(a) \over \varphi^{-}_{-\lambda_n}(\b)}
= {\varphi^{+}_{-\lambda_n}(a) \over \varphi^{+}_{-\lambda_n}(\b)}$, and the Wronskian relation. A similar analysis produces the expression for $\hat\psi^-_{n}(\b,b)$ in \eqref{psi_hat_derivatives}.

Alternatively, \eqref{last_passage_pdf_discrete_spec} can be shown by performing the Laplace inverse in 
\eqref{last-passage-density-phi-killed}. Let $H(\lambda):= \m(\b){1 \over \lambda G_{(a,b)}(\lambda;\b,\b)}$. Using 
\eqref{greenfunc_double} with $x=y=\b$ gives
\begin{equation*}
{\mathcal L}_\lambda^{-1}\left\{H(\lambda)\right\}(u) = 
{\mathcal L}_\lambda^{-1}\left\{{1 \over \lambda} 
\frac{w_\lambda \phi(a,b;\lambda)}{\phi(a, \b;\lambda)  \phi(\b,b;\lambda)} \right\}(u).
\end{equation*}
From the analytic properties of $G_{(a,b)}$, $H(\lambda)$ is meromorphic in $\lambda$ with a simple pole at $\lambda=0$ and simple poles at the two sets of eigenvalues, 
$\lambda = -\lambda^{(a,\b)}_n$ and $\lambda = -\lambda^{(\b,b)}_n$, corresponding to the zeros in $\lambda$ of 
$\phi(a, \b;\lambda)$ and $\phi(\b,b;\lambda)$, respectively. For $\lambda=0$ we have 
$
{1\over \m(\b)} G_{(a,b)}(0;\b,\b) = {\mathcal{S}[a,\b]\mathcal{S}[\b,b] \over \mathcal{S}[a,b]} 
$, 
which follows simply from the scale function solution to \eqref{eq:phi} when $\lambda=0$. 
Hence, the residue at $\lambda=0$ is
\begin{equation*}
{\rm Res}\{H(\lambda);\lambda = 0\} = {1\over G_{(a,b)}(0;\b,\b)} 
= {\mathcal{S}[a,b] \over \mathcal{S}[a,\b]\mathcal{S}[\b,b]}.
\end{equation*}
For $\lambda = -\lambda^{(a,\b)}_n$:
\begin{align*}
{\rm Res}\{H(\lambda);\lambda = -\lambda^{(a,\b)}_n\} 
&= \frac{w_{-\lambda^{(a,\b)}_n}}{-\lambda^{(a,\b)}_n}
{ \phi(a,b;-\lambda^{(a,\b)}_n) \over \phi(\b,b;-\lambda^{(a,\b)}_n) 
{\partial \over \partial \lambda}\phi(a,\b;\lambda)\vert_{\lambda=-\lambda^{(a,\b)}_n} } 
\\
&= \frac{1}{\lambda^{(a,\b)}_n}
\frac{w_{-\lambda^{(a,\b)}_n}}{\Delta(a,\b;\lambda^{(a,\b)}_n)}
{ \phi(a,b;-\lambda^{(a,\b)}_n) \over \phi(\b,b;-\lambda^{(a,\b)}_n) } 
= \frac{1}{\lambda^{(a,\b)}_n} \hat\psi^+_{n}(a,\b).
\end{align*}
The equivalence of $\hat\psi^+_{n}(a,\b)$ with the expression given above follows since 
${\phi(a,b;-\lambda_n) \over \phi(\b,b;-\lambda_n) } = 
{\varphi^+_{-\lambda_n}\!(a) \over \varphi^+_{-\lambda_n}\!(\b)}$ when $\lambda_n = \lambda^{(a,\b)}_n$. 
The latter holds since ${\phi(a,b;-\lambda_n) \over \phi(\b,b;-\lambda_n) } \cdot  
{\varphi^+_{-\lambda_n}\!(\b)\over \varphi^+_{-\lambda_n}\!(a) } = 1$, 
upon multiplying $\varphi^+_{-\lambda_n}\!(\b)$ and $\varphi^+_{-\lambda_n}\!(a)$ with cylinder functions and then using the eigenvalue equation for $\lambda_n=\lambda^{(a,\b)}_n$.

\noindent A similar analysis for $\lambda = -\lambda^{(\b,b)}_n$ gives
\begin{align*}
{\rm Res}\{H(\lambda);\lambda = -\lambda^{(\b,b)}_n\} 
&= \frac{w_{-\lambda^{{(\b,b)}}_n}}{-\lambda^{(\b,b)}_n}
{ \phi(a,b;-\lambda^{(\b,b)}_n) \over \phi(a,\b;-\lambda^{(\b,b)}_n) 
{\partial \over \partial \lambda}\phi(\b,b;\lambda)\vert_{\lambda=-\lambda^{(\b,b)}_n} } 
\\
&= \frac{1}{\lambda^{(\b,b)}_n}
\frac{w_{-\lambda^{(\b,b)}_n}}{\Delta(\b,b;\lambda^{(\b,b)}_n)}
{ \phi(a,b;-\lambda^{(\b,b)}_n) \over \phi(a,\b;-\lambda^{(\b,b)}_n) } 
= \frac{1}{\lambda^{(\b,b)}_n} \hat\psi^-_{n}(\b,b).
\end{align*}
Hence, summing all residue contributions in the Laplace inverse gives the series representation: 
\begin{align}\label{xi_func_series}
\xi_{(a,b)}(u;\b) ={1\over \m(\b)} {\mathcal L}_\lambda^{-1}\{H(\lambda)\}(u) 
= 
{1 \over \m(\b)} \!
\left[ {\mathcal{S}[a,b] \over \mathcal{S}[a,\b]\mathcal{S}[\b,b]} 
+ 
\sum_{n=1}^{\infty}
\bigg(
{e^{-\lambda_n^{(a,\b)}u} \over \lambda_n^{(a,\b)} }
\hat\psi^+_{n}(a,\b) 
+  
{e^{-\lambda_n^{(\b,b)} u} \over \lambda_n^{(\b,b)} } 
\hat\psi^-_{n}(\b,b)\!
\bigg)\!\right].
\end{align}

\section*{Acknowledgements}
This research was funded by the Natural Sciences and Engineering Research 
Council of Canada (NSERC) discovery grant number 2018-06176.




\begin{thebibliography}{19}
%
\bibitem{AS72}
\textsc{Abramowitz, M.} and  \textsc{Stegun, I. A.} (1972). \textsc{Handbook of Mathematical Functions}, Dover, New York.
%
\bibitem{BS02}
\textsc{Borodin, A. N.} and \textsc{Salminen, P.} (2002). \textit{Handbook of Brownian Motion -- Facts and Formulae},
Series: Probability and its Applications, 2nd ed. Birkh\"{a}user Basel.
%
\bibitem{CM21}
\textsc{Campolieti, G.} \textit{Solvable Diffusions}, Springer Series, New York. Working textbook, manuscript in preparation.
%

\bibitem{Camp_Sui_excursion}
\textsc{Campolieti, G.} and \textsc{Sui, Y.} (2025). Excursions of Solvable Scalar Diffusions. 
Addressing Modern Challenges in the Mathematical, Statistical, and Computational Sciences: The VI AMMCS 
International Conference.
%
\bibitem{Chesney}
\textsc{Chesney, M., Jeanblanc-Picqu\'{e}, M., and Yor, M.} (1997). Brownian excursions and Parisian barrier options. 
\textit{ Advances in Applied Probability}, \textbf{29}, 165–184.
%
\bibitem{chiu2005passage} 
\textsc{Chiu, S. N. and Yin, C.} (2005). 
Passage times for a spectrally negative \uppercase{L}{\'e}vy process with applications to risk theory, 
\textit{Bernoulli}, \textbf{11}, 3, 511--522, Bernoulli Society for Mathematical Statistics and Probability.
%

\bibitem{Egami}
\textsc{Egami, M. and Kevkhishvili, R.} (2020). Time reversal and last passage time of diffusions with applications to credit risk management. \textit{Finance and Stochastics}, \textbf{24} 795---825.

%
\bibitem{Getoor}
\textsc{Getoor, R. K. and Sharpe, M. J.} Last exit times and additive functionals. \textit{Ann. Probab.}, 1(4):550–569, 1973.
%
%
\bibitem{Karlin}
\textsc{Karlin, S.} and \textsc{Taylor, H. E.} (1981). \textit{A Second Course in Stochastic Processes.} Academic Press.
%
\bibitem{landriault2023bridging} (2023). 
\textsc{Landriault, D., Li, B., Lkabous, M. A. and Wang, Z.} Bridging the first and last passage times for \uppercase{L}{\'e}vy models. 
\textit{Stochastic Processes and their Applications}, \textbf{157} 308--334.
%
\bibitem{Linetsky2004a}
\textsc{Linetsky, V.} (2004). The spectral decomposition of the option value. \textit{International Journal of Theoretical and Applied Finance}, \textbf{7} 337--384.
%
\bibitem{Linetsky2004b}
\textsc{Linetsky, V.} (2004). Lookback Options and Diffusion Hitting Times: A Spectral Expansion Approach. 
\textit{Finance and Stochastics}, \textbf{8} 373--398.
%
\bibitem{Pitman and Yor(1981)}
\textsc{Pitman J. and Yor, M.} (1981): Bessel processes and infinitely divisible laws: Lecture Notes in Math., vol. 851. Springer, Berlin. 
%
\bibitem{Profeta2010}
\textsc{Profeta C., Roynette B. and Yor M.} (2010). Option Prices as Probabilities, A New Look at Generalized Black-Scholes Formulae, Springer-Verlag.
%
\bibitem{Salminen84}
\textsc{Salminen P.} (1984). One-dimensional diffusions and their exit spaces. \textit{Math. Scand.}, \textbf{54} 209–220.
%
\bibitem{Sato}
\textsc{Sato, K. and Watanabe, T.} (2004). Moments of last exit times for \uppercase{L}{\'e}vy processes,
Annales de l'IHP Probabilit{\'e}s et statistiques,  \textbf{40}, 2, 207--225.
%
\bibitem{YaodeThesis}
\textsc{Yaode, S.} (2024). Last Passage Time and Excursion Theory for Solvable Diffusions with Applications in Mathematical Finance, Doctoral dissertation, Wilfrid Laurier University.

\end{thebibliography}
\end{document}